\def\@tocline#1#2#3#4#5#6#7{\relax
  \ifnum #1>\c@tocdepth 
  \else
    \par \addpenalty\@secpenalty\addvspace{#2}%
    \begingroup \hyphenpenalty\@M
    \@ifempty{#4}{%
      \@tempdima\csname r@tocindent\number#1\endcsname\relax
    }{%
      \@tempdima#4\relax
    }%
    \parindent\z@ \leftskip#3\relax \advance\leftskip\@tempdima\relax
    \rightskip\@pnumwidth plus4em \parfillskip-\@pnumwidth
    #5\leavevmode\hskip-\@tempdima
      \ifcase #1
       \or\or \hskip 1em \or \hskip 2em \else \hskip 3em \fi%
      #6\nobreak\relax
    \hfill\hbox to\@pnumwidth{\@tocpagenum{#7}}\par
    \nobreak
    \endgroup
  \fi}
\DeclareSymbolFont{rsfs}{OMS}{rsfs}{m}{n}
\DeclareSymbolFontAlphabet{\scr}{rsfs}
\DeclareMathOperator{\arc}{arc}
\DeclareSymbolFontAlphabet{\mathbb}{AMSb} 
\DeclareSymbolFontAlphabet{\mathbbl}{bbold}
\newcommand{\Prism}{{\mathlarger{\mathbbl{\Delta}}}}
\renewcommand{\m}{\mathfrak{m}}
\renewcommand{\n}{\mathfrak{n}}
\renewcommand{\fram}{\mathfrak{m}}
\numberwithin{equation}{theorem}
\renewcommand{\O}{\mathcal O}
\newcommand{\perf}{\textnormal{perf}}
\newcommand{\perfd}{\textnormal{perfd}}
\newcommand{\WCart}{\mathrm{WCart}}
\newcommand{\HT}{\mathrm{HT}}
\newcommand{\HPrism}{\mathscr{H}_{\Prism}}
\newcommand{\absperfdinj}{\text{lim-perfectoid injective}\xspace}
\newcommand{\claperfdinj}{perfectoid injective\xspace}
\newcommand{\absperfdpure}{lim-perfectoid pure\xspace}
\newcommand{\claperfdpure}{perfectoid pure\xspace}
\newcommand{\classicalperfdpure}{{\xspace}perfectoid pure\xspace}
\renewcommand{\Cart}{\mathrm{Cart}}
\begin{document}

\title{Perfectoid pure singularities}
\author{Bhargav Bhatt, Linquan Ma, Zsolt Patakfalvi, Karl Schwede, Kevin Tucker, Joe Waldron, Jakub Witaszek}
\address{Institute for Advanced Study and Princeton University}
\email{bhargav.bhatt@gmail.com}
\address{Department of Mathematics, Purdue University, West Lafayette, IN 47907, USA}
\email{ma326@purdue.edu}
\address{\'Ecole Polytechnique F\'ed\'erale de Lausanne (EPFL), MA C3 635, Station 8, 1015 Lausanne, Switzerland}
\email{zsolt.patakfalvi@epfl.ch}
\address{Department of Mathematics, University of Utah, Salt Lake City, UT 84112, USA}
\email{schwede@math.utah.edu}
\address{Department of Mathematics, University of Illinois at Chicago, Chicago, IL 60607, USA}
\email{kftucker@uic.edu}
\address{Department of Mathematics, Michigan State University, East Lansing, MI 48824, USA}
\email{waldro51@msu.edu}
\address{Northwestern University, Department of Mathematics, Evanston, IL 60208, USA
}
\email{jakub.witaszek@northwestern.edu}

\begin{abstract}
  Fix a prime number $p$.
  Inspired by the notion of $F$-pure or $F$-split singularities, we study the condition that a Noetherian ring with $p$ in its Jacobson radical is pure inside some perfectoid (classical) ring, a condition we call \emph{\claperfdpure}.  We also study a related a priori weaker condition which asks that $R$ is pure in its absolute perfectoidization, a condition we call \emph{\absperfdpure}.  We show that both these notions coincide when $R$ is LCI.  Mixed characteristic analogs of $F$-injective and Du Bois singularities are also explored.  We study these notions of singularity, proving that they are weakly normal and that they are Du Bois after inverting $p$.  We also explore the behavior of \claperfdpure singularities under finite covers and their relation to log canonical singularities.  Finally, we prove an inversion of adjunction result in the LCI setting, and use it to prove that many common examples are perfectoid pure.
\end{abstract}

\maketitle
\setcounter{tocdepth}{1}
\tableofcontents

\section{Introduction}

Suppose $R$ is a Noetherian ring of characteristic $p > 0$.  We say that $R$ is \emph{$F$-split} if the Frobenius map $R \to F_* R$ is split as a map of $R$-modules.  Under moderate hypotheses, this is equivalent to the map $R \to F_* R$ being \emph{pure} (aka universally injective) in which case we say that $R$ is $F$-pure.  Every regular ring is $F$-pure, and many important singular rings are also $F$-pure.  Because of the nice properties that $F$-split and $F$-pure rings possess, the condition has been intensively studied since the 1970s, see for instance \cite{HochsterRobertsFrobeniusLocalCohomology,MehtaRamanathanFrobeniusSplittingAndCohomologyVanishing,BrionKumarFrobeniusSplitting}.

For Noetherian rings, $F$-pure singularities are quite closely related to log canonical singularities, \cf  \cite{HaraWatanabeFRegFPure,MustataSrinivasOrdinary,BhattSchwedeTakagiweakordinaryconjectureandFsingularity}, a central class of singularities in the minimal model program and moduli theory in characteristic zero.  The goal of this paper is to study a variant of these notions in mixed characteristic.

Note that $R$ is $F$-pure if and only if the map to the perfection $R \to R_{\perf} := \bigcup_e R^{1/p^e} = \colim_e F^e_* R$ is {pure}. In mixed characteristic, while we no longer have Frobenius, thanks to \cite{ScholzePerfectoidspaces,BhattScholzepPrismaticCohomology} we have nice analogs of perfect rings and perfection, namely, \emph{perfectoid rings} and the \emph{perfectoidization}. We define $R$ to be \emph{\claperfdpure} if there exists a perfectoid $R$-algebra $B$ and an $R$-algebra homomorphism such that $R \to B$
is {pure}.  This $B$ need not be arbitrary, in fact, when $(R,\m)$ is a Noetherian complete local domain with perfect residue field $k$, $R$ is \claperfdpure if and only if $$R \to B := (R \otimes_A A[p^{1/p^{\infty}}, x_2^{1/p^{\infty}}, \dots, x_d^{1/p^{\infty}}]^{\wedge_p})_{\perfd}$$ is pure, where $A = W(k)\llbracket x_2, \dots, x_d\rrbracket$ is a Noether-Cohen normalization of $R$ (i.e., $A\to R$ makes $R$ into a finite $A$-module) and $(-)_{\perfd}$ is the perfectoidization functor introduced in \cite[Sections 7 and 8]{BhattScholzepPrismaticCohomology}, see \autoref{lem:RAperfdsuffices}.

Related to $F$-pure singularities in characteristic $p > 0$ and log canonical singularities in characteristic zero are the notions of $F$-injective and Du Bois singularities, respectively \cite{FedderFPureRat,DuBoisMain,SteenBrinkDuBoisReview}.
Note that $F$-pure $\Rightarrow$ $F$-injective, and log canonical $\Rightarrow$ Du Bois \cite{KollarKovacsLCImpliesDB}, while the converse implications hold when the ring is quasi-Gorenstein. 
We say that a Noetherian local ring $(R,\fram)$ of characteristic $p>0$ is $F$-injective if 
\[H^i_{\fram}(R)\to H^i_{\fram}(R_{\perf})\] is injective for all $i$.  On the other hand, if $(R,\fram)$ is essentially of finite type over $\mathbb{C}$, $R$ is Du Bois if 
\[H^i_{\fram}(R)\to H^i_{\fram}(\underline{\Omega}_R^0)\] is injective for all $i$. Note, this map is always surjective, \cite{KovacsDuBoisLC1,KovacsSchwedeDBDeforms}, and so injectivity is equivalent to it being an isomorphism, which is in turn equivalent to the usual definition of Du Bois $(R \cong \DuBois{R}$) by duality, \cf \cite{KovacsDuBoisLC1,BhattSchwedeTakagiweakordinaryconjectureandFsingularity,GodfreyMurayamaPureOfDuBois}.
Inspired by these definitions, we say that a Noetherian local ring $(R, \fram)$ is \claperfdinj if 
\[
    H^i_{\fram}(R) \to H^i_{\fram}(B)
\]
is injective for all $i$, for some perfectoid $R$-algebra $B$. Once again, it suffices to check this on certain specific $B$ by \autoref{lem:RAperfdsuffices}.

The two equicharacteristic definitions above can be combined into a single statement: over $\bC$ we have that $\underline{\Omega}_R^0=\myR\Gamma_h(\Spec(R),\cO)$ by \cite{LeeLocalAcyclicFibrationsAndDeRham, HuberJorderDifferentialFormsHTopology}, and over a field of characteristic $p>0$, $R_{\perf}=\myR\Gamma_h(\Spec(R),\cO)$ by \cite{BhattSchwedeTakagiweakordinaryconjectureandFsingularity}, where $\myR\Gamma_h$ denotes derived global sections from the $h$-topology. Therefore we also  
develop an a priori weaker notion:
we say that $R$ is \emph{\absperfdinj} if 
\[
    H^i_{\fram}(R) \to H^i_{\fram}(R_{\perfd})
\]
injects for all $i \geq 0$, where 
$R_{\perfd} = \myR \Gamma_{\arc}(\Spf(R), \cO)$ is the absolute perfectoidization, see \cite{BhattScholzepPrismaticCohomology}. Here the arc-topology \cite{BhattMathewArc} is a Grothendieck topology more suited to the $p$-complete and perfectoid setting, but which is equivalent to the $h$-topology on Noetherian schemes. We show that $R_\perfd$ is the inverse limit of all perfectoid rings $B$ that admit a map from $R$, i.e., $R_\perfd = \varprojlim_{R\to B}B$ where the limit is taken in $\widehat{D}(\mathbf{Z}_p)$, see \autoref{prop:universal_property_perfd}.

We also say that $R$ is \emph{\absperfdpure} if the map 
  $  R \to R_{\perfd}$
is pure.  Note $R_{\perfd}$ is a derived object and not a classical ring (as indeed was $\underline{\Omega}_R^0$), and so some care must be taken to define ``pure'', see \autoref{subsec.PureMapsInDR}.  
If $R$ is a ring of characteristic $p>0$, we have that $R_{\perf}=R_{\perfd}$, thus it follows that our definitions agree with the usual characteristic $p$ definitions in this case.  We expect that perfectoid pure and lim-perfectoid pure (respectively, perfectoid injective and lim-perfectoid injective) are equivalent in general. We can show this when all local rings of $R$ are complete intersections (i.e., $R$ is LCI), in fact, in this case, all four notions agree. 

\begin{theoremA*}[{\autoref{cor:LCIallequivalent}}]
Let $R$ be a Noetherian ring with $p$ in its Jacobson radical. Suppose $R$ is LCI. Then $R$ being perfectoid pure, \absperfdpure, perfectoid injective, and \absperfdinj are all equivalent.
\end{theoremA*}

Furthermore, in the LCI case, we can show an inversion of adjunction-type result. Note that, thanks to Theorem A above, in the statement of the next result we can replace \claperfdinj by any of the other three notions. 

\begin{theoremB*}[{\autoref{thm: inv of adj}}]
    Suppose $(R, \m)$ is a Noetherian local ring of residue characteristic $p> 0$ that is a complete intersection. Suppose that $f \in \m$ is a nonzerodivisor and $R/fR$ is \claperfdinj. Then $R$ is \claperfdinj. In fact, we even obtain that the pair $(R, f)$ is \claperfdinj.
\end{theoremB*}

The key point for both these results is that when $R$ is LCI, we show in \autoref{thm: lci case} that $R_{\perfd}$ is Cohen-Macaulay, and then the proof of our inversion of adjunction result follows similarly to classical results in characteristic zero or $p > 0$ \cite{ElkikDeformationsOfRational,FedderFPureRat}.

While we hope that the LCI condition is unnecessary, even this is enough to show that numerous examples are \claperfdpure (for instance $\mathbb{Z}_p[[y,z]]/(p^3 + y^3 + z^3)$ as long as $p \equiv 1 \pmod 3$), see \autoref{ex:CYlikehypersurfaces}.

We can relate our mixed characteristic singularities to those coming from characteristic zero as follows.  Compare with \cite{HaraWatanabeFRegFPure,SchwedeFInjectiveAreDuBois}.

\begin{theoremC*}
    Suppose $(R, \fram)$ is a mixed characteristic $(0, p> 0)$ Noetherian local ring.
    \begin{enumerate}
        \item If $R$ is \absperfdinj and $R$ is essentially of finite type over a DVR, then $R[1/p]$ has Du Bois singularities.  (\autoref{prop.ArcImpliesR1overpIsDuBois})
        \item If $R$ is normal, $\bQ$-Gorenstein, and \claperfdpure, then $R[1/p]$ is log canonical.  (\autoref{prop.PerfdPureImpliesR1overpIsLC}) \label{partb}
        \item If $R$ is normal, $\bQ$-Gorenstein with canonical index not divisible by $p > 0$, and $R$ is \claperfdpure, then $R$ is log canonical.\label{partc} (\autoref{cor.PerfdPureImpliesLC})
    \end{enumerate}
\end{theoremC*}

Indeed, we even obtain \autoref{partc} in the potentially non-normal semi-log canonical case in \autoref{thm.PerfdPurePlusQGorIndexImpliesLC}.  Relatedly, we prove that \absperfdinj singularities are weakly normal in \autoref{cor.ArcInjectiveImpliesWeaklyNormal}.  To prove \autoref{partb} and \autoref{partc}, we study the behavior of perfectoid purity under cyclic covers in \autoref{prop.CyclicCoverNIndexStuff}.

\subsection{Acknowlegdements}
Bhargav Bhatt was supported by NSF Grant DMS \#1801689 and \#1840234, NSF FRG Grant \#1952399, a Packard
Fellowship, and the Simons Foundation.  Linquan Ma was supported in part by NSF Grant DMS \#2302430 and a fellowship from the Sloan Foundation.  Zsolt Patakfalvi was supported by grant \#200020B/192035 from the Swiss National Science Foundation and ERC Starting grant \#804334.  
Karl Schwede was supported by NSF Grants \#2101800, \#2501903 and NSF FRG Grant DMS-1952522, Simons Travel Support SFI-MPSTSM-00013051 and a Fellowship from the Simons Foundation.  Kevin Tucker was partially supported by
NSF Grants \#2200716, \#2501904 and Simons Foundation Travel Support for Mathematicians SFI-MPS-TSM-00014083.  Joe Waldron was supported by NSF CAREER Grant DMS \#2440240, NSF Grant DMS\#2401279, the Simons Foundation Gift \#850684, and also gratefully acknowledges support from the Institute for Advanced Study during his Spring 2024 Membership funded by the Infosys Membership Fund. Jakub Witaszek was supported by NSF Grant No.\ DMS-\#2101897 and NSF Grant No.\ DMS-\#2401360.  This material based upon work supported by the National Science Foundation under Grant No. DMS-1928930, while the authors were in residence at the Simons Laufer Mathematical Sciences Institute (formerly MSRI) in Berkeley, California, during the Spring 2024 semester.  The authors thank Elden Elmanto and George Pappas for valuable conversations.  The authors also thank Anne Fayolle for valuable comments on earlier drafts and in particular for pointing out a problem with {\color{red} 4.29}.  Finally, the authors also thank the referee for numerous useful comments.

\section{Preliminaries}

We begin by recalling the definitions of some singularities in equal characteristic for the convenience of the reader.

Suppose first $R$ is a ring of characteristic $p > 0$.  In order to distinguish between the target and source of the Frobenius endomorphism, we write the Frobenius map as $R \to F_* R$.  The abelian group structure of $F_* R$ is the same as that of $R$, but with the $R$-module structure induced by Frobenius. 

\begin{definition}[$F$-singularities {\cite{HochsterRobertsFrobeniusLocalCohomology,FedderFPureRat}}]
    Suppose $R$ is a Noetherian ring of characteristic $p > 0$.  We say that $R$ is \emph{$F$-pure} if the Frobenius map
    \[ 
        R \to F_* R 
    \]
    is pure\footnote{this is also known as being universally injective}.  We say that $R$ is \emph{$F$-injective} if for each maximal ideal $\fram \subseteq R$, we have that 
    \[
        H^i_{\fram}(R) \to H^i_{\fram}(F_* R)
    \]
    injects for every $i \geq 0$.
\end{definition}

If $R$ is $F$-finite\footnote{meaning Frobenius is a finite map} or if $R$ is complete and local, then $R$ is $F$-pure if and only if $R \to F_* R$ splits as a map of $R$-modules.  It is clear that every $F$-pure ring is $F$-injective.  The converse holds if $R$ is quasi-Gorenstein, that is if $R$ has a canonical module locally isomorphic to $R$, \cite{FedderFPureRat}.  

We also recall some notions of singularities most commonly studied over rings of characteristic zero.  

\begin{definition}[Log canonical singularities]
    Suppose $X$ is a normal Noetherian integral scheme with a dualizing complex and a canonical divisor $K_X$.  We say that $X$ is \emph{log canonical} if $X$ is $\bQ$-Gorenstein\footnote{meaning there is some integer $n > 0$ such that $nK_X$ is Cartier} and for every proper birational map $\pi : Y \to X$ with $Y$ normal, we have that the coefficients of 
    \[
        K_Y - \pi^* K_X
    \]
    are $\geq -1$.  In other words, if $E$ is the reduced exceptional divisor, we require that $\pi_* \cO_Y(\lceil K_Y - \pi^* K_X + \epsilon E\rceil) = \cO_X$ for all $1 \gg \epsilon > 0$. 
\end{definition}
In the presence of a log resolution, one can check whether $X$ is log canonical by simply checking $K_Y - \pi^* K_X$ has coefficients $\geq -1$ on that log resolution $Y$. 

There is also a notion of semi-log canonical singularities which are a non-normal variant of log canonical singularities.  We refer the reader to \cite{KollarKovacsSingularitiesBook} for this definition and for more details on log canonical singularities.  Roughly speaking though, as long as $X$ is semi-log canonical in codimension 1 (has node-like-singularities after localizing at height one primes), the definition is exactly the same as the one above.

We now also recall the definition of Du Bois singularities.

\begin{definition}[Du Bois singularities {\cite{SteenBrinkDuBoisReview,DuBoisMain}}]
    Suppose $X$ is a reduced scheme essentially of finite type over a field of characteristic zero.  We say that $X$ is \emph{Du Bois} if $\cO_X \to \DuBois{X}$ is an isomorphism (here $\DuBois{X}$ is defined as in \cite{DuBoisMain}).
\end{definition}

Every log canonical variety over a field of characteristic zero is Du Bois \cite{KollarKovacsLCImpliesDB}, and a normal quasi-Gorenstein Du Bois variety is automatically log canonical \cite{KovacsDuBoisLC1}.  Since the map 
\[ 
\myR \Hom_{\cO_X}(\DuBois{X}, \omega_X^{\mydot}) \to \omega_X^{\mydot}
\] 
always injects on cohomology, by Grothendieck duality $X$ has Du Bois singularities if and only if the displayed map surjects on cohomology.  Furthermore, by either local duality applied to the above, or the argument of \cite[Lemma 2.2]{KovacsDuBoisLC1} in view of \cite[Theorem 3.3]{KovacsSchwedeDBDeforms}, or by \cite{GodfreyMurayamaPureOfDuBois}, $X$ is Du Bois if and only if 
\[ 
    H^i_{x}(X, {\cO_{X,x}}) \to H^i_{x}(X, {\DuBois{X}}_x)
\]
injects for every point $x \in X$.  Note, this condition is reminiscent of the $F$-injective condition above.  

Via reduction to characteristic $p \gg 0$, (conjecturally) we have that log canonical singularities correspond to $F$-pure singularities, and Du Bois singularities correspond to $F$-injective singularities, \cf \cite{HaraWatanabeFRegFPure,SchwedeFInjectiveAreDuBois,MustataSrinivasOrdinary,BhattSchwedeTakagiweakordinaryconjectureandFsingularity}.

\subsection{Pure maps in $\myD(R)$}
\label{subsec.PureMapsInDR}

In what follows $R$ is a commutative ring.
The notion of purity we consider makes use of colimits in a derived category, and so it is crucial to consider $\myD(R)$ to be the (unbounded) derived $\infty$-category rather than the classical triangulated category which is its homotopy category. 
This can be obtained via an $\infty$-categorical analog of the classical Verdier construction, by performing a Dwyer-Kan localization of $\mathrm{N}(\mathrm{Ch}(R))$ at the quasi-isomorphisms.  For more information about this object, see \cite[Section 1.3.5]{lurie_higher_algebra} and in particular \cite[Definition 1.3.5.8, Proposition 1.3.5.15]{lurie_higher_algebra}. For the reader more familiar with the derived category language, co-fiber in a stable $\infty$-category corresponds to the cone in the corresponding triangulated category (i.e., in its homotopy category), fiber to a cone shifted by $-1$, and a fiber sequence corresponds to an exact triangle.

\begin{definition}
    A map $f : M \to N$ in $\myD(R)$ is \emph{pure} if it can be written as a  filtered colimit of split maps, $f_i : M \to N_i$.
\end{definition}

\begin{remark}
If $M$ and $N$ are discrete $R$-modules then this coincides with the traditional definition, see \cite[\href{https://stacks.math.columbia.edu/tag/058K}{Tag 058K}]{stacks-project}.
\end{remark}

We start with verifying some basic properties:

\begin{lemma}\label{lem:first_factor_is_pure}
Let $f:M\to N$ and $g:N\to L$ be maps in $\myD(R)$, and suppose that  $g\circ f:M\to L$ is pure.  Then $f$ is pure.
    \end{lemma}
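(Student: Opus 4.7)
The idea is to convert the data witnessing purity of $g\circ f$ into data witnessing purity of $f$ by pulling back the splitting diagram along $g$.

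Write $g\circ f$ as a filtered colimit $\colim_{i\in I}(h_i\colon M\to L_i)$ in the arrow $\infty$-category of $\myD(R)$, with each $h_i$ split by some $s_i\colon L_i\to M$ and with structure maps $\iota_i\colon L_i\to L$ satisfying $\iota_i\circ h_i\simeq g\circ f$ coherently in $i$ and $L\simeq \colim_i L_i$. For each $i\in I$, form the homotopy pullback
\[
N_i := N\times_L L_i
\]
with projections $q_i\colon N_i\to N$ and $p_i\colon N_i\to L_i$. The pair $(f\colon M\to N,\,h_i\colon M\to L_i)$, together with the coherent homotopy $g\circ f\simeq \iota_i\circ h_i$, induces via the universal property of the pullback a map $f_i\colon M\to N_i$ with $q_i\circ f_i\simeq f$ and $p_i\circ f_i\simeq h_i$; by functoriality of homotopy pullbacks, $i\mapsto(f_i\colon M\to N_i)$ assembles into an $I$-indexed diagram in the arrow category.

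Each $f_i$ is split by the retraction $s_i\circ p_i\colon N_i\to M$: indeed $(s_i\circ p_i)\circ f_i\simeq s_i\circ h_i\simeq \id_M$. Since $\myD(R)$ is a stable presentable $\infty$-category, filtered colimits commute with finite limits, so
\[
\colim_i N_i \simeq N\times_L \colim_i L_i \simeq N\times_L L \simeq N,
\]
and under this identification the induced map $M\to \colim_i N_i$ is $f$. Hence $f$ is exhibited as a filtered colimit of split maps, and is therefore pure.

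The main delicate point is the $\infty$-categorical coherence needed to promote the pointwise assignment $i\mapsto f_i$ to an honest functor $I\to \myD(R)^{\Delta^1}$ whose colimit is the arrow $f\colon M\to N$. This is a formal consequence of the fact that the base-change functor $N\times_L(-)\colon \myD(R)_{/L}\to \myD(R)_{/N}$ preserves colimits, applied to the diagram $\{\iota_i\colon L_i\to L\}$ equipped with the coherent cone from $M$ provided by the $h_i$; this kind of bookkeeping is precisely what the $\infty$-categorical (rather than triangulated) setup from \autoref{subsec.PureMapsInDR} was designed to handle.
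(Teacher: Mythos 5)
Your proof is correct and follows essentially the same route as the paper's: form the pullback $N_i = N\times_L L_i$, observe that the induced map $M\to N_i$ inherits a splitting from $s_i$, and conclude using that filtered colimits commute with finite limits in the stable $\infty$-category $\myD(R)$. Your explicit attention to the coherence of the diagram $i\mapsto f_i$ is a welcome elaboration of a point the paper leaves implicit.
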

\begin{proof}
    Suppose that we can write $g\circ f$ as a colimit of split maps $f_i:M\to L_i$.  Then define $N_i$ as the following pullback.
    \[
    \xymatrix{
       N_i\ar[r]\ar[d]& L_i\ar[d]\\
         N\ar[r] & L
        }
    \]
    By the definition of pullback, we have a canonical map $M\to N_i$ that factors $M\to L_i$, and since the latter splits, we have that $M\to N_i$ splits. It is therefore enough to show that $N=\colim N_i$.  But this follows from taking the colimit of the above diagrams, since filtered colimits in a stable $\infty$-category commute with finite limits.  That latter follows from applying \cite[Proposition 1.1.4.1]{lurie_higher_algebra} to $\mathrm{colim}:\Ind(D(R))\to D(R)$, since colimits always commute with colimits, where $\Ind(D(R))$ is stable by \cite[Proposition 1.1.3.6]{lurie_higher_algebra}.
\end{proof}

\begin{lemma}\label{lem:composition_is_pure}
    If $f:M\to N$ and $g:N\to L$ are pure maps in $\myD(R)$ then $g\circ f: M\to L$ is pure.
\end{lemma}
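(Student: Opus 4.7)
The plan is to first establish the claim when $g$ itself is split, and then bootstrap to general pure $g$ by exploiting the closure of pure maps under filtered colimits.

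Suppose first that $g:N\to L$ is split, with retraction $s:L\to N$. Since $D(R)$ is a stable $\infty$-category, this gives a canonical direct-sum decomposition $L\simeq N\oplus Q$ (with $Q=\mathrm{fib}(s)$) under which $g$ becomes the inclusion of the first summand, so that $g\circ f:M\to L$ is identified with $(f,0):M\to N\oplus Q$. Writing $f=\colim_i f_i$ with $f_i:M\to N_i$ split and retraction $r_i:N_i\to M$, and using that filtered colimits commute with the finite coproduct $-\oplus Q$, we get
\[
g\circ f=(f,0)=\colim_i(f_i,0):M\to\colim_i(N_i\oplus Q)\simeq N\oplus Q\simeq L,
\]
where each $(f_i,0):M\to N_i\oplus Q$ is split with retraction $(r_i,0)$. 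Hence $g\circ f$ is pure.

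For the general case, write $g=\colim_j g_j$ with $g_j:N\to L_j$ split. Precomposition with the fixed map $f$ preserves colimits in the arrow $\infty$-category, so $g\circ f=\colim_j(g_j\circ f)$, and by the previous paragraph each $g_j\circ f$ is pure. The remaining ingredient is that a filtered colimit (with fixed source) of pure maps is pure, reflecting the fact that the class of pure maps out of $M$ is by definition the closure of split maps under filtered colimits in $D(R)^{M/}$ and is therefore itself closed under filtered colimits.

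The main obstacle is making this ``filtered colimit of filtered colimits'' argument functorial in the $\infty$-categorical setting. Concretely, one can choose retractions $s_j:L_j\to N$ of $g_j$ compatibly along the structure maps of $J$ (which can be arranged either by refining $J$ to a cofinal subdiagram or via an $\infty$-categorical universality argument), and then form the pullback $P_{i,j}=N_i\times_N L_j$ for each $(i,j)\in I\times J$. The induced map $M\to P_{i,j}$ (built from $f_i$ and $g_j\circ f$) is split with retraction $r_i\circ\mathrm{proj}_1$, and commutation of filtered colimits with finite limits in $D(R)$ (\cf the proof of \autoref{lem:first_factor_is_pure}) yields $\colim_{(i,j)}P_{i,j}=N\times_N L=L$, realizing $g\circ f$ as a filtered colimit of split maps.
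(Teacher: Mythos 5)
Your argument is structured differently from the paper's. Where the paper shows that $g_j \circ f$ is pure for each split $g_j$ by a one--line appeal to \autoref{lem:first_factor_is_pure} (the retraction $L_j \to N$ gives a factorization of the pure map $f$ through $g_j\circ f$), you instead decompose $L_j \simeq N\oplus Q_j$ and exhibit $g_j\circ f=(f,0)$ directly as a filtered colimit of the split maps $(f_i,0)\colon M\to N_i\oplus Q_j$. Both routes are correct and arrive at the same intermediate conclusion. The paper's route is shorter, leaning on an already-established lemma; yours is more explicit but costs you a separate case analysis.

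The real issue is in your final paragraph. Both you and the paper ultimately need the assertion that a filtered colimit of pure maps (with fixed source $M$) is again pure; the paper asserts this as ``following from the definition,'' while you try to unwind it. Your unwinding, however, hinges on the claim that one can ``choose retractions $s_j\colon L_j\to N$ compatibly along the structure maps of $J$,'' and this is not justified. A retraction of a split map is genuine extra data, not determined by the map itself, and there is in general no reason a filtered diagram valued in split maps should lift to a filtered diagram valued in maps-equipped-with-retractions: the space of retractions is typically neither empty nor contractible, so natural choices need not exist. Your parenthetical suggestions (``refining $J$ to a cofinal subdiagram'' or ``an $\infty$-categorical universality argument'') are not arguments; without a genuine mechanism, the pullback system $P_{i,j}=N_i\times_N L_j$ is not well defined as a functor out of $I\times J$.

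A cleaner way to close the gap, if one wants to be careful, is to avoid retractions entirely and use compactness: if each $g_j\circ f$ is pure, its cone $Q_j$ has the property that every map from a perfect complex lands at zero in $M[1]$ (this is exactly the forward direction of the later \autoref{prop:purity-criterion}, whose proof is independent of the present lemma); the cone of $g\circ f$ is $\colim_j Q_j$, a compact-to-$Q$ map factors through some $Q_j$, and one concludes as in that proposition. This sidesteps any need to choose section data coherently.
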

\begin{proof}
    Suppose that we can write $g$ as a colimit of split maps $g_i:N\to L_i$. 
 Then it suffices to show that ${ g_i \circ f} : M\to L_i$ is pure since it follows from the definition that the colimit of pure maps is pure.  Since $N\to L_i$ is split, we obtain a factorization $M\to N\to L_i\to N$, 
    and therefore $M\to L_i$ is pure by \autoref{lem:first_factor_is_pure} as required.
\end{proof}

\begin{lemma}\label{lem:tensor-product-food}
If $\phi:M\to N$ is a pure map in $\myD(R)$ and $K\in \myD(R)$ then $M\otimes_R^\myL K\to N\otimes_R^{\myL} K$ is pure. 
\end{lemma}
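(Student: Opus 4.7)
The plan is to unwind the definition: write $\phi$ as a filtered colimit of split maps, apply $-\otimes^\myL K$ termwise, and check that the resulting system still exhibits the target as a filtered colimit of split maps.

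More precisely, first I would use the hypothesis to write $\phi : M \to N$ as $\colim_i \phi_i$ where each $\phi_i : M \to N_i$ is a split map in $\myD(R)$ and $N \simeq \colim_i N_i$. Tensoring with $K$, I get maps $\phi_i \otimes^\myL \id_K : M \otimes^\myL K \to N_i \otimes^\myL K$. Since $-\otimes^\myL K : \myD(R) \to \myD(R)$ is a functor of stable $\infty$-categories (in particular additive), it preserves split maps: a splitting $s_i : N_i \to M$ with $s_i \circ \phi_i = \id_M$ gives a splitting $s_i \otimes^\myL \id_K$ of $\phi_i \otimes^\myL \id_K$.

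Next, I would observe that $-\otimes^\myL K$ preserves colimits, since the derived tensor product is computed by a left adjoint (to the internal Hom). Hence $\colim_i (N_i \otimes^\myL K) \simeq (\colim_i N_i) \otimes^\myL K \simeq N \otimes^\myL K$, and the induced map from $M \otimes^\myL K$ to this colimit is exactly the colimit of the split maps $\phi_i \otimes^\myL \id_K$, which is in turn identified with $\phi \otimes^\myL \id_K$ by functoriality.

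I do not expect any serious obstacle: the only two ingredients are the additivity (preservation of splittings) and colimit-preservation of $-\otimes^\myL K$, both of which are standard for the unbounded derived $\infty$-category. The only minor subtlety is making sure the system $\{N_i\otimes^\myL K\}$ remains filtered, which is immediate since the indexing category is unchanged.
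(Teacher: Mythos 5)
Your proposal is correct and follows exactly the same route as the paper's proof: write $\phi$ as a filtered colimit of split maps $\phi_i$, note that $-\otimes^{\myL}K$ preserves both the splittings and the colimit, and conclude. No gaps.
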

\begin{proof}
Since $\phi: M\to N$ is pure, there exists a system of split maps $\phi_i:M\to N_i$ with $\phi=\colim \phi_i$.  Then $M\otimes_R^{\myL}K\to N_i\otimes_R^{\myL} K$ is split.  Hence $$M\otimes_R^{\myL} K\to \colim(N_i\otimes_R^{\myL} K)\simeq (\colim N_i)\otimes_R^{\myL} K=N\otimes_R^\myL K$$ is pure as required. 
\end{proof}

The following is well known for maps of modules, we expect our generalization to complexes below is also well known to experts. 
\begin{lemma}\label{lem:injective_E_purity}
    Suppose $(R, \fram)$ is a Noetherian local ring and denote the injective hull of the residue field by $E$.  Suppose $M \in \myD(R)$, and there is a map $f : R \to M$.  Suppose that 
    \[
        R \otimes_R E  \to  H^0(M \otimes_R^{\myL} E)
    \]
    is injective. Then there is a map $M \to R^{\wedge\m}$ such that the composition 
    \[
        R \to M \to R^{\wedge\m}
    \]
    is the $\fram$-adic completion map.  In particular, $R\to M$ is pure, and if $R$ is complete local then $R \to M$ splits.
\end{lemma}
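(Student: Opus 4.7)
My plan is to use Matlis duality to convert a map $M \to R^{\wedge\m}$ into a map out of $M \otimes^L E$, then exploit the injectivity of $E$ to produce a splitting. Recall that for a Noetherian local ring $(R,\m)$, Matlis duality gives a canonical isomorphism $R^{\wedge\m} \cong \Hom_R(E,E)$, and this identification sends the completion map $R \to R^{\wedge\m}$ to the map sending $1 \in R$ to $\id_E$.

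First I would set up the derived tensor-hom adjunction
\[
\myR\Hom_R(M, R^{\wedge\m}) \simeq \myR\Hom_R(M, \myR\Hom_R(E,E)) \simeq \myR\Hom_R(M \otimes^{\myL}_R E,\, E).
\]
Since $E$ is an injective $R$-module, the Grothendieck spectral sequence collapses to give
\[
H^0\bigl(\myR\Hom_R(M \otimes^{\myL}_R E, E)\bigr) \cong \Hom_R\bigl(H^0(M \otimes^{\myL}_R E),\, E\bigr).
\]
Under this chain of identifications, a map $\phi : M \to R^{\wedge \m}$ corresponds to a morphism of $R$-modules $H^0(M \otimes^{\myL}_R E) \to E$, and the composition $R \to M \to R^{\wedge\m}$ being the completion corresponds to the composite $E = R \otimes^{\myL}_R E \to H^0(M \otimes^{\myL}_R E) \to E$ being the identity.

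Now the hypothesis says exactly that $E \to H^0(M \otimes^{\myL}_R E)$ (obtained from $R \to M$) is injective. Since $E$ is an injective $R$-module, this injection admits a retraction $H^0(M \otimes^{\myL}_R E) \to E$, and by construction the composite $E \to H^0(M \otimes^{\myL}_R E) \to E$ is the identity. Translating back through the adjunction produces the desired map $M \to R^{\wedge\m}$ factoring the completion.

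For the purity claim: the completion map $R \to R^{\wedge\m}$ is faithfully flat (standard for Noetherian local rings), hence pure as a map of classical modules and therefore as a map in $\myD(R)$. Applying \autoref{lem:first_factor_is_pure} to $R \to M \to R^{\wedge\m}$ shows $R \to M$ is pure. If in addition $R$ is complete then $R = R^{\wedge\m}$, so the factorization of the identity $R \to M \to R$ yields a splitting. The only subtlety worth being careful about is the interaction between derived and classical Hom/tensor; this is handled cleanly by the injectivity of $E$, which makes $\myR\Hom_R(-,E) = \Hom_R(-,E)$ and allows $H^0$ to be computed termwise as above.
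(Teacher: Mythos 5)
Your proof is correct and follows essentially the same approach as the paper's: apply Matlis duality $R^{\wedge\m} \cong \Hom_R(E,E)$ together with derived tensor-hom adjunction to convert the problem into finding a retraction of the injection $E \to H^0(M \otimes^{\myL} E)$, which exists because $E$ is injective, then translate back. The only cosmetic difference is that you make the identification $H^0(\myR\Hom(M\otimes^{\myL}E, E)) \cong \Hom_R(H^0(M\otimes^{\myL}E), E)$ explicit and phrase the existence of the factorization as choosing a retraction, while the paper phrases it as the induced map on $H^0$ of $\myR\Hom(-,E)$ being surjective; these are the same step.
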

\begin{proof}
    Since $E$ is injective, the exact functor $\Hom(-, E) = \myR\Hom(-, E)$ turns injections into surjections.  This yields a map
    \[
        \myR \Hom(R \otimes_R^{\myL} E, E) \leftarrow  \myR \Hom(M \otimes_R^{\myL} E, E)
    \] 
    which we identify with 
    \[
        \myR \Hom(R, \myR \Hom(E, E)) \leftarrow \myR \Hom(M, \myR \Hom( E, E)).
    \]
    This map is surjective on cohomology.
    Notice that $\myR\Hom(E, E) = R^{\wedge\m}$ (the identity maps to $1$).  If we take zeroth cohomology we get a surjective map 
    \[
        \Hom_{\myD(R)}(R, R^{\wedge\m}) \leftarrow \Hom_{\myD(R)}(M, R^{\wedge\m}).
    \]
    This implies there is a map $M \to R^{\wedge\m}$ splitting $f : R \to M$ in the sense of the statement, as desired.  Since $R^{\wedge\m}$ is a faithfully flat $R$-module, $R\to R^{\wedge\m}$ is pure and so $R\to M$ is pure by \autoref{lem:first_factor_is_pure}.
\end{proof}

\begin{proposition}\label{prop:purity-criterion}
Let $f:M\to N$ be a map in $\myD(R)$ with cone $Q$.  Then $f$ is pure if and only if for every perfect complex $K$ with a map $K\to Q$, the composition $K\to M[1]$ is zero. 
\end{proposition}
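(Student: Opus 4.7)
The plan is to exploit the cofiber sequence
\[
M \xrightarrow{f} N \to Q \xrightarrow{\partial} M[1]
\]
together with the fact that $\myD(R)$ is a stable $\infty$-category that is compactly generated by perfect complexes, so that filtered colimits commute with finite limits and every object is a filtered colimit of perfect complexes.

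For the forward direction, I would suppose $f = \colim_i f_i$ with $f_i : M \to N_i$ split, and let $Q_i$ be the cofiber of $f_i$. Since the cofiber functor preserves filtered colimits, $Q \simeq \colim_i Q_i$. Naturality of boundary maps applied to the map of cofiber sequences
\[
\xymatrix{
M \ar[r] \ar@{=}[d] & N_i \ar[r] \ar[d] & Q_i \ar[r]^-{\partial_i} \ar[d] & M[1] \ar@{=}[d] \\
M \ar[r] & N \ar[r] & Q \ar[r]^-{\partial} & M[1]
}
\]
shows that the composition $Q_i \to Q \xrightarrow{\partial} M[1]$ equals $\partial_i$, which vanishes because $f_i$ is split. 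Given any perfect $K$ and map $K \to Q$, compactness of $K$ lets us factor $K \to Q$ through some $K \to Q_i$, so $K \to Q \to M[1]$ vanishes.

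For the converse, I would write $Q \simeq \colim_\alpha K_\alpha$ as a filtered colimit of perfect complexes $K_\alpha$ equipped with maps to $Q$; this is the compact generation of $\myD(R)$ by perfect complexes. For each $\alpha$ form the pullback
\[
\xymatrix{
N_\alpha \ar[r] \ar[d] & N \ar[d] \\
K_\alpha \ar[r] & Q,
}
\]
and note that, since the square is Cartesian in a stable $\infty$-category, the fiber of $N_\alpha \to K_\alpha$ is identified with the fiber $M$ of $N \to Q$. This yields a fiber sequence $M \to N_\alpha \to K_\alpha$ whose boundary map, by naturality, is the composition $K_\alpha \to Q \xrightarrow{\partial} M[1]$; this vanishes by hypothesis, so $M \to N_\alpha$ is split. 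Because filtered colimits commute with finite limits in $\myD(R)$ (the same input used in the proof of \autoref{lem:first_factor_is_pure}), one has
\[
\colim_\alpha N_\alpha \;\simeq\; N \times_Q \colim_\alpha K_\alpha \;\simeq\; N \times_Q Q \;\simeq\; N,
\]
exhibiting $f : M \to N$ as a filtered colimit of split maps and hence as pure.

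The main thing to verify carefully is the reverse direction's input that the indexing category of perfect complexes mapping to $Q$ is filtered with colimit $Q$; this is a standard consequence of $\myD(R) = \mathrm{Ind}(\mathrm{Perf}(R))$, but it is the essential place where compactness of $K$ (equivalently, $K$ being perfect rather than just pseudo-coherent) enters. Everything else is naturality of the boundary map and the compatibility of filtered colimits with fibers and cofibers.
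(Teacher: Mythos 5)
Your proof is correct and follows essentially the same route as the paper's: the forward direction uses compactness of $K$ to factor $K\to Q$ through a stage $Q_i$ of the colimit of cofibers, and the converse writes $Q$ as a filtered colimit of perfect complexes, pulls back to define $N_\alpha$, and uses that the vanishing boundary map splits $M\to N_\alpha$ together with the commutation of filtered colimits with finite limits. The only cosmetic difference is that the paper cites an explicit triangulated-category reference for the splitting of a (co)fiber sequence with zero connecting map, which you assert as standard.
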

\begin{proof}
Suppose that $M\to N$ is a {filtered} colimit of split maps $f_i:M\to N_i$.  Then let $Q_i$ be the cone of $M\to N_i$.  Since perfect complexes are the compact objects of $D(R)$ by \cite[Tag 07LT]{stacks-project}, and $Q_i$ and $K$ are perfect,  
we have a factorization $K\to Q_i\to Q$ for some $i$.  
Then the required vanishing follows since $M\to N_i\to Q_i$ splits, and $K\to M[1]$ factors through $K\to Q_i\xrightarrow{0} M[1]$.

Conversely suppose that the property holds.  We may express $Q=\colim K_i$ as a filtered colimit of perfect complexes since perfect complexes are the compact objects of $D(R)$ and $D(R)$ is compactly generated by \cite[Remark 1.4.4.3]{lurie_higher_algebra} and \cite[Proposition 2.5]{neeman_grothendieck_duality}.

Define $N_i$ via the pullback
 \[
    \xymatrix{
        M \ar[r]\ar^@{=}[d] & N_i\ar[r]\ar[d]& K_i\ar[d]\\
        M\ar[r] & N\ar[r] & Q
        }
    \]
Then we have $N\simeq \colim N_i$  because filtered colimits commute with finite limits.  Furthermore, since $K_i\to M[1]$ is the zero map, the top row splits {by \cite[1.2.7]{neeman_triangulated}}.  Note that in this context, the colimit of zero maps need not be zero, and hence we cannot conclude that $N\to Q$ is itself split.
\end{proof}

Next we recall that if $I=(f_1,\dots,f_n)$ is a finitely generated ideal of a commutative ring $R$ and $M\in D(R)$, then following \cite[\href{https://stacks.math.columbia.edu/tag/0952}{Tag 0952}]{stacks-project}, one defines $\myR\Gamma_I M := C^\bullet(\underline{f},R)\otimes_RM$, where $C^\bullet(\underline{f},R)$ denotes the \v{C}ech complex associated to the sequence $f_1,\dots,f_n$. It is well-known that this definition only depends on $I$ \cite[\href{https://stacks.math.columbia.edu/tag/0A6R}{Tag 0A6R}]{stacks-project} and that when $R$ is Noetherian and $M$ is a finitely generated $R$-module, it agrees with the usual definition of local cohomology using injective resolutions, see \cite[\href{https://stacks.math.columbia.edu/tag/0BJD}{Tag 0BJD}]{stacks-project}.

\begin{proposition}\label{prop:purity_implies_injecitivity}
Let $R$ be a Noetherian ring and $I\subseteq R$ an ideal, and let $f:M\to N$ be a pure map in $\myD(R)$. Then $H^i\myR\Gamma_{I}M\to H^i\myR\Gamma_{I}N$ is injective for all $i$. 
\end{proposition}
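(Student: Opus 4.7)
The plan is to first reduce the statement to the assertion that a pure map $g : A \to B$ in $\myD(R)$ induces an injection $H^i(A) \hookrightarrow H^i(B)$ for all $i$, and then to deduce this assertion directly from the compact object characterization of purity in \autoref{prop:purity-criterion}.

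For the reduction, one uses the standard identification $\myR\Gamma_I(-) \simeq \myR\Gamma_I(R) \otimes^{\myL} -$ in $\myD(R)$, which can be proved using the stable Koszul complex on a finite generating set of $I$ (this uses that $R$ is Noetherian, so $I$ is finitely generated). Tensoring the pure map $f$ with the object $\myR\Gamma_I(R)$ and applying \autoref{lem:tensor-product-food} shows that $\myR\Gamma_I(M) \to \myR\Gamma_I(N)$ is again a pure map in $\myD(R)$.

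It therefore suffices to check that a pure map $g : A \to B$ with cone $C$ induces an injection on $H^i$. From the fiber sequence $A \to B \to C$ one obtains a long exact cohomology sequence, and injectivity of $H^i(A) \to H^i(B)$ is equivalent to vanishing of the boundary map
\[
    \partial : H^{i-1}(C) \longrightarrow H^i(A).
\]
Now any class $\alpha \in H^{i-1}(C)$ is represented by a morphism $R[-(i-1)] \to C$ in $\myD(R)$, and $R[-(i-1)]$ is a perfect complex. By the standard identification of the boundary map in the long exact sequence, $\partial(\alpha) \in H^i(A)$ is precisely the composition
\[
    R[-(i-1)] \longrightarrow C \longrightarrow A[1].
\]
By \autoref{prop:purity-criterion} applied to the pure map $g$, this composition vanishes, so $\partial(\alpha) = 0$.

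The only nontrivial inputs are the formula $\myR\Gamma_I \simeq \myR\Gamma_I(R) \otimes^{\myL} -$ and the identification of the connecting map in the long exact cohomology sequence with the shift map $C \to A[1]$ of the fiber sequence; both are standard, so the main content of the proof is simply the observation that \autoref{prop:purity-criterion}, applied to the perfect complex $R[-(i-1)]$, gives exactly the required vanishing.
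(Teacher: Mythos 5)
Your proof is correct, but it takes a genuinely different route from the paper's. The paper argues directly from the definition of purity: it writes $f=\colim f_i$ as a filtered colimit of split maps $f_i:M\to N_i$, observes that each split map induces an injection $H^i\myR\Gamma_I M\hookrightarrow H^i\myR\Gamma_I N_i$, and concludes by commuting $H^i\myR\Gamma_I$ with the filtered colimit. You instead first upgrade the statement to the assertion that $\myR\Gamma_I(f)$ is itself pure, via the identification $\myR\Gamma_I(-)\simeq \myR\Gamma_I(R)\otimes^{\myL}(-)$ together with \autoref{lem:tensor-product-food}, and then extract injectivity on cohomology from the compact-object criterion of \autoref{prop:purity-criterion}, applied to the perfect complexes $R[-(i-1)]$ to kill the boundary maps in the long exact sequence. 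Both arguments ultimately rest on $I$ being finitely generated, so that $\myR\Gamma_I$ is computed by tensoring with a fixed (stable Koszul) complex and in particular commutes with colimits. What your version buys is the slightly stronger intermediate fact that $\myR\Gamma_I M\to \myR\Gamma_I N$ is again pure, and a cleanly isolated general lemma that any pure map in $\myD(R)$ is injective on cohomology; the paper's version is shorter and needs only the definition of purity rather than the characterization via perfect complexes.
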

\begin{proof}
Write $f=\colim f_i$ where $f_i:M\to N_i$ splits. 
 Since $H^i\myR\Gamma_I M\to H^i\myR\Gamma_I N_i$ is injective for each $i$, we have 
 \[H^i\myR\Gamma_I M\hookrightarrow \colim H^i\myR\Gamma_I N_i\cong  H^i\myR\Gamma_I \colim N_i\cong H^i\myR\Gamma_I N. \qedhere \]
\end{proof}

\begin{definition}\label{defn:p-completely-pure}
Let $R$ be an $I$-complete Noetherian ring and $M,N\in\widehat{\myD}(R)$, the derived $I$-complete objects in $\myD(R)$.  A map $M\to N$ is \emph{$I$-completely pure} if $$M\otimes_R^{\myL}R/I^n\to N\otimes_R^{\myL}R/I^n$$ is pure for all $n$. 
\end{definition}

\begin{lemma}\label{lem:p-completely-pure-implies-pure}
Let $R$ be an $I$-complete Noetherian ring, $M, N\in\widehat{\myD}(R)$ such that $M$ is bounded above with coherent cohomology (equivalently, it is pseudo-coherent \cite[\href{https://stacks.math.columbia.edu/tag/064Q}{Tag 064Q}]{stacks-project}) and
$M\to N$ an $I$-completely pure map.  Then $M\to N$ is pure.
\end{lemma}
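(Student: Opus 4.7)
My plan is to verify purity of $f : M \to N$ via the criterion in \autoref{prop:purity-criterion}: for every perfect complex $K$ equipped with a map to the cone $Q$ of $f$, the composite $h : K \to M[1]$ should vanish in $D(R)$.

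First I reduce modulo $I^n$. Since $K \otimes^L_R R/I^n$ is perfect in $D(R/I^n)$ and $Q \otimes^L_R R/I^n$ is the cone of $M/I^n \to N/I^n$, the $I$-complete purity hypothesis, combined with \autoref{prop:purity-criterion} applied over $R/I^n$, forces $h \otimes^L_R R/I^n = 0$ in $\Hom_{D(R/I^n)}(K/I^n, M[1]/I^n)$ for every $n \geq 1$.

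I next pass to the mapping complex $E := \myR\Hom_R(K, M[1]) \simeq K^\vee \otimes^L_R M[1]$. Because $K$ is perfect and $M$ is pseudo-coherent and bounded above, $E$ lies in $D^-_{\mathrm{coh}}(R)$; moreover, tensoring against a perfect complex preserves derived $I$-completeness, so $E$ inherits derived $I$-completeness from $M$. The previous step now reads: the element $h \in H^0(E)$ maps to zero in $H^0(E \otimes^L_R R/I^n)$ for every $n$.

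To conclude, derived $I$-completeness of $E$ gives $E \simeq \myR\lim_n (E \otimes^L_R R/I^n)$, hence the Milnor short exact sequence
\[
0 \to R^1\!\lim_n H^{-1}(E \otimes^L_R R/I^n) \to H^0(E) \to \lim_n H^0(E \otimes^L_R R/I^n) \to 0.
\]
The main obstacle is to show that the $R^1\!\lim$ term vanishes, so that $H^0(E)$ injects into $\lim_n H^0(E \otimes^L_R R/I^n)$; once this is known, $h$ dies at every finite level and therefore $h = 0$, as desired. For the $R^1\!\lim$ vanishing, my plan is a standard Artin--Rees / spectral sequence argument: the convergent spectral sequence $E_2^{p,q} = \Tor^R_{-p}(R/I^n, H^q(E)) \Rightarrow H^{p+q}(E \otimes^L_R R/I^n)$ together with Artin--Rees shows, for Noetherian $R$ and $E \in D^-_{\mathrm{coh}}(R)$, that the pro-system $\{H^i(E \otimes^L_R R/I^n)\}_n$ is pro-isomorphic to $\{H^i(E)/I^n H^i(E)\}_n$. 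The latter has surjective transition maps (since each $H^i(E)$ is finitely generated over $R$), hence is Mittag--Leffler, and pro-isomorphism transports this to the former, giving the desired vanishing.
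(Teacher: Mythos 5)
Your proof is correct and follows essentially the same strategy as the paper's: reduce via \autoref{prop:purity-criterion} to showing the obstruction class in $H^0(\myR\Hom_R(K,M[1]))$ vanishes, observe it dies modulo $I^n$ by the $I$-completely pure hypothesis, and conclude using an isomorphism $H^i\big(\myR\Hom_R(K,M)\big) \cong \lim_n H^i\big(\myR\Hom_R(K,M)\otimes^{\myL} R/I^n\big)$. The only difference is that where you expand the limit step into a Milnor sequence plus an Artin--Rees/Mittag--Leffler argument for $R^1\!\lim$ vanishing, the paper simply cites \cite[\href{https://stacks.math.columbia.edu/tag/0EGV}{Tag 0EGV}]{stacks-project}, which packages precisely that argument for pseudo-coherent derived-complete complexes over a Noetherian ring.
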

\begin{proof}
Let $Q$ be the cone of $M\to N$.  By \autoref{prop:purity-criterion} we must show that for any perfect $K\in D(R)$ with map $K\to Q$, the composition $K\to Q\to M[1]$ is $0$.  
Fixing such a $K \to Q \to M[1]$ gives an element $x\in H^1(L)$, where $L= \myR\Hom_R(K, M)$, which we need to show is zero.  
By \cite[\href{https://stacks.math.columbia.edu/tag/0EGV}{Tag 0EGV}]{stacks-project}, we have $H^i(L)\simeq \lim_n H^i(L\otimes_R^\myL R/I^n)$, and so it is enough to show that the image of $x$ in $H^1(L\otimes_R^\myL R/I^n)$ is zero.  
By \cite[\href{https://stacks.math.columbia.edu/tag/0A6A}{Tag 0A6A}]{stacks-project} we have that $$L\otimes_R^\myL R/I^n\cong \myR \Hom_{R/I^n}(K\otimes_R^\myL R/I^n,M\otimes_R^\myL R/I^n).$$
 Since $M\to N$ is $I$-completely pure, the  map $K\otimes^\myL R/I^n\to M\otimes^\myL R/I^n[1]$ which corresponds to the image of $x$ in $H^1(L\otimes_R^\myL R/I^n)$ via the above equivalence
is zero, which shows that the image of $x$ in $ H^1(L\otimes_R^\myL R/I^n)$ is zero as required.
\end{proof}

\section{Absolute perfectoidization}

In positive characteristic, we can measure singularities of an excellent local ring by comparing it to its perfection.  
If  $(A,I)$ is a perfect prism corresponding to perfectoid ring $\overline{A}:=A/I$, and $R$ is a $p$-complete
 $\overline{A}$-algebra, the perfectoidization of $R$ is defined in \cite[Section 8]{BhattScholzepPrismaticCohomology} as
\[
    R_{\perfd}:=(\colim_{e} \phi^e_*\Prism_{R/A})^{\wedge{(p, I)}} \otimes^{\myL}_A\overline{A}\in D(\overline{A}).
\]
However, we are primarily interested in Noetherian rings, for which there will be no such choice of perfectoid base.
The aim of this section is to introduce a generalization of the above construction to the Noetherian situation using the framework of \cite{BhattLurieAbsolute} and prove its basic properties.

\subsection{The Cartier-Witt stack}

In this subsection we briefly introduce the Cartier-Witt stack, and describe the main properties necessary for the construction of $R_{\perfd}$.  For further information see \cite[Section 3]{BhattLurieAbsolute}.

\begin{definition}
    A \emph{generalized Cartier divisor} of a scheme $X$ is a pair $(\sI,\alpha)$ where $\sI$ is an invertible sheaf and $\alpha:\sI\to\cO_X$ is a morphism of $\cO_X$-modules.  A morphism between such objects is defined to be an isomorphism $\rho:\sI\to\sI'$ satisfying $\alpha=\alpha'\circ\rho$.
    Denote the category of generalized Cartier divisors of $X$ by $\Cart(X)$.
    $\Cart$ forms a stack for the fpqc topology, which can be identified with $[\bA^1/\bG_m]$.
\end{definition}

\begin{definition}\cite[Definition 3.1.4]{BhattLurieAbsolute}
    Let $R$ be a commutative ring in which $p$ is nilpotent, and $W(R)$ be the ring of Witt vectors of $R$.  We say a generalized Cartier divisor $(I,\alpha)$ of $\Spec(W(R))$ is a \emph{Cartier-Witt divisor} of $R$ if
    \begin{enumerate}
        \item The image of $I\xrightarrow{\alpha}W(R)\to R$ is a nilpotent ideal.
        \item The image of $I\xrightarrow{\alpha} W(R)\xrightarrow{\delta} W(R)$ generates the unit ideal. 
    \end{enumerate}
    Denote $\WCart(R)$ to be the full subcategory of $\Cart(W(R))$ spanned by the Cartier-Witt divisors.  This functor gives rise to the \emph{Cartier-Witt stack}, which is a stack for the fpqc topology. If $p$ is not nilpotent in $R$ we set $\WCart(R)=\emptyset$.
\end{definition}

\begin{remark}\cite[Construction 3.2.4]{BhattLurieAbsolute}
Given a prism $(A,I)$, we obtain a morphism of stacks $\rho_A:\Spf(A)\to \WCart$, where $\Spf$ is taken in the $(p,I)$-adic topology on $A$.  
To produce this functor, it is enough to associate a Cartier-Witt divisor to every homomorphism $f:A\to R$ for which $(p,I)$ is nilpotent in the image.  By the universal property of $W(R)$ there is a unique lift $\tilde{f}:A\to W(R)$ as a map of $\delta$-rings, and then $I\otimes_A W(R)\to W(R)$ provides the required Cartier-Witt divisor.
\end{remark}

\begin{remark}\label{rem:WCart_Frobenius}
    Let $R$ be a commutative ring.  Then pullback by the Frobenius $W(R)\to W(R)$  defines a functor $\phi:\WCart(R)\to \WCart(R)$, which in turn gives a Frobenius on the Cartier-Witt stack $\phi:\WCart\to \WCart$.  For any prism $(A,I)$ this gives a diagram
    \[
\xymatrix{
\Spf(A)\ar^{\rho_A}[r]\ar^{\phi}[d] & \WCart\ar^{\phi}[d]\\
\Spf(A)\ar^{\rho_A}[r] &\WCart
}
\]
which commutes up to canonical isomorphism.
\end{remark}

\begin{proposition}\cite[Definition 3.3.1/Proposition 3.3.5]{BhattLurieAbsolute}\label{prop:D_WCart_description}
For a ring $R$, let $\myD(R)$ denote the derived $\infty$-category of $R$-modules.  Then the \emph{$\infty$-category of quasi-coherent complexes on $\WCart$} is 
\[\myD(\WCart):=\varprojlim_{\Spec(R)\to\WCart}\myD(R)\simeq \varprojlim_{(A,I)}\widehat{\myD}(A)\]
where the first limit is indexed over all points of $\WCart$, while the second is indexed by the category of all bounded prisms, and $\widehat{\myD}(A)$ indicates the full subcategory of $\myD(A)$ spanned by $(p,I)$-complete complexes. 
\end{proposition}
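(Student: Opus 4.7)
The first equality in the displayed equivalence is, in effect, the definition of the $\infty$-category of quasi-coherent complexes on the stack $\WCart$: one declares that a quasi-coherent complex consists of a compatible system $\{M_R \in \myD(R)\}$ indexed by points $\Spec(R) \to \WCart$, with transition data given by derived pullback, and assembles these into a limit along the fpqc topology (for which $\WCart$ is a stack by its very construction). So the substantive content of the statement is the second equivalence, identifying this limit with one indexed only by bounded prisms.

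The plan is to produce the comparison functor and then check it is an equivalence via a cofinality/descent argument. For each bounded prism $(A, I)$, the map $\rho_A : \Spf(A) \to \WCart$ recalled earlier in the excerpt together with the standard identification of quasi-coherent complexes on a formal affine $\Spf(A)$ with $\widehat{\myD}(A)$ (the $(p,I)$-completed derived category) yields a restriction functor $\myD(\WCart) \to \widehat{\myD}(A)$. Assembling these over all prisms and using their compatibility with morphisms of prisms gives the candidate comparison map
\[
\myD(\WCart) \;\longrightarrow\; \varprojlim_{(A,I)} \widehat{\myD}(A).
\]

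To show this is an equivalence, I would argue that the category of bounded prisms, mapping to the site of points of $\WCart$ via $(A,I)\mapsto (\Spf(A), \rho_A)$, is cofinal (in the fpqc-covering sense). The key input is that any point $\Spec(R) \to \WCart$, which by definition consists of a Cartier-Witt divisor $(I,\alpha)$ on $W(R)$, can be covered by a prism: the pair $(W(R), I)$ is, up to standard modifications (passing to a $(p,I)$-adic completion and, if necessary, a flat cover of $R$ to make $I$ honestly invertible and the $\delta$-ring hypothesis fit), a bounded prism, and the factorization $\Spec(R') \to \Spf(A) \to \WCart$ exhibits the cofinality. Standard fpqc descent for $\myD$ along such covers, using the presentability and stability of the relevant $\infty$-categories, then converts the limit over all points into the limit over this cofinal subcategory of prisms.

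The main obstacle will be the construction in the previous paragraph: not every Cartier-Witt divisor comes tautologically from a bounded prism, because the Cartier-Witt condition (nilpotence of $\alpha$ modulo $p$ in $R$, and distinguishedness via $\delta$) is strictly weaker than the $(p,I)$-adic completeness and prismatic hypotheses on $A$. One must therefore carefully exploit the fact that $W(R)$ is a universal $\delta$-ring, show that after a suitable flat cover and completion the resulting object becomes a bounded prism, and check that the associated map to $\WCart$ recovers the original Cartier-Witt divisor. Once this cofinality is in hand, descent together with the Frobenius compatibility noted in the excerpt completes the identification.
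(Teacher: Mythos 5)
This statement is a direct citation of \cite[Definition~3.3.1/Proposition~3.3.5]{BhattLurieAbsolute}; the paper does not supply its own proof, so there is nothing in the text to compare your argument against. What you outline does track the proof strategy one finds in Bhatt--Lurie: the first displayed equality is genuinely a definition, the comparison functor $\myD(\WCart) \to \varprojlim_{(A,I)} \widehat{\myD}(A)$ is restriction along $\rho_A$, and the equivalence is established by showing the maps $\rho_A : \Spf(A) \to \WCart$, over all bounded prisms, assemble into a flat cover of $\WCart$ and then invoking fpqc descent for quasi-coherent complexes.

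The place where your sketch is too loose is the covering lemma itself. You phrase it as the subcategory of prisms being ``cofinal (in the fpqc-covering sense)'' among points, but this is not cofinality in the usual sense: an arbitrary point $\Spec(R)\to\WCart$ does not factor through some $\Spf(A)$ on the nose, only after a faithfully flat cover $R\to R'$. The precise statement you need is: for any $p$-nilpotent ring $R$ and any Cartier--Witt divisor on $R$, there is a faithfully flat $R\to R'$, a bounded prism $(A,I)$, and a map $\Spec(R')\to\Spf(A)$ making the evident triangle over $\WCart$ commute. Your heuristic that $(W(R),I)$ ``is, up to standard modifications, a bounded prism'' is pointing at the right construction --- after a flat cover one trivializes the invertible ideal so $I = (d)$, one $(p,d)$-adically completes $W(R')$, and one must then \emph{check} that the completed $\delta$-pair $(W(R')^{\wedge}_{(p,d)}, (d))$ is a bounded prism and that $\rho$ of it recovers the original divisor. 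None of the conditions (boundedness, distinguishedness of $d$, compatibility of the map with the $\delta$-structure on the Witt vectors) is automatic, and your writeup skips them; that verification is the actual content of the cited proposition. With that lemma in hand, the rest of what you say --- replace the limit over all points by the \v{C}ech nerve of this cover and identify it with the limit over prisms --- is a standard descent computation.
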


\begin{definition}\cite[Construction 4.4.1]{BhattLurieAbsolute}
Let $R$ be a commutative ring.  For a bounded prism $(A,I)$, let $\Prism_{\bullet/A}\in\widehat{\myD}(A)$ denote the relative prismatic complex.  For any morphism of bounded prisms $(A,I)\to (B,IB)$, there is a canonical isomorphism \[B\widehat{\otimes}^{\myL}_A\Prism_{(\overline{A}\otimes_{\mathbf{Z}_p}^{\myL} R)/A}\to \Prism_{(\overline{B}\otimes_{\mathbf{Z}_p}^{\myL} R)/B}.\]
It follows from \autoref{prop:D_WCart_description} that this determines an object $\HPrism(R)\in \myD(\WCart)$ called the \emph{prismatic cohomology sheaf} of $R$.
\end{definition}

\begin{definition}\cite[Definition 3.4.1]{BhattLurieAbsolute}
The \emph{Hodge-Tate divisor} $\WCart^{\HT}$ is the closed substack of $\WCart$ given by 
\[\WCart^{\HT}(R)=\{(I,\alpha)\in\WCart(R) \mid \textrm{\ the\ composition\ } I\xrightarrow{\alpha} W(R)\to R \textrm{\ is\ }0\}.\]
\end{definition}

\begin{remark}
    For any prism $(A,I)$, \cite[Remark 3.4.2]{BhattLurieAbsolute} gives a pullback square 
\[
\xymatrix{
\Spf(A/I)\ar^{\rho_A^{\HT}}[r]\ar[d] & \WCart^\HT\ar[d]\\
\Spf(A)\ar^{\rho_A}[r] &\WCart
}
\]
Hence for any perfectoid ring $\overline{A}$, which is the quotient of a unique (up to unique isomorphism) perfect prism $(A,I)$, we get a functorial morphism $\rho_A^{\HT}:\Spf(\overline{A})\to \WCart^{\HT}$.
\end{remark}

The perfectoidization of a ring will be a sheaf on the Hodge-Tate divisor, so finally we record the analog of \autoref{prop:D_WCart_description}.

\begin{proposition}\cite[Definition 3.5.1/Remark 3.5.3]{BhattLurieAbsolute}
\label{prop:DescriptionOfDWCartHT}
    The $\infty$-category of quasi-coherent complexes on the Hodge-Tate divisor is 
    \[\myD(\WCart^{\HT}):=\varprojlim_{\Spec(R)\to \WCart^{\HT}}\myD(R)\simeq \varprojlim_{(A,I)} \widehat{\myD}(A/I)\]
    where the first limit runs over all points of the Hodge-Tate divisor while the second runs over all bounded prisms.
\end{proposition}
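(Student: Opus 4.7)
The plan is to read off the first equality as the definitional unwinding of $\myD(\WCart^{\HT})$ as a limit over affine points, and then establish the second equivalence by bootstrapping from the analog for $\WCart$ (\autoref{prop:D_WCart_description}) via the closed immersion $j \colon \WCart^{\HT} \hookrightarrow \WCart$. The crucial input is the pullback square from the preceding remark,
\[
\xymatrix{
\Spf(A/I)\ar^{\rho_A^{\HT}}[r]\ar[d] & \WCart^{\HT}\ar[d]\\
\Spf(A)\ar^{\rho_A}[r] &\WCart
}
\]
which shows that the prism-indexed diagram of affine formal schemes covering $\WCart$ restricts along $j$ to a prism-indexed diagram covering $\WCart^{\HT}$.

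First I would invoke \autoref{prop:D_WCart_description} to get $\myD(\WCart) \simeq \varprojlim_{(A,I)} \widehat{\myD}(A)$, indexed over the category of bounded prisms, with transition maps given by base change. Next I would pull back along $j$: on the stack side this produces $\myD(\WCart^{\HT})$, while on each prism factor the pullback square above replaces $\Spf(A)$ by $\Spf(A/I)$, whose $\infty$-category of quasi-coherent complexes is $\widehat{\myD}(A/I)$. Here the completion drops from $(p,I)$-adic on $A$ to $p$-adic on $A/I$, because once one kills $I$, only $p$-completion remains. This manipulation produces the claimed equivalence provided one can commute the pullback along $j$ with the prism-indexed limit.

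The main obstacle is precisely this commutation: verifying that the limit presentation is preserved under pullback along the closed immersion $j$. This reduces to the general descent principle that, for a closed immersion $Z \hookrightarrow X$ of sufficiently nice (formal) stacks and a cover $\{U_\alpha \to X\}$, one has $\myD(Z) \simeq \varprojlim_\alpha \myD(U_\alpha \times_X Z)$ whenever the analogous descent holds for $X$. In our setting this compatibility ultimately amounts to the observation that derived tensoring with $A/I$ over $A$ is well-behaved on the $(p,I)$-complete $\infty$-category, since $(p,I)$-adic completion on $A$-modules restricts to $p$-adic completion after base change to $A/I$, which is a direct consequence of the $\delta$-ring and prismatic compatibilities built into the definition of $\WCart$.
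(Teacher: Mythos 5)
This proposition is not proved in the paper at all: it is recalled verbatim from \cite[Definition 3.5.1/Remark 3.5.3]{BhattLurieAbsolute}, so there is no in-paper argument to compare against. Your sketch is nevertheless a faithful reconstruction of how the equivalence is established in the cited source: the first identification is definitional, and the second follows by flat descent for quasi-coherent complexes along the affine covers $\rho_A^{\HT}\colon \Spf(A/I)\to\WCart^{\HT}$, which are exactly the base changes of the covers $\rho_A\colon\Spf(A)\to\WCart$ along the closed immersion, as recorded in the pullback square you quote. The one step you flag as the ``main obstacle'' --- commuting the prism-indexed limit with pullback along $j$ --- is not really an obstacle: it is the conjunction of (i) stability of flat covers under base change and (ii) the fact that $\myD(-)$ is a sheaf for the flat topology on these formal stacks, both of which are the same inputs already used to prove \autoref{prop:D_WCart_description}. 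Your observation that $(p,I)$-completion collapses to $p$-completion on $A/I$-modules (because such modules are $I$-torsion, hence already derived $I$-complete) is correct and is the reason the hat on $\widehat{\myD}(A/I)$ denotes only $p$-completion. In short: the proposal is correct and takes essentially the route of the cited reference.
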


\subsection{Perfection}  

With the background out of the way, we are ready to introduce $R_{\perfd}$ and prove its main properties. 

\begin{definition}\label{def:perfd}
For a commutative ring $R$, we define the perfectoidization $R_{\perfd}$ by

\[\HPrism(R)_{\perf}:=\varinjlim (\HPrism(R)\to \phi_*\HPrism(R)\to \phi_*^2\HPrism(R)\to\cdots)\in\myD(\WCart)\]

\[R_{\perfd}:=\myR\Gamma(\WCart^{\HT},\HPrism(R)_{\perf}|_{\WCart^{\HT}})\in\widehat{\myD}(\mathbf{Z}_p)\]
\end{definition}

\begin{remark}
Since colimit commutes with $\myR\Gamma(\WCart^{\HT},-)$ \cite[Corollary 3.5.13]{BhattLurieAbsolute} and restriction, we have 
$$R_{\perfd}=\colim_e \myR\Gamma(\WCart^{\HT},\phi_*^e\HPrism(R)|_{\WCart^{\HT}})$$
Note that every term in this colimit is in $\widehat{D}(R)$ and we take the colimit in $\widehat{D}(R)$. 
\end{remark}

\begin{lemma}
    If $R$ is an $\overline{A}$-algebra for some perfectoid ring $\overline{A}$, then $R_{\perfd}$ agrees with the perfectoidization defined in \cite{BhattScholzepPrismaticCohomology}.
\end{lemma}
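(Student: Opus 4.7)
The plan is to match the new absolute construction with the Bhatt--Scholze relative one by evaluating at the distinguished point $\rho_A^{\HT}\colon\Spf(\overline{A})\to\WCart^{\HT}$ attached to the perfect prism $(A,I)$ determined by $\overline{A}$.

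First, I would unwind $\HPrism(R)$ at the point $\rho_A\colon \Spf(A)\to\WCart$. By construction, $\HPrism(R)$ assigns to a bounded prism $(B,J)$ the complex $\Prism_{(\overline{B}\otimes^{\myL} R)/B}$. Because $R$ already carries an $\overline{A}$-algebra structure, the animated ring $\overline{A}\otimes^{\myL} R$ is equivalent to $R$ as an animated $\overline{A}$-algebra, so
\[
\rho_A^{\ast}\HPrism(R)\;\simeq\;\Prism_{R/A}\ \in\ \widehat{\myD}(A).
\]
Combining this with the commutativity up to canonical isomorphism of the Frobenius square of \autoref{rem:WCart_Frobenius}, and the fact that pullback along $\rho_A$ commutes with colimits, one gets
\[
\rho_A^{\ast}\bigl(\HPrism(R)_{\perf}\bigr)\;\simeq\;\colim_e \phi^e_{\ast}\Prism_{R/A}.
\]

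Second, I would restrict to the Hodge--Tate divisor. Using the pullback square exhibited just after \autoref{prop:D_WCart_description} (which relates $\rho_A^{\HT}$ to $\rho_A$ along the closed immersion $\Spf(\overline{A})\hookrightarrow\Spf(A)$), the previous identification yields
\[
(\rho_A^{\HT})^{\ast}\bigl(\HPrism(R)_{\perf}|_{\WCart^{\HT}}\bigr)\;\simeq\;\bigl(\colim_e \phi^e_{\ast}\Prism_{R/A}\bigr)^{\wedge(p,I)}\otimes^{\myL}_A\overline{A},
\]
which is precisely the Bhatt--Scholze expression defining $R_{\perfd}$ in that setting.

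Third, I would identify the derived global sections $\myR\Gamma(\WCart^{\HT},\HPrism(R)_{\perf}|_{\WCart^{\HT}})$ with the pullback above. The key input is that, for the perfect prism $(A,I)$, the map $\rho_A^{\HT}$ is a faithfully flat cover of $\WCart^{\HT}$, so sections can be computed by the \v{C}ech complex of this cover via \autoref{prop:DescriptionOfDWCartHT}. Since $\HPrism(R)_{\perf}$ is Frobenius-perfect by construction, the descent datum coming from $\Spf(\overline{A})\times_{\WCart^{\HT}}\Spf(\overline{A})$ trivializes after the Frobenius colimit, so the \v{C}ech complex degenerates to its zeroth term. This collapses the derived global sections to the single pullback computed in the second step.

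The main obstacle is the last step: justifying that Frobenius-perfection kills the higher terms of the \v{C}ech complex for the cover $\rho_A^{\HT}$. For non-perfected objects this manifestly fails, since Hodge--Tate cohomology contains nontrivial differential information, so the argument must exploit that perfect prisms are Frobenius-initial among prisms mapping to $(A,I)$ in the relevant topology and that the Frobenius on $\WCart^{\HT}$ factors through $\rho_A^{\HT}$ after perfection. Once this collapse is granted, the two definitions of $R_{\perfd}$ agree on the nose.
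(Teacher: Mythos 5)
Your first step is wrong, and this error propagates through the rest of the argument. By the very definition of $\HPrism(R)$ recalled just before \autoref{prop:DescriptionOfDWCartHT}, its pullback along $\rho_A$ is
\[
\rho_A^{*}\HPrism(R)\;\simeq\;\Prism_{(\overline{A}\,\widehat{\otimes}^{\myL}_{\mathbf{Z}_p}\,R)/A},
\]
with the derived tensor product over $\mathbf{Z}_p$ built in. Even when $R$ happens to carry an $\overline{A}$-algebra structure, the animated ring $\overline{A}\,\widehat{\otimes}^{\myL}_{\mathbf{Z}_p}\,R$ is strictly larger than $R$ (already for $\overline{A}=\cO_C$ and $R=\overline{A}$, one has $\cO_C\widehat{\otimes}_{\mathbf{Z}_p}\cO_C\neq\cO_C$). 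So $\rho_A^{*}\HPrism(R)\not\simeq\Prism_{R/A}$. What is true --- and this is the input the paper actually uses, citing \cite[Proposition~4.4.8]{BhattLurieAbsolute} --- is the \emph{pushforward} identity $\HPrism(R)\simeq\rho_{A*}\Prism_{R/A}$. That identity is genuinely stronger: it encodes the descent data that your pullback formula omits, and once you have it, $\myR\Gamma$ on $\WCart^{\HT}$ turns into $\myR\Gamma$ on $\Spf(\overline{A})$ by the pullback square, with no \v{C}ech complex to control.

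Your third step has an independent problem. You claim $\rho_A^{\HT}\colon\Spf(\overline{A})\to\WCart^{\HT}$ is a faithfully flat cover, but this fails for a general perfectoid ring: for instance $\overline{A}=\mathbf{F}_p$ is perfectoid (it is $\mathbf{Z}_p/p$ for the perfect prism $(\mathbf{Z}_p,(p))$), yet $\rho_{\mathbf{Z}_p}^{\HT}\colon\Spf(\mathbf{F}_p)\to\WCart^{\HT}$ is not surjective. The covering-plus-\v{C}ech approach is only available for sufficiently large $\overline{A}$ (e.g.\ ones containing compatible $p$-power roots of unity), not for an arbitrary perfectoid base. And even where it applies, the ``Frobenius-perfection collapses the \v{C}ech complex'' claim, which you flag yourself as the sticking point, needs a real proof; it is not automatic. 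The cleaner route --- and the one the paper follows --- is to avoid descent along $\rho_A^{\HT}$ entirely by going through the pushforward comparison, after which the computation is a sequence of routine identifications (colimit commuting with restriction and with $\myR\Gamma$, Frobenius compatibility from \autoref{rem:WCart_Frobenius}, and base change along the Hodge--Tate pullback square).
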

\begin{proof}
Everything that follows occurs in $\widehat{\myD}({\bZ}_p)$, so there are $p$-completions built in.  We omit them from the notation.  The result follows from the following chain of equivalences:
\begin{align*}
R_{\perfd}&:=\myR\Gamma\left(\WCart^{\HT},\left(\colim\left(\phi_*^e\HPrism(R)\right)\right)|_{\WCart^{\HT}}\right)&\\
    &\simeq \myR\Gamma\left(\WCart^{\HT},\colim\left(\phi_*^e\HPrism(R)|_{\WCart^{\HT}}\right)\right)& \text{colim commutes with restriction}\\
    &\simeq \colim \myR\Gamma(\WCart^{\HT},\phi_*^e\HPrism(R)|_{\WCart^{\HT}}) &\text{\cite[3.5.13]{BhattLurieAbsolute}}\\
    &\simeq \colim \myR\Gamma(\WCart^{\HT},\phi_*^e\rho_{A*}\Prism_{R/A}|_{\WCart^{\HT}})& \text{\cite[Proposition 4.4.8]{BhattLurieAbsolute}}\\
    &\simeq\colim \myR\Gamma(\WCart^{\HT},\rho_{A*}\phi_*^e\Prism_{R/A}|_{\WCart^{\HT}})& \text{\autoref{rem:WCart_Frobenius}}\\
    &\simeq\colim \myR\Gamma\left(\Spf(\overline{A}),
    (\phi_*^e\Prism_{R/A}\otimes \overline{A}) \right)&\\
    &\simeq \myR\Gamma(\Spf(\overline{A}),(\colim \phi_*^e\Prism_{R/A})\otimes\overline{A}). 
    \end{align*} 
\end{proof}

\begin{remark}
By the lemma above, we know that $R_{\perfd}$ is a perfectoid ring when $R$ is semi-perfectoid \cite[Theorem 7.4]{BhattScholzepPrismaticCohomology} (see also \cite{Ishizuka} for an explicit characterization of $R_{\perfd}$ in the semi-perfectoid case). In general, $R_{\perfd}$ has the structure of a derived commutative ring (see e.g. \cite[Section 4]{RaksitHochschildHomology}), but we do not need this {in our paper}. 
\end{remark}

\begin{proposition}\label{prop:universal_property_perfd}
    For a commutative ring $R$ with $p$ in its Jacobson radical, we have
    \[R_{\perfd}=\varprojlim_{R\to B} B\] where the limit runs over all maps from $R$ to perfectoid rings $B$. 
    This limit is computed in $\widehat{\myD}(\mathbf{Z}_p)$.
\end{proposition}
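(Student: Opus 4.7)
The plan is to realize $R_\perfd$ as a limit over a perfect-prism cover of $\WCart^{\HT}$, compute the stalks via perfectoidization, and then reindex the resulting double limit.

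First, by \autoref{prop:DescriptionOfDWCartHT} and the computation of global sections as the limit over a cover, we have
\[
R_\perfd \simeq \varprojlim_{(A, I)} \bigl( \HPrism(R)_\perf |_{\WCart^{\HT}} \bigr)_{(A, I)}
\]
where the limit runs over bounded prisms. The sheaf $\HPrism(R)_\perf$ is Frobenius-invariant on $\WCart$ by construction, so its value at any bounded prism coincides with its value at the perfection via the reflection $(A, I) \mapsto (A_\perf, I_\perf)$; the limit therefore restricts to the full subcategory of perfect prisms. For a perfect prism $(A, I)$ with $\bar{A} = A/I$, the computation of the preceding lemma identifies
\[
\bigl( \HPrism(R)_\perf |_{\WCart^{\HT}} \bigr)_{(A, I)} \simeq (\bar{A} \otimes^\myL R)_\perfd,
\]
the derived perfectoidization of $\bar{A} \otimes^\myL R$ as an $\bar{A}$-algebra. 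By the universal property of perfectoidization from \cite[Sections 7-8]{BhattScholzepPrismaticCohomology}, this is initial among (classical) perfectoid rings $C$ equipped with compatible maps from $\bar{A}$ and $R$, and hence equals $\varprojlim_{(C,\, \bar{A} \to C,\, R \to C)} C$, the limit over a category with an initial object.

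Combining the above, $R_\perfd \simeq \varprojlim_\mathcal{C} C$, where $\mathcal{C}$ is the category of tuples $((A, I), C, \bar{A} \to C, R \to C)$ with $(A, I)$ a perfect prism and $C$ perfectoid, and the functor sends such a tuple to $C$. Let $\mathcal{D} = \{(C, R \to C) : C \text{ perfectoid}\}$ and $\pi \colon \mathcal{C} \to \mathcal{D}$ be the forgetful functor. For each $d = (C_0, R \to C_0) \in \mathcal{D}$, the slice $\pi/d$ admits the tuple $((A_{C_0}, I_{C_0}), (C_0, \id, R \to C_0), \id_{C_0})$ as a terminal object, where $(A_{C_0}, I_{C_0})$ denotes the perfect prism of $C_0$: any other object's data $\bar{A} \to C'$ together with $f \colon C' \to C_0$ uniquely determines a prism map $(A, I) \to (A_{C_0}, I_{C_0})$ via the perfect-prism functor applied to the composite $\bar{A} \to C' \xrightarrow{f} C_0$. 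Thus $\pi/d$ is contractible and $\pi$ is initial for limits, giving
\[
R_\perfd \simeq \varprojlim_\mathcal{C} C \simeq \varprojlim_\mathcal{D} C = \varprojlim_{R \to B \text{ perfectoid}} B,
\]
as claimed.

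The principal subtlety is identifying the stalk with $(\bar{A} \otimes^\myL R)_\perfd$ and verifying that this derived perfectoidization satisfies the desired universal property against classical perfectoid targets despite $\bar{A} \otimes^\myL R$ being only derivedly semi-perfectoid in general; both points rely on carefully unpacking the perfectoidization construction of \cite{BhattScholzepPrismaticCohomology}.
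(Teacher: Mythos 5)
Your proposal is correct and follows essentially the same route as the paper: express $R_\perfd$ as a limit over perfect prisms, invoke the Bhatt--Scholze identification of relative perfectoidization with a limit over perfectoid targets (their Proposition~8.5), and then reindex the resulting iterated limit via a cofinality argument for the forgetful functor. Your cofinality argument (showing each slice $\pi/d$ has a terminal object coming from the perfect prism of $C_0$) is equivalent to the paper's version, which phrases the same step as ``the functor $\Phi$ admits a right adjoint and hence is initial.''

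Two places where you are looser than the paper. First, your justification for restricting the outer limit to perfect prisms --- ``$\HPrism(R)_\perf$ is Frobenius-invariant, so its value at any bounded prism coincides with its value at the perfection'' --- does not follow directly from $\phi$-invariance alone, since $A_\perf$ is a completed colimit of Frobenius twists, not a single twist. The paper instead argues via the explicit base-change map $\alpha$ from the stalk at $(A,I)$ to the stalk at $(A_\perf, I A_\perf)$, observing it has a section (and remarking it is in fact an isomorphism). The conclusion you want does hold, and can also be justified abstractly because perfection is left adjoint to the inclusion of perfect prisms (so the inclusion is initial), which is closer to what your word ``reflection'' is gesturing at; just be aware your stated reason is not quite enough as written. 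Second, you assert that $(\overline A \otimes^\myL R)_\perfd$ ``is initial among (classical) perfectoid rings $C$ equipped with compatible maps from $\overline A$ and $R$,'' which would require it to itself be a classical perfectoid ring; that only holds when $\overline A \otimes^\myL R$ is semi-perfectoid. What you actually need, and what the paper cites, is only the derived limit formula of \cite[Proposition~8.5]{BhattScholzepPrismaticCohomology}, which holds without that hypothesis and does not require the indexing category to have an initial object. Neither point breaks the argument, but both should be stated more precisely.
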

\begin{proof}
By \cite[Proposition 8.5]{BhattScholzepPrismaticCohomology}, 
the proposition holds in the case where $R$ is an $\overline{A}$-algebra for some perfectoid ring $\overline{A}$. It follows that $(\overline{A}\widehat{\otimes}^{\myL}_{\mathbf{Z}_p}R)_{\perfd} \cong \varprojlim_{R\to B} B$.

Now for a general prism $(A,I)$, we have 
\[\HPrism(R)_{\perf}|_{\WCart^{\HT}}(\overline{A})= (\colim_e \phi_*^e\Prism_{\overline{A}\widehat{\otimes}^{\myL}_{\mathbf{Z}_p} R/A})^{\wedge(p, I)}\widehat{\otimes}_A \overline{A}\]
Let $A_{\perf}$ be the perfection of $A$, i.e., $A_{\perf}=(\colim_e\phi_*^eA)^{\wedge(p,I)}$. Then $A_\perf$ is a perfect prism and we have canonical maps 
$$A_\perf \to (\colim_e \phi_*^e\Prism_{\overline{A}\widehat{\otimes}^{\myL}_{\mathbf{Z}_p} R/A})^{\wedge(p,I)}\xrightarrow{\alpha} (\colim_e\phi_*^e\Prism_{\overline{A}_\perf \widehat{\otimes}^{\myL}_{\mathbf{Z}_p} R / A_\perf})^{\wedge(p,I)}.$$
Note that by base change,
$$(\colim_e\phi_*^e\Prism_{\overline{A}_\perf \widehat{\otimes}^{\myL}_{\mathbf{Z}_p} R / A_\perf})^{\wedge(p,I)} \cong (\colim_e \phi_*^e\Prism_{\overline{A}\widehat{\otimes}^{\myL}_{\mathbf{Z}_p} R/A})^{\wedge(p,I)} \widehat{\otimes}^{\myL}_AA_\perf.$$
Thus $\alpha$ admits a section, namely the multiplication map $\mu$ (in fact, one can show that $\alpha$ is an isomorphism). Therefore, when computing $\HPrism(R)_{\perf}|_{\WCart^{\HT}}$, we may restrict ourselves to perfect prisms. By \autoref{prop:DescriptionOfDWCartHT} and the discussion above, we have
$$R_{\perfd}= \varprojlim_{(A,I)}\left(\varprojlim_{\overline{A}\widehat{\otimes}^{\myL}_{\mathbf{Z}_p}R \to B} B\right)$$
where the first limit runs over all perfect prisms $(A,I)$. By \cite[Theorem 3.10]{BhattScholzepPrismaticCohomology}, we can rewrite the above as 
$$R_{\perfd}= \varprojlim_{S}\left(\varprojlim_{S\to B, R \to B} B\right)$$
where the first limit runs over all perfectoid rings $S$ and the second limit runs over all maps of perfectoid rings $S\to B$ and all maps of rings $R\to B$. Finally, we note that the functor $\Phi$ from the category 
$$\{S\to B \text{ map of perfectoid rings}, R\to B \text{ map of rings}\}$$
to the category of perfectoid $R$-algebras sending $\{S\to B, R\to B\}$ to $B$ is left adjoint to the functor sending $B$ to $\{B\xrightarrow{=}B, R\to B\}$. Therefore, pulling back diagrams along $\Phi$ does not change the limit, that is, 
$$R_{\perfd}= \varprojlim_{R\to B}B$$
where the limit runs over all $R\to B$ where $B$ is perfectoid. 
\end{proof}

\begin{proposition}\label{prop:h-sheafification}
For a commutative ring $R$ with $p$ in its Jacobson radical, 
we have
    \[R_{\perfd}=\myR\Gamma_{\arc}(\Spf(R),\cO).\]
In particular, $R_{\perfd} \in D^{\geq0}(R)$.
\end{proposition}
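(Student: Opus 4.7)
The plan is to leverage the universal property of $R_{\perfd}$ established in the preceding Proposition, which identifies $R_{\perfd} \simeq \varprojlim_{R\to B} B$ with the limit over all perfectoid $R$-algebras $B$, and then match this with $\myR\Gamma_{\arc}(\Spf(R), \cO)$. The broad strategy is to show that both functors are arc-(hyper)sheaves on $p$-complete Noetherian rings and that they agree on perfectoid inputs, then conclude by descent along an arc-hypercover by perfectoid rings.

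The key input is that the arc topology on $p$-complete Noetherian rings has an abundant supply of perfectoid ``points''. Using the framework of \cite{BhattMathewArc}, the arc topology is generated by maps to absolutely integrally closed, rank $\leq 1$, $p$-complete valuation rings $V$, and under the standing hypothesis that $p$ lies in the Jacobson radical of $R$, every such $V$ is automatically perfectoid: it carries a compatible system of $p$-power roots of $p$, and its mod-$p$ reduction has surjective Frobenius, both by absolute integral closedness. Dually, every $p$-complete Noetherian ring $R$ admits an arc-hypercover by perfectoid rings.

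With this in hand, I would proceed in three steps. First, verify that both functors are arc-(hyper)sheaves on $p$-complete Noetherian rings: the assignment $R \mapsto \myR\Gamma_{\arc}(\Spf(R), \cO)$ is an arc-sheaf by definition, while $R \mapsto R_{\perfd}$ is an arc-sheaf because of the universal description $R_{\perfd} \simeq \varprojlim_{R \to B} B$ combined with arc-descent for perfectoid rings \cite{BhattScholzepPrismaticCohomology, BhattMathewArc}. Second, observe that on a perfectoid input $S$ the two functors agree, both returning $S$ itself: for the perfectoidization this is by construction, and for the arc-cohomology this is the standard fact that perfectoid rings are arc-local for their structure sheaf. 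Third, choose an arc-hypercover of $\Spf(R)$ by spectra of perfectoid rings and totalize; by the first two points the resulting totalizations coincide, yielding the claimed equivalence.

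The main obstacle is the arc-sheaf property for $R \mapsto R_{\perfd}$, which one can alternatively bypass by working directly from the universal property: any arc-hypercover $R \to R^{\bullet}$ by perfectoid rings induces a compatible system of morphisms through which every perfectoid $R$-algebra $B$ factors (after refinement), so the limit defining $R_{\perfd}$ reorganizes into the totalization computing $\myR\Gamma_{\arc}(\Spf(R), \cO)$. The subtlety is ensuring cofinality at the level of the relevant $\infty$-categories, which one handles by writing each perfectoid $R$-algebra as a limit of absolutely integrally closed $p$-complete valuation rings and invoking arc-descent on the perfectoid side.
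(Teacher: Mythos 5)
Your proposal is, in substance, the same argument the paper points to: the paper's own proof is a one-line citation of \cite[Proposition 8.10, Corollary 8.11]{BhattScholzepPrismaticCohomology}, observing that the argument there never actually uses the perfectoid base once one has the universal property of \autoref{prop:universal_property_perfd}. Your step-by-step plan (arc-hypercover by perfectoid rings, agreement of both functors on perfectoid inputs, descent/totalization) is a faithful reconstruction of what that citation carries, so you've hit the intended proof.

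One small inaccuracy in your last paragraph: a perfectoid $R$-algebra $B$ is \emph{not} in general an inverse limit of absolutely integrally closed $p$-complete valuation rings. What is true, and what the Bhatt--Mathew framework gives you, is that $B$ admits a $p$-complete arc-\emph{cover} by a product of such valuation rings; you then conclude by arc-descent for the structure sheaf on perfectoid rings rather than by expressing $B$ itself as a limit. Once you make that adjustment, the cofinality/reorganization step you describe goes through, and it is exactly the mechanism by which [BS 8.10] proves the arc-sheaf property that you flagged as the main obstacle.
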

\begin{proof}
    The proof from \cite[Proposition 8.10, Corollary 8.11]{BhattScholzepPrismaticCohomology} did not make use of the perfectoid base, so already applies in our setting using \autoref{prop:universal_property_perfd}. 
\end{proof}

\begin{remark}
One could define a version of the $h$-topology for formal schemes which agrees with the arc topology in our Noetherian situation, however we do not pursue this since we do not need it.  In particular $R_{\perfd}$ could be computed in terms of an $h$-sheafification for Noetherian $R$.
\end{remark}

We will need the following result from forthcoming work  \cite{BhattLuriepadicRHmodp}. 

\begin{proposition}
\label{prop:galois_pullback}
Let $\cO_C/\mathbf{Z}_p$ be the $p$-completed ring of integers in a perfectoid extension $C/\mathbf{Q}_p$ which is the $p$-completion of a totally ramified Galois extension of $\mathbf{Q}_p$ with Galois group $\Gamma=\mathbf{Z}_p$. Write $\gamma \in \mathbf{Z}_p$ for a generator. (One can produce such an extension from, e.g., the cyclotomic extension.)

Then for any 
ring $R$, we have a pullback square in $\widehat{D}(R)$:
\[
\xymatrix{ 
R_{\perfd} \ar[r] \ar[d] &  \mathrm{fib}\left( (R \widehat{\otimes}^{\myL}_{\mathbf{Z}_p} \cO_C)_{\perfd} \xrightarrow{\gamma-1}  (R \widehat{\otimes}^{\myL}_{\mathbf{Z}_p} \cO_C)_{\perfd} \right) \ar[d] \\
(R/p)_\perf \ar[r] &  (R/p)_\perf \oplus (R/p)_\perf[-1],
}
\]
where the lower horizontal map is the evident one into the first summand on the right,  the top horizontal map is induced by the natural map to the first term of the right hand side, the left vertical one is the natural one, while the right vertical one is induced by observing that $\gamma$ acts trivially on $(R/p)_\perf$ (so the bottom right entry is also $\mathrm{fib}((R/p)_\perf \xrightarrow{\gamma-1} (R/p)_\perf)$).
\end{proposition}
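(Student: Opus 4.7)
The plan is to combine pro-\'etale Galois descent along the $\Gamma = \mathbf{Z}_p$-extension $\mathbf{Z}_p \to \cO_C$ (which is a $\Gamma$-torsor after inverting $p$) with an open-closed gluing argument along $V(p) \subset \Spf(R)$, working throughout inside the arc-sheaf framework of \autoref{prop:h-sheafification}. The totally ramified hypothesis on $C/\mathbf{Q}_p$ (which gives residue field $\mathbf{F}_p$) is the key input that produces the trivial $\gamma$-action on $(R/p)_\perf$ featured on the right of the square.

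First, I would construct and identify the four maps. The top arrow comes from the canonical $R$-algebra map $R \to R \widehat{\otimes}^{\myL}_{\mathbf{Z}_p} \cO_C$, which is $\Gamma$-equivariant with $R$ landing in the $\Gamma$-fixed locus; the induced map on perfectoidizations therefore factors through $\mathrm{fib}(\gamma - 1)$. The left arrow is the mod $p$ reduction, using $R_\perfd \otimes^{\myL}_{\mathbf{Z}_p} \mathbf{F}_p \simeq (R/p)_\perf$ since perfectoidization agrees with perfection in characteristic $p$. The right arrow is constructed by viewing $(R/p)_\perf$ as a perfectoid $R \widehat{\otimes}_{\mathbf{Z}_p} \cO_C$-algebra via the residue map $\cO_C \twoheadrightarrow \mathbf{F}_p$ (available by the totally ramified hypothesis); the universal property from \autoref{prop:universal_property_perfd} then provides a map $(R \widehat{\otimes} \cO_C)_\perfd \to (R/p)_\perf$, and $\gamma$ acts trivially on the target because inertia fixes the residue field, so the fiber of $\gamma - 1$ on the source maps to $\mathrm{fib}(0) = (R/p)_\perf \oplus (R/p)_\perf[-1]$.

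Next, I would verify that the square is Cartesian by combining Galois descent over the inverted-$p$ locus with the mod $p$ description. After inverting $p$, the map $\Spec(R[1/p] \widehat{\otimes}_{\mathbf{Q}_p} C) \to \Spec(R[1/p])$ is a pro-\'etale $\Gamma$-torsor, and since $\Gamma = \mathbf{Z}_p$ is procyclic with topological generator $\gamma$ and has cohomological dimension one on $p$-complete continuous modules, the $\Gamma$-descent collapses to the two-term complex $\mathrm{fib}(\gamma - 1)$, identifying the rational piece of $R_\perfd$ with that of the top right corner. The mod $p$ piece $R_\perfd \otimes^{\myL}_{\mathbf{Z}_p} \mathbf{F}_p = (R/p)_\perf$ is captured by the bottom left corner, and the pullback square records precisely the gluing between these two pieces; the $[-1]$-shift in the bottom right arises as a derived correction from this gluing.

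The main obstacle I expect is pinning down the arc-gluing between the rational and special pieces, i.e., checking that the induced map from $R_\perfd$ to the pullback of the other three corners is an equivalence. Equivalently, after tensoring the claimed square with $\mathbf{F}_p$, one needs the resulting square of characteristic $p$ perfections to be Cartesian, which reduces to a direct computation with $(R/p \otimes_{\mathbf{F}_p} \cO_C/p)_\perf$ and its residue map to $(R/p)_\perf$. Once the mod $p$ compatibility is established, the rational part follows from standard pro-\'etale Galois descent for $C/\mathbf{Q}_p$, and the $p$-complete fracture assembles the two parts into the claimed pullback square.
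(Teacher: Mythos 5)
A preliminary remark: the paper does not prove this proposition at all --- it is imported as a black box from \cite{BhattLuriepadicRHmodp} --- so there is no internal argument to compare yours against; what follows is an assessment of your sketch on its own terms. Your geometric picture is the right one: the generic fibre of $\Spf(\cO_C) \to \Spf(\mathbf{Z}_p)$ is a pro-\'etale $\Gamma$-torsor, the special fibre is a single $\mathbf{F}_p$-point because the extension is totally ramified, and the right vertical arrow is correctly manufactured from the $\Gamma$-fixed residue map $\cO_C \to \mathbf{F}_p$ together with the procyclicity of $\Gamma$.

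The verification step, however, contains a genuine error. You propose to check Cartesianness after tensoring the square with $\mathbf{F}_p$, asserting that this yields ``a square of characteristic $p$ perfections''; implicitly you are using $A_{\perfd} \otimes^{\myL}_{\mathbf{Z}_p} \mathbf{F}_p \simeq (A/p)_{\perf}$. This identification is false: for $A = \cO_C$ one has $A_{\perfd} = \cO_C$ and $\cO_C \otimes^{\myL}_{\mathbf{Z}_p} \mathbf{F}_p = \cO_C/p$, whereas $(\cO_C/p)_{\perf} = \mathbf{F}_p$ since the colimit perfection kills every positive-valuation element. Derived Nakayama does legitimately reduce the Cartesianness of a square of $p$-complete objects to that of its derived mod-$p$ reduction, but the reduced square has corners like $\mathrm{fib}\big(\gamma - 1 \mid (R \widehat{\otimes}\cO_C)_{\perfd}/^{\myL}p\big)$ rather than perfections, and checking it is exactly as hard as the original statement: for $R = \mathbf{Z}_p$ it encodes the fact that $H^1_{\mathrm{cont}}(\Gamma, \cO_C)$ is nonzero even rationally (Tate), which is precisely why $(\mathbf{Z}_p)_{\perfd}$ is not discrete. (Relatedly, the ``rational part plus $p$-complete fracture'' step cannot carry independent weight: all four corners are already $p$-complete, so their rationalizations are not data one can glue back in.) The missing ingredient is the mechanism that relates $R_{\perfd}$ --- a limit over \emph{all} perfectoid $R$-algebras --- to the single algebra $(R\widehat{\otimes}\cO_C)_{\perfd}$: one needs that $\mathbf{Z}_p \to \cO_C$ is a $p$-complete arc-cover, so that by \autoref{prop:h-sheafification} one has $R_{\perfd} \simeq \mathrm{Tot}\big((R\widehat{\otimes}\cO_C^{\widehat{\otimes}(\bullet+1)})_{\perfd}\big)$, together with the computation of the perfectoidized self-products, $(\cO_C\widehat{\otimes}_{\mathbf{Z}_p}\cO_C)_{\perfd} \simeq \mathrm{Cont}(\Gamma,\cO_C)\times_{\mathrm{Cont}(\Gamma,\mathbf{F}_p)}\mathbf{F}_p$ (a torsor on the generic fibre glued along the common special fibre), and its higher analogues. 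Feeding this into the totalization and using $\mathrm{fib}(\gamma-1)$ for continuous $\mathbf{Z}_p$-cohomology is what produces the displayed pullback; your sketch alludes to this gluing but replaces it with a mod-$p$ collapse that does not occur.
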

\begin{proof}[Sketch of Proof]
We apply \cite[Proposition 4.5.1]{BhattLectureNotesRiemannHilbert} to $\mathfrak{X}=\Spf(\cO_C)$ and $\mathfrak{Y}=\Spf(\mathbf{Z}_p)$ (see \cite[Example 4.5.2]{BhattLectureNotesRiemannHilbert} for $K=\mathbf{Q}_p$). Then for any $\mathbf{Z}_p$-algebra $R$, consider the sheaf $R_{\perfd}$ on $\Spf(\mathbf{Z}_p)^{\mathrm{pfd}}$, the fiber square in \cite[Example 4.5.2]{BhattLectureNotesRiemannHilbert} yields the pullback square in the proposition.
\end{proof}

\begin{proposition}\label{prop:formally_etale_perfd}
Let $R\to S$ be a map of rings such that $p$ is in their Jacobson radicals.
Suppose $R/p\to S/p$ is relatively perfect, i.e., the relative Frobenius for the animated $\mathbf{F}_p$-algebra map $R \otimes^{\myL}_{\mathbf{Z}} \mathbf{F}_p \to S \otimes^{\myL}_{\mathbf{Z}} \mathbf{F}_p$ is an isomorphism. Then we have $S\widehat{\otimes}^{\myL}_R R_{\perfd}\simeq S_{\perfd}$.
\end{proposition}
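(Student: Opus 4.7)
The plan is to reduce the claim to two base-change assertions by applying the Galois pullback square of \autoref{prop:galois_pullback}. Since the functor $S \widehat{\otimes}^\myL_R -$ preserves finite limits in the stable $\infty$-category $\widehat{\myD}(R)$ (finite limits coincide with finite colimits up to shifts in a stable $\infty$-category, and tensor product preserves colimits), applying it to the pullback square expressing $R_\perfd$ yields a pullback square whose limit is $S \widehat{\otimes}^\myL_R R_\perfd$. Comparing corner-by-corner with the analogous square for $S$, and noting that the transition map $\gamma - 1$ is compatible by functoriality, the problem reduces to establishing the base changes $(a)$ $S \widehat{\otimes}^\myL_R (R/p)_\perf \simeq (S/p)_\perf$ and $(b)$ $S \widehat{\otimes}^\myL_R (R \widehat{\otimes}^\myL_{\mathbf{Z}_p} \cO_C)_\perfd \simeq (S \widehat{\otimes}^\myL_{\mathbf{Z}_p} \cO_C)_\perfd$.

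For $(a)$, I would write $(R/p)_\perf = \colim_{\phi_R}(R/p)$ and pull the tensor product inside the filtered colimit. Each term becomes $S/p$, now viewed as an $R$-module with action twisted by an iterate of Frobenius on $R/p$. Relative perfectness supplies isomorphisms $(S/p) \otimes^\myL_{R/p,\, \phi_R^e} (R/p) \xrightarrow{\sim} S/p$ (by iterating the relative Frobenius $F_{S/R}$), under which the transition maps in the colimit are identified with the absolute Frobenius on $S/p$; the colimit is therefore $(S/p)_\perf$.

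For $(b)$, set $\widetilde R := R \widehat{\otimes}^\myL_{\mathbf{Z}_p} \cO_C$ and $\widetilde S := S \widehat{\otimes}^\myL_{\mathbf{Z}_p} \cO_C$. Both are $\cO_C$-algebras, so perfectoidization here agrees with the relative construction of \cite[\S 7--8]{BhattScholzepPrismaticCohomology}. Since relative perfectness mod $p$ is stable under base change along $\mathbf{Z}_p \to \cO_C$, the reduction $\widetilde R/p \to \widetilde S/p$ is still relatively perfect, and the claim reduces to base change for the relative prismatic complex along $p$-completely formally \'etale maps---precisely what relative perfectness mod $p$ provides, via vanishing of the $p$-completed cotangent complex $L^{\wedge p}_{\widetilde S / \widetilde R} = 0$ and standard compatibility of $\Prism_{-/A_\inf(\cO_C)}$ with such maps.

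The main obstacle is step $(b)$: establishing base change for perfectoidization along mod-$p$ relatively perfect maps in the presence of a perfectoid base. This is the technical heart of the argument; once it is in place for $\cO_C$-algebras, the absolute case follows formally from the Galois pullback reduction together with the elementary colimit manipulation in step $(a)$.
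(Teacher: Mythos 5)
Your overall strategy mirrors the paper's exactly: reduce via the Galois pullback square of \autoref{prop:galois_pullback} (base change preserves pullbacks in a stable $\infty$-category), then verify base change at the other three corners. Step~(a), identifying $S \widehat\otimes^\myL_R (R/p)_\perf$ with $(S/p)_\perf$ by pulling the tensor inside the Frobenius colimit and invoking iterated relative Frobenius, is also correct and matches what the paper does implicitly.

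The gap is step~(b), which you yourself flag as the ``technical heart.'' You appeal to ``standard compatibility of $\Prism_{-/A_{\inf}(\cO_C)}$'' with maps that are $p$-completely formally \'etale, but there is no off-the-shelf theorem to cite here: the \'etale base change lemma for $\Prism$ from \cite{BhattScholzepPrismaticCohomology} is stated for $p$-completely \'etale maps (finitely presented), not for maps that are merely relatively perfect mod $p$, and perfectoidization on top of that involves the full colimit over Frobenius, not just $\Prism$. Concretely, once over a perfectoid base one must prove $R_{\HT,n}\widehat\otimes^\myL_R S \simeq S_{\HT,n}$ where $R_{\HT,n} = \phi_*^n\Prism_{R/A}\widehat\otimes^\myL_A\overline{A}$, for \emph{every} $n\geq 0$. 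Vanishing of $L^{\wedge p}_{S/R}$ (which, as you note, follows from relative perfectness) handles only $n=0$, via the conjugate filtration and the Hodge--Tate comparison. For $n>0$ the paper has to do genuinely additional work: reduce mod $\phi^{-n}(d)$ by derived Nakayama, observe that $p\in(\phi^n(d),d)$ so everything becomes an $R/p$-module, and then use the factorization $S/p \to S/p\otimes^\myL_{R/p}F_*(R/p) \to F_*(S/p)$ with the second map an isomorphism --- this last isomorphism is exactly where relative perfectness (rather than mere cotangent-complex vanishing) is used --- to identify the $n$-twisted term with a base change of the $n=0$ term. That Frobenius-descent maneuver is the actual content of the proposition and is what your sketch does not supply.
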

\begin{proof}
We may assume that both $R$ and $S$ are $p$-complete. It suffices to check the equivalent statement for the other three corners of the pullback square appearing in \autoref{prop:galois_pullback}.  In the top right, it is sufficient to show that $(R\widehat{\otimes}^{\myL}_{\mathbf{Z}_p}\cO_C)_{\perfd}\widehat{\otimes}_R^{\myL}S\cong (S\widehat{\otimes}^{\myL}_{\mathbf{Z}_p}\cO_C)_{\perfd}$, and so assume we are working over the perfectoid base $\cO_C$. Furthermore, the rings on the bottom row are $\mathbb{F}_p$ algebras.  Therefore it suffices to show that the proposition holds for when $R$ is an algebra over some perfectoid ring $\overline{A}$, corresponding to the perfect prism $(A,d)$.

In this case we have $R_{\perfd}=(\colim_n \phi_*^n\Prism_{R/A})^{\wedge(p, I)} \otimes^{\myL}_A\overline{A}$. 
We can rewrite 
$$R_{\perfd}=
\big(\colim (\phi_*^n\Prism_{R/A}\widehat{\otimes}^{\myL}_A\overline{A})\big)^{\wedge p}.$$  
Denote $R_{\HT,n}=\phi_*^n{\Prism}_{R/A}\widehat{\otimes}^{\myL}_A \overline{A}$.  Therefore it suffices to show that $R_{\HT,n}\widehat{\otimes}^{\myL}_R S\simeq {S}_{\HT,n}$.
For $n=0$, the Hodge-Tate comparison provides a filtered homomorphism $\overline{\Prism}_{R/A}\widehat{\otimes}^{\myL}_R S\to\overline{\Prism}_{S/A}$ whose 
 $i$\textsuperscript{th} graded piece is 
\[\wedge^i L_{R/(A/I)}\{-i\}[-i]^{\wedge p}\widehat{\otimes}^{\myL}_RS\to \wedge^i L_{S/(A/I)}\{-i\}[-i]^{\wedge p}.\]
But since $L_{S/R}^{\wedge p}\simeq 0$,
the above is an equivalence, and hence so is $R_{\HT,0}\otimes_RS\to S_{\HT,0}$.

Now for higher $n$, it is sufficient to check modulo $\phi^{-n}(d)$ by derived Nakayama.  We have

\[(\phi_*^n\Prism_{R/A}/d\widehat{\otimes}^{\myL}_R S)/\phi^{-n}(d)\to \phi_*^n\Prism_{S/A}/(d,\phi^{-n}(d))\]
which identifies with
\[F_*^n(\Prism_{R/A}/(\phi^{n}(d),d))\widehat{\otimes}^{\myL}_R S\to F_*^n(\Prism_{S/A}/(\phi^n(d),d))\]
where we used that $p\in(\phi^n(d),d)$ and hence the modules involved are also $R/p$-modules. Using the factorization $S/p\to S/p\otimes^{\myL}_{R/p} F_*(R/p)\to F_*(S/p)$ and the fact that the latter is an isomorphism, 
this is equivalent to
\[F_*^n(\Prism_{R/A}/(\phi^n(d),d)\widehat{\otimes}^{\myL}_RS )\to F_*^n(\Prism_{S/A}/(\phi^n(d),d))\]
which is an equivalence by the case $n=0$. 
\end{proof}

\begin{remark}
\label{rmk:formally_etale_perfd}
A similar argument actually proves a slight strengthening: If $R\to S$ is a map of rings such that $R/J\to S/J$ is relatively perfect for some finitely generated ideal $J$ that contains $p$. Then we have $(S/J)\widehat{\otimes}^{\myL}_R R_{\perfd}\simeq S_{\perfd}/J$ (here $R/J$, $S/J$ and $S_\perfd/J$ should be interpreted in the derived sense, i.e., if $J=(f_1,\dots,f_n)$, then $R/J\cong \text{Kos}(f_1,\dots,f_n, R)$).
\end{remark}

\begin{proposition}\label{prop:perfds_completion_maximal_ideal}
Let $(R,\fram)$ be a Noetherian local ring with $p\in\fram$.  Then we have
\[(R^{\wedge\m}_{\perfd})^{\wedge\m}\simeq (R_\perfd)^{\wedge\m}\]
  where the (derived) completions are with respect to $\fram$.    
\end{proposition}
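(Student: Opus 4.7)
The plan is to show the natural map
$$\alpha \colon (R_\perfd)^{\wedge\fram} \longrightarrow ((R^{\wedge\fram})_\perfd)^{\wedge\fram},$$
obtained by applying functoriality of perfectoidization to the completion $R \to R^{\wedge\fram}$ and then $\fram$-completing, is an equivalence. Both source and target of $\alpha$ are derived $\fram$-complete by construction, so by derived Nakayama it suffices to check that $\alpha \otimes^\myL_R R/\fram$ is an equivalence. Using the standard fact that $M^{\wedge\fram} \otimes^\myL_R R/\fram \simeq M \otimes^\myL_R R/\fram$ for any $M \in D(R)$, and naturality in $M$, this reduces further to showing that the map
$$R_\perfd \otimes^\myL_R R/\fram \longrightarrow (R^{\wedge\fram})_\perfd \otimes^\myL_R R/\fram$$
induced by $R \to R^{\wedge\fram}$ is an equivalence.

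To produce this mod-$\fram$ equivalence, I apply \autoref{rmk:formally_etale_perfd} with $S = R^{\wedge\fram}$ and $J = \fram$. The hypothesis $p \in J$ is immediate, so the main task is to verify that $R/\fram \to R^{\wedge\fram}/\fram$ is relatively perfect, with the quotients interpreted in the derived sense as per the parenthetical of the remark. In fact this map is an equivalence of animated $\mathbb{F}_p$-algebras: writing $\fram = (f_1,\dots,f_d)$, both derived quotients are Koszul complexes $K(f_1,\dots,f_d;R)$ and $K(f_1,\dots,f_d;R^{\wedge\fram})$, whose Koszul homology modules are all $\fram$-torsion. Combined with flatness of $R \to R^{\wedge\fram}$ and the classical identification $R^{\wedge\fram}/\fram R^{\wedge\fram} = R/\fram$, this shows the natural map between the two Koszul complexes is a quasi-isomorphism. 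The relative Frobenius of an equivalence is trivially an equivalence, so the remark applies and yields
$$(R^{\wedge\fram}/\fram) \widehat{\otimes}^\myL_R R_\perfd \simeq (R^{\wedge\fram})_\perfd/\fram,$$
with the equivalence being the one induced by the perfectoidization of $R \to R^{\wedge\fram}$.

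Finally, I identify the two sides with the derived tensor products above. Since $R^{\wedge\fram}/\fram = R/\fram$ is $p$-torsion, $\widehat{\otimes}^\myL$ coincides with $\otimes^\myL$, and the left side equals $R_\perfd \otimes^\myL_R R/\fram$. For the right side, flatness of $R \to R^{\wedge\fram}$ gives $(R^{\wedge\fram})_\perfd/\fram \simeq (R^{\wedge\fram})_\perfd \otimes^\myL_R R/\fram$. This produces the required mod-$\fram$ equivalence, completing the proof. The one point that could cause concern is the verification of relative perfectness in \autoref{rmk:formally_etale_perfd}, but this is straightforward here precisely because the completion $R \to R^{\wedge\fram}$ becomes an isomorphism modulo every power of $\fram$.
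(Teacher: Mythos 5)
Your argument is the same as the paper's: reduce by derived Nakayama to a statement modulo $\fram$ and then invoke \autoref{rmk:formally_etale_perfd} with $J=\fram$, the point being that $\mathrm{Kos}(\fram,R)\to\mathrm{Kos}(\fram,R^{\wedge\fram})$ is relatively perfect because it is already an equivalence. The one thing to tighten is a conflation in your final paragraph: the quotients in \autoref{rmk:formally_etale_perfd} are the \emph{derived} (Koszul) quotients, and $\mathrm{Kos}(\fram,R^{\wedge\fram})$ is not the honest quotient $R/\fram$ unless $\fram$ is generated by a regular sequence, so you cannot identify $(R^{\wedge\fram}/\fram)\widehat{\otimes}^{\myL}_R R_\perfd$ with $R_\perfd\otimes^{\myL}_R R/\fram$ as written. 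The fix is immediate: run your Nakayama reduction with $\mathrm{Kos}(\fram,R)$ in place of $R/\fram$ (derived Nakayama and the invariance of $-\otimes^{\myL}\mathrm{Kos}(\fram)$ under derived $\fram$-completion hold equally well in that form), after which the output of the remark is exactly the equivalence you need.
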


\begin{proof}
By derived Nakayama, it suffices to show that $R_{\perfd}/\m \cong R^{\wedge\m}_\perfd/\m$ (again, both sides are interpreted in the derived sense). But this follows from \autoref{rmk:formally_etale_perfd} since $\text{Kos}(\m, R)\to \text{Kos}(\m, R^{\wedge \m})$ is relatively perfect (it is already an isomorphism). 
\end{proof}

\begin{lemma}\label{lem:perfd_localization}
Let $R$ be a Noetherian ring with $p$ in its Jacobson radical. Suppose $Q$ is a prime ideal with $p\in Q$.  
Then we have \[(R_Q)_{\perfd}\simeq (R_{\perfd})_Q^{\wedge p}.\]
\end{lemma}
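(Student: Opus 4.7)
The plan is to apply \autoref{prop:formally_etale_perfd} (or more precisely, its hypothesis) to the localization map $R \to R_Q$, and then identify the resulting derived tensor product $R_Q \widehat{\otimes}^{\myL}_R R_{\perfd}$ with the $p$-completion of the ordinary localization $(R_{\perfd})_Q$. Note that $p$ lies in the Jacobson radical of $R_Q$ automatically: since $p \in Q$ by hypothesis and $R_Q$ is local with maximal ideal $QR_Q$, its Jacobson radical is $QR_Q \ni p$.

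The key technical point is verifying that the induced animated $\mathbf{F}_p$-algebra map $R \otimes^{\myL}_{\mathbf{Z}} \mathbf{F}_p \to R_Q \otimes^{\myL}_{\mathbf{Z}} \mathbf{F}_p$ is relatively perfect in the sense of \autoref{prop:formally_etale_perfd}. To show this I would write $R \to R_Q$ as a filtered colimit of basic open immersions $R \to R[f^{-1}]$ indexed by $f \in R \setminus Q$. Each $R \to R[f^{-1}]$ is étale (indeed, an open immersion), so after animated base change to $\mathbf{F}_p$ the relative Frobenius is an isomorphism; since the formation of relative Frobenius commutes with filtered colimits and being an equivalence is preserved under such colimits, the relative Frobenius for $R \otimes^{\myL}_{\mathbf{Z}} \mathbf{F}_p \to R_Q \otimes^{\myL}_{\mathbf{Z}} \mathbf{F}_p$ is an equivalence. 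Applying \autoref{prop:formally_etale_perfd} then yields $R_Q \widehat{\otimes}^{\myL}_R R_{\perfd} \simeq (R_Q)_{\perfd}$.

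It remains to identify $R_Q \widehat{\otimes}^{\myL}_R R_{\perfd}$ with $(R_{\perfd})_Q^{\wedge p}$. Since $R \to R_Q$ is flat, the derived tensor product agrees with the underived one, and tensoring with a localization is the same as localizing termwise at the multiplicative set $R \setminus Q$. Hence $R_Q \otimes^{\myL}_R R_{\perfd} \simeq (R_{\perfd})_Q$, and $p$-completing both sides (the right side being the completion appearing in the statement, and the left being the $\widehat{\otimes}^{\myL}$ by definition) gives the desired identification.

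The main obstacle I anticipate is the relative perfection verification: morally it is obvious because localization is formally étale, but some care is needed in the animated setting (e.g.\ if $R$ has $p$-torsion, then $R \otimes^{\myL}_{\mathbf{Z}} \mathbf{F}_p$ has a nontrivial $H^{-1}$, so one cannot just reduce to classical rings). The filtered colimit presentation of the localization as étale maps should circumvent this cleanly, since relative perfection is known for étale morphisms and it passes through filtered colimits.
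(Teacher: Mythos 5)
Your proposal is correct and follows essentially the same route as the paper: the paper's proof is a one-line application of \autoref{prop:formally_etale_perfd} after noting that $R/p \to (R/p)_Q$ is relatively perfect, with the identification $R_{\perfd}\widehat{\otimes}^{\myL}_R R_Q \simeq (R_{\perfd})_Q^{\wedge p}$ left implicit. Your filtered-colimit-of-\'etale-localizations verification of relative perfection simply fills in a detail the paper takes for granted.
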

\begin{proof}
Since $R/p\to R_Q/p=(R/p)_Q$ is relatively perfect, \autoref{prop:formally_etale_perfd} shows that $R_{\perfd}\widehat{\otimes}_R^{\myL}R_Q\simeq (R_Q)_{\perfd}$, which is exactly what we want to show.
\end{proof}

\section{Definitions and basic properties of singularities}

We start by defining the singularities that we will study. 

\begin{definition}
Let $R$ be a Noetherian ring with $p$ in its Jacobson radical. We say $R$ is
\begin{enumerate}
\item \emph{\claperfdpure} if there exists a perfectoid $R$-algebra $B$ such that $R\to B$ is pure;
\item \emph{\absperfdpure} if $R\to R_\perfd$ is pure in $\myD(R)$;
\item \emph{\claperfdinj} if there exists a perfectoid $R$-algebra $B$ such that for every maximal ideal $\fram$ and every $i$, $H^i_\mathfrak{m}(R)\to H^i_{\mathfrak{m}}(B)$ is injective;
\item \emph{\absperfdinj} if for every maximal ideal $\fram$ and every $i$, $H^i_\mathfrak{m}(R)\to H^i_{\mathfrak{m}}(R_{\perfd})$	is injective.
\end{enumerate}
\end{definition}

\begin{remark}\label{rem:atlas_implies_absolute}
Note that \claperfdpure (resp.\ \claperfdinj) implies \absperfdpure (resp.\ \absperfdinj), because if $B$ is the perfectoid $R$-algebra verifying one of the former properties, we have a factorization $R\to R_{\perfd}\to B$ by \autoref{prop:universal_property_perfd}.  In the case of injectivity, this is a standard property, while for purity it follows from  \autoref{lem:first_factor_is_pure}.
\end{remark}

\begin{remark}
\label{rmk:PerfdPureInjectiveCharp}
Suppose $R$ is a Noetherian ring of characteristic $p$. Then $R$ is \claperfdpure  (respectively \claperfdinj) if and only if $R$ is \absperfdpure (respectively \absperfdinj) if and only if $R$ is $F$-pure (respectively $F$-injective). 
This is because in characteristic $p$, perfectoid rings are exactly perfect rings, and $R_{\perfd}=R_{\perf}$ is a discrete ring.
$R\to R_{\perf}$ is pure (respectively induces an injection on local cohomology supported at each maximal ideal) if and only if $R$ is $F$-pure (respectively $F$-injective) essentially by definition. 
\end{remark}

\begin{lemma}\label{lem:pure_injective_comparison}
Let $R$ be a Noetherian ring with $p$ in its Jacobson radical.  If $R$ is \claperfdpure (resp. \absperfdpure), then it is \claperfdinj 
 (resp. \absperfdinj).  The converse holds if $R$ is quasi-Gorenstein (resp.Gorenstein).
\end{lemma}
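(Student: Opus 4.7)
The forward directions are direct applications of \autoref{prop:purity_implies_injecitivity}. If $R\to B$ is pure with $B$ perfectoid, then $H^i_\m(R)\to H^i_\m(B)$ is injective for all $i$ and all maximal $\m$, so $R$ is \claperfdinj with the same witness $B$; applying \autoref{prop:purity_implies_injecitivity} to $R\to R_\perfd$ yields the \absperfdpure $\Rightarrow$ \absperfdinj implication in exactly the same way.

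For the converses, my plan is to reduce to the case where $(R,\m)$ is a complete Noetherian local ring of dimension $d$ and then invoke local duality. Purity of $R\to M$ can be checked after localization at each maximal ideal and then after the faithfully flat base change $R_\m \to R_\m^{\wedge\m}$, by factoring through the completion and invoking \autoref{lem:first_factor_is_pure}. The (quasi-)Gorenstein hypothesis descends to the completion, and the \absperfdinj (resp.\ \claperfdinj) hypothesis descends as well via \autoref{prop:perfds_completion_maximal_ideal} and \autoref{lem:perfd_localization} (resp.\ using that completion of a perfectoid ring at an ideal containing $p$ remains perfectoid). Write $E = E_R(R/\m)$.

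If $R$ is quasi-Gorenstein, then $\omega_R\cong R$, so $H^d_\m(R)\cong E$, and top local duality reads $H^d_\m(M)^\vee \cong \Hom_R(M,R)$ for every $R$-module $M$, where $(-)^\vee := \Hom_R(-,E)$. Matlis-dualizing the injection $H^d_\m(R)\hookrightarrow H^d_\m(B)$ supplied by \claperfdinj gives a surjection $\Hom_R(B,R)\twoheadrightarrow \Hom_R(R,R)=R$; any preimage of $1$ produces an $R$-linear splitting $B\to R$ of $R\to B$, so $R\to B$ is split and in particular pure, giving \claperfdpure.

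If $R$ is Gorenstein I would instead apply \autoref{lem:injective_E_purity} to $R\to R_\perfd$. Gorensteinness upgrades the previous identification to $\myR\Gamma_\m(R)\simeq E[-d]$ in $\myD(R)$, and since $\myR\Gamma_\m(-)$ is computed by a Koszul complex on generators of $\m$ it commutes with $-\otimes^{\myL}_R R_\perfd$, yielding
\[
    \myR\Gamma_\m(R_\perfd) \;\simeq\; R_\perfd \otimes^{\myL}_R \myR\Gamma_\m(R) \;\simeq\; (R_\perfd \otimes^{\myL}_R E)[-d].
\]
Taking $H^0$ identifies $H^d_\m(R_\perfd)\cong H^0(R_\perfd\otimes^{\myL}_R E)$, so the \absperfdinj hypothesis $H^d_\m(R)\hookrightarrow H^d_\m(R_\perfd)$ is precisely the input required by \autoref{lem:injective_E_purity} to conclude that $R\to R_\perfd$ is pure. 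The main subtlety I anticipate is this descent step: tracking perfectoidness (resp.\ the behavior of $R_\perfd$ via \autoref{prop:perfds_completion_maximal_ideal}) under completion at a maximal ideal, and verifying carefully that local cohomology commutes with derived tensor against the derived $p$-complete object $R_\perfd$ so that the identification of $H^d_\m(R_\perfd)$ with $H^0(R_\perfd\otimes^{\myL}_R E)$ really holds.
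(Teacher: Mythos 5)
Your proposal is correct and follows essentially the same route as the paper: the forward directions via \autoref{prop:purity_implies_injecitivity}, and the converses by identifying $H^d_{\fram}(B)\cong B\otimes E(R/\fram)$ (resp.\ $H^d_{\fram}(R_{\perfd})\cong H^0(E\otimes^{\myL}R_{\perfd})$) using the (quasi-)Gorenstein hypothesis and then invoking the injective-hull purity criterion of \autoref{lem:injective_E_purity}. The only difference is your preliminary reduction to the complete local case, which is harmless but unnecessary, since \autoref{lem:injective_E_purity} already produces a factorization through $R^{\wedge\fram}$ and hence purity for non-complete local rings, and the quasi-Gorenstein case can be run at all maximal ideals of $R$ simultaneously.
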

\begin{proof}

The first assertion follows from \autoref{prop:purity_implies_injecitivity}. We now assume that $R$ is quasi-Gorenstein and perfectoid injective. Our goal is to show that $R$ is perfectoid pure.   Since $R$ is quasi-Gorenstein, for every maximal ideal $\m$ of $R$ with $d=\dim(R_\m)$, we have $H_\m^d(R)\cong E(R/\m)$ and $H_\m^d(B)\cong B\otimes_R E(R/\m)$ for any $R$-algebra $B$, where $E(R/\m)$ denotes the injective hull of $R/\m$. Thus if $R$ is quasi-Gorenstein and \claperfdinj, then there exists some $R$-algebra $B$ such that $E(R/\m)\to B\otimes_R E(R/\m)$ is injective for every maximal ideal $\m$. This implies $R\to B$ is pure and thus $R$ is \claperfdpure by \autoref{lem:injective_E_purity}.

Suppose $R$ is \absperfdinj and Gorenstein.  Then we have an injection
\[
    E = H^d_{\fram}(R) \to H^d_{\fram}(R_{\perfd}) = \myH^d \big( \myR\Gamma_{\fram}(R) \otimes_R^{\myL} R_{\perfd} \big)=\myH^d\big(E[-d]\otimes_R^{\myL}R_{\perfd}\big)
\]
 so by \autoref{lem:injective_E_purity}, $R\to R_{\perfd}$ is pure.
\end{proof}

\begin{lemma}
\label{rmk:make B aic}
Let $R$ be a Noetherian ring with $p$ in its Jacobson radical. Suppose $R$ is \claperfdpure { (respectively \claperfdinj)}. Then we can always choose $B$ perfectoid such that $R\to B$ is pure {(respectively induces an injection on local cohomology supported at each maximal ideal of $R$)}, and such that every element of $R$ has a compatible system of $p$-power roots in $B$ (in fact, we can even assume that $B$ is absolutely integrally closed). 
\end{lemma}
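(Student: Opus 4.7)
The plan is to begin with a perfectoid $R$-algebra $B_0$ witnessing the hypothesis and to successively enlarge it via $p$-completely faithfully flat perfectoid extensions. Since $p$-completely faithfully flat maps between $p$-complete objects are pure (and induce injections on local cohomology supported at maximal ideals containing $p$), and since both purity (by \autoref{lem:composition_is_pure}) and local-cohomology-injectivity are stable under composition and under filtered colimits, it suffices to exhibit such enlargements that eventually achieve the two properties of having compatible systems of $p$-power roots for every element of $R$ and being absolutely integrally closed.

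For the first property, for each $r\in R$ I would form the perfectoid $B_0$-algebra
\[
    B_0^{(r)} := \bigl((B_0[T^{1/p^\infty}])^{\wedge_p}/(T-r)\bigr)_\perfd,
\]
into which $r$ maps with a compatible system of $p$-power roots given by the images of $T^{1/p^e}$. The map $B_0 \to B_0^{(r)}$ is $p$-completely faithfully flat by the analysis of perfectoidization in \cite[Sections 7--8]{BhattScholzepPrismaticCohomology}. Iterating this construction over all $r \in R$ transfinitely, and taking $p$-completed filtered colimits at limit stages (which remain perfectoid, since perfectoid rings are closed under such colimits), yields a perfectoid $B_0$-algebra $B_1$ in which every element of $R$ has a compatible system of $p$-power roots, with $B_0 \to B_1$ pure.

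For the second property, I would further enlarge $B_1$ by iteratively perfectoidizing $B_1[T]/f$ for monic polynomials $f \in B_1[T]$, the class of $T$ providing a root. Each step is a $p$-completely faithfully flat perfectoid extension by the same techniques, so transfinite iteration (continuing past limit stages until a fixed point is reached) and $p$-completed filtered colimits produce a perfectoid $R$-algebra $B$ that is absolutely integrally closed while still containing $p$-power roots of every element of $R$. Since every extension in the tower $R \to B_0 \to B_1 \to B$ is pure (respectively local-cohomology-injective), so is the composition by \autoref{lem:composition_is_pure} and colimit-stability.

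The main obstacle is verifying the $p$-completely faithful flatness (or at least purity) claim for these enlargements, particularly for adjoining roots of arbitrary monic polynomials in the second step, which ultimately rests on the perfectoid almost purity theorem and the formal properties of the perfectoidization functor developed in \cite{BhattScholzepPrismaticCohomology}. For the local-cohomology version one additionally uses that local cohomology commutes with filtered colimits.
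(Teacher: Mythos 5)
Your construction of the enlargement $B_0 \to B$ is essentially the paper's: the paper also builds $B$ by ``an iterated use of Andr\'e's flatness lemma'' (\cite[Theorem 7.14]{BhattScholzepPrismaticCohomology}), which is exactly the statement that each $\bigl((B_0[T^{1/p^\infty}])^{\wedge_p}/(T-r)\bigr)_\perfd$ is $p$-completely faithfully flat over $B_0$, and the transfinite iteration and $p$-completed colimits are unproblematic. The gap is in how you conclude. You assert that a $p$-completely faithfully flat map of ($p$-complete, non-Noetherian) perfectoid rings is pure, and then compose. But $p$-complete faithful flatness only controls the reductions $B_0/p^n \to B/p^n$; it says nothing directly about $M \to M\otimes_{B_0} B$ for an arbitrary $B_0$-module $M$ (e.g.\ one on which $p$ acts invertibly). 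The paper's \autoref{lem:p-completely-pure-implies-pure} cannot be invoked here either: it requires the source to be pseudo-coherent over a Noetherian complete ring, which $B_0$ is not. So ``$B_0\to B$ is pure, hence the composite is pure'' is unjustified as stated.

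The correct finish, which is what the paper does, sidesteps purity of $B_0\to B$ entirely: from $R/p^n\to B_0/p^n$ pure and $B_0/p^n\to B/p^n$ faithfully flat one gets that $R/p^n\to B/p^n$ is pure for all $n$; then, because $R$ is Noetherian with $p$ in its Jacobson radical, purity of $R\to B$ can be tested on the injective hulls $E(R/\m)$, each of which is a filtered union of $p^n$-torsion finitely generated submodules, so the mod-$p^n$ purity suffices (via \cite[Proposition 6.11]{HochsterRobertsRingsOfInvariants}, cf.\ \autoref{lem:injective_E_purity}). A parallel issue arises in the injective case: your parenthetical that such maps ``induce injections on local cohomology'' needs two separate inputs, namely that $H^i_\m(B_0)\to H^i_\m(B_0)\otimes_{B_0}B$ is injective (which follows since $H^i_\m(B_0)$ is a colimit of $p^n$-torsion modules and $B_0/p^n\to B/p^n$ is pure) and the identification $H^i_\m(B)\cong H^i_\m(B_0)\otimes_{B_0}B$, which the paper justifies by $t$-exactness of $-\otimes^{\myL}_{B_0}B$ on $p$-torsion objects; ``local cohomology commutes with filtered colimits'' does not supply this. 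With these repairs your argument coincides with the paper's proof.
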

\begin{proof}
By an iterated use of Andr\'{e}'s flatness lemma \cite[Theorem 7.14]{BhattScholzepPrismaticCohomology} we can construct a $p$-complete faithfully flat extension $B\to B'$ of perfectoid rings such that all elements of $R$ have compatible system of $p$-power roots in $B'$ (in fact, we can assume $B'$ is absolutely integrally closed). Note that this implies $B/p^n\to B'/p^n$ is faithfully flat for all $n$.

{In the \claperfdpure case}, it follows that $R/p^n\to B'/p^n$ is pure for every $n$. Now for every maximal ideal $\m$ of $R$, let $E(R/\m)$ be the injective hull of $R/\m$. For every finitely generated submodule $N$ of $E(R/\m)$, $N$ is $p^n$-torsion for some $n$, thus $N \to B'\otimes_R N$ can be identified with $N \to B'/p^n \otimes_{R/p^n} N$. Hence $N\to B'\otimes_R N$ is injective for every such $N$ by the purity of $R/p^n\to B'/p^n$. By taking a direct limit for all such $N$, we find that $E(R/\m)\to B'\otimes_R E(R/\m)$ is injective for every $\m$. Thus $R\to B'$ is pure thanks to \cite[Proposition 6.11]{HochsterRobertsRingsOfInvariants} (\cf \autoref{lem:injective_E_purity})  as wanted.

{In the \claperfdinj case, notice, we consider the map $H^i_{\m}(B) \to H^i_{\m}(B) \otimes_B B'$.  Just as above, since $H^i_{\m}(B)$ is a colimit of $p^n$-torsion modules and $B/p^n\to B'/p^n$ is faithfully flat and hence pure, we see that $H^i_{\m}(B) \to H^i_{\m}(B) \otimes_B B'$ is injective. But as the functor $N\mapsto N\otimes^{\myL}_BB'$ is $t$-exact on $D_{p\text{-tor}}(B)$ by the $p$-complete flatness of $B'$ over $B$, thus we have  
$$H_\m^i(B')= h^i(\myR\Gamma_\m(B)\otimes^\mathbf{L}_BB')= H_\m^i(B)\otimes_BB'.$$
The result follows.
}
\end{proof}

\begin{lemma}
\label{lem:Puremap}
Suppose $R \to S$ is a pure map of Noetherian rings.  If $p$ lies inside their Jacobson radicals and $S$ is \claperfdpure (resp. \absperfdpure) then so is $R$. 

Furthermore, the same statements hold for \claperfdinj and \absperfdinj if $S$ is a finite $R$-module.
\end{lemma}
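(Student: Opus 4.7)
The plan is to handle the pure and injective variants in parallel, exploiting the functoriality of $(-)_{\perfd}$ together with the given purity of $R\to S$.

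For the \claperfdpure case, given a pure map $S\to B$ with $B$ perfectoid, the composition $R\to S\to B$ is pure, and $B$ is naturally a perfectoid $R$-algebra via $R\to S\to B$; this already exhibits $R$ as \claperfdpure. For the \absperfdpure case, functoriality of perfectoidization gives a commutative square with corners $R,S,R_{\perfd},S_{\perfd}$. The hypothesis says $S\to S_{\perfd}$ is pure in $\myD(S)$; I would first argue this forces purity in $\myD(R)$ as well, since the restriction-of-scalars functor $\myD(S)\to \myD(R)$ is a left adjoint (hence preserves filtered colimits) and obviously sends split maps to split maps. Combined with purity of $R\to S$ in $\myD(R)$ (the classical notion agrees with the derived one for discrete modules, as noted after the definition of purity), \autoref{lem:composition_is_pure} yields purity of $R\to S\to S_{\perfd}$ in $\myD(R)$. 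Since this composition factors as $R\to R_{\perfd}\to S_{\perfd}$, \autoref{lem:first_factor_is_pure} then produces purity of $R\to R_{\perfd}$, as desired.

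For the injective cases, the extra input is module-finiteness of $S$ over $R$: for each maximal $\m\subseteq R$, the maximal ideals $\n_1,\dots,\n_r\subseteq S$ above $\m$ are finite in number (as $S/\m S$ is Artinian) and pairwise coprime. Consequently, for any $S$-module $M$,
\[
H^i_\m(M)\;\cong\; H^i_{\m S}(M)\;\cong\; H^i_{\cap_j \n_j}(M)\;\cong\;\bigoplus_{j=1}^r H^i_{\n_j}(M),
\]
using that local cohomology depends only on the radical of the ideal and that pairwise coprime ideals yield a direct sum decomposition (Mayer--Vietoris). In the \claperfdinj case, pick a perfectoid $B$ witnessing the condition for $S$; applying the decomposition with $M=S$ and $M=B$ yields $H^i_\m(S)\hookrightarrow H^i_\m(B)$, and composing with the injection $H^i_\m(R)\hookrightarrow H^i_\m(S)$ furnished by \autoref{prop:purity_implies_injecitivity} shows that the perfectoid $R$-algebra $B$ also witnesses the \claperfdinj of $R$. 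For \absperfdinj, the identical argument with $B$ replaced by $S_{\perfd}$ gives $H^i_\m(R)\hookrightarrow H^i_\m(S_{\perfd})$, and this map factors through $H^i_\m(R_{\perfd})$ by functoriality of perfectoidization, so $H^i_\m(R)\hookrightarrow H^i_\m(R_{\perfd})$ as well.

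The only subtle step in the plan is the passage of derived purity from $\myD(S)$ to $\myD(R)$ via restriction of scalars in the \absperfdpure case; everything else is a routine combination of functoriality, composition of pure/injective maps (Lemmas \ref{lem:first_factor_is_pure} and \ref{lem:composition_is_pure}), and the standard direct-sum decomposition of local cohomology for a semilocal module-finite extension.
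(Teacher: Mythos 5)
Your proof is correct and follows essentially the same route as the paper: composition for the pure cases, the factorization $R\to R_{\perfd}\to S_{\perfd}$ with \autoref{lem:first_factor_is_pure} and \autoref{lem:composition_is_pure} for the lim-perfectoid pure case, and the direct-sum decomposition $H^i_\m(K)\cong\bigoplus_j H^i_{\n_j}(K)$ over the finitely many maximal ideals of $S$ above $\m$ for the injective cases. The only difference is that you make explicit the (correct) observation that restriction of scalars $\myD(S)\to\myD(R)$ preserves purity, a step the paper leaves implicit.
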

\begin{proof}
First suppose that $S$ is \claperfdpure.  Then by definition, there is a perfectoid $S$-algebra $B$ such that $S \to B$ is pure. Hence the composition  $R\to S\to B$ is pure as required. Next suppose that we are in the \absperfdpure case.  Then $S\to S_{\perfd}$ is pure.  Therefore $R\to S_{\perfd}$ is pure by \autoref{lem:composition_is_pure}.  But we have a factorization $R\to R_{\perfd}\to S_{\perfd}$ and so $R\to R_{\perfd}$ is pure by \autoref{lem:first_factor_is_pure}. 

For the \claperfdinj (resp.\ \absperfdinj) case, choose a maximal ideal $\fram$ of $R$ with the finitely many $\fram_j$ maximal ideals of $S$ lying over $\fram$. It is easy to see that $H^i_{\fram}(K)\cong \oplus_jH^i_{\fram_j}(K)$ for every $K\in\myD(S)$ and every $i$. We immediately see that the composition
\[
H^i_{\fram}(R) \to H^i_{\fram}(S) = \oplus_j H^i_{\fram_j}(S) \hookrightarrow \oplus_j H^i_{\fram_j}(K) = H^i_{\fram}(K)
\]
is injective, where $K=B$ a perfectoid $S$-algebra in the \claperfdinj case, and $K=S_{\perfd}$ in the \absperfdinj case. In the former case we are done since $B$ is also a perfectoid $R$-algebra. In the latter case, note that we have a factorization $R\to R_{\perfd}\to S_{\perfd}$, we obtain injectivity of $H_\m^i(R)\to H_\m^i(R_\perfd)$ as wanted. 
\end{proof}

\begin{remark}
It is not true that pure subrings of \claperfdinj (resp.\ \absperfdinj) rings are \claperfdinj (resp.\ \absperfdinj). This is not true even in characteristic $p$ (i.e., for $F$-injective rings, see \autoref{rmk:PerfdPureInjectiveCharp}), for example see \cite{WatanabeFRationalityOfCertainReesAndCounterExamplesToBoutot}. 
\end{remark}

\subsection{Completion and localization}

We next show that, for Noetherian local rings, \claperfdpure (resp. \absperfdpure) and \claperfdinj (resp. \absperfdinj) are preserved under $\m$-adic completion. 

\begin{lemma}
\label{lem:pfdpurecompletion}
    Suppose $(R, \m)$ is a Noetherian local ring of residue characteristic $p>0$. Then $R$ is \claperfdpure if and only if the $\m$-adic completion $R^{\wedge\m}$ is \claperfdpure, in which case $R^{\wedge\m}\to B$ splits for some perfectoid $R^{\wedge\m}$-algebra $B$.

    {Similarly, $R$ is \claperfdinj if and only if $R^{\wedge\m}$ is.}
\end{lemma}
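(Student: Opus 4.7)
The plan is to prove the two directions separately. For the forward direction ($R^{\wedge\m}$ satisfies the property $\Rightarrow$ $R$ does), I would use that $R \to R^{\wedge\m}$ is faithfully flat (hence pure) since $R$ is Noetherian local. Concretely, in the \claperfdpure case, if $R^{\wedge\m} \to B$ is pure with $B$ perfectoid, then $B$ is also a perfectoid $R$-algebra and $R \to R^{\wedge\m} \to B$ is pure by \autoref{lem:composition_is_pure}. In the \claperfdinj case, flat base change gives $H^i_\m(R) \cong H^i_{\m R^{\wedge\m}}(R^{\wedge\m})$, while $H^i_{\m R^{\wedge\m}}(B) \cong H^i_\m(B)$ since $\m$ and $\m R^{\wedge\m}$ generate the same radical in $R^{\wedge\m}$, so the injection transfers from $R^{\wedge\m}$ to $R$.

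For the converse direction, suppose $R$ is \claperfdpure (resp.\ \claperfdinj) with perfectoid $R$-algebra $B$. I would take $B' := B^{\wedge\m}$, the $\m$-adic completion, which is naturally an $R^{\wedge\m}$-algebra. The key technical claim is that $B'$ is still perfectoid: since $\m B$ is a finitely generated ideal of $B$ containing $p$, the completion $B^{\wedge\m B} = B^{\wedge\m}$ should inherit perfectoidness from $B$. This is what I expect to be the main obstacle — it requires verification that the defining conditions of a perfectoid ring ($\pi$-adic completeness for some $\pi$ with $\pi^p \mid p$, Frobenius surjectivity mod $p$, and principality of the kernel of $\theta$) all pass through completion along the larger ideal $\m B$.

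Granting that $B'$ is perfectoid, the \claperfdinj case follows from the fact that $\myR\Gamma_\m$ is unchanged under $\m$-adic completion (since it only records $\m^\infty$-torsion): one gets $H^i_\m(B) = H^i_\m(B') = H^i_{\m R^{\wedge\m}}(B')$, so the injection $H^i_\m(R) \hookrightarrow H^i_\m(B)$ yields the desired injection $H^i_{\m R^{\wedge\m}}(R^{\wedge\m}) \hookrightarrow H^i_{\m R^{\wedge\m}}(B')$. For the \claperfdpure case, I would apply \autoref{lem:injective_E_purity} to $R \to B$ — the hypothesis $E \hookrightarrow B \otimes E$ follows from purity — obtaining a factorization $R \to B \to R^{\wedge\m}$ of the $\m$-adic completion map. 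Taking $\m$-adic completions gives $R^{\wedge\m} \to B' \to (R^{\wedge\m})^{\wedge\m} = R^{\wedge\m}$ whose composition is the identity, showing that $R^{\wedge\m} \to B'$ splits; this simultaneously yields the \claperfdpure property of $R^{\wedge\m}$ and the additional splitting conclusion of the statement.
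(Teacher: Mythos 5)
Your overall strategy coincides with the paper's: the forward direction by purity/faithful flatness of $R \to R^{\wedge\m}$ and composition, and the converse by completing $B$ itself along $\m$. The splitting conclusion via \autoref{lem:injective_E_purity} and the local-cohomology bookkeeping are also fine. But the step you single out as ``the main obstacle'' --- that $B^{\wedge\m}$ is again perfectoid --- is precisely the crux of the lemma, and you leave it unproved. It does not follow from the definition of a perfectoid ring by a routine verification: the known stability of perfectoidness under $J$-adic completion (\cite[Proposition 2.1.11 (e)]{KestutisScholzePurityFlatCohomology}) applies to ideals $J$ generated by elements that admit compatible systems of $p$-power roots in $B$, and for an arbitrary perfectoid $R$-algebra $B$ the generators of $\m B$ have no reason to admit such roots. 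So as written the argument has a genuine gap.

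The paper fills this gap with a preliminary reduction: before completing, it invokes \autoref{rmk:make B aic}, which uses an iterated application of Andr\'e's flatness lemma to replace $B$ by a $p$-completely faithfully flat perfectoid extension $B'$ in which \emph{every} element of $R$ (in particular a generating set of $\m$) acquires a compatible system of $p$-power roots, while preserving purity (resp.\ injectivity on local cohomology) of $R \to B'$. Only after this enlargement does the citation to \cite[Proposition 2.1.11 (e)]{KestutisScholzePurityFlatCohomology} apply to show $B'^{\wedge\m}$ is perfectoid (and that the classical $\m$-adic completion agrees with the derived one, which you also need for the identification $H^i_\m(B') \cong H^i_\m(B'^{\wedge\m})$ in the injective case). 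With that enlargement inserted at the start of your converse direction, the rest of your argument goes through.
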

\begin{proof}
In the \claperfdpure case, one direction follows from \autoref{lem:Puremap} since $R\to R^{\wedge\m}$ is faithfully flat and in particular pure. 
For the other implication, suppose now that $B$ is a perfectoid $R$-algebra so that $R \to B$ is pure.
By \autoref{rmk:make B aic}, we may enlarge $B$ to assume that every element of $R$ has a compatible system of $p$-power roots in $B$. Since $R \to B$ is pure, so is the $\m$-adic completion map $R^{\wedge\m} \to B^{\wedge\m}$ (since $E:=E(R/\m)\to E\otimes_R B\cong E\otimes_{R^{\wedge\m}} B^{\wedge\m}$ is injective). Now by \cite[Proposition 2.1.11 (e)]{KestutisScholzePurityFlatCohomology}, the $\m$-adic completion $B^{\wedge\m}$ remains perfectoid. For the last conclusion, simply note that for a Noetherian complete local ring $R$, by Matlis duality, a map $R\to B$ is pure if and only if it splits. 

In the \claperfdinj case, by \autoref{rmk:make B aic} we may assume $B$ admits a compatible system of $p$-power roots for all elements of $R$, and the $\m$-adic completion $B^{\wedge\m}$ agrees with the derived $\m$-adic completion (since $\m$ is finitely generated),
and it is still perfectoid (see \cite[Proposition 2.1.11]{KestutisScholzePurityFlatCohomology}). Now it suffices to observe that
\[H^i_{\fram}(B) \cong H^i(\myR\Gamma_\m(B)) \cong H^i(\myR\Gamma_\m(B^{\wedge\m}))\cong H^i_{\fram}(B^{\wedge\m}).\qedhere \]
\end{proof}

\begin{lemma}
\label{lem:abspurecompletion}
    Suppose $(R, \m)$ is a Noetherian local ring of residue characteristic $p>0$. Then $R$ is \absperfdpure (resp. \absperfdinj\!\!) if and only if the $\m$-adic completion $R^{\wedge\m}$ is \absperfdpure (resp. \absperfdinj\!\!).
\end{lemma}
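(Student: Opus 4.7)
The plan is to tackle the \absperfdinj case first, which is formal, and then handle \absperfdpure by reducing to the injective-hull criterion in \autoref{lem:injective_E_purity}.

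For \absperfdinj, I would exploit the fact that local cohomology only sees the derived $\m$-adic completion of its argument: $H^i_\m(M) \cong H^i_\m(M^{\wedge\m})$ for every $M \in \myD(R)$ (for instance via $\myR\Gamma_\m = \colim_k \myR\Hom_R(R/\m^k, -)$). Applying this to $R$ and to $R_\perfd$, then invoking \autoref{prop:perfds_completion_maximal_ideal} to identify $(R_\perfd)^{\wedge\m} \simeq (R^{\wedge\m}_\perfd)^{\wedge\m}$, yields compatible isomorphisms $H^i_\m(R) \cong H^i_\m(R^{\wedge\m})$ and $H^i_\m(R_\perfd) \cong H^i_\m(R^{\wedge\m}_\perfd)$. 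Hence the injectivity condition defining \absperfdinj for $R$ is transported to the same condition for $R^{\wedge\m}$, and vice versa.

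For \absperfdpure, one direction is essentially formal: if $R^{\wedge\m} \to R^{\wedge\m}_\perfd$ is pure, then composing with the faithfully flat (hence pure) map $R \to R^{\wedge\m}$ yields a pure map $R \to R^{\wedge\m}_\perfd$ by \autoref{lem:composition_is_pure}. This map factors as $R \to R_\perfd \to R^{\wedge\m}_\perfd$ via the universal property in \autoref{prop:universal_property_perfd}, so \autoref{lem:first_factor_is_pure} gives purity of $R \to R_\perfd$.

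The substantive direction is the converse. Assuming $R \to R_\perfd$ is pure, let $E = E_R(R/\m)$; since $R^{\wedge\m}$ is Noetherian complete local, \autoref{lem:injective_E_purity} reduces the problem to exhibiting an injection $E \hookrightarrow H^0(E \otimes^{\myL}_{R^{\wedge\m}} R^{\wedge\m}_\perfd)$. The key step is the natural equivalence
\[
E \otimes^{\myL}_R R_\perfd \;\simeq\; E \otimes^{\myL}_{R^{\wedge\m}} R^{\wedge\m}_\perfd.
\]
Granting this, \autoref{lem:tensor-product-food} applied to $R \to R_\perfd$ delivers a pure map $E \to E \otimes^{\myL}_R R_\perfd$ in $\myD(R)$, which in particular is injective on $H^0$. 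To prove the equivalence, I would write $E = \colim_n E_n$ with $E_n = (0 :_E \m^n)$, an $R/\m^n = R^{\wedge\m}/\m^n$-module, and use base change along $R \to R/\m^n$ to reduce to the claim that $R_\perfd \otimes^{\myL}_R R/\m^n \simeq R^{\wedge\m}_\perfd \otimes^{\myL}_{R^{\wedge\m}} R^{\wedge\m}/\m^n$ for every $n$. Since the cofiber of $M \to M^{\wedge\m}$ is killed by $\otimes^{\myL} R/\m$ and $R/\m^n$ admits a finite filtration with $R/\m$-graded pieces, tensoring with $R/\m^n$ is insensitive to derived $\m$-adic completion of the other factor; hence both sides agree with $(R_\perfd)^{\wedge\m} \otimes^{\myL}_R R/\m^n \simeq (R^{\wedge\m}_\perfd)^{\wedge\m} \otimes^{\myL}_R R/\m^n$, which is exactly \autoref{prop:perfds_completion_maximal_ideal}.

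The main obstacle is precisely this last bootstrap: promoting the mod-$\m$ content of \autoref{prop:perfds_completion_maximal_ideal} to mod-$\m^n$ identifications, which is what allows the $E$-level criterion of \autoref{lem:injective_E_purity} to register the full derived structure of the perfectoidization (and not merely its socle). The filtration argument sketched above is what overcomes this, after which everything else is formal manipulation with the purity lemmas of \autoref{subsec.PureMapsInDR}.
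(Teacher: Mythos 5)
Your proposal is correct, and for the \absperfdinj case and the ``complete implies local'' direction of purity it coincides with the paper's argument (commutative square $R \to R_\perfd \to R^{\wedge\m}_\perfd$, then \autoref{lem:composition_is_pure} and \autoref{lem:first_factor_is_pure}). Where you genuinely diverge is the hard direction of \absperfdpure. The paper shows that $R \to R_\perfd$ pure implies $R^{\wedge\m} \to (R_\perfd)^{\wedge\m}$ is $\m$-completely pure, invokes \autoref{lem:p-completely-pure-implies-pure} (whose proof runs through the perfect-complex purity criterion \autoref{prop:purity-criterion} and a $\lim_n$ argument) to upgrade this to honest purity, and then identifies $(R_\perfd)^{\wedge\m} \simeq (R^{\wedge\m}_\perfd)^{\wedge\m}$ via \autoref{prop:perfds_completion_maximal_ideal}; purity of $R^{\wedge\m}\to R^{\wedge\m}_\perfd$ then follows from \autoref{lem:first_factor_is_pure}. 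You instead route everything through the Matlis-dual criterion \autoref{lem:injective_E_purity}, reducing to the equivalence $E\otimes^{\myL}_R R_\perfd \simeq E\otimes^{\myL}_{R^{\wedge\m}} R^{\wedge\m}_\perfd$, which you obtain by writing $E=\colim_n(0:_E \m^n)$ and observing that $-\otimes^{\myL} R/\m^n$ does not see derived $\m$-completion (the fiber of $M\to M^{\wedge\m}$ is built from complexes on which some element of $\m$ acts invertibly, so it dies after tensoring with $R/\m^n$); this again lands on \autoref{prop:perfds_completion_maximal_ideal}. Both arguments are valid and both rest on the same completion-comparison proposition; the paper's is shorter because \autoref{lem:p-completely-pure-implies-pure} already packages the passage from mod-$\m^n$ information to the uncompleted statement, whereas your $E$-torsion filtration redoes that bookkeeping by hand. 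In exchange, your route produces an actual splitting of $R^{\wedge\m}\to R^{\wedge\m}_\perfd$ (since $R^{\wedge\m}$ is complete), paralleling the stronger conclusion recorded in \autoref{lem:pfdpurecompletion} for the classical perfectoid pure case, and it avoids the final appeal to \autoref{lem:first_factor_is_pure} by working with $R^{\wedge\m}_\perfd$ rather than its completion. Do make sure, when applying \autoref{lem:injective_E_purity}, that your identification is compatible with the structure maps out of $E$; this follows from the commutativity of the square $R\to R_\perfd$, $R^{\wedge\m}\to R^{\wedge\m}_\perfd$ and the fact that $E\otimes^{\myL}_R R^{\wedge\m}\simeq E$, but it should be said.
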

\begin{proof}

First suppose that $R^{\wedge\m}$ is \absperfdpure (resp. \absperfdinj).  We have a diagram 
$$
\xymatrix{
R\ar[r]\ar[d] & R_{\perfd}\ar[d]\\
R^{\wedge\m}\ar[r]& R^{\wedge\m}_{\perfd}
}
$$
such that the lower composition is pure (resp. injective on local cohomology). Hence the top row is pure (resp. injective on local cohomology) by \autoref{lem:first_factor_is_pure} and thus $R$ is \absperfdpure (resp. \absperfdinj).

Now suppose that $R$ is \absperfdpure (resp. \absperfdinj), so that $R\to R_{\perfd}$ is pure (resp. injective on local cohomology).  Then $R^{\wedge\m}\to (R_{\perfd})^{\wedge\m}$ is $\fram$-completely pure (resp. injective on local cohomology), and so is pure (resp. injective on local cohomology) since $R$ is Noetherian by \autoref{lem:p-completely-pure-implies-pure}.  But $(R_{\perfd})^{\wedge\m}\simeq (R^{\wedge\m}_{\perfd})^{\wedge\m}$ by \autoref{prop:perfds_completion_maximal_ideal}, and so ${R}^{\wedge\m}\to (R^{\wedge\m}_{\perfd})^{\wedge\m}$ is pure (resp. injective on local cohomology).
\end{proof}

We next show that \claperfdpure (resp. \absperfdpure) and \claperfdinj (resp. \absperfdinj) can be checked at localizations at the maximal ideals.

\begin{lemma}
\label{lem:checklocalrings}
Let $R$ be a Noetherian ring with $p$ in its Jacobson radical. Then $R$ is perfectoid pure {(resp. \claperfdinj)} if and only if $R_\m$ is perfectoid pure {(resp. \claperfdinj)} for all maximal ideals $\m$ of $R$. 
\end{lemma}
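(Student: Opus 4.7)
The plan is to pass through $\m$-adic completions via \autoref{lem:pfdpurecompletion}, reducing both directions to the complete local case.

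For the forward direction, suppose $R\to B$ is pure (respectively induces an injection on local cohomology at every maximal ideal) with $B$ perfectoid. By \autoref{rmk:make B aic} I may enlarge $B$ so that every element of $R$ has a compatible system of $p$-power roots in $B$. Arguing exactly as in the proof of \autoref{lem:pfdpurecompletion}, the $\m$-adic completion $\widehat{R_\m}=R^{\wedge \m}\to B^{\wedge \m}$ remains pure (respectively injective on local cohomology): for purity, $E:=E(R/\m)$ is $\m$-power-torsion so $E\otimes_R B\cong E\otimes_R B^{\wedge\m}$; for local cohomology, $H^i_\m(B)\cong H^i_\m(B^{\wedge \m})$. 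Since $B^{\wedge\m}$ is perfectoid by \cite[Proposition 2.1.11(e)]{KestutisScholzePurityFlatCohomology}, $\widehat{R_\m}$ is perfectoid pure (respectively injective), and \autoref{lem:pfdpurecompletion} then gives the same conclusion for $R_\m$.

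For the reverse direction, for each maximal ideal $\m$ of $R$, \autoref{lem:pfdpurecompletion} provides a perfectoid $\widehat{R_\m}$-algebra $B_\m$ with $\widehat{R_\m}\to B_\m$ pure (respectively inducing injections on local cohomology at $\m\widehat{R_\m}$). I would then form the infinite product $B:=\prod_\m B_\m$, an $R$-algebra via $R\to \widehat{R_\m}\to B_\m$ on each factor. Arbitrary products of perfectoid rings are perfectoid: each ingredient of the Bhatt--Scholze definition ($p$-completeness, existence of $\pi$ with $\pi^p\mid p$, surjectivity of Frobenius on $B/p$, and principality of $\ker\theta$) passes to products, since the Witt vector functor and tilting both commute with products of rings. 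To check purity of $R\to B$, apply \cite[Proposition~6.11]{HochsterRobertsRingsOfInvariants} and verify that $E(R/\m_0)\to E(R/\m_0)\otimes_R B$ is injective for each maximal $\m_0$. The crucial observation is that for any $\m\neq \m_0$ one may choose $s\in\m_0\setminus \m$, which is a unit in $\widehat{R_\m}$ and hence in $B_\m$, so $\m_0 B_\m=B_\m$ and therefore $N\otimes_R B_\m=0$ for every $\m_0$-power-torsion $R$-module $N$. Writing $E(R/\m_0)$ as the direct limit of its finitely generated (hence finitely presented, over Noetherian $R$) $\m_0$-power-torsion submodules, and using that tensor commutes with products for finitely presented modules and always with colimits, one obtains $E(R/\m_0)\otimes_R B\cong E(R/\m_0)\otimes_R B_{\m_0}$; injectivity then follows from purity of $\widehat{R_{\m_0}}\to B_{\m_0}$ together with the identification $E(R/\m_0)=E(\widehat{R_{\m_0}}/\m_0\widehat{R_{\m_0}})$.

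For the local cohomology variant, I would exploit functoriality directly: the composition $R\to \prod_\m B_\m \to B_{\m_0}$ with the projection to the $\m_0$-factor factors as $R\to\widehat{R_{\m_0}}\to B_{\m_0}$, which induces an injection on $H^i_{\m_0}$ by the chosen property of $B_{\m_0}$, combined with the standard identification $H^i_{\m_0}(R)=H^i_{\m_0}(\widehat{R_{\m_0}})$; hence $H^i_{\m_0}(R)\to H^i_{\m_0}(\prod_\m B_\m)$ must also be injective. The main obstacle I expect is verifying that an infinite product of perfectoid rings is again perfectoid; once granted, everything else is a careful but routine reduction to the local situation via \autoref{lem:pfdpurecompletion} together with standard manipulations of injective hulls and local cohomology. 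If the product claim proves inconvenient, a possible workaround is to replace $\prod_\m B_\m$ with its perfectoidization and check that the arguments survive.
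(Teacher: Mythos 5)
Your proposal is correct and follows essentially the same strategy as the paper: the converse direction is identical (take the product $\prod_\m B(\m)$ of perfectoid witnesses over the completions, using that a product of perfectoid rings is perfectoid --- which the paper simply cites to \cite[Example 3.8]{BhattIyengarMaRegularRingsPerfectoid} rather than verifying as you sketch), and the reduction to the complete local case via \autoref{lem:pfdpurecompletion} is the same mechanism. The only cosmetic difference is in the forward direction, where the paper localizes $B$ at $\m$ and takes the $p$-adic completion $(B_\m)^{\wedge p}$ (perfectoid by the same reference), whereas you enlarge $B$ via \autoref{rmk:make B aic} and take its $\m$-adic completion; both are valid.
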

\begin{proof}
We first handle the \claperfdpure case. Suppose $R$ is \claperfdpure, then $R\to B$ is pure for some perfectoid ring $B$. Then $R_\m \to B_\m$ is pure, i.e., $E(R/\m)\otimes_R R_\m \to E(R/\m) \otimes_R B_\m$ is injective where $E(R/\m)$ is the injective hull of $R/\m$. But as $E(R/\m)$ is $p^{\infty}$-torsion, we know that $E(R/\m)\otimes_R B_\m \cong E(R/\m)\otimes_R (B_\m)^{\wedge p}$ where $(B_\m)^{\wedge p}$ is the $p$-adic completion of $B_\m$, which is perfectoid by \cite[Example 3.8]{BhattIyengarMaRegularRingsPerfectoid}. Thus $R_\m\to (B_\m)^{\wedge p}$ is pure and thus $R_\m$ is \claperfdpure. Conversely, suppose $R_\m$ is \claperfdpure, then $(R_\m)^{\wedge \m}$ is \claperfdpure by \autoref{lem:pfdpurecompletion}, i.e., $(R_\m)^{\wedge \m}\to B(\m)$ is split for some perfectoid ring $B(\m)$. Then $\prod_\m B(\m)$ is perfectoid by \cite[Example 3.8]{BhattIyengarMaRegularRingsPerfectoid} and $\prod_\m (R_\m)^{\wedge \m} \to \prod_\m B(\m)$ is split. Since $R\to \prod (R_\m)^{\wedge \m}$ is faithfully flat (here we are using that $R$ is Noetherian), it follows that the composition $R\to \prod_\m(R_\m)^{\wedge \m}\to \prod_\m B(\m)$ is pure and thus $R$ is \claperfdpure. 

The proof in the \claperfdinj case is similar: first note that we have $H_\m^i(B)\cong H_\m^i(B_\m)\cong H_\m^i((B_\m)^{\wedge p})$ (the second isomorphism follows as the $p$-adic completion agrees with the derived $p$-completion as $B$ and thus $B_\m$ has bounded $p^\infty$-torsion). So if $H_\m^i(R)\to H_\m^i(B)$ is injective for all $i$, then $H_\m^i(R_\m)\to H_\m^i((B_\m)^{\wedge p})$ is injective and thus $R_\m$ is \claperfdinj. Conversely, assuming $R_\m$ is \claperfdinj and using the same notation as in the \claperfdpure case, we have $H_\m^i(R)\to H_\m^i(\prod_\m B(\m))$ is injective for every maximal ideal $\m$ and every $i$ since this map factors the injective map $H_\m^i(R)\to H_\m^i(B(\m))$. Thus $R$ is \claperfdinj as wanted.
\end{proof}

\begin{lemma}
\label{lem:checklocalringsAbsPure}
Let $R$ be a Noetherian ring with $p$ in its Jacobson radical. Then $R$ is \absperfdpure (resp. \absperfdinj) if and only if $R_\m$ is \absperfdpure (resp. \absperfdinj) for every maximal ideal $\m$ of $R$.
\end{lemma}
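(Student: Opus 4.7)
The plan is to prove both directions for each of \absperfdpure and \absperfdinj, relying on \autoref{lem:perfd_localization} (which gives $(R_\m)_\perfd \simeq ((R_\perfd)_\m)^{\wedge p}$), \autoref{prop:perfds_completion_maximal_ideal} (compatibility of perfectoidization with $\m$-adic completion), \autoref{lem:abspurecompletion} (reduction to the complete local case), and the derived purity machinery from \autoref{subsec.PureMapsInDR}.

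For the \absperfdinj case I would handle both implications at once by showing that $H^i_\m(R) \to H^i_\m(R_\perfd)$ is naturally identified with $H^i_\m(R_\m) \to H^i_\m((R_\m)_\perfd)$. Two general observations suffice: (i) $\myR\Gamma_\m(X) \simeq \myR\Gamma_\m(X_\m)$ for any $X \in \myD(R)$, since $\m$-torsion is insensitive to localization at $\m$; and (ii) $\myR\Gamma_\m(X) \simeq \myR\Gamma_\m(X^{\wedge p})$ for $X \in \myD(R_\m)$. The latter follows because the cofiber $C$ of $X \to X^{\wedge p}$ has vanishing derived $p$-completion, so $p$ acts invertibly on $C$; on the other hand $\myR\Gamma_\m(C)$ lies in derived $p$-power torsion (every $\m$-power torsion element is $p$-power torsion, since $p \in \m$), hence $\myR\Gamma_\m(C) = 0$. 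Applying (i) and (ii) together with \autoref{lem:perfd_localization} yields the desired natural isomorphism.

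For the \absperfdpure ``if'' direction, I would apply the purity criterion \autoref{prop:purity-criterion}. Given a perfect $K \in \myD(R)$ with $K \to Q = \mathrm{cofib}(R \to R_\perfd)$, the composition $K \to R[1]$ defines a class in $\Hom_{\myD(R)}(K, R[1]) = H^1(\myR\Hom_R(K,R))$, which is a finitely generated $R$-module since $K$ is perfect. Such a class vanishes iff it vanishes after localizing at every maximal ideal $\m$. But localizing gives $K_\m \to Q_\m = \mathrm{cofib}(R_\m \to (R_\perfd)_\m) \to R_\m[1]$, which is zero by \autoref{prop:purity-criterion} applied to $R_\m \to (R_\perfd)_\m$ --- and the latter is pure by \autoref{lem:first_factor_is_pure}, since it is the first factor of the pure composition $R_\m \to (R_\perfd)_\m \to ((R_\perfd)_\m)^{\wedge p} = (R_\m)_\perfd$ guaranteed by hypothesis and \autoref{lem:perfd_localization}.

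For the \absperfdpure ``only if'' direction --- which I expect to be the main obstacle --- I would reduce by \autoref{lem:abspurecompletion} to showing $R^{\wedge\m} \to (R^{\wedge\m})_\perfd$ is pure. Tensoring the given pure map $R \to R_\perfd$ with $R/\m^n$ via \autoref{lem:tensor-product-food} yields purity of $R/\m^n \to R_\perfd/\m^n$; since $R/\m^n = R^{\wedge\m}/\m^n$ and $R_\perfd/\m^n \simeq (R^{\wedge\m})_\perfd/\m^n$ by \autoref{prop:perfds_completion_maximal_ideal} (reduced mod $\m^n$), this establishes $\m$-complete purity of the composite $R^{\wedge\m} \to ((R^{\wedge\m})_\perfd)^{\wedge\m}$. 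Since the source is $\m$-adically complete Noetherian and discrete (so trivially bounded with coherent cohomology) and the target is $\m$-complete by construction, \autoref{lem:p-completely-pure-implies-pure} promotes $\m$-complete purity to genuine purity; then \autoref{lem:first_factor_is_pure} applied to $R^{\wedge\m} \to (R^{\wedge\m})_\perfd \to ((R^{\wedge\m})_\perfd)^{\wedge\m}$ extracts purity of the desired first factor. The key subtlety --- and the reason the argument is somewhat indirect --- is that $(R^{\wedge\m})_\perfd$ itself need not be $\m$-adically complete even though $R^{\wedge\m}$ is, which forces us to route through its $\m$-adic completion and only recover the statement we want via \autoref{lem:first_factor_is_pure}.
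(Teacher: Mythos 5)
Your proposal is correct, but it takes a genuinely different route from the paper for the purity directions. For the ``only if'' direction of \absperfdpure, the paper simply localizes the pure map $R \to R_\perfd$ to get purity of $R_\m \to (R_\perfd)_\m$, then observes that after $p$-completion this is $R_\m \to (R_\m)_\perfd$ by \autoref{lem:perfd_localization}, and applies \autoref{lem:p-completely-pure-implies-pure} with $I = (p)$. You instead route through $\m$-adic completion: tensor with $R/\m^n$, invoke \autoref{prop:perfds_completion_maximal_ideal}, apply \autoref{lem:p-completely-pure-implies-pure} with $I = \m$, peel off the completion with \autoref{lem:first_factor_is_pure}, and finish with \autoref{lem:abspurecompletion}. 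Both work, but the paper's is shorter since it never needs to pass to $R^{\wedge\m}$. For the ``if'' direction, the two arguments differ more substantially: the paper invokes \autoref{lem:abspurecompletion} to show each $(R_\m)^{\wedge\m}$ is \absperfdpure, forms the faithfully flat map $R \to \prod_\m (R_\m)^{\wedge\m}$ followed by the split map $\prod_\m (R_\m)^{\wedge\m} \to \prod_\m ((R_\m)^{\wedge\m})_\perfd$, observes the composition is pure and factors through $R_\perfd$, and applies \autoref{lem:first_factor_is_pure}. You instead use the purity criterion of \autoref{prop:purity-criterion} and the fact that the obstruction class $K \to R[1]$ lives in a finitely generated $R$-module, so vanishing can be checked locally at every maximal ideal; locally it vanishes because $R_\m \to (R_\perfd)_\m$ is the first factor of the pure map $R_\m \to (R_\m)_\perfd$. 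Your obstruction-theoretic argument is arguably more self-contained (it avoids the infinite product and the need for \autoref{lem:abspurecompletion} in this direction), while the paper's is closer in spirit to the classical characteristic-$p$ argument. Your treatment of the \absperfdinj case fills in the same chain of isomorphisms the paper states and leaves to the reader, with a correct justification of the step $\myR\Gamma_\m(X) \simeq \myR\Gamma_\m(X^{\wedge p})$ via $p$ acting invertibly on the cofiber of $p$-completion.
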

\begin{proof}
We first assume $R$ is \absperfdpure, i.e., $R\to R_\perfd$ is pure, then $R_\m\to (R_\perfd)_\m$ is pure and thus $R_\m\to (R_\perfd)_\m^{\wedge p}\cong (R_\m)_\perfd$ is $p$-completely pure, where the isomorphism follows from \autoref{lem:perfd_localization}. It follows that $R_\m\to (R_\m)_\perfd$ is pure by \autoref{lem:p-completely-pure-implies-pure}. Conversely, suppose $R_\m$ is \absperfdpure for every maximal ideal $\m$. Then by \autoref{lem:abspurecompletion}, we know that $(R_\m)^{\wedge\m}$ is \absperfdpure. Consider the composition: 
\[R\to \prod_{\m}(R_\m)^{\wedge\m}\to \prod_\m ((R_\m)^{\wedge\m})_\perfd,\]
where the first map is faithfully flat and the second map is split in $\myD(R)$. It follows that the composition is pure in $\myD(R)$ by \autoref{lem:composition_is_pure} and since this map factors through $R_\perfd$, we have that $R\to R_\perfd$ is pure by \autoref{lem:first_factor_is_pure}. 

The proof in the \absperfdinj case is even easier by noting that 
$$H_\m^i(R_\perfd)\cong H_\m^i((R_\perfd)_\m)\cong H_\m^i((R_\perfd)_\m^{\wedge p})\cong H_\m^i((R_\m)_\perfd)$$
where the second isomorphism can be deduced from \cite[\href{https://stacks.math.columbia.edu/tag/0A6W}{Tag 0A6W}]{stacks-project} and the last isomorphism follows from \autoref{lem:perfd_localization}. We leave the details to the readers. 
\end{proof}

Next, we show that \claperfdpure (resp. \absperfdpure) and \claperfdinj (resp. \absperfdinj) are preserved under localization. We need a lemma.

\begin{lemma}
\label{lem:DualizingComplex}
 Suppose $R$ is a Noetherian ring with a dualizing complex. Suppose that $M \to N$ is a map in $D(R)$ with $M$ a perfect complex such that 
 \[
    H^i_{\fram}(M) \to H^i_{\fram}(N)
 \]
 injects for every $i$ and every maximal ideal in $\fram$.  Then for every $Q \in \Spec R$ we have that 
 \[
    H^i_{Q}(M_Q) \to H^i_{Q}(N_Q)
 \]
 injects.  

 In particular, if $R$ with $p$ in the Jacobson radical is \claperfdinj (with $B$ exhibiting this property), respectively if it is \absperfdinj, then 
 \[
    H^i_Q(R_Q) \to H^i_Q(B_Q), \;\;\;\text{ respectively } \;\;\;H^i_Q(R_Q) \to H^i_Q((R_{\perfd})_Q)
 \]
 injects for every $Q \in \Spec R$.
\end{lemma}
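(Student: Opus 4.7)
The plan is to use Grothendieck local duality to translate the injectivity hypothesis at maximal ideals into a surjectivity statement about the dualizing complex, and then descend these surjections to arbitrary primes via faithful flatness. As a preliminary reduction, since each $H^i_\fram$-module is $\fram$-torsion, the hypothesis persists under localization at $\fram$, and the conclusion at $Q$ depends only on $R_\fram$ for any maximal ideal $\fram \supseteq Q$. Thus we may assume $(R,\fram)$ is a Noetherian local ring with dualizing complex $\omega_R^\bullet$.

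Set $\bdD(-) := \myR\Hom_R(-,\omega_R^\bullet)$, $E := E_R(R/\fram)$, and $(-)^\vee := \myR\Hom_R(-, E)$. The identity $\myR\Hom_R(\myR\Gamma_\fram R, E) \simeq \omega_R^{\bullet,\wedge\fram}$, combined with the perfectness of the Koszul quotients $R/\fram^n$ (which allows one to commute $\myR\Hom(X,-)$ past the inverse limit defining derived $\fram$-adic completion), yields a natural isomorphism $\myR\Gamma_\fram(X)^\vee \simeq \bdD(X)^{\wedge\fram}$ for every $X \in \myD(R)$. Matlis dualizing the hypothesis then produces surjections $H^{-i}(\bdD(N)^{\wedge\fram}) \twoheadrightarrow H^{-i}(\bdD(M)^{\wedge\fram})$ for every $i$. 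Since $M$ is perfect, $\bdD(M) \in \myD^b_{\coherent}(R)$, so $\bdD(M)^{\wedge\fram} \simeq \bdD(M) \otimes_R \widehat R$ and $H^{-i}(\bdD(M)^{\wedge\fram}) = H^{-i}(\bdD(M)) \otimes_R \widehat R$ is a finitely generated $\widehat R$-module.

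Next, descend the surjection. Since $H^{-i}(\bdD(M))$ is Noetherian, the image of the natural map $H^{-i}(\bdD(N)) \to H^{-i}(\bdD(M))$ is a finitely generated submodule; comparison with the surjection on derived $\fram$-completions, together with faithful flatness of $\widehat R$ over the local ring $R$, identifies this image with all of $H^{-i}(\bdD(M))$. Localizing at $Q$ preserves the resulting surjection, and completing at $QR_Q$ yields a surjection onto $H^{-i}(\bdD(M)_Q \otimes \widehat{R_Q})$. Finally, applying local duality over $R_Q$ (valid since $(\omega_R^\bullet)_Q$ is a dualizing complex for $R_Q$, and since $\bdD(M)_Q = \myR\Hom_{R_Q}(M_Q, (\omega_R^\bullet)_Q)$ by perfectness of $M$) converts this surjection back into the desired injection $H^i_Q(M_Q) \hookrightarrow H^i_Q(N_Q)$.

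The main obstacle is the descent step: the Matlis-dualized hypothesis only gives surjectivity on the derived $\fram$-completions, which is not a priori equivalent to surjectivity on the uncompleted modules, since $\bdD(N)$ may fail to be coherent. The key is to exploit that the target $\bdD(M)$ is coherent, so the image lives inside a finitely generated module, allowing faithful flatness of $\widehat R$ over $R$ to give the required descent from completions back to classical modules.
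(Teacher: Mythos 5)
Your overall strategy (local duality at $\fram$, localize, dualize back at $Q$) is the same as the paper's, but the descent step is a genuine gap, and it is not reparable as stated. Surjectivity of $H^{-i}(\bdD(N)^{\wedge\fram}) \to H^{-i}(\bdD(M)^{\wedge\fram})$ does \emph{not} imply surjectivity of $H^{-i}(\bdD(N)) \to H^{-i}(\bdD(M))$ when $\bdD(N)$ fails to be coherent: the cohomology of the derived completion of $\bdD(N)$ is in general much larger than the completion of the image of $H^{-i}(\bdD(N))$, so the completed surjection gives no control over the uncompleted image $I$; in particular the image of the completed map need not lie in $I\otimes_R \widehat R$, which is exactly what your faithful-flatness argument would need. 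A minimal counterexample to your intermediate claim: take $R=\bZ_{(p)}$, $M=R$, and $N=\bZ_p$ with the inclusion $M\to N$. Then $H^1_{(p)}(M)\to H^1_{(p)}(N)$ is an isomorphism, so the hypotheses hold and the Matlis-dualized map on derived completions is the identity of $\bZ_p$; but the identity of $R$ does not extend to an $R$-linear map $\bZ_p\to R$ (such an extension would split $R$ off $\bZ_p$ as a direct summand with complement a nonzero $\bQ$-vector space, contradicting $\bigcap_n p^n\bZ_p=0$), so the image of the uncompleted map $\Hom_R(\bZ_p,R)\to\Hom_R(R,R)=R$ is a proper submodule (in fact it is zero). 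Thus ``surjective after completion, coherent target'' is not enough; coherence must be imposed on the \emph{source} of the dualized map as well.

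The missing idea, and the route the paper takes, is to first write $N=\colim_\lambda N_\lambda$ as a filtered colimit of perfect complexes with compatible maps $M\to N_\lambda$ (possible since $D(R)$ is compactly generated by perfect complexes). The hypothesis passes to each $N_\lambda$, since a class of $H^i_\fram(M)$ killed in some $H^i_\fram(N_\lambda)$ is killed in the filtered colimit $H^i_\fram(N)$ and hence vanishes. For fixed $\lambda$ both $\bdD(M)$ and $\bdD(N_\lambda)$ lie in $D^b_{\coherent}(R)$, so local duality together with faithful flatness of the completion legitimately produces a surjection of finitely generated modules \emph{before} completing; this surjection localizes at $Q$ and dualizes back over $R_Q$ to give $H^i_Q(M_Q)\hookrightarrow H^i_Q((N_\lambda)_Q)$. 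Finally $H^i_Q(N_Q)=\colim_\lambda H^i_Q((N_\lambda)_Q)$, and a compatible system of injections from a fixed module into a filtered system injects into the colimit. With this reduction in place, the rest of your argument goes through.
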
 
\begin{proof}
Write $N$ as a filtered colimit of compact objects $N_{\lambda}$ (that is, perfect complexes), $N = \colim N_{\lambda}$ where the colimit is taken in the derived infinity category, and where we fix maps $M \to N_{\lambda}$ colimiting to $M \to N$.
Note that $N_Q = \colim (N_{\lambda})_Q$.  It thus suffices to show that each $H^i_Q(M_Q) \to H^i_Q((N_{\lambda})_Q)$ injects.  Let $\fram \supseteq Q$ be a maximal ideal.  We are given that $H^i_{\fram}(M_{\fram}) \to H^i_{\fram}((N_{\lambda})_{\fram})$ injects. Local duality, localization, and then local duality again, completes the proof.

For the application, take $M = R$ and $N = B$ respective $N = R_{\perfd}$.
\end{proof}

\begin{lemma}
\label{lem:PerfdPureLocalizes}
Let $R$ be a Noetherian ring with $p$ in its Jacobson radical. If $R$ is \claperfdpure (resp. \claperfdinj) and $Q\in\Spec(R)$ such that $p\in Q$, then $R_Q$ is \claperfdpure (resp. \claperfdinj). Consequently, $W^{-1}R$ is \claperfdpure (resp. \claperfdinj) for any multiplicative system $W$ such that $p$ is in the Jacobson radical of $W^{-1}R$.
\end{lemma}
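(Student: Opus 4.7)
The plan is to build the perfectoid witness for $R_Q$ out of the given witness $B$ for $R$, by localizing to $B_Q$ and $p$-adically completing to $B' := (B_Q)^{\wedge p}$, which is again perfectoid by \cite[Example 3.8]{BhattIyengarMaRegularRingsPerfectoid}. The basic observation used in both cases is that, because $p \in Q$, any $R_Q$-module supported at the unique maximal ideal $QR_Q$ of the local ring $R_Q$ is $p^\infty$-torsion, so tensoring it with $B_Q$ or with $B'$ gives the same answer.

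\textbf{The perfectoid-pure case} is direct: base change preserves purity, so $R_Q \to B_Q$ remains pure. By the injective hull criterion \autoref{lem:injective_E_purity}, to show $R_Q \to B'$ is pure it suffices to verify that $E \to E \otimes_{R_Q} B'$ is injective, where $E = E_{R_Q}(R_Q/QR_Q)$ is the injective hull of the residue field. Since $E$ is $p^\infty$-torsion this target agrees with $E \otimes_{R_Q} B_Q$, and the needed injection comes from purity of $R_Q \to B_Q$.

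\textbf{For the perfectoid-injective case,} the difficulty is that injectivity of $H^i_{\m}(R) \to H^i_{\m}(B)$ at maximal ideals $\m$ does not obviously descend to an injection at a non-maximal prime $Q \subseteq \m$; the known tool, \autoref{lem:DualizingComplex}, requires a dualizing complex on $R$ which is not part of our hypothesis. My plan is to reduce to that situation by completing: (i) use \autoref{lem:checklocalrings} to replace $R$ by its localization at a maximal ideal $\m \supseteq Q$; (ii) use \autoref{lem:pfdpurecompletion} to further replace $R$ by $\widehat{R} := R^{\wedge \m}$, which is complete Noetherian local and hence admits a dualizing complex; (iii) apply \autoref{lem:DualizingComplex} to $\widehat{R}$, with a perfectoid witness $\widehat{B}$, to obtain $H^i_{Q^*}(\widehat{R}_{Q^*}) \hookrightarrow H^i_{Q^*}(\widehat{B}_{Q^*})$ for every $Q^* \in \Spec \widehat{R}$; (iv) pick $Q^*$ minimal over $Q\widehat{R}$, so that by faithful flatness $Q^* \cap R = Q$ and $\sqrt{QR_Q \cdot \widehat{R}_{Q^*}} = Q^* \widehat{R}_{Q^*}$, which makes $R_Q \to \widehat{R}_{Q^*}$ a faithfully flat local homomorphism; (v) combine flat base change for local cohomology with the fact that $H^i_J(M)$ depends only on $\sqrt{J}$ to descend the injection to $H^i_{QR_Q}(R_Q) \hookrightarrow H^i_{QR_Q}(\widehat{B}_{Q^*})$; (vi) pass to the perfectoid $p$-adic completion $(\widehat{B}_{Q^*})^{\wedge p}$, which serves as the perfectoid witness for $R_Q$. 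The main obstacle is step (v), which requires carefully keeping track of which ring acts on which object and verifying that the various identifications of local cohomology line up.

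\textbf{The ``consequently'' statement} follows from the prime case combined with \autoref{lem:checklocalrings}: the maximal ideals of $W^{-1}R$ have the form $W^{-1}\m$ for maximal ideals $\m \subset R$ disjoint from $W$, and the Jacobson radical hypothesis forces $p \in \m$ for each such $\m$. Thus $(W^{-1}R)_{W^{-1}\m} = R_{\m}$ is \claperfdpure (respectively, \claperfdinj) by the prime case just proved, so \autoref{lem:checklocalrings} concludes.
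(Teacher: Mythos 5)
Your proposal is correct and follows essentially the same route as the paper's proof: the pure case via $R_Q \to B_Q \to (B_Q)^{\wedge p}$ with the $p^\infty$-torsion identification of $E \otimes B_Q$ and $E \otimes (B_Q)^{\wedge p}$, and the injective case by localizing at a maximal ideal, completing to gain a dualizing complex, choosing $Q^*$ minimal over $Q\widehat{R}$, and composing the faithfully flat injection with the one supplied by \autoref{lem:DualizingComplex} before $p$-adically completing the witness. No substantive differences to report.
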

\begin{proof}
In the \claperfdpure case, note that the composition $R_Q\to B_Q \to (B_Q)^{\wedge p}$ is pure and $(B_Q)^{\wedge p}$ is perfectoid (see the proof of \autoref{lem:checklocalrings}) and so $R_Q$ is \claperfdpure. In the \claperfdinj case, after localizing at a maximal ideal containing $Q$ and applying \autoref{lem:checklocalrings} we may assume $(R,\m)$ is local. Now by \autoref{lem:pfdpurecompletion} we know that $R^{\wedge\m}$ is \claperfdinj and admits a dualizing complex. Choose $Q'\in \Spec(R^{\wedge\m})$ a minimal prime of $QR^{\wedge\m}$ and we have
$$H_Q^i(R_Q)\to H_Q^i((R^{\wedge\m})_{Q'}) \to H_Q^i(B_{Q'})\cong H_Q^i((B_{Q'})^{\wedge p}).$$
The first map above is injective by faithful flatness of $R_Q\to (R^{\wedge\m})_{Q'}$, the second map is injective by \autoref{lem:DualizingComplex} and the isomorphism follows since $p$ completion agrees with derived $p$-completion here (as $B$ and thus $B_{Q'}$ has bounded $p^\infty$-torsion). Thus the composition is injective, since $(B_{Q'})^{\wedge p}$ is perfectoid by \cite[Example 3.8]{BhattIyengarMaRegularRingsPerfectoid}, $R_Q$ is \claperfdinj as wanted. 

The last conclusion follows from \autoref{lem:checklocalrings}. 
\end{proof}

\begin{lemma}
    \label{lem.absPerfdPureLocalizes}
Let $R$ be a Noetherian ring with $p$ in its Jacobson radical. If $R$ is \absperfdpure (resp. \absperfdinj) and $Q\in\Spec(R)$ such that $p\in Q$, then $R_Q$ is \absperfdpure (resp. \absperfdinj). Consequently, $W^{-1}R$ is \absperfdpure (resp. \absperfdinj) for any multiplicative system $W$ such that $p$ is in the Jacobson radical of $W^{-1}R$.
\end{lemma}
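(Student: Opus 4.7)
The plan is to mirror the structure of \autoref{lem:PerfdPureLocalizes}, substituting $R_\perfd$ for the explicit perfectoid $R$-algebra $B$ and using \autoref{lem:perfd_localization} to identify $(R_Q)_\perfd$ with $(R_\perfd)_Q^{\wedge p}$ where needed. The last sentence concerning $W^{-1}R$ then follows from the first assertion via \autoref{lem:checklocalringsAbsPure}: each maximal ideal of $W^{-1}R$ is $QW^{-1}R$ for some prime $Q \in \Spec R$ with $p \in Q$, and $(W^{-1}R)_{QW^{-1}R} = R_Q$.

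For the \absperfdpure case I would begin with the pure map $R \to R_\perfd$ in $\myD(R)$. Since $R \to R_Q$ is flat, \autoref{lem:tensor-product-food} yields that $R_Q \to (R_\perfd)_Q$ is pure in $\myD(R_Q)$; tensoring further with $R_Q/p^n$ and reapplying \autoref{lem:tensor-product-food} gives that $R_Q/p^n \to (R_\perfd)_Q \otimes^{\myL}_{R_Q} R_Q/p^n$ is pure for every $n$. By \autoref{lem:perfd_localization} the target agrees with $(R_Q)_\perfd \otimes^{\myL}_{R_Q} R_Q/p^n$ (the $p$-completion is invisible modulo $p^n$), so $R_Q \to (R_Q)_\perfd$ is $p$-completely pure, and \autoref{lem:p-completely-pure-implies-pure} promotes this to a pure map in $\myD(R_Q)$.

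For the \absperfdinj case, since local cohomology at the maximal ideal does not localize cleanly, I would first apply \autoref{lem:checklocalringsAbsPure} to reduce to the case where $(R,\m)$ is Noetherian local with $\m \supseteq Q$, and then use \autoref{lem:abspurecompletion} to replace $R$ by its $\m$-adic completion $\widehat{R}$, which remains \absperfdinj and admits a dualizing complex. Choose a minimal prime $Q'$ of $Q\widehat{R}$; by \autoref{lem:DualizingComplex} applied to $\widehat{R} \to \widehat{R}_\perfd$, the map $H^i_{Q'}(\widehat{R}_{Q'}) \to H^i_{Q'}((\widehat{R}_\perfd)_{Q'})$ is injective for every $i$. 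Using \autoref{lem:perfd_localization} together with insensitivity of local cohomology at an ideal containing $p$ to derived $p$-completion (Stacks \cite[\href{https://stacks.math.columbia.edu/tag/0A6W}{Tag 0A6W}]{stacks-project}), the target rewrites as $H^i_{Q'}((\widehat{R}_{Q'})_\perfd)$. Combining this with faithful flatness of $R_Q \to \widehat{R}_{Q'}$ (which holds since $R \to \widehat{R}$ is faithfully flat and $Q' \cap R = Q$ by minimality, and lets us identify $H^i_Q$ with $H^i_{Q'}$ on $\widehat{R}_{Q'}$-modules) and functoriality of $(-)_\perfd$, the composite
\[
H^i_Q(R_Q) \longrightarrow H^i_Q((R_Q)_\perfd) \longrightarrow H^i_{Q'}((\widehat{R}_{Q'})_\perfd)
\]
is injective, forcing the first map to inject and exhibiting $R_Q$ as \absperfdinj.

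The point I expect to be trickiest is the invocation of \autoref{lem:p-completely-pure-implies-pure} in the \absperfdpure step, since that lemma is stated for an $I$-complete base while $R_Q$ itself need not be $p$-adically complete. Its proof, however, only uses that $\myR\Hom_{R_Q}(K, R_Q)$ is derived $p$-complete for perfect $K$, which one can arrange by first composing with the pure (faithfully flat) $p$-adic completion $R_Q \to R_Q^{\wedge p}$ and noting $(R_Q^{\wedge p})_\perfd \simeq (R_Q)_\perfd$; this is the same device implicit in the local-case half of the proof of \autoref{lem:checklocalringsAbsPure}, so it should not present a genuine obstacle.
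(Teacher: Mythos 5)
Your proposal is correct and follows essentially the same route as the paper's proof: in the lim-perfectoid pure case the paper also reduces to $p$-complete purity via \autoref{lem:tensor-product-food} and then applies \autoref{lem:p-completely-pure-implies-pure} over $R_Q^{\wedge p}$ before composing with the faithfully flat map $R_Q \to R_Q^{\wedge p}$ (exactly the repair you supply in your last paragraph), and the lim-perfectoid injective case matches your reduction through \autoref{lem:checklocalringsAbsPure}, \autoref{lem:abspurecompletion}, the passage to a minimal prime $Q'$ of $Q\widehat{R}$, and \autoref{lem:DualizingComplex} point for point. The only caution is that your first pass invokes \autoref{lem:p-completely-pure-implies-pure} directly over $R_Q$, which is not $p$-complete, so the argument is airtight only once the completion device in your closing paragraph is actually inserted into the middle of the purity step rather than left as an afterthought.
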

\begin{proof}
By \autoref{lem:perfd_localization}, we have $(R_Q)_{\perfd}\simeq (R_{\perfd})_Q^{\wedge p}$.  
Now suppose that $R\to R_{\perfd}$ is pure, for which it follows that $R_Q\to (R_{\perfd})_Q$ is pure.  Then $R_Q\otimes^{\myL}\mathbb{Z}/p^n\to (R_{\perfd})_Q\otimes^{\myL}\mathbb{Z}/p^n$ is pure by \autoref{lem:tensor-product-food}, and so $(R_Q)^{\wedge p}\to (R_{\perfd})_Q^{\wedge p}$ is $p$-completely pure and hence pure by \autoref{lem:p-completely-pure-implies-pure}.  Thus by \autoref{lem:composition_is_pure} the composition $$R_Q\to (R_Q)^{\wedge p} \to (R_{\perfd})_Q^{\wedge p}\simeq (R_Q)_{\perfd}$$ is pure since $R_Q\to (R_Q)^{\wedge p}$ is faithfully flat as $R$ is Noetherian ring and $p\in Q$.

Now suppose that $R$ is \absperfdinj. After localizing at a maximal ideal containing $Q$ and applying \autoref{lem:checklocalringsAbsPure}, we may assume $(R,\m)$ is local. Now by \autoref{lem:pfdpurecompletion} we know that $R^{\wedge\m}$ is \absperfdinj and admits a dualizing complex. Choose $Q'\in \Spec(R^{\wedge\m})$ a minimal prime of $QR^{\wedge\m}$ and we have
$$H_Q^i(R_Q)\to H_Q^i((R^{\wedge\m})_{Q'}) \to H_Q^i((R^{\wedge\m}_\perfd)_{Q'})\cong H_Q^i\big(((R^{\wedge\m})_{Q'})_\perfd\big).$$
The first map above is injective by faithful flatness of $R_Q\to (R^{\wedge\m})_{Q'}$, the second map is injective by \autoref{lem:DualizingComplex} and that $R^{\wedge\m}$ is \absperfdinj, and the isomorphism follows from \autoref{lem:perfd_localization}. Thus the composition is injective, since the map factors through $H_Q^i((R_Q)_\perfd)$, we know that $H_Q^i(R_Q)\to H_Q^i((R_Q)_\perfd)$ is injective and thus $R_Q$ is \absperfdinj as wanted. 

The last conclusion follows from \autoref{lem:checklocalrings}. 
\end{proof}

\subsection{Smooth ring extensions}
In this subsection we show the behavior of perfectoid purity under smooth extensions. We begin with some lemmas.

\begin{lemma}
    \label{lem.EtaleMapExtends}
    Suppose $R \to S$ is an \'etale map between Noetherian rings that have $p$ in their Jacobson radicals. Suppose $R$ is \claperfdpure, \claperfdinj, \absperfdpure or \absperfdinj, then so is $S$.
\end{lemma}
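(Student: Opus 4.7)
The central input is \autoref{prop:formally_etale_perfd}: since $R \to S$ is \'etale, the mod-$p$ reduction $R/p \to S/p$ is \'etale and therefore relatively perfect, so there is a canonical equivalence $S_{\perfd} \simeq S\, \widehat{\otimes}^{\myL}_R\, R_{\perfd}$. The strategy is to propagate each of the four properties along the flat base change $R \to S$ using this identification.

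For the \absperfdpure case, \autoref{lem:tensor-product-food} applied to the pure map $R \to R_{\perfd}$ with $K = S$ produces a pure map $S = R \otimes^{\myL}_R S \to R_{\perfd} \otimes^{\myL}_R S$ in $\myD(R)$; if $\sigma_i \colon N_i \to R$ splits an $R$-linear $R \to N_i$, then $\sigma_i \otimes_R S \colon N_i \otimes_R S \to S$ is an $S$-linear splitting of its base change, upgrading the purity to $\myD(S)$. Tensoring with $\bZ/p^n$ (via \autoref{lem:tensor-product-food} again), this map becomes $p$-completely pure; \autoref{lem:p-completely-pure-implies-pure} combined with the identification $(R_{\perfd} \otimes^{\myL}_R S)^{\wedge p} \simeq S_{\perfd}$ then gives purity of $S^{\wedge p} \to S_{\perfd}$, and composing with the faithfully flat $S \to S^{\wedge p}$ via \autoref{lem:composition_is_pure} concludes that $S \to S_{\perfd}$ is pure. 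For the \absperfdinj case, \autoref{lem:checklocalringsAbsPure} reduces to a maximal ideal $\n \subset S$ with $\m := R \cap \n$; since $R_\m \to S_\n$ is flat with $\m S_\n$ being $\n$-primary, flat base change for local cohomology together with \autoref{lem:perfd_localization} transfers the injectivity $H^i_\m(R_\m) \hookrightarrow H^i_\m((R_\m)_{\perfd})$ to $H^i_\n(S_\n) \hookrightarrow H^i_\n((S_\n)_{\perfd})$.

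For the two classical cases, let $B$ be a perfectoid $R$-algebra exhibiting the hypothesis, and set $C := (B \otimes_R S)^{\wedge p}$. Since $B/p \to C/p \cong B/p \otimes_{R/p} S/p$ is \'etale and Frobenius lifts uniquely along \'etale maps modulo $p$, $C$ is a classical perfectoid $S$-algebra (cf.\ \cite[Proposition 2.1.11]{KestutisScholzePurityFlatCohomology}). For \claperfdpure, the map $S \to C$ is pure: for every $S$-module $M$ regarded as an $R$-module, the induced map $M \to C \otimes_S M \cong B \otimes_R M$ is injective by purity of $R \to B$. For \claperfdinj, the injectivity $H^i_\n(S) \hookrightarrow H^i_\n(C)$ follows similarly from the corresponding statement for $R \to B$ via flat base change along $R_\m \to S_\n$. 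The main obstacle I anticipate is the verification that $(B \otimes_R S)^{\wedge p}$ is indeed a classical perfectoid $S$-algebra, which requires care in invoking the \'etale base change properties of perfectoid rings.
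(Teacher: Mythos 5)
Your proof follows essentially the same route as the paper's (which is quite terse): classical perfectoid base change for the classical cases, \autoref{prop:formally_etale_perfd} for the lim-perfectoid cases, and flat base change for local cohomology for the injective cases. Your elaboration is correct, and you usefully spell out the step the paper elides in the \absperfdpure case — that the splittings survive base change to $\myD(S)$, then propagating purity across $p$-completion via \autoref{lem:tensor-product-food}, \autoref{lem:p-completely-pure-implies-pure}, and \autoref{lem:composition_is_pure}.

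The one point you flag yourself — that $(B\otimes_R S)^{\wedge p}$ is perfectoid when $R\to S$ is \'etale — is indeed a real hypothesis to verify but is a standard fact: a $p$-completely \'etale algebra over a perfectoid ring is perfectoid (this is implicit in the almost purity theorem and can also be extracted from the tilting equivalence; the paper itself simply asserts it without reference). Your citation of \cite[Proposition 2.1.11]{KestutisScholzePurityFlatCohomology} is not the right one for this, however — that result is about $\m$-adic completion of perfectoid rings being perfectoid, not about \'etale base change. A more appropriate pointer is the \'etale comparison theorem in \cite{BhattScholzepPrismaticCohomology} (e.g., the neighborhood of Theorem 10.9 or the treatment of semiperfectoid quotients in Section~7), or Bhatt--Morrow--Scholze's almost purity. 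Otherwise the argument is sound.
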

\begin{proof}
    If $R \to B$ is a pure map such that $B$ is perfectoid, then the base change $S \to B \widehat \otimes_R S$ is also pure, thus $S$ is perfectoid pure since $B \widehat \otimes_R S$ is a perfectoid ring. 
    The analogous statement for \absperfdpure follows by \autoref{prop:formally_etale_perfd}.  The statements for perfectoid injective and lim-perfectoid injective follow similarly since $H^i_{\fram}(-) \otimes_R S = H^i_{\fram}(- \otimes_R S)$ as $R \to S$ is flat.
\end{proof}

\begin{lemma}
    \label{clm.BhattSmoothnessClaim}
    Suppose $R$ is a perfectoid ring. Let $G = \mathbf{G}_m^n \times \mathrm{Spf}(R)$ and set $T=\mathcal{O}(G)$, so $T= R[x_1^{\pm 1}, \dots, x_n^{\pm 1}]^{\wedge p}$. Then $T \to T_{\perfd}$ admits a $T$-module splitting which is functorial in the map $R \to T$ (and, importantly, compatible with base change in $R$).
\end{lemma}
\begin{proof}
We shall argue geometrically. Write $G' = \lim_p G$ for the naive perfection of $G$ (the inverse limit of multiplication by $p$ on $G$). Note that multiplication by $p^m$ on $G$ is faithfully flat with kernel the linearly reductive group scheme $\mu_{p^m}^n$. Taking inverse limits in $m$ shows that the natural map $G' \to G$ is faithfully flat with kernel $K = \bZ_p(1)  = \lim \mu_{p^m}$ also being linearly reductive. Since $G'$ is perfectoid by construction, this map factors as a $G' \to G_{\perfd} \to G$ of group stacks. Note that $K$ in $G'$ maps to 0 under the composite by construction. So we can pass to the quotient by $K$ to obtain a map $G = G'/K \to G_{\perfd}/K$ splitting the map $G_{\perfd}/K \to G$. Passing to rings, this shows that
    \[ 
        T := \cO(G) \to \cO(G_{\perfd}/K) 
    \]
    admits a section (even as rings). Using the fact that $K$ is linear reductive, we also know that 
    \[
        \cO(G_{\perfd}/K) \to \cO(G_{\perfd}) = T_{\perfd}
    \]
    admits a natural module splitting (in fact, representations of $K$ are $\bZ[1/p]/\bZ$-graded modules, and taking cohomology just means taking the degree 0 summand). Combining the two shows that $T \to T_{\perfd}$ admits a natural $T$-module splitting, as wanted.
\end{proof}

The next lemma is well-known, we include a short proof for completeness as we cannot find a good reference.

\begin{lemma}
\label{lem: filtered colimits commutes cosimplicial}
Let $\{C_i^\bullet\}_{i\in I}$ be a filtered diagram of cosimplicial objects in $D(R)$ so that $C_i^n\in D^{\geq 0}$ for all $i, n$. Then the natural map 
$$\varinjlim_{i\in I}\lim_{\Delta} C_i^\bullet \to \lim_{\Delta}\varinjlim_{i\in I} C_i^\bullet$$
is an isomorphism.
\end{lemma}
\begin{proof}
It is enough to check this isomorphism after taking $H^k(-)$ for each $k\geq 0$. But after taking $H^k(-)$, on both sides we could replace $\lim_{\Delta}$ by $\lim_{\Delta_{\leq k+1}}$: this is because 
$$\text{fib} \left(\lim_{\Delta} C_i^\bullet \to \lim_{\Delta_{\leq k+1}}C_i^{\bullet \leq k+1}\right)$$
lives in $D^{> k+1}$ by our assumption that $C_i^n\in D^{\geq 0}$ for all $i, n$. Thus the isomorphism follows since filtered colimits commutes with finite limits.
\end{proof}

\begin{proposition}
    \label{prop.SmoothMapExtends}
    Suppose $R \to S$ is a finite type map between Noetherian rings with $p$ in the Jacobson radical of $R$.  Suppose $\frq$ is a prime ideal of $S$ that contains $p$ such that $\Spec (S) \to \Spec (R)$ is smooth at $\frq$.  If $R$ is 
    \begin{enumerate}
        \item \claperfdpure, respectively 
        \item \claperfdinj,
        \item \absperfdpure, or \label{prop.SmoothMapExtends.absperfdpure}
        \item \absperfdinj
    \end{enumerate}
    then so is $S_{\frq}$.
\end{proposition}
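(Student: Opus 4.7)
The plan is to use the standard local structure of smooth morphisms to reduce to the case of a polynomial ring extension, which is combined with the already-established \autoref{lem.EtaleMapExtends} for the étale part. First, since smoothness is open, I would replace $S$ by a Zariski neighborhood of $\frq$ so that $R\to S$ is smooth. By the local structure theorem for smooth morphisms (e.g.\ \cite[\href{https://stacks.math.columbia.edu/tag/054L}{Tag 054L}]{stacks-project}), after a further Zariski shrinking around $\frq$ there is a factorization
$$R \longrightarrow R[x_1,\dots,x_n] \xrightarrow{\ \textup{\'etale}\ } S.$$
Writing $Q := \frq\cap R[x_1,\dots,x_n]$, we have $p\in Q$ and the induced map $R[x_1,\dots,x_n]_Q \to S_\frq$ is \'etale. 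Thus by \autoref{lem.EtaleMapExtends} the problem reduces to showing that $R[x_1,\dots,x_n]_Q$ satisfies the corresponding perfectoid property whenever $R$ does.

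For the \claperfdpure case, pick a pure map $R\to B$ with $B$ perfectoid; by \autoref{rmk:make B aic} we may arrange that $B$ is absolutely integrally closed. The natural perfectoid $R[x_1,\dots,x_n]$-algebra to consider is $C := B\langle x_1^{1/p^\infty},\dots,x_n^{1/p^\infty}\rangle$, the $p$-completion of $B[x_1^{1/p^\infty},\dots,x_n^{1/p^\infty}]$. I would then verify the purity of $R[x_1,\dots,x_n]_Q \to C^{\wedge Q}$ by factoring it as
$$R[x_1,\dots,x_n] \longrightarrow B[x_1,\dots,x_n] \lhook\joinrel\longrightarrow B[x_1^{1/p^\infty},\dots,x_n^{1/p^\infty}] \longrightarrow C,$$
where the first arrow is the base change of the pure map $R\to B$ (pure by \autoref{lem:tensor-product-food}), the second is split as a map of $B[x_1,\dots,x_n]$-modules with basis the monomials $x_1^{a_1}\cdots x_n^{a_n}$, $a_i\in[0,1)\cap\mathbb{Z}[1/p]$, and the $p$-completion is handled after localizing at $Q$ using \autoref{lem:p-completely-pure-implies-pure} (since then the source is Noetherian and $p$-adically complete). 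The \claperfdinj case follows from the same construction by tracking local cohomology and using the $B$-module decomposition to split off the unwanted summands.

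For the \absperfdpure and \absperfdinj cases, the idea is parallel but phrased via the universal description of \autoref{prop:universal_property_perfd}: for every perfectoid $R$-algebra $B$ the ring $B\langle x_1^{1/p^\infty},\dots,x_n^{1/p^\infty}\rangle$ is a perfectoid $R[x_1,\dots,x_n]$-algebra, and taking the inverse limit over such $B$ produces a natural factorization
$$R[x_1,\dots,x_n]_Q \longrightarrow \bigl(R[x_1,\dots,x_n]_Q\bigr)_\perfd$$
through which the purity (or local cohomology injectivity) of $R\to R_\perfd$ propagates, again after passing to the $Q$-adic completion and invoking \autoref{lem:p-completely-pure-implies-pure} and \autoref{lem.absPerfdPureLocalizes}.

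I expect the main obstacle to be the passage through $p$-completion in the polynomial step, since the intermediate ring $B[x_1^{1/p^\infty},\dots,x_n^{1/p^\infty}]$ is non-Noetherian and purity is notoriously delicate to propagate through non-Noetherian completions. The remedy, as indicated above, is to defer the $p$-completion until after localizing at $Q$ so that the source is a Noetherian $p$-adically complete local ring; then the $p$-complete purity established before completion upgrades to honest purity via \autoref{lem:p-completely-pure-implies-pure}. A secondary check is that the various reduction steps (shrinking $S$, localizing at $Q$, \'etale descent to $R[x_1,\dots,x_n]_Q$) are compatible with the perfectoid notions, but these are exactly the contents of \autoref{lem:checklocalrings}, \autoref{lem:checklocalringsAbsPure}, \autoref{lem:PerfdPureLocalizes}, \autoref{lem.absPerfdPureLocalizes}, and \autoref{lem.EtaleMapExtends}.
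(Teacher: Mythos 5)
Your reductions (shrinking $S$, \cite[\href{https://stacks.math.columbia.edu/tag/054L}{Tag 054L}]{stacks-project}, the \'etale step via \autoref{lem.EtaleMapExtends}) and your treatment of the \claperfdpure case coincide with the paper's: base change $R\to B$ to $B[x_1,\dots,x_n]$, split off the extra monomials of $B[x_1^{1/p^\infty},\dots,x_n^{1/p^\infty}]$, and defer the $p$-completion until after localizing so that \autoref{lem:p-completely-pure-implies-pure} applies. For the \claperfdinj case your sketch is thinner than the paper's: the maximal ideals of $R[x_1,\dots,x_n]$ over $\m$ are of the form $\m+(f(x))$ for irreducible monic $f$, not just $\m+(x)$, and the injectivity of $H^i_{\m'}(R[x])\to H^i_{\m'}(B\otimes_R R[x])$ at such an $\m'$ requires an argument (the paper's \autoref{clm:LC injection adjoing variables} uses the faithfully flat map $R[y]_{\m+(y)}\to R[x]_{\m'}$, $y\mapsto f(x)$). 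This is a fixable omission.

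The genuine gap is in the lim-perfectoid cases, which are the substance of the proposition. You assert that because each $B\langle x_1^{1/p^\infty},\dots,x_n^{1/p^\infty}\rangle$ is a perfectoid $R[x_1,\dots,x_n]$-algebra, ``taking the inverse limit over such $B$'' propagates the purity of $R\to R_\perfd$ to $R[x_1,\dots,x_n]\to (R[x_1,\dots,x_n])_\perfd$. Two essential points are missing. First, purity and splitness do not pass to inverse limits: a limit of split maps is split only when the retractions are functorial in the diagram, so you must exhibit compatible splittings, not merely pure maps levelwise; relatedly, you need to know that the limit defining $R_\perfd$ commutes with $-\widehat{\otimes}_R R[x_1^{1/p^\infty},\dots,x_n^{1/p^\infty}]$, which is false for an arbitrary limit and is exactly what the paper's argument with the cosimplicial \Cech nerve $R_\perfd=\lim B^\bullet$, $p$-complete freeness, and coconnectivity is designed to establish. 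Second, and more seriously, $(R[x_1,\dots,x_n])_\perfd$ is not controlled by the single algebra $B\langle x_1^{1/p^\infty},\dots,x_n^{1/p^\infty}\rangle$: it is computed by a totalization carrying the continuous cohomology of $\mathbf{Z}_p(1)^n$, and producing a retraction of $(B\widehat{\otimes}_R S)_\perfd$ onto $B\widehat{\otimes}_R S$, functorially in $B$, is the content of the paper's \autoref{clm.BhattSmoothnessClaim}, proved by passing to $\mathbf{G}_m^n$ and using linear reductivity of $\mu_{p^m}^n$ and of $K=\mathbf{Z}_p(1)^n$ (equivalently, the $\bZ[1/p]/\bZ$-grading). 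Your proposal contains no substitute for either ingredient, and you identify the main obstacle as the non-Noetherian $p$-completion, which is not where the difficulty lies. A corrected version of your strategy would run the monomial splitting through the \Cech nerve $B^\bullet$ rather than the full inverse limit, observe that those splittings are functorial in $B$, and invoke \autoref{lem:first_factor_is_pure} against the induced map $(R[x_1,\dots,x_n])_\perfd\to\lim B^\bullet\langle x_1^{1/p^\infty},\dots,x_n^{1/p^\infty}\rangle$; that would bypass the group-theoretic claim, but none of this is in what you wrote.
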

\begin{proof}
By \cite[\href{https://stacks.math.columbia.edu/tag/054L}{Tag 054L}]{stacks-project}, we may assume that $(R,\m)$ is local, $S$ is \'etale over $R[x_1,\dots,x_n]$, and $\mathfrak{q}\in\Spec(S)$ that contracts to $\m$. 

\begin{enumerate}
\item Using \autoref{lem.EtaleMapExtends}, we can reduce to the case that $S = R[x_1, \dots, x_n]$ and that $\mathfrak{q}\in\Spec(S)$ contracts to $\m$.
Suppose $R$ is \claperfdpure, i.e., there exists a perfectoid $B$ such that $R\to B$ is pure.  It is easy to see that $S_{\frq} \to B[x_1^{1/p^{\infty}}, \dots, x_n^{1/p^{\infty}}]_{\frq} =: B'$ is also pure. Since $B'^{\wedge p}$ is perfectoid and we have that $S_{\frq} \to B'^{\wedge p}$ is pure (as the map remains injective after tensoring with $E_{S_{\frq}}$, the injective hull of $S_{\frq}/\frq S_{\frq}$), it follows that $S_\frq$ is perfectoid pure.

\item Again, we may assume that $S = R[x_1, \dots, x_n]$ and $\mathfrak{q}\in\Spec(S)$ contracts to $\m$.  Suppose $R$ is perfectoid injective, i.e., there exists a perfectoid $B$ such that $H_\m^i(R)\to H_\m^i(B)$ is injective for all $i$. We first claim the following.
\begin{claim}
\label{clm:LC injection adjoing variables}
Suppose $R\to C$ is a map in $D(R)$ such that $H_\m^i(R)\to H_\m^i(C)$ is injective for all $i$. Then, for each maximal ideal $\m'$ of $S$ that contracts to $\m$, we have that $H_{\m'}^i(S)\to H_{\m'}^i(C\otimes_RS)$ is injective for all $i$.
\end{claim}
\begin{proof}[Proof of Claim]
By an obvious induction we may assume that $S=R[x]$ and $\m'=\m+(f(x))$ where $f(x)\in R[x]$ is a monic polynomial whose image in $(R/\m)[x]$ is irreducible. Now the morphism $R[y]_{\m + (y)}\to R[x]_{\m'}$ sending $y$ to $f(x)$ is faithfully flat (for example, by using \cite[\href{https://stacks.math.columbia.edu/tag/00ML}{Tag 00ML}]{stacks-project}), and base change along this map induces a commutative diagram 
\[
\xymatrix{
H_{\m+(y)}^i(R[y]) \ar[r] \ar[d] & H_{\m+(y)}^i(C\otimes_RR[y]) \ar[d] \\
H_{\m'}^i(R[x]) \ar[r] & H^i_{\m'}(C\otimes_RR[x]) 
}.
\]
Since $H_\m^i(R)\to H_\m^i(C)$ is injective for all $i$, we have that $H_{\m+(y)}^i(R[y]) \to H_{\m+(y)}^i(C\otimes_RR[y])$ is injective for all $i$ (note that $H_{\m+(y)}^i(R[y])\cong H_\m^{i-1}(R)[y^{-1}]$ and similarly for $H_{\m+(y)}^i(C\otimes_RR[y])$). Thus by flatness of the base change, we have that $H_{\m'}^i(R[x])\to H_{\m'}^i(C\otimes_RR[x])$ is injective for all $i$.
\end{proof}
Since $B[x_1,\dots,x_n]\to B[x_1^{1/p^{\infty}}, \dots, x_n^{1/p^{\infty}}] =: B'$ is faithfully flat, together with \autoref{clm:LC injection adjoing variables} (applied to $C=B$) we have that 
$$H_{\m'}^i(S_{\m'}) \cong H_{\m'}^i(S)\to H_{\m'}^i(B')\cong H^i_{\m'}(B'^{\wedge p}_{\m'})$$
is injective for all $i$. Thus, as $B'^{\wedge p}_{\m'}$ is perfectoid, $S_{\m'}$ is perfectoid injective for each maximal ideal $\m'$ of $S$ that contracts to $\m$. Finally, since $\frq$ contracts to $\m$, we can pick such a maximal ideal $\m'$ that contains $\frq$. By \autoref{lem:PerfdPureLocalizes}, $S_{\frq}$ is perfectoid injective. 

\item Suppose $R$ is lim-perfectoid pure. Similar to the reductions above, we may assume that our point of interest $\frq$ lies in $\mathbf{G}_m^n \subset \mathbf{A}^n$ over $\mathrm{Spf}(R)$  and thus we may assume $S=R[x_1^{\pm 1},...,x_n^{\pm 1}]$. By base change, it is enough to explain that $S' := R_{\perfd} \widehat{\otimes}_R S \to S'_{\perfd}$ is pure. In this case, we shall actually explain why it is naturally split over $S'$. We first note that we can write $R_\perfd = \varprojlim B^\bullet$ where $B^\bullet$ is a cosimplicial perfectoid $R$-algebra. To see this, we take a $p$-complete arc cover $R\to B^0$ with $B^0$ perfectoid (for example, see \cite[Lemma 2.2.3]{KestutisScholzePurityFlatCohomology}). Then we note that, since perfectoidization is an arc-sheaf (see \autoref{prop:h-sheafification}), we have 
\[R_{\perfd} \cong \varprojlim \left((B_0)_\perfd \rightrightarrows (B_0\widehat{\otimes}^{\myL}_RB_0)_{\perfd} \, \substack{\longrightarrow\\[-1em] \longrightarrow \\[-1em] \longrightarrow} \cdots \right)\]
and that all the terms in the cosimplicial limit above are perfectoid rings since they are perfectoidizations of semi-perfectoid rings (so the right hand side above provides us the desired $\varprojlim B^\bullet$). Now as $S'$ is a $p$-completely free $R_\perfd$-module (as $S$ was a $p$-completely free $R$-module), the functor $- \widehat{\otimes}_{R_\perfd} S'$ commutes with cosimplicial limits of coconnective $R_\perfd$-complexes: indeed, the case of finite free modules is clear, and one passes to the limit by \autoref{lem: filtered colimits commutes cosimplicial}. It follows that 
$$S' \simeq \varprojlim (B^\bullet \widehat{\otimes}_{R_\perfd} S') = \varprojlim(B^\bullet \widehat{\otimes}_R S).$$ 
As the composition $S' \simeq \varprojlim(B^\bullet \widehat{\otimes}_R S) \to \varprojlim (B^\bullet \widehat{\otimes}_R S)_\perfd$ factors over $S' \to S'_\perfd$, to show that the latter is split, it suffices to show that $\varprojlim(B^\bullet \widehat{\otimes}_R S) \to \varprojlim (B^\bullet \widehat{\otimes}_R S)_\perfd$ is split. But for any perfectoid $R$-algebra $B$, the map $B \widehat{\otimes}_R S \to (B \widehat{\otimes}_R S)_\perfd$ admits a canonical splitting by \autoref{clm.BhattSmoothnessClaim}; taking inverse limits over $B^\bullet$ then gives the desired splitting.

\item Suppose $R$ is \absperfdinj. As in part (c), we may assume $S=R[x_1^{\pm 1},...,x_n^{\pm 1}]$. While \autoref{clm:LC injection adjoing variables} was proven for $S = R[x_1, \dots, x_n]$, it also applies to $S=R[x_1^{\pm 1},...,x_n^{\pm 1}]$.  Indeed, any maximal ideal of $R[x_1^{\pm 1}, \dots, x_n^{\pm 1}]$ contracting to $\m \subseteq R$ comes from a prime ideal $Q \subseteq R[x_1, \dots, x_n]$ contracting to $\m$.  But $Q$ is then contained in a maximal ideal of $R[x_1, \dots, x_n]$ contracting to $\m$.    We thus see that $S=R[x_1^{\pm 1},...,x_n^{\pm 1}]$ can also be used in the statement of \autoref{clm:LC injection adjoing variables} thanks to \autoref{lem:DualizingComplex}.

Therefore, using \autoref{clm:LC injection adjoing variables} applied to $C=R_{\perfd}$ and our $S$, the map $S \to S' := R_{\perfd} \widehat{\otimes}_R S$ is injective on local cohomology over all maximal ideals in $S$ containing $p$, so it is enough to explain the same property for the map $S' \to S'_{\perfd}$. But this map is pure as explained in part (c), so we win.
\end{enumerate}
\end{proof}

    \begin{lemma}
        \label{clm.FunctorialPerfectoidForSmoothExtensionsOfPerfectoid}
        \label{cor.IndSmoothMapExtends.clm.FunctorialAssignmentForPerfectoidForSmoothExtensions}
        Suppose $A$ is a perfectoid ring.  Then for each smooth $A$-algebra $S$, there is a {$p$-complete} faithfully flat map $S \to B(S)$ where $B(S)$ is perfectoid.  Furthermore, this assignment is functorial in $A \to S$.
    \end{lemma}
    \begin{proof}
        Pick $E$ a finite set of generators of $S$ as an $A$-algebra.  Consider the map $S \to T(E) := S[x^{1/p^{\infty}}]^{\wedge p}_{x \in E}$ with semiperfectoid target.  Set $F(E)$ to be the perfectoidization of $T(E)$, which is a perfectoid ring by \cite[Theorem 7.4]{BhattScholzepPrismaticCohomology}.  

        We next show that $S \to T(E) \to F(E)$ is $p$-complete faithfully flat. To see this, we note that by \cite[\href{https://stacks.math.columbia.edu/tag/054L}{Tag 054L}]{stacks-project}, we may assume that $S$ is an \'etale extension of $A[z_1,\dots,z_n]$ for some $n$. We have $S \to S_\infty := A[z_1^{1/p^\infty}, \dots, z_n^{1/p^\infty}]^{\wedge p} \widehat{\otimes}_{A[z_1,\dots,z_n]}S$ is $p$-complete faithfully flat map of perfectoid rings. We have a commutative diagram 
        \[\xymatrix{
S \ar[r] \ar[d]  & F(E) \ar[d]\\
 S_\infty \ar[r] & F(E)[z_1^{1/p^\infty}, \dots, z_n^{1/p^\infty}]^{\wedge p}        
        }.\]
        The vertical maps are $p$-complete faithfully flat, and the bottom row is $p$-complete faithfully flat by Andr\'{e}'s flatness lemma (see \cite[Theorem 7.14]{BhattScholzepPrismaticCohomology}), because $F(E)$ is adjoining compatible system of $p$-power roots of certain elements to $S_\infty$. It follows that the top row is also $p$-complete faithfully flat. We now set $B(S) := (\colim_E F(E))^{\wedge p}$ where the colimit runs over inclusions of sets $E \subseteq E'$.  
    \end{proof}

\begin{corollary}
    \label{cor.IndSmoothMapExtends}
    Suppose $R \to S$ is a regular map of rings with $p$ in their Jacobson radicals.  If $R$ is 
    \begin{enumerate}
        \item \claperfdpure, respectively  \label{cor.IndSmoothMapExtends.perfdPure}
        \item \claperfdinj, \label{cor.IndSmoothMapExtends.perfdInj}
        \item \absperfdpure, or \label{cor.IndSmoothMapExtends.LimPerfdPure}
        \item \absperfdinj \label{cor.IndSmoothMapExtends.LimPerfdInj}
    \end{enumerate}
    then so is $S$.
\end{corollary}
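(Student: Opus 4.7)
The plan is to reduce to the smooth case treated in \autoref{prop.SmoothMapExtends} via Popescu's desingularization theorem, and then argue that each of the four properties survives the resulting filtered colimit of smooth approximations. First I would use \autoref{lem:checklocalrings} and \autoref{lem:checklocalringsAbsPure} to reduce to testing the property on $S_\m$ for every maximal ideal $\m \subset S$ (necessarily containing $p$). Setting $\frq := \m \cap R$, the ring $R_\frq$ retains the hypothesized property by \autoref{lem:PerfdPureLocalizes} or \autoref{lem.absPerfdPureLocalizes}, and the map $R_\frq \to S_\m$ remains a regular local map between Noetherian local rings. Popescu's desingularization then presents $S_\m = \colim_\lambda (S_\lambda)_{\frq_\lambda}$, where each $R_\frq \to S_\lambda$ is smooth and finitely presented and $\frq_\lambda = \m S_\m \cap S_\lambda$; by \autoref{prop.SmoothMapExtends}, every $(S_\lambda)_{\frq_\lambda}$ already enjoys the corresponding property.

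The heart of the argument is to propagate the property through this filtered colimit. For the absolute variants (c) and (d), I would follow the strategy of the \absperfdpure part of \autoref{prop.SmoothMapExtends}: setting $S' := R_\perfd \widehat{\otimes}_R S = \colim_\lambda S'_\lambda$ with $S'_\lambda := R_\perfd \widehat{\otimes}_R S_\lambda$, for each $\lambda$ the map $S'_\lambda \to (S'_\lambda)_\perfd$ is canonically split over $S'_\lambda$ by \autoref{clm.BhattSmoothnessClaim} applied termwise to a cosimplicial resolution of $R_\perfd$ by honest perfectoid algebras (exactly as in the proof of \autoref{prop.SmoothMapExtends}). These splittings are natural in the smooth base and hence compatible with the Popescu transition maps; assuming the expected colimit compatibility $S'_\perfd \simeq \colim_\lambda (S'_\lambda)_\perfd$, the map $S' \to S'_\perfd$ is then exhibited as a filtered colimit of split maps and is therefore pure by definition. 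Combining this with purity of the flat base change $S \to S'$ and applying \autoref{lem:first_factor_is_pure} yields the \absperfdpure property for $S$; the \absperfdinj case runs in parallel, exchanging purity for injectivity on local cohomology and invoking the fact that local cohomology commutes with filtered colimits.

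For the classical variants (a) and (b), starting from a perfectoid $R$-algebra $B$ witnessing the hypothesis, I would take $B_S := (B \widehat{\otimes}_R S)_\perfd$ and verify that it is a classical perfectoid $S$-algebra. Writing $B_S \simeq \colim_\lambda (B \widehat{\otimes}_R S_\lambda)_\perfd$, each term is a genuine perfectoid ring---the $p$-complete tensor $B \widehat{\otimes}_R S_\lambda$ is semiperfectoid in the smooth case, so its perfectoidization is concentrated in degree zero---and a filtered colimit of perfectoid rings in the $p$-complete category is again perfectoid. The purity, respectively local cohomology injectivity, of $S \to B_S$ then follows by combining the analogous property of the flat base change $S \to B \widehat{\otimes}_R S$ with the colimit of the termwise splittings produced by \autoref{clm.BhattSmoothnessClaim}.

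The main obstacle is verifying the colimit compatibility of perfectoidization in the form required: I need that $(-)_\perfd$ commutes with the filtered colimits arising from Popescu approximation and that the splittings from \autoref{clm.BhattSmoothnessClaim} are natural enough to assemble at the colimit level. For the classical variants this additionally demands checking that the resulting limit object remains concentrated in degree zero, so that $B_S$ is genuinely a classical perfectoid $S$-algebra in the sense of the definitions of \claperfdpure and \claperfdinj.
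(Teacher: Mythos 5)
Your overall skeleton (Popescu approximation feeding into the smooth case) matches the paper's, but there is a genuine gap in your treatment of the classical variants (a) and (b). You assert that $B \widehat{\otimes}_R S_\lambda$ is semiperfectoid when $S_\lambda$ is smooth over $R$, so that its perfectoidization is a discrete perfectoid ring. This is false: already for $B\langle x\rangle$ over a perfectoid $B$, the Frobenius on $B\langle x\rangle/p \cong (B/p)[x]$ is not surjective (its image lies in the subring generated by $p$-th powers, which misses $x$), so $B\langle x\rangle$ is not a quotient of a perfectoid ring, and $(B\langle x\rangle)_{\perfd}$ is a genuinely derived object rather than a classical perfectoid ring. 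This is precisely why the paper proves \autoref{clm.BhattSmoothnessClaim} as a \emph{module splitting} of $T \to T_{\perfd}$ (not an identification of $T_\perfd$ with a perfectoid ring) in the \absperfdpure case, and why for (a) and (b) it instead builds, in \autoref{clm.FunctorialPerfectoidForSmoothExtensionsOfPerfectoid}, a functorial honest perfectoid algebra $B(S)$ over each smooth algebra $S$ over a perfectoid base $A$: one first adjoins compatible $p$-power roots of a chosen generating set $E$ of $S$ over $A$, producing a genuinely semiperfectoid ring $T(E)$ (a quotient of $A[x_e^{1/p^\infty}]^{\wedge p}$), then perfectoidizes and verifies $p$-complete faithful flatness of $S \to T(E)_{\perfd}$ via Andr\'e's flatness lemma. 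Without some such construction your witness $B_S$ is not a classical perfectoid $S$-algebra, and (a), (b) do not go through.

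For (c) and (d) your route is workable in outline but leans on two points you flag without resolving. The compatibility $S_{\perfd} \simeq \widehat{\colim}_\lambda (S_\lambda)_{\perfd}$ is indeed needed; the paper obtains it from \autoref{def:perfd} together with the fact that $\myR\Gamma(\WCart^{\HT},-)$ commutes with colimits. By contrast, the naturality of the splittings of \autoref{clm.BhattSmoothnessClaim} along the Popescu transition maps $S_\lambda \to S_\mu$ --- which are not maps of tori, so the claim's functoriality in the perfectoid base does not apply --- is neither established by you nor actually necessary: the paper only uses that each individual map $S_\lambda \to (S_\lambda)_{\perfd}$ is pure by \autoref{prop.SmoothMapExtends}, that a filtered colimit of pure maps is pure (no compatibility of the chosen splittings across $\lambda$ is required for this), and then \autoref{lem:p-completely-pure-implies-pure} to upgrade $p$-complete purity to purity. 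I would restructure (c) and (d) along those lines rather than attempting to glue splittings.
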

\begin{proof}
    We may write $S = \colim S_{\lambda}$ as a filtered colimit of smooth $R$-algebras by Popescu's theorem \cite{PopescuGeneralNeronDesingularization,PopescuLetterToEditor}, \cite[\href{https://stacks.math.columbia.edu/tag/07GB}{Tag 07GB}]{stacks-project}.  
    Before getting into the details, we begin with a claim.  

    \begin{itemize}
        \item[\autoref{cor.IndSmoothMapExtends.perfdPure}]  Fix $A$ to be a perfectoid ring such that $R \to A$ is pure.  We then have that the map $A \otimes_R {S_{\lambda}} \to B(A \otimes_R S_{\lambda})$ is $p$-completely faithfully flat by \autoref{cor.IndSmoothMapExtends.clm.FunctorialAssignmentForPerfectoidForSmoothExtensions}.  Hence $S_{\lambda} \to A \otimes_R {S_{\lambda}} \to B(A \otimes_R S_{\lambda})$ is $p$-completely pure.  Taking colimit we see that 
        \[
            S = \colim_{\lambda} S_{\lambda} \to \colim_{\lambda} B(A \otimes_R S_{\lambda}) \to \widehat{\colim}_\lambda B(A\otimes_RS_\lambda)
        \]
        is $p$-completely pure and hence pure since $S$ is Noetherian.  
        \item[\autoref{cor.IndSmoothMapExtends.perfdInj}]  Without loss of generality, we may assume that $(R, \fram)$ and $(S, \frn)$ are local.  Additionally, we may pick $\frn_{\lambda} \in \Spec S_{\lambda}$ the image of $\frn \in \Spec S$.  Replacing $S_{\lambda}$ by $S_{\lambda, \frn_{\lambda}}$ we may assume that each $S_{\lambda}$ is local and $S_{\lambda} \to S$ is a local map, and that $S_{\lambda}$ is a localization of a smooth $R$-algebra.   
        
        Fix $A$ to be a perfectoid ring such that $H^i_{\fram}(R) \to H^i_{\fram}(A)$ is injective.  By \autoref{clm:LC injection adjoing variables} (followed by an \'etale extension which is harmless) we have that each $H^i_{\frn_{\lambda}}(S_{\lambda}) \to H^i_{\frn_{\lambda}}(A \otimes_R S_{\lambda})$ is injective. By \autoref{clm.FunctorialPerfectoidForSmoothExtensionsOfPerfectoid} the map $A \otimes_R S_{\lambda} \to B(A \otimes_R S_{\lambda})$ is $p$-completely faithfully flat, thus the map $H^i_{\frn_{\lambda}}(A \otimes_R S_{\lambda}) \to H^i_{\frn_{\lambda}}(B(A \otimes_R S_{\lambda}))$ is injective.  Composing, we obtain that 
        \[
            H^i_{\frn_{\lambda}}(S_{\lambda}) \to H^i_{\frn_{\lambda}}(B(A \otimes_R S_{\lambda}))
        \]
        injects.  By the functoriality in \autoref{clm.FunctorialPerfectoidForSmoothExtensionsOfPerfectoid} and taking a colimit completes the proof.        
        \item[\autoref{cor.IndSmoothMapExtends.LimPerfdPure}]
        We simply note that $S_\perfd \cong \widehat{\varinjlim}_\lambda (S_\lambda)_\perfd$, using \autoref{def:perfd} and the fact that the functor $\myR\Gamma(\WCart^{\HT},-)$ commutes with colimits \cite[Corollary 3.5.13]{BhattLurieAbsolute}. Now by \autoref{prop.SmoothMapExtends} each $S_\lambda$ is lim-perfectoid pure and thus 
        $$S= \colim_\lambda S_\lambda \to \widehat{\varinjlim}_\lambda (S_\lambda)_\perfd \cong S_\perfd$$ 
        is $p$-completely pure and thus pure since $S$ is Noetherian. 
        \item[\autoref{cor.IndSmoothMapExtends.LimPerfdInj}] Similar to the above, we have $H_\n^i(S) = H_\n^i(\colim_\lambda S_\lambda) \hookrightarrow H_\n^i({\varinjlim}_\lambda (S_\lambda)_\perfd) \cong H_\n^i(S_\perfd)$ where the middle injection follows from \autoref{prop.SmoothMapExtends}. 
    \end{itemize}
\end{proof}

We expect the following stronger result to be true in view of the positive characteristic picture, see \cite{HochsterHunekeFRegularityTestElementsBaseChange,Aberbach2001a,HashimotoCMFinjectiveHoms,AberbachEnescuStructureOfFPure,SchwedeZhangBertiniTheoremsForFSings,DattaMurayamaPermanencePropertiesFinjectivity} or \cite[Chapter 7]{MaPolstraFsingularitesBook}, however we do not see how to prove it.
\begin{conjecture}
    Let $(R,\m)$ be a Noetherian local ring and let $(R, \fram) \subseteq (S, \frn)$ be a flat local homomorphism. Then we have 
    \begin{enumerate}
        \item If $R$ is perfectoid pure and $S/\m S$ is regular (or even Gorenstein $F$-pure), then $S$ is perfectoid pure.
        \item If $R$ is perfectoid injective and $S/\m S$ is Cohen-Macaulay and geometrically $F$-injective, then $S$ is perfectoid injective.   
    \end{enumerate}
\end{conjecture}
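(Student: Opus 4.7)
The plan is to mimic the classical characteristic-$p$ proofs that $F$-purity (respectively $F$-injectivity) ascends along flat local maps with suitable closed fiber: combine a witness for perfectoid purity of $R$ with the Frobenius-splitting datum coming from $S/\m S$ to build a witness for $S$. I would first reduce to the case where both $R$ and $S$ are complete, which is allowed by \autoref{lem:pfdpurecompletion} and \autoref{lem:abspurecompletion}, noting that $\m$-adic completion preserves the flatness of $R\to S$ and does not change $S/\m S$.

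For part (1), use \autoref{rmk:make B aic} to pick an absolutely integrally closed perfectoid $R$-algebra $B$ with a splitting $\phi \colon B\to R$ of $R\to B$. Form $T := B \widehat{\otimes}_R S$. Flatness of $R \to S$ ensures that $\phi \widehat{\otimes}_R \id_S \colon T \to S$ splits the natural map $S \to T$. Using Andr\'e's flatness lemma \cite[Theorem 7.14]{BhattScholzepPrismaticCohomology} iteratively, enlarge $B$ to a $p$-completely faithfully flat perfectoid extension $B'$ so that $T' := B' \widehat{\otimes}_R S$ contains compatible $p$-power roots of every element of $S$; then $S \to T'$ is still split (since $T \to T'$ is $p$-completely faithfully flat), and $T'$ is semiperfectoid. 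By \cite[Theorem 7.4]{BhattScholzepPrismaticCohomology} the perfectoidization $C := T'_\perfd$ is a perfectoid ring. The remaining step would be to show that $T' \to C$ is pure, whereupon $S \to C$ would be pure by composition, exhibiting $S$ as \claperfdpure.

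For part (2), the setup is parallel: pick a perfectoid $R$-algebra $B$ with $H^i_\m(R) \hookrightarrow H^i_\m(B)$ for all $i$, form $T'$ and $C$ as above. The flatness of $R \to S$ with $S/\m S$ Cohen-Macaulay should give a base change formula expressing $H^i_\n(T')$ in terms of $H^j_\m(B)$ and local cohomology on the fiber (which, being CM, is concentrated in a single degree); this should transfer injectivity of local cohomology from the $R$-side to $T'$. The geometric $F$-injective hypothesis on the fiber would then have to feed into the step $T' \to C$ to preserve injectivity on $H^\bullet_\n$.

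The main obstacle in both parts is the final step $T' \to T'_\perfd$: perfectoidization of a semiperfectoid algebra is not known to preserve purity or local cohomology injectivity from a Noetherian subring in general, and one would need to exploit the closed-fiber hypothesis to establish it. In characteristic $p$ this reduces to the classical Frobenius-splitting lifting argument: a splitting on $R$ combined with the Gorenstein $F$-pure structure on $S/\m S$ builds a splitting on $S$ by multiplying classes in the appropriate Ext-group. In mixed characteristic the analog is unclear because the perfectoidization functor, unlike the Frobenius colimit, does not carry an obvious ``space of splittings'' description for semiperfectoid rings, and the interaction between $\phi \colon B \to R$ and the Frobenius-splitting datum on $S/\m S$ does not visibly glue to produce a splitting of $T' \to T'_\perfd$. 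A tentative avenue is to use the arc-local description $R_\perfd \simeq \myR\Gamma_{\arc}(\Spf(R),\cO)$ from \autoref{prop:h-sheafification} to reduce the issue to a statement over perfectoid bases (where one can invoke \cite[Theorem 7.4]{BhattScholzepPrismaticCohomology}), but we do not see a clean way to carry this through.
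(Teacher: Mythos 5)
This statement appears in the paper as a \emph{conjecture}: the authors introduce it with the remark that they ``expect the following stronger result to be true in view of the positive characteristic picture \dots however we do not see how to prove it.'' There is consequently no proof in the paper to measure your argument against, and your proposal is not a proof either, as you acknowledge. The preparatory steps you carry out are sound and are the natural ones: reduce to the complete case via \autoref{lem:pfdpurecompletion} and \autoref{lem:abspurecompletion}, choose a split witness $B$ enlarged as in \autoref{rmk:make B aic}, base change the splitting along $R \to S$ to get a split (hence pure) map $S \to T' := B' \widehat{\otimes}_R S$, and use Andr\'e's flatness lemma to make $T'$ semiperfectoid so that $C := T'_{\perfd}$ is an honest perfectoid ring. (One small point of care: to know $T'$ is semiperfectoid you should adjoin compatible $p$-power roots of a finite set of topological generators of $S$, so that $T'$ receives a surjection from a perfectoid ring, not merely contain root systems of elements.)

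The genuine gap is the one you name yourself: nothing shows that $T' \to T'_{\perfd}$ is pure over $S$ (respectively injective on $H^{\bullet}_{\n}$), and without that the composite $S \to C$ does not witness perfectoid purity of $S$. This is precisely where the closed-fiber hypothesis must enter, and it is precisely the step with no known mixed-characteristic analogue. In characteristic $p$ the corresponding ascent results (Radu--Andr\'e, Hashimoto, Aberbach--Enescu, cited in the paper) exploit the relative Frobenius $S \otimes_R F_* R \to F_* S$ and Fedder/Glassbrenner-type dualities that turn the Gorenstein $F$-pure fiber into an explicit generator of $\Hom_S(F_*S, S)$; the perfectoidization functor has no such relative description, and $T' \to T'_{\perfd}$ can genuinely kill elements, so purity of $S \to C$ requires new input. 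Your proposal correctly locates the obstruction but does not overcome it, which is consistent with the paper leaving the statement open.
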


Based on the above, the following question is also natural, see \cite{DattaTuckerOpenness} for a partial characteristic $p > 0$ analog.

\begin{question}
    Consider subsets 
    \[
        \begin{array}{c}
            Z = \{ Q \in \Spec R \;|\; p \in Q, R_Q \text{ is \absperfdinj} \} \\
            \text{ and } \\
            Z' = \{ Q \in \Spec R \;|\; p \in Q, R_Q \text{ is \claperfdinj} \}
        \end{array}
    \]
    Is $Z$, respectively $Z'$, open in $V(p)$, the $(p=0)$-fiber?  How is it related to the Du Bois locus if one inverts $p$?  The analogous questions also hold for the \claperfdpure or \absperfdpure loci and their relation to log canonical singularities (at least in the $\bQ$-Gorenstein setting).
\end{question}

\subsection{Cohen-Macaulayness of the perfectoidization in the complete intersection case}

Before we state the next lemma, we fix some notation as follows. Let $k$ be a field of characteristic $p>0$ and let $C_k$ be the unique complete unramified DVR with residue field $k$ (see \cite[\href{https://stacks.math.columbia.edu/tag/0328}{Tag 0328}]{stacks-project}) and fix an inclusion $C_k\to W(k^{1/p^\infty})$. Let $W(k^{1/p^\infty})\to \cO_C$ be a $p$-completed integral extension such that $\cO_C$ is a perfectoid valuation ring.

If $S:= C_k[[x_1,\dots, x_n]]$, then we set 
\begin{equation}
    \label{eq.SInftyDefinition}
    S_{\infty} := (S\widehat{\otimes}_{C_k}\cO_C)[x_1^{1/p^\infty},\dots, x_n^{1/p^\infty}]^{\wedge p},
\end{equation}
where the completed tensor product is $p$-adic. Then $S_\infty$ is a perfectoid ring and we have an induced map $S\to S_\infty$.

\begin{lemma}
    \label{lem:RAperfdsuffices}
    Let $(R, \m, k)$ be a Noetherian complete local ring of residue characteristic $p>0$. Let $\phi$: $S= C_k[[x_1,\dots, x_n]]\to R$ be a map such that $R$ is module-finite over the image of $\phi$ (e.g., $S\twoheadrightarrow R$ or $S\to R$ is a Noether-Cohen-normalization of $R$ when $R$ is a domain). Then the following are equivalent.
        \begin{enumerate}
            \item $R$ is \claperfdpure.
            \item $R\to R^{S_\infty}_\perfd:= (R \otimes_S {S_{\infty}})_{\perfd}$ is pure.
        \end{enumerate}
    {
        Similarly, the following are equivalent.
        \begin{enumerate}
            \item $R$ is \claperfdinj.
            \item $H^i_{\fram}(R) \to H^i_{\fram}(R^{S_\infty}_\perfd)$ is injective for all $i$.
        \end{enumerate}
    }
    \end{lemma}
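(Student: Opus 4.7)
The direction $(2)\Rightarrow(1)$ is essentially formal. Since $R$ is finite over $\phi(S)$, pick a surjection of $S$-algebras $S[y_1,\dots,y_m]\twoheadrightarrow R$; then $R\otimes_S S_\infty$ is a quotient of $S_\infty[y_1,\dots,y_m]$, hence of the perfectoid ring $S_\infty[y_1^{1/p^\infty},\dots,y_m^{1/p^\infty}]^{\wedge_p}$, and so is semiperfectoid. By \cite[Theorem 7.4]{BhattScholzepPrismaticCohomology}, $R^{S_\infty}_\perfd=(R\otimes_S S_\infty)_\perfd$ is a classical perfectoid $R$-algebra, which directly witnesses the \claperfdpure (resp.\ \claperfdinj) property of $R$ under hypothesis (2).

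For $(1)\Rightarrow(2)$, fix a perfectoid $R$-algebra $B$ such that $R\to B$ is pure (resp.\ each $H^i_\m(R)\to H^i_\m(B)$ is injective). By \autoref{rmk:make B aic} we may enlarge $B$ so that it is absolutely integrally closed and contains compatible $p$-power roots of every element of $R$; the enlargement is $p$-completely faithfully flat and thus preserves both purity and local cohomology injectivity. The plan is to upgrade the given structure to make $B$ an $S_\infty$-algebra compatible with $S\to R\to B$, and then apply the universal property of perfectoidization to factor $R\to B$ through $R^{S_\infty}_\perfd$. Since $B$ already has compatible $p$-power roots of the images of $x_1,\dots,x_n$, it remains to lift $C_k=W(k)\to B$ to $\cO_C\to B$. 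First extend to $W(k^{1/p^\infty})\to B$: the composition $k\to R/pR\to B/p$ lifts uniquely to $k^{1/p^\infty}\to B/p$ by surjectivity of Frobenius on $B/p$, and then Witt vector functoriality produces $W(k^{1/p^\infty})\to B$ lifting $W(k)\to B$. Next, writing $\cO_C=R'^{\wedge_p}$ where $R'$ is an integral extension of $W(k^{1/p^\infty})$, a Zorn's lemma argument using the absolute integral closedness of $B$ produces a map $R'\to B$ extending $W(k^{1/p^\infty})\to B$, and $p$-adic completeness of $B$ extends this to $\cO_C\to B$.

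Once $\cO_C\to B$ (and hence $S_\infty\to B$) is in place, we obtain an $R$-algebra map $R\otimes_S S_\infty\to B$. By the universal property of perfectoidization applied to the semiperfectoid ring $R\otimes_S S_\infty$, this factors through $R^{S_\infty}_\perfd\to B$, yielding a factorization $R\to R^{S_\infty}_\perfd\to B$. In the purity case, \autoref{lem:first_factor_is_pure} then gives purity of $R\to R^{S_\infty}_\perfd$. In the local cohomology case, the composed injection $H^i_\m(R)\to H^i_\m(B)$ factoring through $H^i_\m(R^{S_\infty}_\perfd)$ forces injectivity of $H^i_\m(R)\to H^i_\m(R^{S_\infty}_\perfd)$. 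The main technical obstacle is the construction of $\cO_C\to B$; everything else is formal manipulation of universal properties, and both directions of the lemma reduce to this single construction.
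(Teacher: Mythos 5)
Your proposal is correct and follows essentially the same route as the paper: the nontrivial direction enlarges $B$ via \autoref{rmk:make B aic}, builds a map $S_\infty\to B$ compatible with $S\to R\to B$, factors $R\to B$ through $R^{S_\infty}_\perfd$ by the universal property of perfectoidization, and concludes with \autoref{lem:first_factor_is_pure}. The only cosmetic difference is that the paper first passes to the $\m$-adic completion of $B$ (still perfectoid) to map the power series ring in, whereas you avoid this by noting $S\to B$ already factors through $R$ and handle $\cO_C\to B$ via absolute integral closedness; both are fine.
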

    \begin{proof}
     Since $R^{S_\infty}_\perfd$ is a perfectoid $R$-algebra, $(b)\Rightarrow(a)$ is trivial {in either case}. We next show $(a)\Rightarrow(b)$. By \autoref{rmk:make B aic}, we may assume that $R\to B$ is pure (respectively, injective on cohomology, in the \claperfdinj case) such that all elements of $R$ have compatible system of $p$-power roots in $B$. We may further replace $B$ by the $\m$-adic completion of $B$ to assume that $B$ is perfectoid and $\m$-adically complete by \cite[Proposition 2.1.11 (e)]{KestutisScholzePurityFlatCohomology} (see the proof of \autoref{lem:checklocalrings}). It follows we have a natural map $S_\infty\to B$: we clearly have $W(k^{1/p^\infty})\to B$ and $C_k[[x_1,\dots, x_n]]\to B$, and since $B$ is $\m$-adically complete, we have a natural map $W(k^{1/p^\infty})[[x_1,\dots,x_n]]\to B$, but since the image of $x_1,\dots,x_n$ all have compatible system of $p$-power roots in $B$ and $B$ is perfectoid, we have a natural map $S_\infty\to B$. It is clear that this map agrees with the canonical map $R\to B$ when restricted to (the image of) $S$. Therefore we have a natural map $R\otimes_SS_\infty\to B$ which induces a natural map $R^{S_{\infty}}_{\perfd} \to B$ since $B$ is perfectoid. Since $R\to B$ is pure {(respectively, induces injections on cohomology)}, we have $R\to R^{S_{\infty}}_{\perfd}$ is pure {(respectively, induces injections on cohomology)} as it factors $R\to B$.
    \end{proof}

\begin{theorem}
\label{thm: lci case}
Let $(R, \m, k)$ be a Noetherian complete local ring of residue characteristic $p>0$. Let $\phi$: $S= C_k[[x_1,\dots, x_n]]\to R$ be a map such that $R$ is module-finite over the image of $\phi$ (e.g., $S\twoheadrightarrow R$ or $S\to R$ is a Noether-Cohen-normalization of $R$ when $R$ is a domain). Suppose $R$ is a complete intersection. Then $R^{S_{\infty}}_{\perfd}$ and $R_{\perfd}$ are Cohen-Macaulay in the sense that 
        \[
            H^i_{\fram}(R^{{S_{\infty}}}_{\perfd}) {= H^i_{\fram}(R_{\perfd})} = 0 
        \]
        for all $i < d = \dim(R)$.

Furthermore, $R$ is \claperfdinj if and only if $R$ is \absperfdinj.
\end{theorem}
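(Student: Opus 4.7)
The plan is to first establish Cohen--Macaulayness of $R^{S_\infty}_{\perfd}$ using an explicit description of the perfectoidization in the LCI setting, then transfer this to $R_{\perfd}$ via \autoref{prop:formally_etale_perfd}, and finally deduce the equivalence of \claperfdinj and \absperfdinj from both Cohen--Macaulay statements together with \autoref{lem:RAperfdsuffices}. To set up, Cohen's structure theorem lets us (after enlarging $n$ if necessary) assume $\phi\colon S \twoheadrightarrow R$ is surjective with $R = S/(f_1, \dots, f_c)$ for a regular sequence $f_1, \dots, f_c$ in the maximal ideal of $S$; in particular $d = n + 1 - c$. Since $S \to S_\infty$ is $p$-completely faithfully flat and $S$ is Noetherian, it is in fact flat, so $p, x_1, \dots, x_n$ remains a regular system of parameters on $S_\infty$ and $f_1, \dots, f_c$ remains a regular sequence on $S_\infty$; hence $S_\infty/(f_\bullet) = R\widehat{\otimes}^{\myL}_S S_\infty$ is a classical ring with $\myR\Gamma_\fram$ concentrated in degree $d$.

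Setting $T := S_\infty[f_1^{1/p^\infty}, \dots, f_c^{1/p^\infty}]^{\wedge p}$, Andr\'e's flatness lemma \cite[Theorem 7.14]{BhattScholzepPrismaticCohomology} makes $T$ perfectoid and $p$-completely faithfully flat over $S_\infty$. The main obstacle will be establishing the identification
\[
    \bigl(T/(f_1, \dots, f_c)\bigr)_{\perfd} \;\cong\; \bigl(T/(f_1^{1/p^\infty}, \dots, f_c^{1/p^\infty})\bigr)^{\wedge p},
\]
where the ideal on the right is the Frobenius-stable ideal $\bigcup_k (f_1^{1/p^k}, \dots, f_c^{1/p^k})$: the right-hand side is perfectoid as a $p$-completed quotient of a perfectoid by a Frobenius-stable ideal, while the universal property of perfectoidization forces each $f_i^{1/p^k}$ (being a $p^k$-th power nilpotent in the quotient) to vanish in any perfectoid target. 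Granted this identification, the right-hand side is the $p$-completion of $\colim_k T/(f_1^{1/p^k}, \dots, f_c^{1/p^k})$, and each term is Cohen--Macaulay of dimension $d$ (iterative use of Andr\'e's flatness makes $f_1^{1/p^k}, \dots, f_c^{1/p^k}$ a regular sequence on $T$), while filtered colimits and $p$-completion preserve the vanishing of local cohomology below degree $d$. Since $S_\infty \to T$ is relatively perfect modulo $p$, \autoref{prop:formally_etale_perfd} identifies $(T/(f_\bullet))_{\perfd}$ with the $p$-completely faithfully flat base change of $R^{S_\infty}_{\perfd}$ along $S_\infty/(f_\bullet) \to T/(f_\bullet)$, so Cohen--Macaulayness descends to $R^{S_\infty}_{\perfd}$.

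Cohen--Macaulayness of $R_{\perfd}$ then follows: applying \autoref{prop:formally_etale_perfd} to $R \to R \otimes_S S_\infty$ (whose reduction mod $p$ is relatively perfect, being the base change of the relatively perfect $S/p \to S_\infty/p$) yields $R^{S_\infty}_{\perfd} \simeq R_{\perfd}\widehat{\otimes}^{\myL}_S S_\infty$; since $\myR\Gamma_\fram$ commutes with the $p$-completely flat base change $S \to S_\infty$, faithful flatness transfers Cohen--Macaulayness to $R_{\perfd}$. For the final equivalence, \claperfdinj $\Rightarrow$ \absperfdinj is \autoref{rem:atlas_implies_absolute}; conversely, since $R$ is LCI and hence CM we have $H^i_\fram(R) = 0$ for $i \neq d$, so by \autoref{lem:RAperfdsuffices} the \claperfdinj property amounts to the single injection $H^d_\fram(R) \hookrightarrow H^d_\fram(R^{S_\infty}_{\perfd})$. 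The identification $H^d_\fram(R^{S_\infty}_{\perfd}) \cong H^d_\fram(R_{\perfd})\widehat{\otimes}_S S_\infty$ from the preceding sentence, together with the injection $H^d_\fram(R_{\perfd}) \hookrightarrow H^d_\fram(R_{\perfd})\widehat{\otimes}_S S_\infty$ provided by faithful flatness of $S \to S_\infty$ and the \absperfdinj hypothesis $H^d_\fram(R) \hookrightarrow H^d_\fram(R_{\perfd})$, yields the required injection by composition.
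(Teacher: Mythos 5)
The first half of your argument, establishing $H^i_{\fram}(R^{S_\infty}_{\perfd})=0$ for $i<d$ via Andr\'e's flatness lemma and the identification of $(T/(f_\bullet))_{\perfd}$ with the $p$-completed quotient by the $p$-power roots, is essentially the paper's (second) proof of that statement and is fine.

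The second half has a genuine gap. You claim that $S/p\to S_\infty/p$ is relatively perfect and invoke \autoref{prop:formally_etale_perfd} to conclude $R^{S_\infty}_{\perfd}\simeq R_{\perfd}\widehat{\otimes}^{\myL}_S S_\infty$. But adjoining $p$-power roots of the variables (and passing to $\cO_C$) is \emph{not} a relatively perfect extension: already for $k[x]\to k[x^{1/p^\infty}]$ the relative Frobenius $B\otimes_{A,F}A\to B$ has a large nilpotent kernel (e.g.\ $x^{1/p}\otimes 1-1\otimes x$), equivalently the $p$-completed relative cotangent complex is far from zero. Correspondingly, the conclusion is false in general: for $R=S=\mathbf{Z}_p$ one has $R^{S_\infty}_{\perfd}=\cO_C$, a discrete ring, while $R_{\perfd}$ is computed by \autoref{prop:galois_pullback} as a Galois-fixed-points construction and carries the (almost zero but nonzero) contribution of $H^1(\Gamma,\cO_C)$, so $R_{\perfd}\widehat{\otimes}_{\mathbf{Z}_p}\cO_C\not\simeq\cO_C$. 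The passage from $R^{S_\infty}_{\perfd}$ to $R_{\perfd}$ is a \emph{descent} (taking invariants), not a flat base change. Since both your proof that $R_{\perfd}$ is Cohen--Macaulay and your proof of the key injection $H^d_{\fram}(R_{\perfd})\hookrightarrow H^d_{\fram}(R^{S_\infty}_{\perfd})$ --- which is exactly what is needed for \absperfdinj $\Rightarrow$ \claperfdinj --- rest on this false identification, the equivalence is not established. The paper instead introduces the intermediate object $R^{\cO_C}_{\perfd}$, shows $R^{\cO_C}_{\perfd}\to R^{S_\infty}_{\perfd}$ is $p$-completely descendable so that $R^{\cO_C}_{\perfd}$ is the totalization of the \v{C}ech nerve (giving $\myR\Gamma_{\fram}\in D^{\geq d}$ and injectivity of $H^d_{\fram}(R^{\cO_C}_{\perfd})\to H^d_{\fram}(R^{S_\infty}_{\perfd})$ as an equalizer), and then uses the pullback square of \autoref{prop:galois_pullback} together with Cohen--Macaulayness of $(R/p)_{\perf}$ to identify $H^d_{\fram}(R_{\perfd})$ with the kernel of $\gamma-1$ on $H^d_{\fram}(R^{\cO_C}_{\perfd})$; composing these two injections is what replaces your faithfully-flat base change step.
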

\begin{proof}
We first show that $H^i_{\fram}(R^{{S_{\infty}}}_{\perfd})=0$ for all $i<d$. By adjoining new variables to $S$ we may form $S'$ such that $S'\to R$ is surjective. The resulting map $R^{S_{\infty}}_{\perfd}\to R^{S'_{\infty}}_{\perfd}$ is obtained by freely adjoining $p$-power roots of certain elements to $R^{S_\infty}_\perfd$ in the world of perfectoid rings, and is thus $p$-completely faithfully flat by Andr\'{e}'s flatness lemma (see proof of \cite[Theorem 7.14]{BhattScholzepPrismaticCohomology}).  Thus if $\myR\Gamma_{\fram}(R^{{S'_{\infty}}}_{\perfd})\in D^{\geq d}$ then so is $\myR\Gamma_{\fram}(R^{{S_{\infty}}}_{\perfd})$. Hence without loss of generality, we may assume that $S\to R$ is surjective. 

Now $R=S/(f_1,\dots,f_c)$ where $f_1,\dots,f_c$ is a regular sequence. We shall give two proofs\footnote{In fact, the proofs are closely related: the proof of Andr\'e's flatness lemma in \cite[Theorem 7.14]{BhattScholzepPrismaticCohomology} relies on the Hodge--Tate comparison.} that $R^{S_\infty}_{\perfd}$ has vanishing local cohomology in degrees $< d$. In what follows, all completed tensor products are (derived) $p$-adic.

For the first proof, the Hodge--Tate comparison shows that the Hodge--Tate complex 
\[ (R \otimes_S S_\infty)_{\mathrm{HT},0} := \overline{\Prism}_{R \otimes_S S_\infty/A_{\inf}(S_\infty)}\] 
is $p$-completely flat over $R \otimes_S S_\infty$: this follows by considering the conjugate filtration (see \cite[Theorem 1.14 (1)]{BhattScholzepPrismaticCohomology}) and using that each $(\wedge^k L_{R\otimes_SS_\infty/S_\infty})^{\wedge p}[-k] \cong S_\infty\widehat{\otimes}_S(\wedge^k L_{R/S}[-k])$ has Tor dimension $\leq 0$ as $R$ is a complete intersection (see \cite[\href{https://stacks.math.columbia.edu/tag/08SH}{Tag 08SH}]{stacks-project}). Hence, the above complex has vanishing local cohomology for $i < d$ (as the same is true for $R\otimes_SS_\infty$). We then pass to direct limits over the Frobenius as in the proof of \autoref{prop:formally_etale_perfd}. More precisely, by definition of perfectoidization, $R^{S_\infty}_{\perfd}=(\varinjlim (R \otimes_S S_\infty)_{\mathrm{HT},n})^{\wedge p}$ where
$$(R \otimes_S S_\infty)_{\mathrm{HT},n} := \phi_*^n\Prism_{R \otimes_S S_\infty/A_{\inf}(S_\infty)} \otimes^{\myL}_{A_{\inf}(S_\infty)}(A_{\inf}(S_\infty)/\xi)$$
where $\xi$ denotes the distinguished element so that $A_{\inf}(S_\infty)/\xi=S_\infty$. It is enough to show that $(R \otimes_S S_\infty)_{\mathrm{HT},n}/\phi^{-n}(\xi)$ has vanishing local cohomology for $i<d-1$ (note that $\phi^{-n}(\xi)$ has a power in $(p,\xi)$ thus modulo $\xi$ it is contained in the radical of $(p)$). But after modulo $\phi^{-n}(\xi)$, we have 
$$(R \otimes_S S_\infty)_{\mathrm{HT},n}/\phi^{-n}(\xi) = F_*^n(\overline{\Prism}_{R \otimes_S S_\infty/A_{\inf}(S_\infty)}/\phi^n(\xi))=F_*^n((R \otimes_S S_\infty)_{\mathrm{HT},0}/\phi^n(\xi)).$$
Thus the $n=0$ case established above (with the harmless Frobenius pushforward) clearly implies the vanishing of local cohomology of the right hand side above for $i<d-1$. 

For the proof via Andr\'e's flatness lemma, note that  $R^{S_\infty}_\perfd = S_\infty /((f_1,\dots,f_c) S_{\infty})_\perfd$. By Andr\'{e}'s flatness lemma (\cite[Theorem 7.14]{BhattScholzepPrismaticCohomology}) again, we can perform a $p$-complete faithfully flat extension of perfectoid rings $S_\infty \to T$ such that $f_1,\dots,f_c$ have compatible system of $p$-power roots in $T$. Since $S_\infty$ is faithfully flat over $S$ we know that $T$ is $p$-complete faithfully flat over $S$ and thus honestly faithfully flat over $S$ (since $S$ is Noetherian). Let $y_1, y_2, \dots, y_d$ be a regular sequence on $R$. Thus $f_1,\dots,f_c, y_1, y_2,\dots, y_d$ is a regular sequence on $S$ and so remains a regular sequence on $T$. It follows that $y_1, y_2,\dots, y_d$ is a regular sequence on $T/(f_1^{1/p^e},\dots, f_c^{1/p^e})$ for all $e>0$ and thus $y_1, y_2,\dots, y_d$ is a regular sequence on $T/(f_1^{1/p^\infty}, \dots, f_c^{1/p^\infty})$. In particular, we know that $\myR\Gamma_\m(T/((f_1,\dots, f_c)T)_\perfd)\cong \myR\Gamma_\m(T/(f_1^{1/p^\infty},\dots, f_c^{1/p^\infty}))\in D^{\geq d}$ where the first isomorphism follows as $T/((f_1,\dots, f_c) T)_\perfd$ and $T/(f_1^{1/p^\infty}, \dots, f_c^{1/p^\infty})$ agree up to derived $p$-adic completion \cite[Lemma 2.3.2]{CaiLeeMaSchwedeTucker}. Since $S_\infty \to T$ is $p$-complete faithfully flat, $R^{S_\infty}_\perfd= S_\infty /(f_1,\dots,f_c)_\perfd\to T/((f_1,\dots, f_c)T)_\perfd$ is $p$-complete faithfully flat and thus $\myR\Gamma_\m(R^{S_\infty}_\perfd)\in D^{\geq d}$ as well. This proves that $H_\m^i(R^{S_\infty}_\perfd)=0$ for all $i<d$.

We next show that ${H^i_{\fram}(R_{\perfd})} = 0 $ for all $i<d$ and the equivalence of \claperfdinj and \absperfdinj simultaneously. We will need an intermediate object. Let $\cO_C/W(k^{1/p^\infty})$ be the $p$-completed ring of integers in a perfectoid extension $C/W(k^{1/p^\infty})[1/p]$ which is the $p$-completion of a totally ramified Galois extension with Galois group $\Gamma=\mathbf{Z}_p$. Let $R^{\O_C}_{\perfd}:= (R\widehat{\otimes}_{C_k}\O_C)_{\perfd}$.  Note that the following diagram 
\[
\xymatrix{
\big(\cO_C[x_1,\dots,x_n]\big)_\perfd \ar[r] \ar[d] & \cO_C[x^{1/p^\infty}_1,\dots,x^{1/p^\infty}_n]^{\wedge p} \ar[d] \\
\big(\cO_C\widehat{\otimes}_{C_k}C_k[[x_1,\dots,x_n]]\big)_\perfd \ar[r] & \big(\cO_C\widehat{\otimes}_{C_k}C_k[[x_1,\dots,x_n]]\big)[x^{1/p^\infty}_1,\dots,x^{1/p^\infty}_n]^{\wedge p}
}
\]
is a pushout (it is a pushout before we apply perfectoidization, and perfectoidization commutes with pushout, see \cite[Proposition 8.13]{BhattScholzepPrismaticCohomology}). Since the map in the top row is $p$-completely descendable by \cite[Lemma 8.6]{BhattScholzepPrismaticCohomology} (after going modulo the distinguished element),\footnote{Here $p$-completely descendable means after $-{\otimes}^\myL_{\mathbf{Z}_p}\mathbf{F}_p$, the map is descendable in the sense of \cite{MathewGaloisGroupStableHomotopy} (see \cite[Definition 11.14]{BhattScholzeProjectivityWitt}). All we need for descendability is that it is stable under base change, and that if $A\to B$ is descendable then $A\cong \varprojlim B^{\bullet/A}$, see \cite[Theorem 11.15]{BhattScholzeProjectivityWitt}.} 
so is the map in the bottom row, and thus by base change, the map $R^{\O_C}_{\perfd} \to R^{S_\infty}_{\perfd}$ is also $p$-completely descendable. In particular, we have 
$$
    R^{\O_C}_{\perfd} \cong \varprojlim {R^{S_\infty}_{\perfd}} ^{\bullet/R^{\O_C}_{\perfd}} = \varprojlim \left(R^{S_\infty}_{\perfd} \rightrightarrows R^{S_\infty}_{\perfd} \widehat{\otimes}^\mathbf{L}_{R^{\O_C}_{\perfd}} R^{S_\infty}_{\perfd} \, \substack{\longrightarrow\\[-1em] \longrightarrow \\[-1em] \longrightarrow} \cdots \right)
$$
and each face map in ${R^{S_\infty}_{\perfd}} ^{\bullet/R^{\O_C}_{\perfd}}$ is $p$-complete faithfully flat map of perfectoid rings by Andr\'{e}'s flatness lemma (\cite[Theorem 7.4]{BhattScholzepPrismaticCohomology}) since we have identifications 
$$R^{S_\infty}_{\perfd} \widehat{\otimes}^\mathbf{L}_{R^{\O_C}_{\perfd}} R^{S_\infty}_{\perfd} \cong (R^{S_\infty}_{\perfd} \widehat{\otimes}^\mathbf{L}_{(S\widehat{\otimes}_{C_k}\cO_C)} {S_\infty})_{\perfd}$$
by \cite[Proposition 8.13]{BhattScholzepPrismaticCohomology}.
Since $\myR\Gamma_\m(R^{S_\infty}_\perfd)\in D^{\geq d}$, it follows that $\myR\Gamma_\m(R^{\O_C}_\perfd)\in D^{\geq d}$ and that $H_\m^d(R^{\O_C}_\perfd)$ is the equalizer of $H_\m^d(R^{S_\infty}_{\perfd})$ $\rightrightarrows$ $H_\m^d$ $(R^{S_\infty}_{\perfd} \widehat{\otimes}^\mathbf{L}_{R^{\O_C}_{\perfd}} R^{S_\infty}_{\perfd})$. 
In particular, we know that $H_\m^d(R^{\O_C}_\perfd)\to H_\m^d(R^{S_\infty}_\perfd)$ is injective. 

Finally, write $\gamma \in \mathbf{Z}_p$ for a generator of the Galois group. By \autoref{prop:galois_pullback}, we have a pullback diagram 
\[
\xymatrix{
R_\perfd \ar[r] \ar[d] & \text{fib}(R^{\O_C}_\perfd \xrightarrow{\gamma -1}R^{\O_C}_\perfd) \ar[d] \\
(R/p)_\perf \ar[r] & (R/p)_\perf \oplus (R/p)_\perf[-1]
}.
\]
Applying $\myR\Gamma_\m(-)$ to the above diagram and noting that $\myR\Gamma_\m(R^{\O_C}_\perfd)\in D^{\geq d}$ and that \[ \myR\Gamma_\m((R^{\O_C}/p)_\perf)=\myR\Gamma_\m((R/p)_\perf)\cong H_\m^{d-1}((R/p)_\perf)[-(d-1)]
\]
since $R/p$ is Cohen-Macaulay, we obtain that $\myR\Gamma_\m(R_\perfd)\in D^{\geq d}$ (i.e., $H_\m^i(R_\perfd)=0$ for all $i<d$) and that
$H_\m^d(R_\perfd)$ is isomorphic to the kernel of the induced map 
\[
H_\m^d(R^{\O_C}_\perfd) \xrightarrow{\gamma -1} H_\m^d(R^{\O_C}_\perfd).
\]
In particular, we have that $H_\m^d(R_\perfd)\to H_\m^d(R^{\O_C}_\perfd)$ is injective. Putting these together, we see that $R$ is \claperfdinj if and only if $H_\m^d(R)\to H_\m^d(R_\perfd)$ is injective, that is, $R$ is \absperfdinj. 
\end{proof}

\begin{corollary}
\label{cor:LCIallequivalent}
Let $R$ be a Noetherian ring with $p$ in its Jacobson radical. Suppose $R$ is LCI (i.e., all local rings of $R$ are complete intersections). Then $R$ being perfectoid pure, lim-perfectoid pure, perfectoid injective, and lim-perfectoid injective are all equivalent.
\end{corollary}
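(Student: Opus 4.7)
The plan is to close a circle of implications among the four conditions, where the only non-formal step is the complete local complete-intersection case supplied by \autoref{thm: lci case}. Without any LCI hypothesis we already have four arrows at our disposal: from \autoref{lem:pure_injective_comparison}, \claperfdpure $\Rightarrow$ \claperfdinj and \absperfdpure $\Rightarrow$ \absperfdinj, and from \autoref{rem:atlas_implies_absolute}, \claperfdpure $\Rightarrow$ \absperfdpure and \claperfdinj $\Rightarrow$ \absperfdinj. Hence the only remaining arrow is \absperfdinj $\Rightarrow$ \claperfdpure.

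First I would reduce to the case of a complete local complete intersection. By \autoref{lem:checklocalrings} and \autoref{lem:checklocalringsAbsPure}, each of the four properties may be checked at the localizations $R_\m$ at maximal ideals of $R$; by \autoref{lem:pfdpurecompletion} and \autoref{lem:abspurecompletion}, each of them both ascends to and descends from the $\m$-adic completion. Since the LCI hypothesis is preserved under localization and completion, we may assume that $R$ is a complete Noetherian local complete intersection.

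In that setting, Cohen's structure theorem yields a surjection $C_k[[x_1,\dots,x_n]] \twoheadrightarrow R$, so the hypotheses of \autoref{thm: lci case} are met. The last clause of that theorem gives \claperfdinj $\Leftrightarrow$ \absperfdinj, and since a complete intersection is Gorenstein and hence quasi-Gorenstein, the quasi-Gorenstein half of \autoref{lem:pure_injective_comparison} supplies \claperfdinj $\Rightarrow$ \claperfdpure. Stringing these together gives \absperfdinj $\Rightarrow$ \claperfdinj $\Rightarrow$ \claperfdpure, closing the circle. The genuine content really all sits in \autoref{thm: lci case}, which hinges on the Cohen--Macaulayness of $R_{\perfd}$ in the LCI case; the corollary itself is just a bookkeeping assembly of the local-to-global, completion, and Gorenstein purity/injectivity comparison lemmas, so the main obstacle is not in the corollary but already behind us.
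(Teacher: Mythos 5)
Your proof is correct and follows essentially the same route as the paper: reduce to the complete local case via the localization and completion lemmas, then combine the (quasi-)Gorenstein converse in \autoref{lem:pure_injective_comparison} with the equivalence of the two injectivity notions from \autoref{thm: lci case} to close the cycle of implications. The paper states this more tersely, but the logical content is identical.
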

\begin{proof}
By \autoref{lem:checklocalrings}, \autoref{lem:checklocalringsAbsPure}, \autoref{lem:pfdpurecompletion}, and \autoref{lem:abspurecompletion}, to show these notions are equivalent, we may assume that $R$ is a Noetherian complete local ring. The result then follows from \autoref{lem:pure_injective_comparison} and \autoref{thm: lci case}.
\end{proof}

It is natural to ask if one can weaken the LCI condition in the above.  In particular we expect the following:

\begin{conjecture}
    If $R$ is \absperfdpure (respectively \absperfdinj) then $R$ is \claperfdpure (respectively \claperfdinj).
\end{conjecture}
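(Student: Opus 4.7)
The plan is to first reduce to the Noetherian complete local case via \autoref{lem:pfdpurecompletion}, \autoref{lem:abspurecompletion}, \autoref{lem:checklocalrings}, and \autoref{lem:checklocalringsAbsPure}, in which case by \autoref{lem:RAperfdsuffices} it suffices to produce any perfectoid $R$-algebra $B$ witnessing purity (or injectivity on local cohomology). The starting observation is that $R_\perfd$ is, by \autoref{prop:universal_property_perfd}, a derived inverse limit $\varprojlim_{R \to B} B$ over all perfectoid $R$-algebras, so the core task is to extract a single $B$ from this limit that still detects the relevant injection.

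The strategy I would pursue exploits the Galois pullback square of \autoref{prop:galois_pullback}, which expresses $R_\perfd$ as the pullback of $(R/p)_\perf \to (R/p)_\perf \oplus (R/p)_\perf[-1]$ against $\mathrm{fib}\bigl((R\widehat{\otimes}^\myL_{\mathbf{Z}_p}\cO_C)_\perfd \xrightarrow{\gamma-1} (R\widehat{\otimes}^\myL_{\mathbf{Z}_p}\cO_C)_\perfd\bigr)$. Writing $C := (R \widehat{\otimes}^\myL_{\mathbf{Z}_p} \cO_C)_\perfd$, both $C$ and $(R/p)_\perf$ are classical perfectoid $R$-algebras (the latter in the degenerate characteristic $p$ sense, since perfect $\mathbf{F}_p$-algebras are perfectoid), so the product $B := C \times (R/p)_\perf$ is a perfectoid $R$-algebra. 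My hope is to show that the injectivity of $R \to R_\perfd$ on local cohomology forces the same on $R \to B$, by running the long exact sequence of local cohomology attached to the pullback, noting that the degree shift in the summand $(R/p)_\perf[-1]$ only contributes one degree higher.

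For the injective case, the approach reduces to a diagram chase: given a non-zero class $\xi \in H^i_\fram(R)$ whose image vanishes in $H^i_\fram(B)$, one would trace $\xi$ through the induced exact sequences to locate a cocycle in $H^i_\fram(R_\perfd)$ and derive a contradiction with \absperfdinj. The purity case is more delicate: one would invoke the criterion \autoref{prop:purity-criterion}, lifting a perfect complex $K$ mapping to the cone of $R \to B$ to a test map for the cone of $R \to R_\perfd$, and then using purity in the derived setting to conclude that the composition $K \to R[1]$ vanishes. An auxiliary tool is that $C$ is a classical perfectoid ring over the perfectoid base $\cO_C$, so \cite[Proposition~8.5]{BhattScholzepPrismaticCohomology} makes $C$ itself quite computable.

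The main obstacle I expect is that in the non-LCI case the absolute perfectoidization $R_\perfd$ can have non-trivial higher cohomology, and the classical perfectoid ring $B$ constructed above captures only a piece of this derived structure. The contribution living in the shifted summand $(R/p)_\perf[-1]$ is precisely what prevents a direct classical descent, and it is unclear whether purity of $R \to R_\perfd$ in $\myD(R)$ can be detected on the $H^0$ of any single classical perfectoid cover. Overcoming this likely requires either a more clever choice of perfectoid $R$-algebra --- perhaps a product of perfectoidizations along a carefully chosen family of arc-covers that detect all of $H^i_\fram(R)$ --- or a new technique handling derived $p$-completion and the potentially non-discrete nature of $R_\perfd$ simultaneously. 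This is the step where I expect the argument to either succeed with extra work in the Gorenstein or Cohen-Macaulay setting, or genuinely break in general.
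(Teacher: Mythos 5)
This statement is posed as an open conjecture in the paper; the authors give no proof of it, and the only case they establish is the LCI case (\autoref{cor:LCIallequivalent}, via \autoref{thm: lci case}). Your proposal is therefore not being measured against a hidden argument --- but it also does not close the conjecture, and you have correctly sensed where it breaks. Two concrete gaps. First, your candidate witness $B := C \times (R/p)_\perf$ with $C = (R\widehat{\otimes}^{\myL}_{\mathbf{Z}_p}\cO_C)_\perfd$ is not a classical perfectoid ring in general: $R\widehat{\otimes}_{\mathbf{Z}_p}\cO_C$ is not semiperfectoid (e.g.\ $\cO_C[[x]]$), so \cite[Theorem 7.4]{BhattScholzepPrismaticCohomology} does not apply and $C$ is a priori a genuinely derived object, computed as a totalization of a cosimplicial perfectoid ring along a quasisyntomic/arc cover. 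So the very first step --- ``extract a single classical $B$ from the corners of the square'' --- already fails before any diagram chase. Second, even granting a classical $C$, the chase does not go through: the Mayer--Vietoris sequence of the pullback square has a connecting map out of $H^{i-1}_\fram\bigl((R/p)_\perf \oplus (R/p)_\perf[-1]\bigr)$, and the fiber term satisfies $H^i_\fram(\mathrm{fib}(\gamma-1))$ sitting in an extension of $\ker(\gamma-1)$ on $H^i_\fram(C)$ by $\mathrm{coker}(\gamma-1)$ on $H^{i-1}_\fram(C)$. A class of $H^i_\fram(R)$ that survives into $H^i_\fram(R_\perfd)$ can land entirely in these ``wrong-degree'' pieces and hence die in $H^i_\fram(C)$ and $H^i_\fram((R/p)_\perf)$ simultaneously, so injectivity into $R_\perfd$ does not descend to injectivity into any corner.

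It is worth noting that the paper's proof in the LCI case is designed precisely to neutralize this second obstruction: \autoref{thm: lci case} shows $R_\perfd$ and $R^{S_\infty}_\perfd$ are Cohen--Macaulay, so all local cohomology concentrates in the top degree $d$, the offending $H^{d-1}$-contributions are controlled by the Cohen--Macaulayness of $R/p$, and one gets honest injections $H^d_\fram(R_\perfd)\hookrightarrow H^d_\fram(R^{\cO_C}_\perfd)\hookrightarrow H^d_\fram(R^{S_\infty}_\perfd)$ into a genuine perfectoid ring. Without some replacement for that vanishing (your suggestion of the Gorenstein or Cohen--Macaulay setting is a reasonable place to look, but even there the perfectoidization need not be Cohen--Macaulay), the strategy as written does not yield the conjecture.
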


\subsection{Weak normality}
In this section, we show the weak normality of \claperfdinj rings (and even \absperfdinj rings).  We begin with some preliminaries.

The following fact is probably well-known to experts and we give proofs for completeness. For the definition and basic properties of seminormal and absolutely weakly normal rings, we refer to \cite[\href{https://stacks.math.columbia.edu/tag/0EUK}{Tag 0EUK}]{stacks-project} or \cite[Appendix B]{RydhSubmersionsEffectiveDescent}.

\begin{lemma}
\label{lem.perfectoidweaklynormal}
Any perfectoid ring $A$ is absolutely weakly normal.
\end{lemma}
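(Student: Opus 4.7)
The plan is to verify the two defining properties of absolute weak normality of a perfectoid ring $A$: that $A$ is reduced, and that any element $t$ integral over $A$ (equivalently, in the total ring of fractions when $A$ is reduced) with $t^\ell \in A$ and $\ell t \in A$ for some prime $\ell$ necessarily lies in $A$. Reducedness is a well-known property of perfectoid rings, following from the tilt $A^\flat$ being a perfect $\bF_p$-algebra (hence reduced), together with the Fontaine-theoretic presentation $A = W(A^\flat)/\xi$.

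For the second condition, I would fix such an element $t$ and split into cases based on the residue characteristic $p$ of $A$. When $\ell \neq p$, the ring $A$ is a $\bZ_p$-algebra (mixed characteristic) or an $\bF_p$-algebra (equal characteristic), so $\ell$ is a unit in $A$, giving $t = \ell^{-1}(\ell t) \in A$ immediately. When $\ell = p$ and $A$ has characteristic $p$, then $A$ is perfect; since $t^p \in A$ and Frobenius is injective on any reduced characteristic-$p$ ring, the $p$-th root of $t^p$ in $A$ is unique and hence coincides with $t$, so $t \in A$.

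The heart of the argument is the remaining case $\ell = p$ in mixed characteristic. My plan is to invoke the integral closedness of perfectoid rings in their $p$-inverted localization: $A$ coincides with its integral closure in $A[1/p]$. Then $pt \in A$ forces $t \in A[1/p]$, while $t^p \in A$ witnesses $t$ being integral over $A$ inside $A[1/p]$, so $t \in A$ by integral closedness.

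The main technical obstacle I anticipate is the careful handling of potential $p$-power torsion in $A$, since the phrase ``integrally closed in $A[1/p]$'' requires care when the localization map $A \to A[1/p]$ is not injective. My approach would be to first quotient $A$ by its $\pi^\infty$-torsion ideal for a pseudo-uniformizer $\pi$, reducing to the $p$-torsion-free case (where the quotient remains perfectoid), and then carry out the integral closedness argument there. An alternative strategy is to descend the property from the tilt $A^\flat$ to $A$ using the tilt--untilt correspondence, exploiting that $A^\flat$ is perfect and hence absolutely weakly normal directly by the arguments in the first two cases applied in characteristic $p$.
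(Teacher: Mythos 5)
Your outline establishes something strictly weaker than absolute weak normality, because the definition in play (\cite[\href{https://stacks.math.columbia.edu/tag/0EUK}{Tag 0EUK}]{stacks-project}, which the paper accesses through the ``unique retraction of universal homeomorphisms'' characterization) consists of \emph{two} equational conditions: besides the one you address --- for each prime $\ell$ and $x,y\in A$ with $\ell^{\ell}x=y^{\ell}$ there is a unique $a\in A$ with $a^{\ell}=x$ and $\ell a=y$ --- one must also verify \emph{seminormality}: for $x,y\in A$ with $x^3=y^2$ there is $a\in A$ with $a^2=x$ and $a^3=y$. Reducedness plus your prime-$\ell$ conditions do not imply this: the cusp $\bQ[t^2,t^3]$ is reduced and satisfies every prime-$\ell$ condition vacuously ($\ell$ is a unit, so $\ell t\in A$ already forces $t\in A$), yet it is not seminormal and hence not absolutely weakly normal. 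Seminormality is precisely the part of the assertion that survives after inverting $p$, so no case analysis on $\ell$ versus $p$ can dispose of it --- and it is the genuinely hard input. The paper obtains it from Kedlaya--Liu's theorem that perfectoid \emph{Tate} rings are seminormal (applied to $A[1/p]$), glued to the special fibre by the arc-descent pullback square identifying $A$ with $(B_j)_{\perfd}\times_{(B_j/p)_{\perf}}(A/p)_{\perf}$; rather than checking equations, the paper shows that every finite universal homeomorphism $\Spec(B_j)\to\Spec(A)$ admits a unique retraction by proving $A\cong (B_j)_{\perfd}$.

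Two secondary issues with the case you do treat. First, the claim that a perfectoid ring is \emph{integrally} closed in $A[1/p]$ is stronger than what is standard (and than what you need); the available statement is $p$-root-closedness of $p$-torsion-free perfectoid rings in $A[1/p]$, and you should quote or prove that instead. Second, the equational conditions demand exact solutions in $A$ itself, whereas your reduction passes to $A$ modulo its $p$-power torsion; since the kernel of $A\to A[1/p]$ can be large even for reduced $A$, you would still owe an argument lifting solutions back to $A$ (e.g.\ via the torsion/torsion-free pullback description of perfectoid rings). Neither issue is fatal, but the missing seminormality step is.
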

\begin{proof}
Let $A\to B$ be a weakly subintegral extension (i.e., $\Spec(B)\to \Spec(A)$ is a universal homeomorphism). By \cite[\href{https://stacks.math.columbia.edu/tag/0EUR}{Tag 0EUR}]{stacks-project}, it is enough to show that $B$ admits a unique map to $A$. By \cite[\href{https://stacks.math.columbia.edu/tag/0EUJ}{Tag 0EUJ}]{stacks-project}, we may write that $B$ as a filtered colimit of $B_j$ such that each $B_j$ is finitely presented and finite over $A$ (in particular, $B$ is derived $p$-complete) and $\Spec(B_j)\to \Spec(A)$ is a universal homeomorphism. 

Now $A[1/p]$ is a perfectoid Tate ring and thus seminormal by \cite[Theorem 3.7.4]{KedlayaLiuRelativepadicHodgeII}. Since $A[1/p]\to B_j[1/p]$ is a subintegral extension (as $A[1/p], B_j[1/p]$ contain $\mathbf{Q}$),  we have $A[1/p]\cong B_j[1/p]$. But then by \cite[Corollary 8.12]{BhattScholzepPrismaticCohomology}, the following diagram
\[\xymatrix{
A \ar[r] \ar[d] & (B_j)_\perfd \ar[d] \\
(A/p)_\perf \ar[r] & (B_j/p)_\perf \\ 
}
\]
is a pullback square. Since $\Spec(B_j/p)\to \Spec(A/p)$ is a universal homeomorphism of rings of characteristic $p$ we have $(A/p)_\perf\cong (B_j/p)_\perf$. It follows from the pullback diagram that $A\cong (B_j)_\perfd$. This means each $B_j$ admits a unique map to $A$ and thus $B$ admits a unique map to $A$. 
\end{proof}

\begin{lemma}
\label{lem.weaklynormal}
    Suppose $(R,\m)$ is a Noetherian local reduced ring such that $R_P$ is weakly normal for all prime ideals $P\neq \m$. Let $R^{\wn}$ be the weak normalization of $R$. If $H^1_{\m}(R) \to H^1_\m(R^{\wn})$ is injective, then $R\cong R^{\wn}$, i.e., $R$ is weakly normal.
\end{lemma}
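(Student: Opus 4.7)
The plan is to identify the cokernel $C := R^{\wn}/R$ as an $\fram$-power torsion module and then read off its vanishing from the long exact sequence in local cohomology attached to $0 \to R \to R^{\wn} \to C \to 0$.

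First I would dispose of the $\dim R = 0$ case: a reduced Noetherian local ring of dimension $0$ is a field and hence already (weakly) normal, so $R = R^{\wn}$ is automatic. From now on assume $\dim R \geq 1$. The map $R \hookrightarrow R^{\wn}$ is injective (weak normalization sits inside the normalization in the total ring of fractions), and since weak normalization commutes with localization (Stacks, loc.\ cit.), the hypothesis that $R_P$ is weakly normal for $P \neq \fram$ gives $(R^{\wn})_P = (R_P)^{\wn} = R_P$. Therefore $C_P = 0$ for every $P \neq \fram$, so $\Supp(C) \subseteq \{\fram\}$ and every element of $C$ is killed by a power of $\fram$; equivalently, $C = H^0_\fram(C)$.

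Applying $\myR\Gamma_\fram$ to the short exact sequence $0 \to R \to R^{\wn} \to C \to 0$ yields
\[ 0 \to H^0_\fram(R) \to H^0_\fram(R^{\wn}) \to C \to H^1_\fram(R) \to H^1_\fram(R^{\wn}) \to \cdots. \]
The next step is to show $H^0_\fram(R) = H^0_\fram(R^{\wn}) = 0$. For $R$ this is immediate: since $R$ is reduced, its associated primes are exactly the minimal primes, none of which is $\fram$ (as $\dim R \geq 1$). For $R^{\wn}$, note that $R \to R^{\wn}$ is an integral universal homeomorphism on Spec, so $R^{\wn}$ has a unique maximal ideal $\fram'$ with $\sqrt{\fram R^{\wn}} = \fram'$ and $\dim R^{\wn} = \dim R \geq 1$; moreover $R^{\wn}$ is reduced. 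The $\fram$-torsion and $\fram'$-torsion of $R^{\wn}$ agree, and if $x \in R^{\wn}$ satisfied $\fram'^n x = 0$ with $x \neq 0$, then picking a minimal prime $P \subsetneq \fram'$ of $R^{\wn}$ with $x \notin P$ (possible since $R^{\wn}$ is reduced with $x \neq 0$), any $y \in \fram' \setminus P$ becomes a unit in the field $R^{\wn}_P$ while $y^n x = 0$, a contradiction. Hence $H^0_\fram(R^{\wn}) = 0$.

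With both $H^0$'s vanishing, the long exact sequence collapses to
\[ 0 \to C \to H^1_\fram(R) \to H^1_\fram(R^{\wn}), \]
so the injectivity hypothesis forces $C = 0$, i.e., $R = R^{\wn}$. The main point requiring care is the verification that $R^{\wn}$ is local, reduced, of the same dimension as $R$, and that weak normalization commutes with localization; once these standard facts about weak normalization are in hand, the local cohomology computation is routine.
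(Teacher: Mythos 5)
Your proof is correct and is in substance the same argument as the paper's: both exploit that $R \to R^{\wn}$ is an isomorphism away from $\m$ (you phrase this as $C = R^{\wn}/R$ being $\m$-power torsion, the paper as $\Gamma(X,\cO_X)=\Gamma(X,\cO_{X^{\wn}})$ on the punctured spectrum) and then run the local cohomology exact sequence, using reducedness to kill the $H^0_\m$ terms. The only difference is packaging — you use the long exact sequence of $0 \to R \to R^{\wn} \to C \to 0$ to get $C \hookrightarrow \ker\bigl(H^1_\m(R)\to H^1_\m(R^{\wn})\bigr)=0$, while the paper chases the two four-term sequences from \cite[Tag 0DWR]{stacks-project} — and your version has the minor virtue of making the $H^0_\m(R^{\wn})=0$ step explicit.
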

\begin{proof}    
    Let $X$ denote the punctured spectrum of $R$ and consider the following diagram which has exact rows by \cite[Tag 0DWR]{stacks-project}:
    \[
        \xymatrix{
            0 \ar[r] & R \ar[d] \ar[r] & \Gamma(X, \cO_{X})\ar@{=}[d] \ar[r] & H^1_\m(R) \ar@{^{(}->}[d] \ar[r] & 0\\
            0 \ar[r] & R^{\wn}  \ar[r] & \Gamma(X, \cO_{X^{\wn}})  \ar[r] & H^1_\m(R^{\wn}) \ar[r] & 0            
        }. 
    \]
Chasing the diagram we find that $\Gamma(X, \cO_X)$ $\to H^1_{\m}(R^{\wn})$ is surjective, thus $H^1_\m(R) \to H^1_\m(R^{\wn})$ is also surjective and hence it is an isomorphism. It follows that $R \to R^{\wn}$ is an isomorphism as well. 
\end{proof}

\begin{corollary}
    \label{cor.ArcInjectiveImpliesWeaklyNormal}
   Let $(R,\fram)$ be a Noetherian ring with $p$ in its Jacobson radical.  If $R$ is \absperfdinj then $R$ is reduced and weakly normal.
\end{corollary}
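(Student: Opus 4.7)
By \autoref{lem:checklocalringsAbsPure} we may localize at a maximal ideal and assume $(R,\m)$ is a Noetherian local ring with $p\in\m$; we then proceed by Noetherian induction on $\dim R$. The central observation is that, by \autoref{lem.perfectoidweaklynormal}, every perfectoid $R$-algebra $B$ is absolutely weakly normal, so the base change $B\to B\otimes_R R^{\wn}$ is weakly subintegral (weakly subintegral extensions being stable under base change) with absolutely weakly normal source, hence an isomorphism. This produces a unique factorization $R\to R^{\wn}\to B$; passing to the inverse limit over all perfectoid $R$-algebras via \autoref{prop:universal_property_perfd} yields a canonical factorization $R\to R^{\wn}\to R_\perfd$ in $\widehat{\myD}(\mathbf{Z}_p)$. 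Applying $\myR\Gamma_\m(-)$ and taking cohomology, the \absperfdinj hypothesis forces $H^i_\m(R)\hookrightarrow H^i_\m(R^{\wn})$ for every $i$; the parallel factorization through $R^{\red}$ (as perfectoid rings are reduced) similarly yields $H^i_\m(R)\hookrightarrow H^i_\m(R^{\red})$.

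In the base case $\dim R=0$, the ring $R$ is artinian local with $R=H^0_\m(R)$; the injection $R\hookrightarrow H^0_\m(R^{\red})=R^{\red}$ combined with the surjection $R\twoheadrightarrow R^{\red}$ forces $R=R^{\red}$, so $R$ is a field and hence absolutely weakly normal. For the inductive step, reducedness of $R$ is obtained from the long exact sequence attached to $0\to N\to R\to R^{\red}\to 0$ (with $N=\mathrm{Nil}(R)$): the injectivity of $H^i_\m(R)$ into $H^i_\m(R^{\red})$ makes the map $H^i_\m(N)\to H^i_\m(R)$ vanish, so the connecting map $H^{i-1}_\m(R^{\red})\twoheadrightarrow H^i_\m(N)$ is surjective for every $i$. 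When $R^{\red}$ is Cohen-Macaulay, this immediately gives $H^i_\m(N)=0$ for all $i\leq\dim R$, forcing $N=0$ by Grothendieck non-vanishing; in general one argues prime-by-prime, localizing at associated primes of $N$ containing $p$ (using \autoref{lem.absPerfdPureLocalizes}) and invoking the inductive hypothesis.

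Granting reducedness, weak normality of $R$ follows from \autoref{lem.weaklynormal}: the required injection $H^1_\m(R)\hookrightarrow H^1_\m(R^{\wn})$ is the $i=1$ case of the factorization above, and the remaining hypothesis---that $R_P$ is weakly normal for every prime $P\neq\m$---is verified inductively. When $p\in P$, \autoref{lem.absPerfdPureLocalizes} ensures $R_P$ is itself \absperfdinj of strictly smaller dimension, so the inductive hypothesis applies. When $p\notin P$ and $\dim R/P\geq 2$, we pass through an intermediate prime $Q$ with $P\subsetneq Q\subsetneq\m$ and $p\in Q$ (which exists by Krull's altitude theorem applied to $R/P$), apply the inductive hypothesis to $R_Q$, and then use that weak normalization commutes with localization to conclude $R_P$ is weakly normal. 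The main obstacle will be the remaining case $p\notin P$ with $\dim R/P\leq 1$: either $P$ is a minimal prime (in which case $R_P$ is an artinian local reduced ring, hence a field, so automatically weakly normal) or a codimension-one prime in the ``$p$-unit'' locus, for which a direct argument is required. A natural route in the latter case is to establish weak normality of $R[1/p]$ as a whole, leveraging that the generic fiber $B[1/p]$ of any perfectoid ring $B$ is a seminormal perfectoid Tate ring by \cite[Theorem 3.7.4]{KedlayaLiuRelativepadicHodgeII} (as already invoked in \autoref{lem.perfectoidweaklynormal}), combined with the fact that seminormality and weak normality coincide in characteristic zero for reduced rings.
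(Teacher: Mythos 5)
Your overall architecture is the same as the paper's: invoke \autoref{lem.perfectoidweaklynormal} to factor every $R\to B$ through $R^{\wn}$ (and hence $R\to R^{\wn}\to R_\perfd$), get $H^1_\m(R)\hookrightarrow H^1_\m(R^\wn)$, and feed this into \autoref{lem.weaklynormal}. That core idea is right. But there are two places where the paper's route is both cleaner and actually complete, and one of them is the honest gap you flag.

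For reducedness, you do not need the long exact sequence from $0\to N\to R\to R^{\red}\to 0$ or any Cohen--Macaulay hypothesis. Since perfectoid rings are in particular reduced, every map $R_P\to B$ factors through $(R_P)_{\red}$, so $R\to R^{\red}\to R_\perfd$; localizing and using the injectivity hypothesis gives $H^0_P(R_P)\hookrightarrow H^0_P\big((R_P)_{\red}\big)$ directly, and the target is a field when $P$ is minimal and zero otherwise. This kills any torsion supported at $P$ for all $P$ at once. Your artinian base case also has a circularity in phrasing: the map $H^0_\m(R)\to H^0_\m(R^{\red})$ \emph{is} the quotient $R\to R/N$, so its injectivity alone already gives $N=0$; there is no need to invoke an additional surjection, and no need for an induction on dimension at all.

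The genuine gap is exactly the case you call ``the main obstacle'': primes $P$ with $p\notin P$. The fix is not to argue about $R[1/p]$ as a Tate ring, but to use \autoref{lem:DualizingComplex}, which the paper states precisely for this purpose: if $H^i_\m(R)\to H^i_\m(R_\perfd)$ injects at every maximal ideal (all of which contain $p$), then by local duality --- after completing to ensure a dualizing complex, via \autoref{lem:abspurecompletion} --- one gets $H^i_Q(R_Q)\to H^i_Q\big((R_\perfd)_Q\big)$ injective for \emph{every} $Q\in\Spec R$, including those with $p\notin Q$. Once you have this, the dichotomy on whether $p\in P$ disappears entirely. The paper's induction is then also simpler than yours: rather than inducting on $\dim R$ and splitting into subcases by $\dim R/P$, pick a prime $Q$ of minimal height at which weak normality fails and replace $R$ by $R_Q$; minimality guarantees $R_P$ is weakly normal for every $P\subsetneq Q$, and the transferred injectivity $H^1_Q(R_Q)\hookrightarrow H^1_Q\big((R_Q)^{\wn}\big)$ (coming from the factorization through $R^\wn$ plus \autoref{lem:DualizingComplex}) lets \autoref{lem.weaklynormal} finish. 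You should replace your three-way case analysis with this single uniform argument.
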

\begin{proof}
Without loss of generality, we may assume that $R$ is complete local with maximal ideal $\fram$ by \autoref{lem.absPerfdPureLocalizes} and \autoref{lem:abspurecompletion} (and note that $\widehat{R}$ is reduced and weakly normal implies $R$ is so by \cite[Corollary II.3]{ManaresiSomePropertyWeaklyNormal}). In particular, in what follows we will assume $R$ admits a dualizing complex.

Since $R\to R_{\perfd}$ factors through $R_{\red}$, the condition implies that $H_\m^i(R)\to H_\m^i(R_{\red})$ is injective for all $i$. By \autoref{lem:DualizingComplex}, we know that $H^0_P(R_P)\to H^0_P((R_P)_{\red})$ is injective for all $P\in\Spec(R)$.  The latter is a field if $P$ is a minimal prime and zero otherwise. It follows that $R$ is satisfies $(\text{R})_0$ and $(\text{S})_1$ so $R$ is reduced.


We next show that $R$ is weakly normal, we already know that $R$ is reduced. Since any perfectoid ring is absolutely weakly normal by \autoref{lem.perfectoidweaklynormal}, any map $R\to B$ factors canonically through $R^{\wn}$. Therefore we also get a factorization $R\to R^{\wn}\to R_{\perfd}$ and thus the condition implies that $H_\m^i(R)\to H_\m^i(R^{\wn})$ is injective for all $i$. By \autoref{lem:DualizingComplex}, we know that $H_P^i(R_P) \to H^i_P((R_P)^{\wn})$ is injective for all $i$ as weak normalization commutes with localization. Now suppose $R$ is not weakly normal, we fix $Q \in \Spec (R)$ of minimal height such that $R_Q$ is not weakly normal. Then $R_Q$ is weakly normal on the punctured spectrum and $H_Q^1(R_Q) \to H^1_Q((R_Q)^{\wn})$ is injective. By \autoref{lem.weaklynormal}, $R_Q$ is weakly normal and we arrive at a contradiction.
\end{proof}

\section{Comparison with log canonical and Du Bois singularities}

Our goal in this section is to study partial analogs of the main results of \cite{HaraWatanabeFRegFPure} in mixed characteristic (also see \cite{MillerSchwedeSLCvFP,SchwedeFInjectiveAreDuBois,BhattSchwedeTakagiweakordinaryconjectureandFsingularity}) at least under certain index assumptions.
We begin by considering the Du Bois cases.

\begin{proposition}
    \label{prop.ArcImpliesR1overpIsDuBois}
    Suppose $(R, \fram)$ is a \absperfdinj \ 
 Noetherian local ring of mixed characteristic $(0, p> 0)$, essentially of finite type over a mixed characteristic DVR.  Then $R[1/p]$ has Du Bois singularities.
\end{proposition}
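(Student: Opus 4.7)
The plan is to leverage the identification of Du Bois complexes with $h$-cohomology of $\cO$ (in characteristic zero), the fact that $h$- and $\arc$-topologies agree on Noetherian schemes, and the $\arc$-cohomological interpretation of $R_{\perfd}$ given by \autoref{prop:h-sheafification}. The goal is to convert the injectivity of $H^i_{\fram}(R) \to H^i_{\fram}(R_\perfd)$ at primes containing $p$ into an injectivity statement at primes not containing $p$, which is where Du Boisness of $R[1/p]$ is measured.

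First I would reduce to a local statement. Since Du Bois is a local property, it suffices to prove that $R_{\frp}$ is Du Bois for every $\frp \in \Spec R$ with $p \notin \frp$. Noting that $R_\frp$ is essentially of finite type over $\Frac(\text{DVR})$, a field of characteristic $0$, I would use the local-cohomological criterion for Du Bois (\cf \cite{KovacsDuBoisLC1,KovacsSchwedeDBDeforms,GodfreyMurayamaPureOfDuBois}): it suffices to show
\[
    H^i_{\frp R_\frp}(R_\frp) \to H^i_{\frp R_\frp}(\DuBois{R_\frp})
\]
is injective for all $i \geq 0$. Using \cite{LeeLocalAcyclicFibrationsAndDeRham, HuberJorderDifferentialFormsHTopology} together with the agreement of $h$- and $\arc$-topologies on Noetherian schemes \cite{BhattMathewArc}, I identify $\DuBois{R_\frp} \simeq \myR\Gamma_\arc(\Spec R_\frp, \cO)$.

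Next, I would choose a prime $\frq$ minimal over $\frp + (p)$, so $\frq \supseteq \frp$ with $p \in \frq$. By \autoref{lem.absPerfdPureLocalizes}, $R_\frq$ is still \absperfdinj. The main technical step is then to construct a natural comparison map between $\DuBois{R_\frp} = \myR\Gamma_\arc(\Spec R_\frp, \cO)$ and (a suitable $p$-inverted localization of) $(R_\perfd)_\frq$, in a way compatible with the canonical map from $R_\frp$. The existence of such a map should follow from the universal property of $R_\perfd$ as the limit over perfectoid $R$-algebras (\autoref{prop:universal_property_perfd}): any perfectoid $R$-algebra $B$ gives, after inverting $p$, a section of $\cO$ in the $\arc$-topology of $\Spec R[1/p]$, and hence factors through the $\arc$-sheafification of $\cO$ there, which is $\DuBois{}$.

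Finally, I would combine the \absperfdinj hypothesis, applied at $\frq$ after localizing and completing (using \autoref{lem:abspurecompletion}, \autoref{lem.absPerfdPureLocalizes}), with this comparison map and local duality on $R_\frq$ (which exists since $R$ admits a dualizing complex by the essentially-of-finite-type-over-a-DVR hypothesis) to deduce injectivity of $H^i_{\frp R_\frp}(R_\frp) \to H^i_{\frp R_\frp}(\DuBois{R_\frp})$. The main obstacle I expect is precisely Step three: carefully constructing the comparison map from $\DuBois{R_\frp}$ to an appropriate localization of $R_\perfd$ after inverting $p$. The subtlety is that $R_\perfd$ is $p$-adically derived complete, whereas $\DuBois{R[1/p]}$ lives on the generic fiber, so the bridge must be set up via $\arc$-descent applied simultaneously to $\Spec R$ and $\Spf R$, rather than by naively inverting $p$ in $R_\perfd$.
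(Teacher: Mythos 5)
Your high-level plan is on the right track and you correctly flag the crux of the matter (how to bridge $R_\perfd$, which is $p$-adically complete, with $\DuBois{R[1/p]}$, which lives on the $p$-inverted locus), but the concrete route you sketch for building that bridge has a gap that the paper's proof resolves with a different device.

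The problem with your Step 3/6: you want a map from $\DuBois{R_\frp}$ into a localization of $R_\perfd$ at $\frq$, supposedly coming from the universal property $R_\perfd = \varprojlim_{R\to B} B$. First, a map $R[1/p]\to B[1/p]$ has no reason to factor through the Du Bois complex: $B[1/p]$ is not an arc sheaf on $\Spec R[1/p]$ in any useful sense, and $\DuBois{}$ is a derived object (cohomology of a hyperresolution) that $B[1/p]$ does not receive a canonical map from. Second, and more damning, the target you are aiming for does not exist: $R_\perfd$ is derived $p$-complete, so localizing at a prime $\frp\not\ni p$ (or inverting $p$ in a naive way) annihilates it. There is no nontrivial localization of $R_\perfd$ sitting over the generic fiber to map $\DuBois{R_\frp}$ into, so the comparison map you want cannot be built at the level of localizations at $\frp$.

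The paper resolves this by not localizing first. It introduces a third object, $\myR\Gamma_\arc \cO^\arc_X$ where $X = \Spec R$ (the full scheme, not its $p$-completion), and uses the map of sites $\pi\colon \widehat X_\arc \to X_\arc$ induced by $p$-completion to get a commutative square
\[
\xymatrix{
    \cO_X\ar[r]\ar[d] &\myR\Gamma_{\arc}\cO_X^{\arc}\ar[d]\\
    \pi_*\cO_{\widehat{X}}\ar[r] & \pi_*\myR\Gamma_{\arc}\cO_{\widehat{X}}^{\arc}.}
\]
Here the lower-right corner is $\pi_* R_\perfd$ by \autoref{prop:h-sheafification}. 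After applying $H^i_\m$ the left vertical map becomes an isomorphism (local cohomology at $\m$ is insensitive to $p$-completion) and the bottom row becomes injective (the \absperfdinj hypothesis), so the top row $R \to \myR\Gamma_\arc\cO^\arc_X$ is injective on $H^i_\m$. Then one invokes \autoref{lem:DualizingComplex} (local duality at $\m$, localize, dualize again) to propagate this injection to $H^i_Q$ for every $Q \in \Spec R$, in particular for $Q \not\ni p$, where the top-right corner localizes to $\DuBois{R_Q}$. Notice that the propagation is carried out on the global complex $\myR\Gamma_\arc\cO^\arc_X$, which still remembers both the $p$-adic and $p$-inverted locus --- this is exactly the ``simultaneous descent'' you gesture at, but it must be made concrete as the arc-sheafification on $\Spec R$ rather than via a universal-property map out of $\DuBois{R_\frp}$, and the choice of an auxiliary prime $\frq$ is unnecessary once one works globally. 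What you would need to add to your proposal is precisely this intermediate object and the observation that \autoref{lem:DualizingComplex} transports injectivity from $\m$ to any $Q$.
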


\begin{proof}
Let $X=\Spec(R)$ and $\widehat{X}=\Spf(R)$ be the $p$-adic completion.  Let $\pi:\widehat{X}_{\mathrm{top}}\to {X}_{\mathrm{top}}$ be the morphism of sites given by the $p$-completion functor
$\pi:\mathrm{Sch}/X\to\mathrm{FSch}/\widehat{X}$, where $\mathrm{top}$ denotes either the Zariski or arc-topology.  Then pushforward by $\pi$ gives the exact functor $\pi_*: \mathrm{Shv}(\widehat{X})\to \mathrm{Shv}({X})$ fitting in the diagram with arc to Zariski pushforwards:  
\[
\xymatrix{
    \cO_X\ar[r]\ar[d] &\myR\Gamma_{\arc}\cO_X^{\arc}\ar[d]\\
    \pi_*\cO_{\widehat{X}}\ar[r] & \pi_*\myR\Gamma_{\arc}\cO_{\widehat{X}}^{\arc}.}
    \]
The vertical arrows come from the natural maps $\cO_X\to\pi_*\pi^*\cO_X=\pi_*\cO_{\widehat{X}}$ and similarly for $\cO_X^{\arc}$.  The existence of the right vertical arrow is explained by noting that both $\pi_*$ and $\myR\Gamma_{\arc}$ are derived pushforwards by maps of sites, and hence commute.   
The left vertical arrow is an isomorphism after applying $H^i_{\mathfrak{m}}(-)$, and the bottom row is injective after applying $H^i_{\mathfrak{m}}(-)$ because $R$ is lim-perfectoid injective (since $\myR\Gamma_{\arc}\cO_{\widehat{X}}^{\arc}$ identifies with $R_\perfd$ by \autoref{prop:h-sheafification}).  Thus the top row is also injective after applying $H^i_{\mathfrak{m}}(-)$. Therefore, by local duality, localization, and local duality again (see \autoref{lem:DualizingComplex}), the natural map 
$$H^i_Q(\cO_{X,Q})\to H^i_Q((\myR\Gamma_{\arc}\cO^{\arc}_{X})_Q) \cong H^i_Q((\DuBois{X[1/p]})_Q)$$ 
is injective for all $Q\in X[1/p]$ which implies that $X[1/p]$ is Du Bois {(cf.\ \cite[Lemma 2.2]{KovacsDuBoisLC2})}. {Here the final isomorphism follows from the fact that $(\DuBois{X[1/p]})_Q$ is equal to $(\myR\Gamma_{h}\cO^{h}_{X})_Q$ (\cite[Theorem 4.13]{LeeLocalAcyclicFibrationsAndDeRham} or \cite[Proposition 6.10]{HuberJorderDifferentialFormsHTopology}}), and that the $h$ and arc-topologies agree in the Noetherian case (\cite[Proposition 2.6]{BhattMathewArc} and \cite[Section 2]{BhattScholzeProjectivityWitt}).
\end{proof}

\begin{remark}
    We expect that the hypothesis that $R$ is essentially of finite type over a mixed characteristic DVR can be replaced by the assumption that $R$ is excellent and has a dualizing complex, see \cite{MurayamaInjectivityAndCubicalDescentForSchemesStacksSpaces}.  The missing piece is that we do not know a reference that $H^i_Q(\cO_{X,Q}) \to H^i_Q((\DuBois{X[1/p]})_Q)$ surjects in that generality, see \cite{KovacsSchwedeDBDeforms}.  Since we are proving it injects, the surjection implies it is an isomorphism by duality.
\end{remark}

We now move into log canonical singularities.  First we need a lemma.

\begin{lemma}\label{lem.BlowupSquareSequence}
Suppose $(R,\fram)$ is a $p$-complete Noetherian local ring and $\pi:Y\to X=\Spec R$ is a proper map which is an isomorphism over the complement of $Z\subseteq X$ and set $E=\pi^{-1}(Z)_{\red}$.  Let $C$ be the following pullback in ${D}(R)$
\[
\xymatrix{
 C\ar[r]\ar[d] & \myR\Gamma(Y,\cO_Y)\ar[d]\\
\myR\Gamma(Z,\cO_Z)\ar[r] & \myR\Gamma(E,\cO_E).
}
\]
Then we have a factorization
\[
      R \to C \to R_{\perfd}.
    \]
\end{lemma}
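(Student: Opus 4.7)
The plan is to produce the two maps independently. The left-hand map $R \to C$ is formal, arising from the universal property of the pullback, while the right-hand map $C \to R_{\perfd}$ will come from comparing the Zariski pullback square with its arc-topological analog and then invoking arc-descent for abstract blowup squares.

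To build $R \to C$, note that $R = \Gamma(X, \sO_X)$ admits natural maps to $\myR\Gamma(Y, \sO_Y)$ (via pullback along $\pi$) and to $\myR\Gamma(Z, \sO_Z)$ (via the closed embedding $Z \hookrightarrow X$). Since $E$ maps compatibly to both $Y$ and $Z$ inside $X$, both compositions $R \rightrightarrows \myR\Gamma(E, \sO_E)$ factor through $R \to \Gamma(E, \sO_E) \to \myR\Gamma(E, \sO_E)$ and hence agree. The universal property of the pullback yields the map $R \to C$.

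For the map $C \to R_{\perfd}$, I would invoke \autoref{prop:h-sheafification} to identify $R_{\perfd} \simeq \myR\Gamma_{\arc}(\Spf R, \sO)$, and similarly interpret $\sO$-cohomology of $Y$, $Z$, $E$ in the arc-topology on their $p$-completions. For each of these there is a natural comparison map from Zariski to arc cohomology, and these assemble into a morphism from our pullback square into the analogous pullback square $C^{\arc}$ built from arc-cohomologies. Since $\{Y \to X,\ Z \to X\}$ is an abstract blowup square, it is an h-cover, and hence an arc-cover in the Noetherian setting by \cite[Proposition 2.6]{BhattMathewArc} and \cite[Section 2]{BhattScholzeProjectivityWitt}; arc-descent for the structure sheaf then identifies $C^{\arc}$ with $\myR\Gamma_{\arc}(\Spf R, \sO) \simeq R_{\perfd}$. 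Crucially, arc-cohomology of $\sO$ sees only the reduced structure, so using $E = \pi^{-1}(Z)_{\mathrm{red}}$ instead of the scheme-theoretic preimage $Y \times_X Z$ is harmless at this stage.

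The main obstacle is ensuring that the arc-descent statement really applies in the $p$-complete formal setting used to compute $R_{\perfd}$: one must check that the $p$-completion of an abstract blowup square is still an arc-cover on formal schemes, and that the natural comparison maps from Zariski to arc cohomology are functorial enough to induce a morphism of the full pullback squares. Both facts are essentially contained in the arc-descent formalism of Bhatt--Mathew and its application to the perfectoidization in \cite{BhattScholzepPrismaticCohomology}, but it is the step requiring the most care.
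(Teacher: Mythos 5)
Your proposal is correct and follows essentially the same route as the paper: the paper likewise uses \autoref{prop:h-sheafification} to see that $R_{\perfd}$ is an arc sheaf on $\Spf(R)$, hence satisfies descent for the (formal) abstract blowup square, producing a second pullback square with corners $R_\perfd$, $\myR\Gamma(\widehat{Y},\sO_{\widehat{Y},\perfd})$, $\myR\Gamma(\widehat{Z},\sO_{\widehat{Z},\perfd})$, $\myR\Gamma(\widehat{E},\sO_{\widehat{E},\perfd})$, and the factorization $R\to C\to R_\perfd$ drops out of the universal properties of the two pullbacks. The $p$-completion worry you flag is dispatched in the paper by observing that $\myR\Gamma(Y,\sO_Y)$, $\myR\Gamma(Z,\sO_Z)$, $\myR\Gamma(E,\sO_E)$ lie in $D^b_{\coherent}(R)$ with $R$ Noetherian and $p$-complete, so the original square (and hence $C$) is already $p$-complete and maps directly to the completed one.
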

\begin{proof}
Since $R$ is Noetherian $p$-complete and $\myR\Gamma(Y,\cO_Y)$, $\myR\Gamma(X,\cO_Z)$ and $\myR\Gamma(Y,\cO_E)$ are all in $D^b_{\coherent}(R)$, they are all $p$-complete, as is $C$.
Now since  $R_{\perfd}$ is an arc sheaf on $\Spf(R)$ (see \autoref{prop:h-sheafification}) we have another pullback diagram in $\widehat{D}(R)$:
\[
\xymatrix{
 R_{\perfd}\ar[r]\ar[d] & \myR\Gamma(\widehat{Y},\cO_{\widehat{Y},\perfd})\ar[d]\\
\myR\Gamma(\widehat{Z},\cO_{\widehat{Z},\perfd})\ar[r] & \myR\Gamma(\widehat{E},\cO_{\widehat{E},\perfd}),
}
\]
where $\widehat{(-)}$ denotes the $p$-adic formal completion of the corresponding scheme. Hence the required factorization follows immediately from the universal property enjoyed by the latter pullback. 
\end{proof}

\begin{proposition}[{\cf \cite{KovacsSchwedeSmithLCImpliesDuBois}}]
    \label{prop.KSSPushForwardNormalCase}
    Suppose $(R, \fram)$ is a Noetherian local equidimensional ring with a dualizing complex.  Suppose $R$ is \absperfdinj and we are given a proper birational map $\pi : Y \to X = \Spec R$ satisfying the following.
    \begin{itemize}
        \item{} $\pi$ is an isomorphism outside a subset of codimension $\geq 2$ on $X$.
        \item{} The reduced exceptional set $E \subseteq Y$ is pure codimension $1$ and $Y$ is regular at the minimal primes of $E$.  
    \end{itemize}
    We then have that 
    \[
        \pi_* \omega_Y(E) = \omega_X.
    \]
    In particular, if $R$ is {normal} and quasi-Gorenstein, then $R$ is log canonical.
\end{proposition}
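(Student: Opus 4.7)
The strategy is to dualize the injectivity of local cohomology supplied by the \absperfdinj\ hypothesis into a surjection $\pi_* \omega_Y(E) \twoheadrightarrow \omega_X$, and then upgrade this surjection to an isomorphism by a torsion-freeness argument.

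First, the factorization $R \to C \to R_{\perfd}$ of \autoref{lem.BlowupSquareSequence}, together with the \absperfdinj\ hypothesis, forces $H^i_{\fram}(R) \hookrightarrow H^i_{\fram}(C)$ to be injective for every $i$. After passing to the $\fram$-adic completion (harmless by \autoref{lem:abspurecompletion} and for the statement of interest), Matlis/local duality translates this into a surjection
\[
H^{-i}(\myR\Hom_R(C, \omega_R^\bullet)) \twoheadrightarrow H^{-i}(\omega_R^\bullet)
\]
for every $i$; at the top degree $i = d = \dim R$, the right-hand side is $\omega_X$.

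Second, I would compute the left-hand side via Grothendieck duality applied to the fiber sequence $C \to \myR\pi_* \cO_Y \oplus \cO_Z \to \myR\pi_* \cO_E$ coming from the pullback defining $C$. Using $\myR\Hom_R(\myR\pi_* \cO_Y, \omega_R^\bullet) \simeq \myR\pi_* \omega_Y^\bullet$ and analogously for $\cO_E$ and $\cO_Z$, one obtains a cofiber sequence
\[
\myR\pi_* \omega_E^\bullet \to \myR\pi_* \omega_Y^\bullet \oplus \omega_Z^\bullet \to \myR\Hom_R(C, \omega_R^\bullet).
\]
Since $E$ is Cartier at its minimal primes (where $Y$ is regular), the short exact sequence $0 \to \cO_Y(-E) \to \cO_Y \to j_*\cO_E \to 0$ Grothendieck-dualizes to a triangle $j_*\omega_E^\bullet \to \omega_Y^\bullet \to \omega_Y^\bullet(E)$; pushing forward by $\pi$ and combining with the display above rewrites it as a fiber sequence
\[
\omega_Z^\bullet \to \myR\Hom_R(C, \omega_R^\bullet) \to \myR\pi_* \omega_Y^\bullet(E).
\]
Since $Z$ has codimension $\geq 2$, $\omega_Z^\bullet$ is concentrated in cohomological degrees $\geq -(d-2)$, so taking $H^{-d}$ yields $H^{-d}(\myR\Hom_R(C, \omega_R^\bullet)) \cong \pi_* \omega_Y(E)$. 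Together with the surjection from the first step, we conclude $\pi_* \omega_Y(E) \twoheadrightarrow \omega_X$.

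Third, I would verify that this surjection is the natural trace map, which becomes an isomorphism over the regular locus $U \subseteq X$ (since $\pi$ restricts to an isomorphism there and $E \cap \pi^{-1}(U) = \emptyset$). The sheaf $\pi_* \omega_Y(E)$ is torsion-free (as a subsheaf of the constant sheaf of top rational differentials at the generic points of $Y$), so the generically-iso surjection has trivial kernel and is an isomorphism, giving $\pi_* \omega_Y(E) = \omega_X$ as claimed. For the ``in particular'' assertion, when $R$ is normal and quasi-Gorenstein, normality of $X$ guarantees that every proper birational $\pi : Y \to X$ with $Y$ normal is an isomorphism outside a codimension $\geq 2$ locus, and normality of $Y$ gives regularity at the minimal primes of $E$; the identity $\pi_* \omega_Y(E) = \omega_X$ applied to every such $\pi$ is precisely the log canonical condition.

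The main obstacle lies in the Grothendieck duality manipulations: carefully identifying $H^{-d}(\myR\pi_* \omega_Y^\bullet(E))$ with the coherent sheaf $\pi_* \omega_Y(E)$ appearing in the statement (given that $E$ is Cartier on $Y$ only in codimension $1$ and $Y$ may not be Cohen-Macaulay), and verifying that the duality-produced surjection coincides with the natural trace map so that the torsion-freeness argument in step three can conclude.
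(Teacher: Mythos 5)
Your proposal is correct and follows essentially the same route as the paper: both use \autoref{lem.BlowupSquareSequence} to get $H^i_{\fram}(R)\hookrightarrow H^i_{\fram}(C)$, dualize to a surjection onto $\omega_X$, identify $\myH^{-d}({\bf D}(C))$ with $\pi_*\omega_Y(E)$ by discarding the codimension $\geq 2$ contribution of $Z$, and conclude by the generic isomorphism. The only differences are organizational — the paper routes the duality computation through the fiber $D = \myR\Gamma(Y,\cO_Y(-E))$ rather than your octahedron on the three-term pullback, and you spell out the torsion-freeness and the ``in particular'' clause more explicitly — but the substance is the same.
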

\begin{proof}
We may assume that $R$ is $p$-complete, and form $C$ as in \autoref{lem.BlowupSquareSequence}.  From \autoref{lem.BlowupSquareSequence}, and the hypothesis that $R$ is \absperfdinj, we see that $R \to C^{\mydot}$ is injective on local cohomology.  Suppose $Z = V(I)$ for some $I \subseteq R$.

\begin{claim}
    \label{clm.CDotDualizes}
    Suppose $d = \dim R$.  Then $\myH^{-d}({\bf D} (C)) = \Gamma(Y, \omega_Y(E)) $ where ${\bf D}$ denotes Grothendieck duality $\myR\Hom_R(-, \omega_R^{\mydot})$.
\end{claim}
\begin{proof}[Proof of claim]
    Letting $D$ be the fiber of the diagram defining $C$, we have
    the following diagram in which both rows are fiber sequences:
    \[
        \xymatrix{
            D \ar[r] \ar@{=}[d] & C  \ar[d] \ar[r] & R/I \ar[d]  \\
            D \ar[r] & \myR \Gamma(Y, \cO_Y) \ar[r] & \myR \Gamma(E, \cO_E)  
        }
    \]
    Using the bottom row we see that $D = \myR \Gamma(Y,\cO_Y(-E))$.
    We apply Grothendieck duality ${\bf D}$ to this diagram to obtain:
    \[
        \xymatrix{
            {\bf D} (D) & {\bf D} (C) \ar[l] & \ar[l] \omega_{R/I}^{\mydot}  \\
           \ar@{=}[u]  {\bf D} (D) & \ar[u]  \myR \Gamma(Y, \omega_Y^{\mydot}) \ar[l] & \ar[u]  \ar[l]\myR \Gamma(E, \omega_E^{\mydot}) 
        }
    \]
    Since the codimension of $Z$ is at least 2, we see that $\myH^{-d}({\bf{D}}( {D})) \cong \myH^{-d}({\bf{D}}( {C}))$ from the first row.  On the other hand, since $D = \myR \Gamma(Y,\cO_Y(-E))$, we have that ${\bf D}(D) = \myR\Gamma(Y, \omega_Y(E))$. This proves the claim.
\end{proof}
Since $H^d_{\fram}(R) \to H^d_{\fram}(\myR \Gamma(Y, C))$ injects, the dual map $\pi_* \omega_Y(E) \to \omega_X$ surjects.  One verifies that generically this map is the identity ($\pi$ is birational) and so the map is also injective.  This completes the proof. 
\end{proof}

\begin{remark}
    These results can also be obtained completely analogously without using the infinity category framework, but then the octahedral axiom must be invoked.
\end{remark}

\subsection{Cyclic covers}

Cyclic covers are a classical way to study singularities in characteristic zero or $p > 0$.  For instance, unramified cyclic covers provide a convenient way to generalize some results from the quasi-Gorenstein to the $\bQ$-Gorenstein setting.  In this section we explore the behavior of \classicalperfdpure singularities under cyclic covers.  We unfortunately must restrict ourselves to index-not-divisible-by-$p$ case.
    We begin with a lemma that is well known to experts but for which we do not know a statement in our generality.

    \begin{lemma}
        \label{lem.ImagedContainedInS2}
        Suppose $(R, \m )$ is an S2 Noetherian local ring and $M$ is an $R$-module with a given map $\phi : M \to K(R)$.  Suppose further that there exists an ideal $J = (f_1, \dots, f_n)$ of codimension $\geq 2$ so that the induced maps
        \[
            \phi(M_{f_i}) \subseteq R_{f_i}.
        \]
        Then $\phi(M) \subseteq R$.
    \end{lemma}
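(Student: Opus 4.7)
My plan is to reduce the statement to the standard sheaf-theoretic characterization of $S_2$: if $R$ is a Noetherian $S_2$ ring and $J \subseteq R$ is any ideal with $\codim V(J) \geq 2$, then the natural restriction
\[
R \xrightarrow{\sim} \Gamma\!\left(\Spec R \setminus V(J),\, \cO_{\Spec R}\right)
\]
is an isomorphism. Equivalently, an element $x \in K(R)$ that (compatibly) lies in $R_{f_i}$ for each generator of $J$ automatically lies in $R$, since the open sets $D(f_i)$ cover $\Spec R \setminus V(J)$ and the sheaf axiom combined with the $S_2$ identification above forces the glued section to belong to $R$.

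Granting this, the proof is a routine application. Given any $m \in M$, set $x := \phi(m) \in K(R)$. By hypothesis $x \in R_{f_i}$ for every $i$, and these local restrictions are all compatible because they all descend from the single global element $x \in K(R)$. Hence $x$ defines a section over $\Spec R \setminus V(J)$, and the $S_2$ characterization yields $x \in R$. Since $m \in M$ was arbitrary, $\phi(M) \subseteq R$. An equivalent, purely algebraic phrasing is: for each $P \in \Spec R$ with $\height P \leq 1$, the hypothesis $\codim V(J) \geq 2$ forces some $f_i \notin P$, so $R_P$ is a further localization of $R_{f_i}$ and $x \in R_{f_i} \subseteq R_P$; then one invokes the $S_2$ equality $R = \bigcap_{\height P \leq 1} R_P$ inside $K(R)$.

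The only delicate point — and the only place where I expect to need to be careful — is making sense of how the various localizations sit inside a common ambient ring. Since $R$ is $S_2$ (hence $S_1$, so with no embedded primes), and $J$ has codimension $\geq 2$, the ideal $J$ avoids every associated prime of $R$; this lets one either arrange the $f_i$'s to be nonzerodivisors (so each $R_{f_i}$ embeds canonically in $K(R)$) or simply run the argument sheaf-theoretically on $\Spec R$ without worrying about an ambient ring at all. Beyond this minor bookkeeping, the lemma reduces entirely to the $S_2$ sheaf property and should present no further obstacle.
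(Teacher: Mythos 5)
Your argument is correct, and it is essentially the content of the reference the paper invokes: the paper's proof merely reduces to finitely generated submodules of $M$ and cites Hartshorne's generalized divisors paper for the well-known S2 extension property, which is exactly the identification $R=\Gamma(\Spec R\setminus V(J),\cO)$ (equivalently $R=\bigcap_{\height P\le 1}R_P$ inside $K(R)$) that you use, so your element-by-element argument even makes the finite-generation reduction unnecessary. The one point worth making fully explicit is the issue you already flag at the end: since $J$ has codimension $\ge 2$ and $R$ is S1, $J$ avoids every associated prime and hence contains a nonzerodivisor, so $H^0_J(K(R))=0$ and $K(R)\to\prod_i K(R)_{f_i}$ is injective; this is what guarantees that the section glued from the $x|_{D(f_i)}\in R_{f_i}$ actually equals $\phi(m)$ in $K(R)$ (the colon-ideal version of your argument, applied to $(R:_R\phi(m))$, sidesteps this gluing step entirely).
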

    \begin{proof}
        This may be checked on the finitely generated submodules of $M$, where the statement is well known, see for instance \cite{HartshorneGeneralizedDivisorsOnGorensteinSchemes}.
    \end{proof}

We are ready to prove our first result of this sort.

\begin{proposition}
    \label{prop.CyclicCoverNIndexStuff}
    Suppose $(R, \m)$ is a G1 and S2 reduced complete Noetherian local ring.  Suppose that $D$ is a divisor\footnote{In particular, $D$ is the principal divisor associated to a non-zero divisor at every height one prime of $R$.  In other words, $D$ is an \emph{almost-Cartier} divisor in the sense of \cite{HartshorneGeneralizedDivisorsOnGorensteinSchemes}.}  on $\Spec R$ in the sense of \cite{KollarKovacsSingularitiesBook}.    Suppose additionally that $nD \sim 0$ for some $n > 0$, where $n$ is the smallest positive integer with this property.
    
    Let $(R, \m) \subseteq (S, \n)$ be an associated cyclic cover with $S = R \oplus R(-D) \oplus \dots \oplus R(-(n-1)D)$ (depending on a choice of isomorphism $R(nD) \cong R$).   Choose $A = C_k\llbracket x_2, \dots, x_n\rrbracket \to R$ a map making $R$ into a finite $A$-module (for instance, this might be a surjection, or a Noether-Cohen normalization).  We fix $A_{\infty}$ as in \autoref{eq.SInftyDefinition}.

    \begin{enumerate}
        \item  If $R$ is \classicalperfdpure, then $R[1/p] \to S^{A_\infty}_{\perfd}[1/p]$ splits. \label{prop.CyclicCoverNIndexStuff.NoIndexAssumption}
        \item  If $R$ is \classicalperfdpure and $p \nmid n$, then $R \to S^{A_{\infty}}_{\perfd}$ splits.  \label{prop.CyclicCoverNIndexStuff.IndexAssumption}
        \item  If $R$ is \classicalperfdpure and $p \nmid n$, then $S$ is \classicalperfdpure. \label{prop.CyclicCoverNIndexStuff.PerfdPureS}
    \end{enumerate}
\end{proposition}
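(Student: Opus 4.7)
The key tool is the canonical $R$-linear splitting $\pi : S = \bigoplus_{i=0}^{n-1} R(-iD) \twoheadrightarrow R$ onto the trivial summand, which coincides with $\tfrac{1}{n}\Tr_{S/R}$ whenever $n$ is a unit; one compares these two $R$-linear maps on the étale locus of $S/R$ and concludes equality via $S_2$ of $R$ together with \autoref{lem.ImagedContainedInS2}. From the perfectoid purity hypothesis on $R$, together with \autoref{lem:RAperfdsuffices} and the Matlis-duality argument of \autoref{lem:injective_E_purity}, we obtain an $R$-linear splitting $\rho : R^{A_\infty}_\perfd \to R$ of the natural map, which will be reused in all three parts.

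For (1), after inverting $p$ the integer $n$ is automatically a unit since $R[1/p]$ is a $\mathbf{Q}$-algebra, so $\pi[1/p]$ is defined as $\tfrac{1}{n}\Tr_{S[1/p]/R[1/p]}$. Base changing this trace along $R[1/p] \to R^{A_\infty}_\perfd[1/p]$ produces an $R^{A_\infty}_\perfd[1/p]$-linear splitting $\tau[1/p] : S^{A_\infty}_\perfd[1/p] \to R^{A_\infty}_\perfd[1/p]$; composing with $\rho[1/p]$ then lands in $R[1/p]$ as required. The technical content is to verify that the trace-based splitting persists through perfectoidization after inverting $p$, which relies on $S^{A_\infty}_\perfd[1/p]$ being finite of generic rank $n$ over $R^{A_\infty}_\perfd[1/p]$.

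For (2), the additional assumption $p \nmid n$ makes $n$ a unit in $R^{A_\infty}_\perfd$ itself (not merely after inverting $p$), so the construction of (1) runs integrally and produces a map $\sigma : S^{A_\infty}_\perfd \to R[1/p]$. To show $\sigma$ actually factors through $R$, we invoke \autoref{lem.ImagedContainedInS2}: the construction is integrally well-defined wherever $R$ is regular (so that $R(-iD)$ is locally free and the trace pairing is non-degenerate), and the non-regular locus has codimension $\geq 2$ by $G_1+S_2$, so the $S_2$ property of $R$ forces $\sigma(S^{A_\infty}_\perfd) \subseteq R$.

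For (3), given the $R$-linear splitting $\sigma$ from (2), the plan is to upgrade it to an $S$-linear splitting $\tilde\sigma : S^{A_\infty}_\perfd \to S$ of $S \to S^{A_\infty}_\perfd$; once produced, $S \to S^{A_\infty}_\perfd$ is a pure (indeed split, by completeness of $S$) map to a perfectoid $S$-algebra, so $S$ is perfectoid pure. Away from $\Supp(D) \cup \Sing(R)$, the map $R \to S$ is étale, so \autoref{prop:formally_etale_perfd} gives $S^{A_\infty}_\perfd \simeq S \otimes_R R^{A_\infty}_\perfd$ locally, and $\tilde\sigma := \id_S \otimes \rho$ provides the required splitting on this open. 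The main obstacle is extending $\tilde\sigma$ across the codimension-$1$ ramification divisor $D$, where \autoref{lem.ImagedContainedInS2} does not directly apply; the intended strategy is to use that $p\nmid n$ makes the different of $S/R$ controlled by $n$-th powers of local equations for $D$, whose contributions are integrally absorbed into the factor $\tfrac{1}{n}$ available in the trace-splitting. This reduces the extension problem to one across a codimension-$\geq 2$ locus, where $S_2$ of $S$ (which $S$ inherits from $R$ as a direct summand) together with \autoref{lem.ImagedContainedInS2} finishes the argument.
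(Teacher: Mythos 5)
Your parts (1) and (2) follow the paper's route: the trace $T : S \to R$ (equivalently, up to the unit $n$, the projection onto the trivial summand), the splitting $\rho : R^{A_{\infty}}_{\perfd} \to R$ supplied by perfectoid purity, descent of the trace to the perfectoidization over the locus where $R \to S$ is \'etale via \cite[Theorem 10.9]{BhattScholzepPrismaticCohomology}, and then \autoref{lem.ImagedContainedInS2} to force the image back into $R$ (resp.\ $R[1/p]$). One imprecision: the trace does \emph{not} base change to a map defined on all of $S^{A_{\infty}}_{\perfd}[1/p]$, because $S^{A_{\infty}}_{\perfd}$ differs from $S \otimes_R R^{A_{\infty}}_{\perfd}$ along the ramification locus of the cover; the trace-induced map only exists after tensoring up to the total ring of fractions (or after inverting an $f$ cutting out the non-\'etale locus), and it is precisely the S2 lemma that brings the composite with $\rho$ back down. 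You defer this as ``the technical content'' in (1) without supplying it, but since you invoke the same lemma in (2), I would count (1)--(2) as essentially the paper's argument.

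Part (3) has a genuine gap. You propose to build an $S$-linear splitting of $S \to S^{A_{\infty}}_{\perfd}$ by taking $\id_S \otimes \rho$ on the \'etale locus and then ``extending across the ramification divisor,'' claiming the different of $S/R$ is absorbed by the factor $1/n$. That extension problem sits in codimension one, so \autoref{lem.ImagedContainedInS2} genuinely does not apply, and the proposed mechanism is not an argument: over $\Supp(D)$ the ring $S^{A_{\infty}}_{\perfd}$ is not $S \otimes_R R^{A_{\infty}}_{\perfd}$ (roots of local equations of $D$ get adjoined in the perfectoidization), and nothing you have written shows the locally defined splitting extends over that locus. The paper sidesteps this entirely: since $R \subseteq S$ is quasi-\'etale (using $p \nmid n$) and $S$ is S2, $\Hom_R(S,R)$ is generated by $T$ as an $S$-module, so Hom-tensor adjunction factors the already-constructed $R$-linear surjection $\phi : S^{A_{\infty}}_{\perfd} \to R$ from part (2) as $T \circ \phi_S$ with $\phi_S : S^{A_{\infty}}_{\perfd} \to S$ now $S$-linear; then $\phi_S$ must be surjective, since otherwise its image lies in $\n$ while $T(\n) \subseteq \m$ contradicts the surjectivity of $\phi$. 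You would need to replace your gluing strategy with this adjunction step (or something equivalent) for (3) to go through.
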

\begin{proof}
    Notice that $K := K(R) = K_1 \times \dots \times K_t$ is a product of fields, and over each one $K_i$, $S \otimes_{R} K_i$ is an $n$-dimensional $K_i$ vector space.  In fact, by construction, over the complement of a codimension $\geq 2$ closed subset of $\Spec R$, $S$ is locally free of rank $n$ over $R$.  Since $R$ is S2, it follows we have a trace map $T : S \to R$ sending $1 \mapsto n$.  

    The key tool is that  $R[1/p] \subseteq S[1/p]$ is \emph{\'etale in codimension 1} (that is: \emph{quasi-\'etale})  and if $p \nmid n$, then $R \subseteq S$ is \emph{quasi-\'etale}.  This implies that the trace map $T[1/p] : S[1/p] \to R[1/p]$ generates $\Hom_{R[1/p]}(S[1/p], R[1/p])$ as an $S[1/p]$-module in general and that $T$ generates $\Hom_R(S, R)$ as an $S$-module if $p$ does not divide $n$.
    We form the following diagram.
    \[
        \xymatrix{
            R \ar[r] \ar[d] & S \ar[d] \\
            \ar@/^2pc/[u]^{\psi} R^{A_{\infty}}_{\perfd} \ar[r] & S^{A_{\infty}}_{\perfd}
        }
    \]
    where $\psi$ is the splitting that comes from the fact that $R$ is perfectoid-pure.  

    Note that over the locus where $R \subseteq S$ is \'etale, we have that $(R^{A_{\infty}}_{\perfd} \otimes_R S) \to (R^{A_{\infty}}_{\perfd} \otimes_R S)_{\perfd} = S^{A_{\infty}}_{\perfd}$ is an isomorphism \cite[Theorem 10.9]{BhattScholzepPrismaticCohomology}.  In particular, $(R^{A_{\infty}}_{\perfd} \otimes_R S) \to (R^{A_{\infty}}_{\perfd} \otimes_R S)_{\perfd}$ is an isomorphism over the generic points $\Spec K_i$ of characteristic $0$, 
    ie, those that make up $K[1/p]$ (which equals $K$ as long as $R$ has no minimal prime containing $p$).  Tensoring with $K[1/p]$, since $K[1/p] \subseteq K(S)[1/p]$ is finite \'etale, we have that $K[1/p] \otimes_R S^{A_{\infty}}_{\perfd} = K[1/p] \otimes_R S \otimes_R R^{A_{\infty}}_{\perfd}$
    hence we obtain a map 
    \[
        T_{\perfd}' : K[1/p] \otimes_R S^{A_{\infty}}_{\perfd} \to K[1/p] \otimes_R R^{A_{\infty}}_{\perfd} 
    \]
    induced by trace.  Composing, we obtain:
    \[
        \phi : S^{A_{\infty}}_{\perfd} \to K[1/p] \otimes S^{A_{\infty}}_{\perfd} \xrightarrow{T_{\perfd}'} K[1/p] \otimes R^{A_{\infty}}_{\perfd} \xrightarrow{K[1/p] \otimes \psi} K[1/p].
    \]
    For the first statement, it suffices to show that this map lands in $R[1/p]$.  Indeed, because it sends $1 \mapsto n$ which is a unit in $R[1/p]$, it will be surjective onto $R[1/p]$ if its image is contained in $R[1/p]$.
    
    For any $f \in R$ such that $R[1/(fp)] \to S[1/(fp)]$ is \'etale, 
    \[
        (S^{A_{\infty}}_{\perfd})[1/(fp)] \cong (R^{A_{\infty}}_{\perfd})[1/(fp)] \otimes_{R[1/(fp)]} S[1/(fp)]
    \]
    again by \cite[Theorem 10.9]{BhattScholzepPrismaticCohomology}.  
    Hence, arguing as above: $T_{\perfd}' \Big ( S^{A_{\infty}}_{\perfd}[1/(fp)]\Big) \subseteq (R^{A_{\infty}}_{\perfd})[1/(fp)]$.
    Thus $\phi(S^{A_{\infty}}_{\perfd}) \subseteq R[1/(fp)]$ for all such $f$.  Therefore, by \autoref{lem.ImagedContainedInS2},
    \[ 
        \phi(S^{A_{\infty}}_{\perfd}) \subseteq  R 
    \]
    This proves \autoref{prop.CyclicCoverNIndexStuff.NoIndexAssumption}. 

    The proof of \autoref{prop.CyclicCoverNIndexStuff.IndexAssumption} is essentially the same.  Run the same argument without inverting $p$ to obtain $\phi : S^{A_{\infty}}_{\perfd} \to K$. Then argue that the image lands in $R$ exactly as before by verifying it after inverting $f \in R$ such that $R[1/f] \subseteq S[1/f]$ is \'etale.

    For \autoref{prop.CyclicCoverNIndexStuff.PerfdPureS}, we follow \cite{CarvajalRojasFiniteTorsors}.  Since $R \subseteq S$ is quasi-\'etale and $S$ is S2, we see that $\Hom_R(S, R) \cong S$ (generated as an $S$-module by the trace map).  Thus by Hom-tensor adjointness, $\phi : S_{\perfd}^{A_{\infty}} \to R$ factors as 
    \[ 
        S_{\perfd}^{A_{\infty}} \xrightarrow{\phi_S} S \xrightarrow{\Tr} R.
    \]
    If $\phi_S$ was not surjective, its image would lie in $\n = \m + S_{\geq 1}$, the unique maximal ideal of $S$.  However, since $\Tr(\n) \subseteq \m$, and the composition $\phi$ surjects, this is impossible.  This completes the proof.
\end{proof}

We did not really need the above extension $R \subseteq S$ to be a cyclic cover.  A variant of our argument above also works in the following situation.  Note in this case we do not assume that $p \not| [K(S) : K(R)]$

\begin{corollary}
    Suppose $(R, \fram) \subseteq (S, \frn)$ is a split quasi-\'etale extension of complete local Noetherian normal domains with $R$ perfectoid-pure.  Then $S$ is also \claperfdpure.
\end{corollary}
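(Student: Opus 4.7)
The strategy closely follows \autoref{prop.CyclicCoverNIndexStuff}, but replaces the role of $\Hom_R(S,R) \cong S$ (used in the cyclic cover case via the trace map) with a direct construction exploiting the splitting of $R \to B$ provided by perfectoid purity of $R$. Choose a map $A = C_k\llbracket x_2, \dots, x_n\rrbracket \to R$ making $R$ module-finite over $A$; since $R \subseteq S$ is finite, $A \to S$ is also module-finite. Set $B := R^{A_\infty}_{\perfd}$ and $C := S^{A_\infty}_{\perfd}$, both perfectoid. By \autoref{lem:RAperfdsuffices} combined with \autoref{lem:pfdpurecompletion}, since $R$ is \claperfdpure and complete local, there exists an $R$-linear splitting $\psi : B \to R$ of the canonical map $R \to B$.

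The plan is to construct an $S$-linear splitting $\phi : C \to S$ of the natural map $S \to C$, thereby exhibiting $S$ as \claperfdpure. Since $R \subseteq S$ is quasi-\'etale, there is an ideal $J \subseteq R$ of height $\geq 2$ (the ramification ideal) such that $R[1/f] \to S[1/f]$ is finite \'etale for every $f \in J$. By \cite[Theorem 10.9]{BhattScholzepPrismaticCohomology}, perfectoidization commutes with this \'etale base change, yielding isomorphisms $C[1/f] \cong B[1/f] \otimes_{R[1/f]} S[1/f]$ for each $f \in J$. Composing with the $S[1/f]$-linear map
\[
    \psi \otimes \id_S : B[1/f] \otimes_{R[1/f]} S[1/f] \longrightarrow R[1/f] \otimes_{R[1/f]} S[1/f] = S[1/f]
\]
produces compatible $S[1/f]$-linear splittings $\phi_f : C[1/f] \to S[1/f]$. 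These combine -- e.g., by extending to the generic fiber where $R \subseteq S$ is \'etale, giving $C \otimes_R K(R) \cong B \otimes_R K(S)$, and post-composing with $\psi \otimes \id_{K(S)}$ -- into a single $S$-linear map $\phi : C \to K(S)$ whose restriction to each $C[1/f]$ recovers $\phi_f$.

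Since $\phi(C)[1/f] = \phi_f(C[1/f]) \subseteq S[1/f]$ for every $f \in J$, and $J$ generates an ideal of height $\geq 2$ in $S$ (because $R \subseteq S$ is finite and $J \subseteq R$ has height $\geq 2$), \autoref{lem.ImagedContainedInS2} applied to the $S_2$ ring $S$ forces $\phi(C) \subseteq S$. Tracing through the construction shows $\phi(s) = \psi(1) \cdot s = s$ for every $s \in S \subseteq C$, so $\phi : C \to S$ is an $S$-linear splitting of $S \to C$, completing the proof. The main technical obstacle is verifying that the local isomorphisms $C[1/f] \cong B[1/f] \otimes_{R[1/f]} S[1/f]$ from \cite[Theorem 10.9]{BhattScholzepPrismaticCohomology} patch together coherently across $f \in J$ to produce a well-defined $\phi : C \to K(S)$; this requires careful handling of the interplay between localization, $p$-completion, and perfectoidization, particularly at the generic fiber where $K(R)$ is not $p$-adically complete.
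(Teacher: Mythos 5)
Your proof is correct, and while it shares the skeleton of the paper's argument --- the splitting $\psi \colon R^{A_\infty}_{\perfd} \to R$ obtained from perfectoid purity plus completeness, \'etale base change of perfectoidization over the quasi-\'etale locus via \cite[Theorem 10.9]{BhattScholzepPrismaticCohomology}, and the $S_2$ extension lemma (\autoref{lem.ImagedContainedInS2}) --- it diverges at the decisive step. The paper composes with the Grothendieck trace $T\colon S \to R$ to get a surjective $R$-linear map $S^{A_\infty}_{\perfd}\to R$, and then extracts an $S$-linear splitting via the adjunction $\Hom_R(S^{A_\infty}_{\perfd},R)\cong \Hom_S(S^{A_\infty}_{\perfd},\Hom_R(S,R))$, the fact that $\Hom_R(S,R)=S\cdot T$ for a quasi-\'etale extension of normal domains, and the socle argument $T(\frn)\subseteq\fram$. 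You instead build the $S$-linear map directly as the $S_2$-extension of $\psi\otimes\id_S$ and simply check that it sends $1\mapsto \psi(1)=1$, bypassing the trace, the adjointness step, and the socle argument. The ``patching'' issue you flag at the end is not actually an obstacle: every $\phi_f$ is a localization of the single globally defined composite $S^{A_\infty}_{\perfd}\to S^{A_\infty}_{\perfd}\otimes_R K(R)\cong (R^{A_\infty}_{\perfd}\otimes_R S)\otimes_R K(R)\xrightarrow{\psi\otimes\id} K(S)$, where the middle identification holds because the generic point lies in the \'etale locus; there is nothing to glue.

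One thing you should notice: your argument never invokes the hypothesis that $R\subseteq S$ is split. An unused hypothesis should always prompt a second look, but here I believe it reflects a genuine strengthening rather than a gap --- the characteristic $p$ analogue (a quasi-\'etale cover of an $F$-split normal local domain is $F$-split, proved by extending $\id_S\otimes\,\theta$ across the reflexive hull $F_*S\cong (S\otimes_R F_*R)^{**}$) likewise needs no splitting of the cover. The splitting hypothesis is precisely what the paper's trace-based route consumes, since it is used to see that $T$ is surjective; your route gets surjectivity for free from $\phi(1)=1$.
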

\begin{proof}
The trace map $T : S \to R$ is surjective as the splitting must be a multiple of the generator $T \in \Hom_R(S, R)$.  Pick $x \in S$ with $T(x) = 1$.  The induced map 
    \[
        \phi : S^{A_{\infty}}_{\perfd} \to K \otimes_R S^{A_{\infty}}_{\perfd} = K \otimes_R S \otimes_R R^{A_{\infty}}_{\perfd} \xrightarrow{T'_{\perfd}} K\otimes_R R^{A_{\infty}}_{\perfd} \xrightarrow{K \otimes \psi} K
    \]
    then sends the image of $x$ in $S^{A_{\infty}}_{\perfd}$ to $1$.  On the other hand, mimicking the proof of \autoref{prop.CyclicCoverNIndexStuff} \autoref{prop.CyclicCoverNIndexStuff.IndexAssumption}, we see that the image of $\phi$ is contained in $R$ as $R$ is S2.  Hence $\phi : S^{A_{\infty}}_{\perfd} \to R$ is surjective.
    Repeating the argument of \autoref{prop.CyclicCoverNIndexStuff} \autoref{prop.CyclicCoverNIndexStuff.PerfdPureS} then proves that $S$ is also \claperfdpure.  
\end{proof}

The above corollary is in many ways more general than the cyclic cover statement.  However, we want the flexibility to handle the cyclic covers when $\Spec R$ has multiple irreducible components as we want to show that such $R$ are semi-log canonical (SLC) below.

We expect the results above to generalize to the case of general index $[K(S):K(R)]$ in the following way.  See \cite{CarvajalRojasFiniteTorsors} for the analog in characteristic $p > 0$.

\begin{conjecture}
    \label{conj.GeneralQuasiTorsorPerfdPure}
    Suppose that $R \subseteq S$ is a finite $\mu_n$-quasi-torsor\footnote{That is, there exists an open subset $U \subseteq \Spec R$ whose complement has codimension $\geq 2$, such that $\Spec S \to \Spec R$ is $\mu_n$-torsor.} over a Noetherian local reduced G1 and S2 ring $(R, \fram)$ of mixed characteristic.  If $R \to S$ is split, and $R$ is \claperfdpure, then so is $S$.
\end{conjecture}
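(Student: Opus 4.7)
The strategy is to adapt the proof of \autoref{prop.CyclicCoverNIndexStuff}, replacing the trace map $T$ with the given splitting $\sigma \colon S \to R$. In the $\mu_n$-torsor picture $\sigma$ is the projection onto $\mu_n$-invariants; since it sends $1$ to $1$ rather than to $n$, it remains well-behaved regardless of whether $p \mid n$. By \autoref{lem:pfdpurecompletion} I reduce to the case that $(R,\m)$ is complete local, so perfectoid purity of $R$ provides an $R$-linear splitting $\psi \colon R^{A_\infty}_{\perfd} \to R$, where $A \to R$ is a Noether--Cohen normalization as in \autoref{lem:RAperfdsuffices}.

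The $\mu_n$-action on $S$ yields a $\Z/n$-grading $S = \bigoplus_i S_i$ with $R \subseteq S_0$; on the codimension-one torsor locus one has $S_0 = R$, and the S2 hypothesis on $R$ extends this equality globally. By functoriality of perfectoidization, $\mu_n$ acts on $S^{A_\infty}_{\perfd}$, producing a decomposition $S^{A_\infty}_{\perfd} = \bigoplus_i M_i$ together with an $R^{A_\infty}_{\perfd}$-linear projection $\pi \colon S^{A_\infty}_{\perfd} \to M_0$. The key technical claim is that $M_0 = R^{A_\infty}_{\perfd}$: over the torsor locus this should follow from linear reductivity of $\mu_n$ (invariants are exact and commute with flat base change) together with \cite[Theorem~10.9]{BhattScholzepPrismaticCohomology} on the \'etale part, while extension across the codimension-two bad locus is handled using the arc-sheaf description \autoref{prop:h-sheafification} of $R^{A_\infty}_{\perfd}$. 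Granted this, $\phi := \psi \circ \pi \colon S^{A_\infty}_{\perfd} \to R$ is $R$-linear with $\phi(1) = 1$.

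To promote $\phi$ to an $S$-linear splitting, I use that for a $\mu_n$-torsor the relative dualizing module $\Hom_R(S,R)$ is free of rank one over $S$, with $\sigma$ as a generator: on the torsor locus the graded-piece pairings $S_j \otimes_R S_{-j} \to S_0 = R$ are perfect, making $s \mapsto s \cdot \sigma$ an isomorphism, and S2 on $S$ extends this globally. Hom--tensor adjunction then converts $\phi$ into an $S$-linear map $\phi_S \colon S^{A_\infty}_{\perfd} \to S$ with $\sigma \circ \phi_S = \phi$, whence $\sigma(\phi_S(1)) = 1$. Since $\sigma$ is a local homomorphism (i.e.\ $\sigma(\n) \subseteq \m$), $\phi_S(1)$ lies outside $\n$ and is therefore a unit in $S$; rescaling by $\phi_S(1)^{-1}$ produces the desired $S$-linear splitting of $S \to S^{A_\infty}_{\perfd}$, and by \autoref{lem:RAperfdsuffices} this proves $S$ is \claperfdpure.

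The main obstacle is the identification $M_0 = R^{A_\infty}_{\perfd}$ when $p \mid n$: in this regime the $\mu_n$-torsor fails to be \'etale, so \cite[Theorem~10.9]{BhattScholzepPrismaticCohomology} does not apply directly. One must instead show that perfectoidization is compatible with $\mu_n$-equivariant flat base change, using that $\mu_n$ is a flat linearly reductive group scheme, and then extend from codimension one via an S2-type argument for the derived, $p$-complete objects $M_0$ and $R^{A_\infty}_{\perfd}$. This compatibility appears to be the main new input needed beyond the strategy of \autoref{prop.CyclicCoverNIndexStuff}.
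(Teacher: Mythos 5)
This statement is explicitly labelled a \emph{Conjecture} in the paper (\autoref{conj.GeneralQuasiTorsorPerfdPure}); the authors do not prove it, and indeed they flag it, alongside the open question of whether \absperfdpure\ implies \claperfdpure, as one of the things they ``expect'' but cannot establish. So there is no paper argument to compare yours against. What you have written is a proof \emph{outline} whose central step is left unproved, and you correctly identify that unproved step yourself.

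The gap is genuine and is the whole content of the conjecture. Your proposal rests on the identification $M_0 := (S^{A_\infty}_{\perfd})^{\mu_n} \cong R^{A_\infty}_{\perfd}$. When $\gcd(p,n)=1$ this follows from \'etaleness of $R \to S$ in codimension one together with \'etale base change for perfectoidization \cite[Theorem 10.9]{BhattScholzepPrismaticCohomology}, which is precisely how \autoref{prop.CyclicCoverNIndexStuff} proceeds. When $p \mid n$, the group scheme $\mu_n$ has an infinitesimal $\mu_{p^a}$ factor, so $R \to S$ is merely flat in codimension one, and \'etale base change is unavailable. Your suggested substitute --- compatibility of perfectoidization with $\mu_n$-invariants --- is not a formal consequence of linear reductivity: the issue is that even defining the $\mu_n$-coaction on $S^{A_\infty}_{\perfd}$ is delicate. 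A $\mu_n$-coaction on $S$ is a ring map $S \to S\otimes_R R[\Z/n]$; applying $(-)^{A_\infty}_{\perfd}$ gives a map $S^{A_\infty}_{\perfd} \to (S\otimes_R R[\Z/n])^{A_\infty}_{\perfd}$, but the target is \emph{not} identified with $S^{A_\infty}_{\perfd}\otimes_R R[\Z/n]$ unless $R\to R[\Z/n]$ is $p$-completely \'etale, i.e.\ unless $p\nmid n$. Equivalently: on the Frobenius-colimit description, $\phi$ multiplies $\Z/n$-degrees by $p$, so the $\Z/p^a$-graded piece of the grading on $S$ degenerates rather than persists in the limit. One would have to show that the degree-zero piece nevertheless identifies with $R^{A_\infty}_{\perfd}$, and then glue this over the torsor locus past the codimension-$\geq 2$ bad set using some $S_2$-type extension statement for these derived $p$-complete objects --- which is not already in the literature in the needed generality. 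Until these pieces are supplied, the argument does not close.

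Two smaller but related points. First, you take as given that the splitting $\sigma$ generates $\Hom_R(S,R)$ as an $S$-module; that holds for the projection-to-degree-zero $\sigma$ once one has trivialized $\omega_{S/R}$ on the torsor locus (a standard computation for torsors under finite flat l.c.i.\ group schemes) and extended by $S_2$, but you should note that a priori an arbitrary splitting need not be the degree-zero projection, and your argument really uses the specific choice $\sigma = \mathrm{pr}_0$. Second, even granting an $R^{A_\infty}_{\perfd}$-linear idempotent $\pi$ onto $M_0$, its $\mu_n$-equivariance (hence $S$-linearity in the relevant sense for the Hom--tensor step) again depends on the existence of a genuine $\mu_n$-coaction on $S^{A_\infty}_{\perfd}$, which is the point above. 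The overall strategy of replacing the trace map of \autoref{prop.CyclicCoverNIndexStuff} by an equivariant/group-theoretic decomposition is exactly the right direction and mirrors the positive-characteristic analogue for finite torsors in Carvajal-Rojas's work, but the proposal as written is a plan rather than a proof.
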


One could also ask that  \autoref{prop.CyclicCoverNIndexStuff} and the above conjecture hold for \absperfdpure singularities.  On the other hand, based on the characteristic zero and characteristic $p > 0$ pictures, we do not expect \autoref{conj.GeneralQuasiTorsorPerfdPure} to hold for perfectoid-injective or \absperfdinj singularities.

\subsection{Log canonical singularities}

We now apply our work to log canonical singularities.

\begin{corollary}
    \label{cor.PerfdPureImpliesLC}
    Suppose $R$ is \claperfdpure, normal, and $\bQ$-Gorenstein of index not divisible by $p > 0$.  Then $R$ is log canonical.
\end{corollary}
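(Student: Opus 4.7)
The plan is to reduce to the quasi-Gorenstein case via an index-one cyclic cover, verify log canonicity there using \autoref{prop.KSSPushForwardNormalCase}, and then descend along the resulting quasi-\'etale extension. First, by \autoref{lem:checklocalrings} and \autoref{lem:pfdpurecompletion} I may assume $(R,\m)$ is a normal complete local ring. Let $n$ be the index of $K_R$, so that $p\nmid n$ and $nK_R\sim 0$. Fixing an isomorphism $\cO_R(nK_R)\cong \cO_R$, I form the cyclic cover
\[
S \;=\; \bigoplus_{i=0}^{n-1} R(-iK_R),
\]
with the induced $R$-algebra structure. A standard computation gives $\omega_S = \Hom_R(S,\omega_R)\cong S$ as $S$-modules, so $S$ is quasi-Gorenstein; and since $p\nmid n$, the inclusion $R\subseteq S$ is quasi-\'etale. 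By \autoref{prop.CyclicCoverNIndexStuff.PerfdPureS}, $S$ is again \claperfdpure, and hence \absperfdinj by \autoref{lem:pure_injective_comparison} together with \autoref{rem:atlas_implies_absolute}.

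Next I would show that $S$ is log canonical. Given any divisorial valuation $v$ on $K(S)$ centered at a non-generic point, pick a proper birational $\rho\colon Y\to\Spec S$ with $Y$ normal such that $v$ is the valuation of some prime component of the reduced exceptional divisor $E\subseteq Y$; such a $Y$ is produced by (iterated) normalized blowup of the center of $v$ in the excellent local scheme $\Spec S$. Because $Y$ is normal, every height-one point of $Y$ automatically has a DVR local ring, so the regularity hypothesis in \autoref{prop.KSSPushForwardNormalCase} holds for free. That proposition then yields $\rho_*\omega_Y(E) = \omega_{\Spec S} \cong \cO_{\Spec S}$; since $S$ is quasi-Gorenstein and normal, this equality is equivalent (via Grothendieck duality and reflexivity along $E$) to $K_Y - \rho^*K_S + E$ being effective on $Y$, i.e.\ every log discrepancy at an exceptional divisor of $\rho$ is $\ge 0$. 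Varying $v$ shows that $S$ is log canonical.

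Finally I would descend log canonicity from $S$ back to $R$. The quasi-\'etale extension $R\subseteq S$ yields the codimension-one equality $K_S = q^*K_R$, where $q\colon \Spec S\to\Spec R$ is the structure map, and pulling back any normal proper birational $\pi\colon Y'\to\Spec R$ extracting a divisorial valuation of $K(R)$ to the normalization of $Y'\times_{\Spec R}\Spec S$ preserves the log discrepancies of the lifted divisors; hence log canonicity of $S$ forces that of $R$. The step requiring the most technical care is translating the sheaf-level equality $\rho_*\omega_Y(E) = \cO_S$ into coefficient-by-coefficient effectivity of $K_Y - \rho^*K_S + E$ on every normal model $Y$: this is classical in characteristic zero, but in our mixed-characteristic setting one must combine the Grothendieck-duality argument with the valuation-by-valuation extraction of models described above, exactly paralleling the proof of \autoref{prop.KSSPushForwardNormalCase}.
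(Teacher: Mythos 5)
Your proposal follows essentially the same route as the paper: take the index-one cyclic cover $S$, use \autoref{prop.CyclicCoverNIndexStuff} \autoref{prop.CyclicCoverNIndexStuff.PerfdPureS} to see $S$ is \claperfdpure (hence \absperfdinj), conclude $S$ is log canonical from \autoref{prop.KSSPushForwardNormalCase}, and descend along the quasi-\'etale extension. The detailed unwinding of $\rho_*\omega_Y(E)=\omega_S$ into nonnegativity of log discrepancies is already the content of the ``in particular'' clause of \autoref{prop.KSSPushForwardNormalCase}, so that part is fine.

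The one place you under-justify is the descent step, and it is precisely the step that is delicate in mixed characteristic. Your assertion that passing to the normalization of $Y'\times_{\Spec R}\Spec S$ ``preserves the log discrepancies of the lifted divisors'' is not literally true: for a divisor $F$ over $S$ lying over $E$ over $R$ with ramification index $e$, the relation is $a(F)+1=e\,(a(E)+1)+d$, where $d$ is the wild part of the different. One can only conclude $a(E)\ge -1$ from $a(F)\ge -1$ when $d=0$, i.e.\ when the ramification at every exceptional divisorial valuation is \emph{tame}; with wild ramification the implication can fail, which is exactly why the paper warns before \autoref{prop.GeneralCoverResultForLC} that ``the usual computation'' does not work in general here. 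The hypothesis $p\nmid n$ rescues you — $K(R)\subseteq K(S)$ is Galois of degree prime to $p$, so every divisorial valuation of $K(S)$ is tame over its restriction to $K(R)$, and then the standard discrepancy formula (\cite[Proposition 5.20]{KollarMori}) applies — but you should say this explicitly; it, rather than the duality bookkeeping you flag at the end, is the point requiring genuine care.
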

\begin{proof}
    By \autoref{prop.CyclicCoverNIndexStuff}, a quasi-Gorenstein cyclic cover of index prime-to-$p$ is \claperfdpure and it is well known it is normal as the extension is quasi-\'etale.  Hence the cyclic cover $S$ is log canonical by \autoref{prop.KSSPushForwardNormalCase}.  Furthermore, since $K(R) \subseteq K(S)$ is Galois of index not divisible by $p$, we see that each divisorial valuation of $K(S)$ is tame over its restriction to $K(R)$, see for instance \cite[\href{https://stacks.math.columbia.edu/tag/09EA}{Tag 09EA}]{stacks-project} or \cite{KerzSchmidtOnDifferentNotionsOfTameness}.  Hence the usual computation of discrepancies holds (\cite[Proposition 5.20]{KollarMori}) and $R$ is also log canonical. 
\end{proof}

We expect that the hypothesis that the index is not divisible by $p > 0$ can be removed and perhaps also that \claperfdpure can be weakened to \absperfdpure.

\begin{conjecture}
    If $R$ is \absperfdpure, normal, and $\bQ$-Gorenstein, then $R$ is log canonical.
\end{conjecture}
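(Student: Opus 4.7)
The plan is to mimic the strategy of \autoref{cor.PerfdPureImpliesLC}, reducing to the quasi-Gorenstein case and then invoking \autoref{prop.KSSPushForwardNormalCase}. Writing the canonical index as $n = m p^e$ with $\gcd(m,p) = 1$, the tame part should proceed as in \autoref{prop.CyclicCoverNIndexStuff}: take the degree-$m$ cyclic cover (which is quasi-\'etale), and establish that \absperfdpure is inherited by this cover. This is the \absperfdpure analogue of \autoref{prop.CyclicCoverNIndexStuff}\autoref{prop.CyclicCoverNIndexStuff.PerfdPureS}; I expect it to follow formally from the compatibility of $(-)_\perfd$ with quasi-\'etale covers via \cite[Theorem 10.9]{BhattScholzepPrismaticCohomology} together with a trace-map bookkeeping argument mirroring the one in the proof of \autoref{prop.CyclicCoverNIndexStuff}. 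This reduction brings us to the case where the canonical index equals $p^e$.

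The genuinely new difficulty is the wild case $n = p^e$. A finite cyclic cover would be wildly ramified and destroy the discrepancy computation, so the natural replacement is an infinite perfectoid cover obtained by adjoining compatible $p$-power roots of a trivializing section $t$ of the $p^e$-th reflexive power of $\omega_R$. One would form the semi-perfectoid ring $R^\infty := (R[t^{1/p^\infty}])^{\wedge p}$, pass to its perfectoidization $B := (R^\infty)_\perfd$, and use that $R \to B$ is pure (since $R \to R_\perfd \to B$ factors through the pure map $R \to R_\perfd$ by \autoref{prop:universal_property_perfd}). In $B$ the canonical sheaf of $R$ becomes trivializable up to $p$-power roots, opening the door to running the argument of \autoref{prop.KSSPushForwardNormalCase} directly over $B$ rather than over a finite quasi-Gorenstein cover.

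The hard part will be converting purity of $R \to B$ into the discrepancy bound $a_i \geq -1$ on exceptional divisors of a log resolution. In the tame case one exploited the closed-form discrepancy formula for tame quasi-\'etale extensions (\cite[Proposition 5.20]{KollarMori}); in the perfectoid setting the tower is infinite and valuations of the pulled-back section acquire denominators in $\mathbb{Z}[1/p]$, so one needs to compute limits of log discrepancies along $R \to R[t^{1/p}] \to R[t^{1/p^2}] \to \cdots$ and show that \absperfdpure forces these limits to be $\geq -1$. Making this rigorous plausibly requires a perfectoid-coefficient version of \autoref{prop.KSSPushForwardNormalCase}, in which the bottom-right entry of the \autoref{lem.BlowupSquareSequence} pullback square is allowed to be a perfectoid algebra, together with depth control on $\myR\Gamma_\m(B)$ beyond the LCI regime of \autoref{thm: lci case}. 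This last ingredient---Cohen--Macaulayness of perfectoidizations in the merely $\mathbb{Q}$-Gorenstein setting---is where I expect the principal obstacle to lie, since without it one cannot convert purity on top local cohomology into the surjection of canonical-sheaf pushforwards that \autoref{prop.KSSPushForwardNormalCase} ultimately exploits.
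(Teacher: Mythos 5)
This statement is stated in the paper as an open conjecture, not a theorem: the authors prove the log canonicity conclusion only under the additional hypotheses that $R$ is \claperfdpure and that the canonical index is prime to $p$ (\autoref{cor.PerfdPureImpliesLC}), and in the $p$-divisible-index case they obtain only that $R[1/p]$ is log canonical (\autoref{prop.PerfdPureImpliesR1overpIsLC}). There is therefore no proof in the paper to compare your argument against, and your proposal does not close the gap either --- it is a strategy sketch whose unresolved steps are exactly the open problems.

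Concretely, three of your steps are genuine gaps rather than routine verifications. First, your tame reduction already requires an \absperfdpure analogue of \autoref{prop.CyclicCoverNIndexStuff}\autoref{prop.CyclicCoverNIndexStuff.PerfdPureS}; the paper explicitly flags this as unknown (see the discussion following \autoref{conj.GeneralQuasiTorsorPerfdPure}), and the trace-splitting argument there leans on having an honest ring map $\psi: R^{A_\infty}_{\perfd}\to R$ coming from the \emph{classical} perfectoid purity, which has no evident substitute when $R\to R_{\perfd}$ is merely pure as a map in $\myD(R)$. Second, and more seriously, the wild case: passing to the infinite tower $R[t^{1/p^\infty}]$ trivializes $\omega_R$ only ``up to $p$-power roots,'' and the mechanism by which the paper converts purity into $\pi_*\omega_Y(E)=\omega_X$ (\autoref{prop.KSSPushForwardNormalCase} via \autoref{lem.BlowupSquareSequence}) fundamentally uses Grothendieck duality for the \emph{Noetherian} ring $R$ against a coherent object $C$; there is no duality theory for the non-Noetherian perfectoid algebra $B$ that would let you dualize $R\to B$ into a surjection of canonical modules, and no discrepancy formula along a wildly ramified infinite tower replacing \cite[Proposition 5.20]{KollarMori}. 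This is precisely why the paper stops at $R[1/p]$ being log canonical in \autoref{prop.PerfdPureImpliesR1overpIsLC}. Third, the Cohen--Macaulayness of perfectoidizations outside the LCI regime of \autoref{thm: lci case}, which you correctly identify as needed, is itself an open problem closely tied to the direct summand/CM-ness circle of questions. In short: the architecture you propose is the natural one and matches the authors' own expectations, but each load-bearing step is an open problem, so this should be regarded as a plausible program rather than a proof.
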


One could generalize this to pairs as well, although we do not fully develop the theory of pairs in this paper (see \autoref{def.PerfdPurePair} for a first definition).

\begin{proposition}
    \label{prop.PerfdPureImpliesR1overpIsLC}
    Suppose that $R$ is \claperfdpure, normal, and $\bQ$-Gorenstein.  Then $R[1/p]$ is log canonical.
\end{proposition}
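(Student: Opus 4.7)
The plan is to reduce to the quasi-Gorenstein setting via a cyclic index-one cover and then apply the fact that normal, quasi-Gorenstein, Du Bois singularities are log canonical \cite{KovacsDuBoisLC1}. First, since $R$ is \claperfdpure, combining \autoref{rem:atlas_implies_absolute} with \autoref{lem:pure_injective_comparison} shows that $R$ is \absperfdinj, and then \autoref{prop.ArcImpliesR1overpIsDuBois} yields that $R[1/p]$ has Du Bois singularities.

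Let $n$ denote the canonical index of $R$. The key observation is that $n$ is a unit in $R[1/p]$: since $R$ has $p$ in its Jacobson radical, every residue field of the Noetherian ring $R[1/p]$ has characteristic zero (any prime of $R$ not containing $p$ yields a characteristic-zero residue field), so every nonzero integer is invertible there. Fix an isomorphism $R(nK_R)\cong R$ and form the associated cyclic cover $R\to S=\bigoplus_{i=0}^{n-1} R(-iK_R)$. Then $S$ is a normal, quasi-Gorenstein, finite $R$-algebra, and because $n$ is invertible in $R[1/p]$ and $K_R$ is Cartier in codimension one, $R[1/p]\to S[1/p]$ is a finite Galois cover with group $\mathbf{Z}/n\mathbf{Z}$ that is \'etale in codimension one.

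Now the Du Bois complex $\underline{\Omega}^0$ is compatible with \'etale base change, and both rings are normal (hence $S_2$), so the Du Bois property should propagate from $R[1/p]$ to $S[1/p]$. Consequently $S[1/p]$ is normal, quasi-Gorenstein, and Du Bois, so by \cite{KovacsDuBoisLC1} it is log canonical. Finally, since $R[1/p]\to S[1/p]$ is Galois, \'etale in codimension one, and automatically tame in characteristic zero, the standard discrepancy computation (compare \cite[Proposition 5.20]{KollarMori} and the proof of \autoref{cor.PerfdPureImpliesLC}) shows log canonicity descends to $R[1/p]$.

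I expect the Du Bois ascent step to be the main delicate point. Away from the codimension $\geq 2$ branch locus, the argument is immediate by \'etale base change and $S_2$-extension of reflexive sheaves, but careful justification is needed that the Du Bois isomorphism extends across this locus for the merely quasi-\'etale (not necessarily \'etale) Galois cover; this may be handled via an equivariant log-resolution plus $G$-invariants argument. An alternative route would be to show that $S$ itself is \absperfdinj and then apply \autoref{prop.ArcImpliesR1overpIsDuBois} directly to $S$, perhaps by upgrading the splitting $R[1/p]\to S^{A_\infty}_{\perfd}[1/p]$ of \autoref{prop.CyclicCoverNIndexStuff}(a) to a splitting of $S[1/p]\to S^{A_\infty}_{\perfd}[1/p]$ as in the proof of \autoref{prop.CyclicCoverNIndexStuff}(c), but this appears harder without the prime-to-$p$ index hypothesis.
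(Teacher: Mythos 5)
The proposal has a genuine gap at its key step, and what it calls the harder ``alternative route'' is essentially what the paper actually does.

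Your main route reduces to showing that the Du Bois property of $R[1/p]$ ascends along the quasi-\'etale index-one cover $R[1/p]\to S[1/p]$. This is not a known result, and you should be suspicious of it: if Du Bois ascended along index-one covers in general, then combining with \cite{KovacsDuBoisLC1} (normal quasi-Gorenstein Du Bois implies log canonical) and tame discrepancy descent would give the clean statement ``normal $\bQ$-Gorenstein Du Bois implies log canonical'' outright, without any perfectoid input, and the paper would simply cite it. The obstruction is exactly the one you flagged: the Du Bois condition is a statement about a complex in the derived category, not an $S_2$ sheaf, so you cannot extend the isomorphism $\cO\to\DuBois{}$ across a codimension $\geq 2$ locus by reflexivization, and the equivariant-resolution-plus-$G$-invariants trick goes in the descent direction (Kov\'acs) rather than the ascent direction. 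Without this step your argument does not close.

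The paper's proof takes what you present as the ``harder'' alternative, but with a localization trick that makes it work without the prime-to-$p$ hypothesis. It does not try to show that $S[1/p]\to S^{A_\infty}_{\perfd}[1/p]$ splits globally (which would essentially amount to showing $S$ is \claperfdinj, i.e.\ what \autoref{conj.GeneralQuasiTorsorPerfdPure} would give). Instead, using $\Phi'=(1/n)\Phi[1/p]$ and $\Hom$--tensor adjointness, it factors $\Phi'$ as $(S^{A_\infty}_{\perfd})[1/p]\xrightarrow{\Psi} S[1/p]\xrightarrow{\Tr} R[1/p]$ with $\Tr(\Psi(1))=1$, and then for each $Q\in\Spec R[1/p]$ chooses a prime $Q'$ over $Q$ at which $\Psi(1)$ is a unit. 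This yields a splitting of $S_{Q'}\to C_{Q'}$ (for $C$ coming from a blowup as in \autoref{lem.BlowupSquareSequence}), and dualizing via \autoref{clm.CDotDualizes} gives $\Gamma(Y,\cO_Y(K_Y+E))_{Q'}\twoheadrightarrow (\omega_S)_{Q'}$, hence log canonicity of $S[1/p]$ at $Q'$; the Galois action then spreads this to all primes over $Q$, and the standard tame discrepancy computation descends it to $R[1/p]$. In short, the paper never asserts $S[1/p]$ is Du Bois; it only establishes the dual surjection prime-by-prime, which is all that is needed.

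So: your first two steps (perfectoid pure $\Rightarrow$ \absperfdinj $\Rightarrow$ $R[1/p]$ Du Bois by \autoref{prop.ArcImpliesR1overpIsDuBois}) and your final descent step are fine, and your instinct about where the difficulty lies is correct. But the Du Bois ascent claim is the missing ingredient, and the way to repair the argument is not to prove that ascent but to replace it by the pointwise splitting argument sketched above.
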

\begin{proof}
    Without loss of generality, we may assume that $R$ is complete local (as we can check whether $R$ is log canonical on a log resolution of the characteristic zero scheme $\Spec R[1/p]$).
    Again we have a cyclic cover $S = \bigoplus_{i = 0}^{n-1} R(-iK_R)$ where $n$ is the index of $K_R$, and where multiplication on $S$ is defined using an isomorphism $\omega_R^{(n)} \cong R$.  We have a map $\Phi : S^{A_{\infty}}_{\perfd} \to R$ sending $1 \mapsto n = [K(S) : K(R)]$ by the proof of \autoref{prop.CyclicCoverNIndexStuff}.  

    As $R[1/p] \subseteq S[1/p]$ is quasi-\'etale, it suffices to show that $S[1/p]$ is log canonical.
    
    Now, for any $C$ coming from $Y \to \Spec S$ as in \autoref{lem.BlowupSquareSequence}, 
    we have a factorization 
    \[
        R[1/p] \to S[1/p] \to C[1/p] \to (S^{A_{\infty}}_{\perfd})[1/p].
    \]
    The map $\Phi' := (1/n) \cdot \Phi[1/p]$ splits this inclusion.  As $R[1/p] \subseteq S[1/p]$ is quasi-\'etale, $\Tr$ generates $\Hom_{R[1/p]}(S[1/p], R[1/p])$ as an $S[1/p]$-module, it follows from $\Hom$-tensor adjointness that we can factor $\Phi'$ as 
    \[
        \Phi' : (S^{A_{\infty}}_{\perfd})[1/p] \xrightarrow{\Psi} S[1/p] \xrightarrow{\Tr} R[1/p]
    \]
    for some $S[1/p]$-linear $\Psi$.  Note $\Tr(\Psi(1)) = 1$.  

    Pick $Q \in \Spec R[1/p]$.
    As we are in characteristic zero, it suffices to show that $S[1/p]$ is log canonical at at least one prime $Q' \in \Spec S[1/p]$ lying over $Q$ (indeed, since $S[1/p]$ is generically Galois over $R[1/p]$ if it is log canonical at one $Q'$, it is at all $Q'$).  As $\Tr$ sends $\sqrt{QS[1/p]}$ into $Q$, it follows that $\Psi(1) \notin \sqrt{QS[1/p]}$ and hence $\Psi(1) \notin Q'$ for at least one $Q'$ lying over $Q$.  Localizing at $Q'$, we have that the composition  
    \[
        S_{Q'} \to C_{Q'} \to  (S^{A_{\infty}}_{\perfd})[1/p]_{Q'} \to S_{Q'}
    \]
    is an isomorphism.  Thus $S_{Q'} \to C_{Q'}$ is split, and so the Grothendieck dual
    \[
        S_{Q'} \cong \omega_{S_{Q'}} = \myH^{-d} \omega_{S_{Q'}}^{\mydot} \leftarrow \myH^{-d} {\bf D}(C_{Q'})
    \]
    is surjective where $d = \dim S_{Q'}$.  But by \autoref{clm.CDotDualizes}, $\myH^{-d} {\bf D}(C_{Q'}) = \Gamma(Y, \cO_Y(K_Y + E))_{Q'}$ where $E$ is the reduced exceptional divisor.  It follows that $S_{Q'}$ is log canonical, and the proof is complete.
\end{proof}

\subsection{Generalizations outside of the normal case}

The goal of the next section is to generalize the work done previously in this section outside of the normal case.  Beyond simply generalizing to the case of semi-log canonical singularities, such considerations are also necessary even if one assumes \autoref{conj.GeneralQuasiTorsorPerfdPure}.  Indeed, we expect that a $\mu_p$-quasi-torsor over a normal \claperfdpure singularity need not be normal.

Before we continue, we need a slightly nonstandard statement of a well known result attributed to Zariski and Abhyankar.  

\begin{theorem}[{\cf \cite[Section 5]{ArtinNeronModels}, \cite{ZariskiTheReductionOfTheSingsOfASurface, AbhyankarOnTheValuationsCentered}}]
    \label{thm.ArtinZariskiAbhyankar}
    Suppose $(R, \fram)$ is an excellent reduced Noetherian local ring with total ring of fractions $K(R) = K_1 \times \dots \times K_n$ and $v$ is a divisorial valuation over $X = \Spec R$ in some $K_i$.   Then by repeatedly blowing up the center of $v$ in $X$, we obtain a scheme $f : X' \to X$ such that over the irreducible component $X_i$ corresponding to $K_i$, we have that the valuation ring of $v$ is a stalk on $X_i'$ (the strict transform of $X_i$).  

    Furthermore, let $V \subseteq X$ denote the center of $v$ 
    and $\kappa : U \subseteq X$ be an open set contained in $X \setminus V$.  Let $X'' = \sheafspec \sA$ where $\sA$ is the normalization of $\cO_{X'} \subseteq \kappa_* \cO_{f^{-1}(U)}$ and let $X_j''$ denote the irreducible components corresponding to the $K_j$.  Then the composition $g : X'' \to X$ satisfies the following.
    \begin{enumerate}
        \item $g$ is an isomorphism over $U$.
        \item If $Z \subseteq X''$ is the center of $v$ on $X''$, then $X''$ is normal at the generic point of $Z$.
        \item More generally, at every height one point $\mu \in X_i''$ which is the generic point of an irreducible component of $X''_i \setminus g^{-1}(U)$, we have that $\cO_{X'', \mu}$ is a DVR.
    \end{enumerate}
\end{theorem}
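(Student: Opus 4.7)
The plan is to prove the first assertion via the classical theorem of Zariski--Abhyankar on resolution of valuations, applied carefully in our reducible setting, and then to prove the second assertion by an explicit verification of the three properties using standard facts about the integral closure of one-dimensional local rings at height-one points.

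For the first assertion, let $P_i \subseteq R$ be the minimal prime corresponding to $K_i$, so that $X_i = \Spec(R/P_i)$. The center $V$ of $v$ is supported on $X_i$ because $v$ lives on $K_i$, and blowing up $V$ in $X$ restricts on the strict transform to blowing up the scheme-theoretic image of $V$ in $X_i$. Since $R/P_i$ is an excellent local domain, the classical Zariski--Abhyankar theorem on resolution of valuations (see \cite[Section 5]{ArtinNeronModels}, \cite{ZariskiTheReductionOfTheSingsOfASurface, AbhyankarOnTheValuationsCentered}) applied to $R/P_i$ shows that iterating this procedure finitely many times realizes the valuation ring of $v$ as a stalk $\cO_{X_i', \xi}$ at some height-one point $\xi \in X_i'$.

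For the second assertion, define $\sA$ to be the integral closure of $\cO_{X'}$ in $\kappa'_* \cO_{f^{-1}(U)}$, where $\kappa' : f^{-1}(U) \hookrightarrow X'$ denotes the open immersion. Since $X'$ is excellent and $\kappa'_* \cO_{f^{-1}(U)}$ agrees with $\cO_{X'}$ at the generic points of $X'$, the sheaf $\sA$ is coherent over $\cO_{X'}$, so $g: X'' = \sheafspec \sA \to X$ is finite over $X'$. Property (1) is immediate: $f$ is an isomorphism over $X \setminus V \supseteq U$ (each blow-up being along a subscheme of $V$), so $\kappa'_* \cO_{f^{-1}(U)}$ equals $\cO_{X'}$ over $U$, and the integral closure is trivial there. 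For (2) and (3), the key observation is that at any height-one point $\mu' \in X'$ in the complement of $f^{-1}(U)$, the one-dimensional reduced Noetherian local ring $\cO_{X', \mu'}$ has integral closure in its total ring of fractions equal to a finite product of DVRs, by Krull--Akizuki together with finiteness from excellence; these are indexed by pairs $(Q_k, \n_k)$ where $Q_k$ is a minimal prime of $\cO_{X', \mu'}$ and $\n_k$ is a maximal ideal of the integral closure of $\cO_{X', \mu'}/Q_k$. Each such $\n_k$ corresponds to a point $\mu$ of $X''$ lying on a unique component $X_j''$, with $\cO_{X'', \mu}$ being the corresponding DVR, yielding (3). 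For (2), the center $\xi$ of $v$ on $X_i'$ has $\cO_{X_i', \xi}$ equal to the valuation ring of $v$, which is already a DVR; hence it is one of the DVR factors of the integral closure at $\xi$, corresponding to the center $\mu$ of $v$ on $X''$ with $\cO_{X'', \mu}$ normal.

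The main technical subtlety will be confirming that the DVR factors described above do correspond bijectively to points of $X''$ in such a way that each relevant $\mu$ lies on a unique irreducible component $X_j''$ of $X''$. This is a standard consequence of the fact that the integral closure of a reduced Noetherian ring in its total ring of fractions decomposes as a product indexed by the minimal primes; the maximal ideals of each factor then correspond to distinct closed points in $X''$ above $\mu'$, each supported on a single component. A minor auxiliary point is that one may need to perform additional blow-ups in the first step to ensure $\xi$ lies only on $X_i'$ among the components of $X'$, but this can be arranged by blowing up the relevant intersections, which are subschemes of $V$.
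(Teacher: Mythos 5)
Your reduction of the first assertion to the classical domain case via strict transforms matches the paper, and your verification of property (1) is fine. But there is a genuine gap in your treatment of (2) and (3): you analyze the integral closure of $\cO_{X',\mu'}$ in its \emph{total ring of fractions}, whereas $\sA$ is by definition the integral closure of $\cO_{X'}$ inside $\kappa_*\cO_{f^{-1}(U)}$, which is in general a proper subring of the total ring of fractions (it must be, or property (1) would fail, since $X$ need not be normal over $U$). The product decomposition "indexed by the minimal primes" that you invoke is a feature of normalization in the full fraction ring; for normalization inside $\kappa_*\cO_{f^{-1}(U)}$ it holds at $\mu'$ only if the stalk $(\kappa_*\cO_{f^{-1}(U)})_{\mu'}$ actually contains the idempotent cutting out the $K_i$-factor, and the DVR conclusion further requires that this stalk contain the entire field $K(X_i')$. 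Neither is automatic, and this is precisely the computation the paper supplies: writing the complement of $f^{-1}(U)$ near $\mu'$ as $V(h_1,\dots,h_m)$ with each $h_a\in\fram_{\mu'}$, and using that $\mu'$ has height one \emph{on $X_i'$}, each localization $\cO_{X',\mu'}[h_a^{-1}]$ has $K(X_i')$ as its $X_i'$-factor, whence $(\kappa_*\cO_{f^{-1}(U)})_{\mu'}=K(X_i')\times(\kappa_*\cO_{Y'\cap f^{-1}(U)})_{\mu'}$ and therefore $\sA_{\mu'}=\overline{\cO_{X_i',\mu'}}\times\sB_{\mu'}$. Without this step your argument does not establish that the components of $X''$ separate at the preimages of $\mu'$, so $\cO_{X'',\mu}$ is not even known to be a domain.

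Two secondary problems compound this. First, $\mu'$ need only have height one on $X_i'$; it may lie on other components $X_k'$ with higher codimension there, so $\cO_{X',\mu'}$ need not be one-dimensional and your "finite product of DVRs via Krull--Akizuki" description of its full normalization is incorrect as stated (only the $X_i'$-factor is a semilocal Dedekind domain). Second, your proposed remedy of separating the components by blowing up their pairwise intersections does not work: those intersections are not in general contained in $V$ (they can meet $U$), so the extra blow-ups would modify $X$ over $U$ and destroy property (1), besides departing from the prescribed construction of $X'$ as an iterated blow-up of the center of $v$. The correct fix is not to separate the components upstairs but to carry out the local analysis of $(\kappa_*\cO_{f^{-1}(U)})_{\mu'}$ described above.
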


\begin{proof}
    If $R$ is a domain, the first part of the statement (before ``Furthermore,") is well known and can be found in the cited reference.  

    Now, let us consider what happens if we run the algorithm on a non-irreducible $X$.  Note that if at each step, we consider the strict transform of $X_i$, this behaves exactly as the classical integral domain case.  Hence, ignoring the components $X_j$ for $j \neq i$, we now have a scheme $X'$ where one component $X_i'$ has a stalk $\cO_{X_i', \eta}$, at some point $\eta$, equal to the valuation ring of $v$.  
    We write 
    \[ 
        Y' = \bigcup _{j \neq i} X_j'
    \]  
    to be the union of irreducible components of $X'$ distinct from $X_i'$.

    Pick $\mu$ a height-1 point of $X_i'$ which maps into $V$, that is $\mu \in X'_i \setminus f^{-1}(U)$ (for example, $\mu = \eta$).  Note $\mu$ may also be a point of other $X'_{k}$ as well.  Consider $\kappa : f^{-1}(U) \to X'$ the inclusion, then $(\kappa_* \cO_{f^{-1}(U)})_{\mu}$ is the kernel of some 
    \[
        \prod_a \cO_{X',\mu}[h_a^{-1}] \to \prod_{a < b} \cO_{X',\mu}[h_a^{-1}, h_b^{-1}] 
    \]
    for some finitely many $h_a$'s defining the complement of $f^{-1}(U)$ in  $\Spec \cO_{X',\mu}$.  However, every $h_a \in \fram_\mu$, and so when we invert $h_a$, at least on the $X_i'$ component we obtain a field as $\mu$ has height one on $X_i$.  Hence each $\cO_{X',\mu}[h_a^{-1}] = K(X_i') \times \prod \cO_{Y',\mu}[h_a^{-1}]$.  It follows that 
    \[
        (\kappa_* \cO_{f^{-1}(U)})_{\mu} = K(X_i') \times (\kappa_* \cO_{Y' \cap f^{-1}(U)})_{\mu}.
    \]
    We then see that $\sA$, the normalization of $\cO_{X'} \subseteq \kappa_* \cO_{f^{-1}(U)}$ satisfies 
    \[
        \sA_{\mu} := \cO_{X_i, \mu} \times \sB_{\mu}
    \]
    where $\sB$ is the normalization of $\cO_{Y'}$ in $\kappa_* \cO_{f^{-1}(U) \cap Y'}$.   

    Set $X'' := \sheafspec \sA$.  Since integral closure commutes with localization, $X''$ is a DVR at the pre-images of $\mu$ in $X_j$.  We also call these points $\mu$.
    The composition $X'' \to X' \to \Spec R$ is a map with the desired properties as $X'' \to X'$ is an isomorphism over $f^{-1}U$.
\end{proof}

Birational maps on a finite cover of $X$ can be used to show that $X$ has (semi-)log canonical singularities.  In characteristic zero, this easily follows from log discrepancy formulas, but due to the potential presence of wild ramification, the same computation does not seem to work in the general settings that we consider.  If the index of the canonical cover is prime-to-$p$, then we essentially discussed this generalization in \autoref{cor.PerfdPureImpliesLC}.  We now take a more general approach as we hope that \autoref{conj.GeneralQuasiTorsorPerfdPure} is true.

Before we begin, we remind the reader that a scheme is deminormal if it is S2, and is either nonsingular or has ``nodes'' after localizing at points of codimension 1, see \cite[Definition 5.1]{KollarKovacsSingularitiesBook}.  Note this forces the scheme to be Gorenstein in codimension 1 as well, and so allows us to use the techniques of divisors when working with the canonical module.

\begin{proposition}
    \label{prop.GeneralCoverResultForLC}
    Suppose $(R, \fram)$ is a Noetherian reduced local ring with a dualizing complex and $R \subseteq S$ is a finite extension such that $R,S$ are locally equidimensional and with $f : \Spec S \to \Spec R$ the induced map.  Suppose $R$ is deminormal  and $\bQ$-Gorenstein, and that $S$ is S2, and quasi-Gorenstein.  Additionally fix $\omega_R \subseteq K(R)$ and suppose that $(\omega_R^{-1} \otimes_R S)^{**} = y S$ for some $y \in K(S)$ (or equivalently, $f^* (-K_R) \sim 0$).   Further suppose that the twisted Grothendieck trace map:
    \[
        \Phi : S \cong S(K_S - f^*K_R) = y \cdot \omega_S \to R(K_R - K_R) = R
    \]
    is surjective.  
    Suppose that for each birational $\mu : Y \to \Spec S$ satisfying the following conditions:
    \begin{enumerate}
        \item  {$\mu$} is an isomorphism outside a set $V(J) \subseteq \Spec S$ of codimension $\geq 2$,
        \item $Y$ is G1 and S2,
        \item If $F = \mu^{-1}(V(J))_{\red}$, we have that $F$ has pure codimension 1 and that $Y$ is regular at each generic point of $F$ (that is, $F$ can be viewed as a divisor),
    \end{enumerate}
    we have that 
    \[ 
        \mu_* \cO_Y(K_Y + F) = \mu_* \sHom_Y(\sI_F, \omega_{Y}) \to \Hom_S(J, \omega_S) = \omega_S
    \] 
    is surjective and hence an isomorphism.  Then $R$ is semi-log canonical.
\end{proposition}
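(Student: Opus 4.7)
The plan is to verify the semi-log canonical condition for $R$ valuation by valuation. Fix a divisorial valuation $v$ centered on a component of $\Spec R$. Apply \autoref{thm.ArtinZariskiAbhyankar} to obtain $g \colon X'' \to X = \Spec R$ that is an isomorphism outside a codimension $\geq 2$ locus and realizes $v$ as a DVR at a height-one point of an appropriate irreducible component of $X''$. Apply \autoref{thm.ArtinZariskiAbhyankar} again to the finitely many divisorial valuations on $\Spec S$ extending $v$, working over $X'' \times_X \Spec S$, to obtain $\mu \colon Y \to \Spec S$ satisfying all the hypotheses of the proposition (namely G1, S2, with reduced $\mu$-exceptional divisor $F$ pure of codimension one and $Y$ regular at generic points of $F$), together with a finite morphism $f' \colon Y \to X''$ satisfying $f \circ \mu = g \circ f'$.

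By the hypothesis applied to $\mu$, we have a surjection $\mu_* \cO_Y(K_Y + F) \twoheadrightarrow \omega_S$. Applying $f_*$ and using $f_* \mu_* = g_* f'_*$, then composing with the Grothendieck trace $f_* \omega_S \twoheadrightarrow \omega_R$ -- which surjects because, after the twist by $y$ coming from $f^*(-K_R) \sim 0$, it identifies with the surjective map $\Phi$ -- we obtain $g_* f'_* \cO_Y(K_Y + F) \twoheadrightarrow \omega_R$. The remaining task is to factor this through $g_* \cO_{X''}(K_{X''} + E)$, where $E$ is the reduced $g$-exceptional divisor on $X''$. For this, we construct a natural map $f'_* \cO_Y(K_Y + F) \to \cO_{X''}(K_{X''} + E)$: since $F$ has coefficient one on each component of the reduced preimage of $E$ while $(f')^* E$ carries ramification multiplicities $e_i \geq 1$, the inclusion $\cO_Y(F) \hookrightarrow (f')^* \cO_{X''}(E)$ holds in codimension one and extends globally by reflexivity. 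Tensoring with $\omega_Y$, applying $f'_*$ together with the projection formula, and composing with the Grothendieck trace $f'_* \omega_Y \to \omega_{X''}$ gives the desired map. A local computation at the generic points of $\Spec R$ verifies that the resulting composition agrees with the surjection above, so $g_* \cO_{X''}(K_{X''} + E) \twoheadrightarrow \omega_R$ is itself surjective. As $v$ was arbitrary, $R$ is semi-log canonical.

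The main obstacle is producing the trace-type map $f'_* \cO_Y(K_Y + F) \to \cO_{X''}(K_{X''} + E)$ correctly in the presence of possibly wild ramification of $f' \colon Y \to X''$ and verifying its compatibility with $\Phi$, $\mu_*$, and $g_*$. Both the projection formula (applied to the reflexive but non-locally-free $\cO_{X''}(E)$) and the Grothendieck trace require care on the non-Gorenstein schemes $X''$ and $Y$, and S2-ness is used essentially to extend the needed maps from codimension one. Ensuring that $Y$ can be chosen to simultaneously satisfy all the hypotheses of the proposition while dominating $X''$ via a finite $f'$ is a secondary technical point handled by iterating \autoref{thm.ArtinZariskiAbhyankar}.
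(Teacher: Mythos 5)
Your architecture parallels the paper's: use \autoref{thm.ArtinZariskiAbhyankar} to build compatible models of $\Spec R$ and $\Spec S$ on which a chosen divisorial valuation appears, push $\omega_Y(F)$ down in two ways, and play the hypothesis on $\mu$ off against the surjectivity of the twisted trace $\Phi$. The genuine gap is your final step. What your argument establishes is the surjectivity of $g_*\cO_{X''}(K_{X''}+E)\to\omega_R$, i.e.\ $g_*\omega_{X''}(E)=\omega_R$, and you then assert that letting $v$ vary gives semi-log canonicity. That implication is valid when $\omega_R$ is locally free (the quasi-Gorenstein case, which is exactly \autoref{prop.KSSPushForwardNormalCase}), but it does not follow for $\bQ$-Gorenstein $R$ of Cartier index $n>1$: a local section $s$ of $\omega_R$ with effective divisor $D\sim K_R$ extends to a section of $\cO_{X''}(K_{X''}+E)$ if and only if $\ord_{E_i}(g^*D)+a_i+1\geq 0$ for each exceptional $E_i$ (with $a_i$ the discrepancy), and since every effective $D\sim K_R$ must pass through the locus where $\omega_R$ fails to be locally free, $\ord_{E_i}(g^*D)$ can be strictly positive for every such $D$. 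So $g_*\omega_{X''}(E)=\omega_R$ on all models only bounds $a_i+1$ below by $-\min_D\ord_{E_i}(g^*D)$, not by $0$, and it can hold while some $a_i<-1$. (In characteristic zero one rescues this by descending along the tame index-one cover, but tameness is exactly what is unavailable here --- that is the reason the proposition is set up with the cover $S$ and the twist $y$ in the first place.)

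The paper's proof never drops the twist by $y$ (equivalently, by $-f^*K_R$): it shows that the image of the pushforward of $y\cdot\sHom(\sI_F,\omega_Y)$ inside $\cO_{X''}(K_{X''}+E)$ actually lands in the smaller sheaf $\cO_{X''}(\lceil K_{X''}+E-g^*K_R\rceil)$ --- checked in codimension one, namely on the quasi-\'etale locus using the definition of $\Phi$ and at the generic points of $E$ where $X''$ and $Y$ are normal --- and then concludes that $g_*\cO_{X''}(\lceil K_{X''}+E-g^*K_R\rceil)\to R$ is onto. This is the statement that detects log canonicity, since a discrepancy $\leq -2$ (which must occur on some model if $R$ is not log canonical, as discrepancies of a non-lc singularity are unbounded below) forces that pushforward to be a proper submodule of $R$. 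Your construction of the comparison map $f'_*\cO_Y(K_Y+F)\to\cO_{X''}(K_{X''}+E)$ via $\cO_Y(F)\hookrightarrow (f')^*\cO_{X''}(E)$, reflexive extension from codimension one, and the trace is essentially sound even with wild ramification; the missing ingredient is that the target must be corrected by $-g^*K_R$ and rounded up before surjectivity of the pushforward says anything about discrepancies.
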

\begin{proof}    
    We first explain the twisted Grothendieck trace map mentioned in the statement.  The Grothendieck trace is the evaluation-at-1 map $\omega_S = \Hom_R(S, \omega_R) \to \omega_R$.  Tensoring with $\omega_R^{-1}$ and reflexifying/S2-ifying gives us a map
    \[
        y \cdot \omega_S = (\omega_S \otimes_R \omega_R^{-1})^{**} \to (\omega_R \otimes \omega_R^{-1})^{**} = R.
    \]
    As $S$ is quasi-Gorenstein, $y \cdot \omega_S \cong S$ and we have described our map $\Phi$.

    To show that $R$ is semi-log canonical, it suffices to show that for each prime divisor $D$ appearing on some normal birational model, and which is exceptional over the normalization of $R$, that $D$ has discrepancy $\geq -1$.  Applying \autoref{thm.ArtinZariskiAbhyankar}, we can obtain a blowup $\pi : X \to \Spec R$ with a prime divisor on $X$ whose generic point is a valuation ring which corresponds $D$ (this divisor we thus also call $D$) and which is an isomorphism over $U = \Spec R \setminus \pi(D)$.  It thus suffices to compute the discrepancy of $D$ on $X$.

    Suppose that $I \subseteq R$ is an ideal 
    whose blowup produces $\pi : X \to \Spec R$ (in particular, $I$ is invertible when restricted to $U$).  Let $Y_0 \to \Spec S$ denote the blowup of $IS$ and note we have a finite map $Y_0 \to X$.  Let $V \subseteq Y_0$ denote the inverse image of $U$.  Observe that $V$ is quasi-Gorenstein (as it is also an open subset of $\Spec S$), and let {$i : V \to Y_0$} denote the inclusion.  Consider $\sC$ the integral closure of $\cO_{Y_0}$ in $i_* \cO_V$, in other words $\sC = \cO_{Y_0}^{\mathrm{N}} \cap i_* \cO_V$ where the intersection takes place in the fraction field of {$Y_0$}.  Set 
    \[
        Y := \sheafspec_{Y_0} \big( \sC \big).
    \]
    We see that $Y$ is G1 and S2 and has a finite map to $Y_0$, as our base is excellent, and hence has a finite map $g : Y \to X$.  Furthermore the induced map $Y \to \Spec S$ is an isomorphism over $V$.  Let $E$ and $F$ denote the reduced exceptional sets of the maps $X \to \Spec R$ and $Y \to \Spec S$ respectively.  By \autoref{thm.ArtinZariskiAbhyankar} we see that $X$ is regular at all generic points of $E$, and by construction, $Y$ is regular at all generic points of $F$.  Thus $E$ and $F$ are divisors in the sense of \cite{KollarKovacsSingularitiesBook}, \cf \cite{HartshorneGeneralizedDivisorsOnGorensteinSchemes}.

    We have the following commutative diagram:
    \[
        \xymatrix{
            F \ar@{^{(}->}[d] \ar@{->>}[r]^h & E \ar@{^{(}->}[d] \\
            Y \ar[d]_{\mu} \ar@{->>}[r]^{g} & X \ar[d]^{\pi} \\
            \Spec S \ar@{->>}[r]_{f} & \Spec R
        }
    \]
    Note the horizontal maps are all finite by construction.  We obtain the following induced map of canonical modules:
    \[
        \xymatrix{
            0 \ar[r] & g_* \omega_Y \ar[d]  \ar[r] & g_* \omega_Y(F) \ar[d] \ar[r] & h_* \omega_F \ar[d] \\
            0 \ar[r] & \cO_X(K_X) \ar[r] & \cO_X(K_X + E) \ar[r] & \omega_E\\
        }
    \]
    where $\sHom_Y(\sI_F, \omega_Y) = \omega_Y(F) = \cO_Y(K_Y + F)$, notation is reasonable as $Y$ is G1 and S2.
    \begin{claim}
        The image of $g_*\big(y \cdot \sHom(\sI_F, \omega_Y)\big) \to \cO_X(K_X + E)$ is contained in the sheaf $\cO_X(\lceil K_X + E - \pi^* K_R\rceil)$.
    \end{claim}
    \begin{proof}[Proof of claim]
        Since all sheaves are S2, it suffices to check this in codimension 1.  The claim holds on $V$ as we already asserted a version of it for $\Spec S \to \Spec R$ when describing the twisted Grothendieck trace map.  Over the generic points of $E$ (that is, at the generic points of $F$), $Y$ is normal and the claim is straightforward with our choice of rounding. 
    \end{proof}
    
    Pushing forward to $\Spec R$, we obtain
    \[
        \pi_* g_* \big(y \cdot \sHom(\sI_F, \omega_Y)\big) \to \pi_* \cO_X(\lceil K_X + E - \pi^* K_R\rceil) \to R.
    \]
    We can also factor this map alternately as:
    \[
        \pi_* g_* \big(y \cdot \sHom(\sI_F, \omega_Y)\big) = f_* \mu_*  \big(y \cdot \sHom(\sI_F, \omega_Y)\big) \to f_* (y \cdot \omega_S) \to R
    \]
    which is surjective as it is a composition of surjective maps (by hypothesis).  It follows that 
    \[
        \pi_* \cO_X(\lceil K_X + E - \pi^* K_R\rceil) \to R
    \]
    surjects.  
    
    We claim this implies that $R$ is log canonical.  Indeed, if $R$ is not log canonical it has exceptional divisors with arbitrarily negative discrepancies (on some blowup).  In particular, if we have a discrepancy $\leq -2$, then $\pi_* \cO_X(\lceil K_X + E - \pi^* K_R \rceil) \subsetneq R$ on any birational model exhibiting that discrepancy.
\end{proof}

We now state our more general version of \autoref{prop.KSSPushForwardNormalCase} outside of the normal case. 

\begin{theorem}
    \label{thm.PerfdPurePlusQGorIndexImpliesLC}
  Suppose $(R, \fram)$ is a Noetherian local ring with a dualizing complex of mixed characteristic $(0, p > 0)$.  If $R$ is S2, deminormal, $\bQ$-Gorenstein and has a canonical cover $S$ that is \absperfdinj (for instance, if $R$ is \claperfdpure and $\bQ$-Gorenstein of index not divisible by $p$, or assuming \autoref{conj.GeneralQuasiTorsorPerfdPure}), then $R$ is semi-log canonical.
\end{theorem}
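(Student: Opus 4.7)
The plan is to apply \autoref{prop.GeneralCoverResultForLC} to the finite map $f\colon \Spec S\to\Spec R$ furnished by the canonical cover. The structural hypotheses are built into the construction of $S$: by assumption $R$ is deminormal and $\bQ$-Gorenstein with a dualizing complex, while the canonical cover $S=\bigl(\bigoplus_{i=0}^{n-1}\omega_R^{(-i)}\bigr)^{\ast\ast}$, with multiplication induced by a chosen isomorphism $\omega_R^{(n)}\cong R$, is S2 and quasi-Gorenstein, and the identification $(\omega_R^{-1}\otimes_R S)^{\ast\ast}\cong S$ (equivalently $f^\ast(-K_R)\sim 0$) is its defining property. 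Since $f$ is finite, local equidimensionality and the existence of a dualizing complex pass from $R$ to $S$.

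The core ingredient is the birational pushforward condition: for every $\mu\colon Y\to\Spec S$ satisfying conditions (a)--(c) of \autoref{prop.GeneralCoverResultForLC}, one must show $\mu_{\ast}\cO_Y(K_Y+F)\to\omega_S$ is surjective. This is exactly the conclusion of \autoref{prop.KSSPushForwardNormalCase} applied with $S$ in place of $R$, whose hypotheses are all satisfied since $S$ is locally equidimensional, admits a dualizing complex, and is \absperfdinj by assumption. Granting also the surjectivity of the twisted Grothendieck trace $\Phi\colon S\to R$, the conclusion of \autoref{prop.GeneralCoverResultForLC} immediately yields that $R$ is semi-log canonical.

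The main obstacle is arranging trace surjectivity uniformly. In the prime-to-$p$ canonical index case this is immediate: $R\subseteq S$ is a $\mu_n$-quasi-torsor with $n$ invertible in $R$, so $\tfrac{1}{n}\Tr$ even splits $R\to S$ and $\Phi$ is automatically surjective. Under \autoref{conj.GeneralQuasiTorsorPerfdPure} the analogous conclusion persists for general $\mu_n$-quasi-torsors by generically identifying $\Hom_R(S,R)\cong S$ over the quasi-\'etale locus and extending via \autoref{lem.ImagedContainedInS2}. In characteristic zero trace surjectivity would be automatic, but in mixed characteristic wild ramification along the codimension-one locus could in principle collapse the trace modulo $p$; the two cases highlighted in the statement are precisely the regimes where tameness ensures the \absperfdinj-ness of $S$ alone suffices for the conclusion.
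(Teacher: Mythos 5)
Your overall structure matches the paper's: apply \autoref{prop.GeneralCoverResultForLC} to the canonical cover $f\colon \Spec S\to\Spec R$, and supply its birational pushforward hypothesis via \autoref{prop.KSSPushForwardNormalCase} applied to $S$ (which works precisely because $S$ is \absperfdinj). However, your treatment of the trace surjectivity is incorrect, and this is a genuine gap.

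You have conflated the twisted Grothendieck trace $\Phi$ with the ordinary Galois trace $\Tr$. The ordinary trace $\Tr\colon S\to R$ satisfies $\Tr(1)=n$, and indeed its surjectivity requires $p\nmid n$; it only generates $\Hom_R(S,R)$ as an $S$-module when $p\nmid n$. But the map $\Phi$ that \autoref{prop.GeneralCoverResultForLC} asks about is a \emph{different} map: unwinding the construction, $(\omega_S\otimes_R\omega_R^{-1})^{\ast\ast}=\Hom_R(S,R)$ (already S2), and $\Phi$ is the evaluation-at-$1$ map $\Hom_R(S,R)\to R$, $\psi\mapsto\psi(1)$. This is surjective if and only if $R\to S$ admits an $R$-linear retraction. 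For the cyclic cover $S=\bigoplus_{i=0}^{n-1}R(-iK_R)$, the projection onto the degree-$0$ summand is such a retraction, so $\Phi$ is \emph{always} surjective for cyclic covers, regardless of whether $p\mid n$ and without any tameness or invocation of \autoref{conj.GeneralQuasiTorsorPerfdPure}. Wild ramification plays no role here: the needed element of $\Hom_R(S,R)$ with $\psi(1)=1$ is the grade-$0$ projection, not $\tfrac{1}{n}\Tr$.

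Consequently your final paragraph misreads what the two parenthetical cases in the statement accomplish. They are not where ``the \absperfdinj-ness of $S$ alone suffices''; rather, they are sufficient conditions under which one can \emph{check} that $S$ is \absperfdinj (via \autoref{prop.CyclicCoverNIndexStuff} in the prime-to-$p$ case, or via \autoref{conj.GeneralQuasiTorsorPerfdPure} in general). Once $S$ is assumed or known to be \absperfdinj, the rest of the proof — trace surjectivity included — goes through with no further hypothesis, which is exactly why the theorem is stated with ``$S$ is \absperfdinj'' as the blanket assumption and the prime-to-$p$/conjectural cases only as examples.
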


\begin{proof}
        We assume we have a cyclic cover $S =  \bigoplus_{i = 0}^{n-1} R(-iK_R)$ where $n$ is the {Cartier} index of $K_R$, and where the multiplication on $S$ is defined using an isomorphism $\omega_R^{(n)} \cong R$.  Note, that we do not know that the cyclic cover $S$ is normal, but it is certainly S2 and G1 (Gorenstein in codimension 1).  By hypothesis, some such $S$ is \absperfdinj (in the case that the index is not divisible by $p > 0$, this is \autoref{prop.CyclicCoverNIndexStuff}).  Therefore, by \autoref{prop.KSSPushForwardNormalCase}, we see that $\mu_* \cO_Y(K_Y + F) \to \omega_S$ surjects for any $\mu$ satisfying the conditions of \autoref{prop.GeneralCoverResultForLC}.   Furthermore, $R \to S$ is split so that the twisted Grothendieck trace map $S(K_S - f^* K_R) \to R(K_R - K_R) = R$ surjects.  
        
        Hence, we may apply \autoref{prop.GeneralCoverResultForLC} and so deduce that $R$ is semi-log canonical.
\end{proof}

\section{Inversion of adjunction}

    In this section, we are primarily interested in the following question.  If $(R, \fram)$ is local, $0 \neq f \in \fram$ is a nonzerodivisor, and $R/(f)$ is \absperfdinj or perfectoid-injective, is $R$ likewise?  In characteristic $p > 0$, this is open in full generality with the Cohen-Macaulay case being shown in \cite{FedderFPureRat}, and with other substantial progress on this question found for instance in \cite{HoriuchiMillerShimomoto,MaQuyFrobeniusActionsAndDeformation}.  In characteristic zero, the analogous result for Du Bois singularities is shown in \cite{KovacsSchwedeDBDeforms}.

    We will prove a slightly stronger statement (also analogous to the results in characteristic zero and $p> 0$) when $R$ is LCI, and for that we need the following definition.

    \begin{definition}
    \label{def.PerfdPurePair}
        Let $R$ be a Noetherian ring with $p$ in its Jacobson radical, and $f\in R$ a nonzerodivisor.   We say that the pair $(R,f)$ is \emph{\claperfdpure} if there is a choice of perfectoid $R$-algebra $B$ containing a (fixed choice of) compatible system of $p$-power roots of $f$ in $B$, such that the map
        \[ 
            fR\to (fB)_{\perfd} = (f^{1/p^{\infty}}B)^-
        \]      
        is pure as a map of $R$-modules (see \cite[Lemma 2.3.2]{CaiLeeMaSchwedeTucker} or \cite[Section 7]{BhattScholzepPrismaticCohomology}) for the equality above). {Here, $I^-$ denotes the $p$-adic closure of an ideal $I$.}  In the same setting, we say that $(R, f)$ is \emph{\claperfdinj} if 
        \[
            H^i_{\fram}(fR) \to H^i_{\fram}((fB)_{\perfd})
        \]
        injects for every $i$ and every maximal ideal $\fram$.

       {Finally, we define $(f)_{\perfd}$ to be the fiber of the map $R_{\perfd} \to (R/fR)_{\perfd}$ in $D(R)$.}
        We say that $(R, f)$ is \emph{\absperfdpure} if the induced map $fR\to (f)_\perfd$ is pure in $\myD(R)$, and that $(R, f)$ is
        \emph{\absperfdinj} if the induced map
        \[
            H^i_{\fram}(fR) \to H^i_{\fram}((f)_{\perfd})
        \] 
        is injective for every $i$ and every maximal ideal $\fram$.
    \end{definition}
 
    \begin{remark}
        In this paper, we restrict ourselves to pairs with integer coefficients.  A non-empty subset of the authors plans to explore pairs with rational coefficients in a future work.
    \end{remark}

    \begin{remark} \label{remark:pinj-log-to-usual}
        Note if $(R, f)$ is perfectoid injective (respectively perfectoid pure), then $R$ is also.  This follows since we have a factorization:
        \[
            R \to B \xrightarrow{1 \mapsto f} (fB)_{\perfd}
        \]
        which can be identified with $fR \to (fB)_{\perfd}$.
    \end{remark}    

    For us, we will only be working in the case that $R$ is Cohen-Macaulay, and so $H^i_{\fram}(fR) \cong H^i_{\fram}(R) = 0$ for $i < d = \dim R$.

\begin{lemma}
If $(R,f)$ is perfectoid injective (resp.\  perfectoid pure) then $(R,f)$ is lim-perfectoid injective (resp.\ lim-perfectoid pure).
\end{lemma}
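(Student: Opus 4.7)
The plan is to produce a factorization
\[
    fR \longrightarrow (f)_{\perfd} \longrightarrow (fB)_{\perfd}
\]
of the map appearing in the definition of \claperfdpure (resp.\ \claperfdinj) through the map appearing in the definition of \absperfdpure (resp.\ \absperfdinj), and then invoke \autoref{lem:first_factor_is_pure} (resp.\ the corresponding injectivity fact for local cohomology).

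First I would fix a perfectoid $R$-algebra $B$ as in the definition of $(R,f)$ being \claperfdpure (resp.\ \claperfdinj), so that $B$ contains a compatible system of $p$-power roots of $f$. The key observation is that the quotient $\overline{B}:=B/(f^{1/p^\infty}B)^{-}=B/(fB)_{\perfd}$ is again a perfectoid ring (being the perfectoidization of the semiperfectoid ring $B/fB$ in the sense of \cite[Section 7]{BhattScholzepPrismaticCohomology}), and it is naturally an $R/fR$-algebra. By the universal property of absolute perfectoidization (\autoref{prop:universal_property_perfd}) the maps $R\to B$ and $R/fR\to\overline{B}$ factor uniquely through $R_{\perfd}\to B$ and $(R/fR)_{\perfd}\to \overline{B}$, and these fit into a commuting square with the vertical quotient maps $R_{\perfd}\to (R/fR)_{\perfd}$ and $B\to \overline{B}$. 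Taking fibers of the two vertical arrows yields a canonical map $(f)_{\perfd}\to (fB)_{\perfd}$ in $\myD(R)$.

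Next I would produce the lift $fR\to (f)_{\perfd}$. Since the composition $fR\hookrightarrow R\to R_{\perfd}\to (R/fR)_{\perfd}$ is nullhomotopic (it is already zero at the level of classical $R$-modules, and this vanishing propagates up), the universal property of the fiber $(f)_{\perfd}=\mathrm{fib}(R_{\perfd}\to (R/fR)_{\perfd})$ produces the desired map. A short diagram chase then shows that the composition $fR\to (f)_{\perfd}\to (fB)_{\perfd}$ agrees with the map $fR\to (fB)_{\perfd}$ used to define \claperfdpure/\claperfdinj.

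With this factorization in hand, the conclusion is immediate: if $fR\to (fB)_{\perfd}$ is pure in $\myD(R)$, then so is $fR\to (f)_{\perfd}$ by \autoref{lem:first_factor_is_pure}; similarly, if $H^i_{\fram}(fR)\to H^i_{\fram}((fB)_{\perfd})$ is injective, then the intermediate map $H^i_{\fram}(fR)\to H^i_{\fram}((f)_{\perfd})$ is injective as well. The only point requiring care is the assertion that $\overline{B}$ is perfectoid and that the resulting square is genuinely compatible with the classical description of $fR\to (fB)_{\perfd}$; I expect this to be the main (though mild) obstacle, and it should follow directly from the explicit formulas for perfectoidization modulo a compatible system of $p$-power roots in \cite[Section 7]{BhattScholzepPrismaticCohomology} together with the identification $(fB)_{\perfd}=\mathrm{fib}(B\to \overline{B})$ at the level of underlying $R$-modules.
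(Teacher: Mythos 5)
Your proposal is correct and matches the paper's argument: the paper also produces the map of fiber sequences $(fR\to R\to R/fR)\Rightarrow((f)_{\perfd}\to R_{\perfd}\to (R/fR)_{\perfd})\Rightarrow((f^{1/p^\infty}B)^-\to B\to B/(f^{1/p^\infty}B)^-)$, using that $B/(f^{1/p^\infty}B)^-\cong (B/f)_{\perfd}$ is perfectoid, and concludes from the factorization of the left-hand column via \autoref{lem:first_factor_is_pure}. The only cosmetic difference is that the paper obtains $fR\to(f)_{\perfd}$ directly as the induced map on fibers of the commuting right-hand square, which is cleaner than invoking a nullhomotopy.
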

\begin{proof}
For any perfectoid $R$-algebra $B$ that contains a compatible system of $p$-power roots of $f$, we have an exact sequence \[0\to (f^{1/p^\infty}B)^-\to B\to B/(f^{1/p^\infty}B)^-\to 0\]
Since $B/(f^{1/p^\infty}B)^- \cong (B/(f))_{\perfd}$ is perfectoid, we have maps of fiber sequences
\[
	\xymatrix{
		 fR\ar[r]\ar[d] & R \ar[r]\ar[d] & R/fR \ar[d] \\
        (f)_{\perfd}\ar[r]\ar[d] & R_{\perfd}\ar[r]\ar[d]& (R/f)_{\perfd}\ar[d]\\
		 (f^{1/p^{\infty}}B)^-\ar[r] &B\ar[r] & B/(f^{1/p^{\infty}}B)^-
}
\]
The factorization of the left hand column yields the required injectivity (resp.\  purity) by \autoref{lem:first_factor_is_pure}.
\end{proof}

\begin{proposition}
Let $(R,\m)$ be a Noetherian Cohen-Macaulay local ring of residue characteristic $p>0$ and $f\in R$ a nonzerodivisor.  If the pair $(R,f)$ is \claperfdinj (resp.\ \absperfdinj) then $R/fR$ is \claperfdinj (resp.\ \absperfdinj).
\end{proposition}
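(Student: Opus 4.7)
The plan is to exploit Cohen-Macaulayness of $R$ to reduce the injectivity on local cohomology to a single degree, and then to diagram-chase the ladder of long exact sequences coming from $0 \to fR \to R \to R/fR \to 0$.

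Let $d = \dim R$. Since $R$ is Cohen-Macaulay and $f \in \m$ is a nonzerodivisor, $fR \cong R$ and $R/fR$ is Cohen-Macaulay of dimension $d-1$, so $H^i_\m(R) = H^i_\m(fR) = 0$ for $i < d$ and $H^i_\m(R/fR) = 0$ for $i \ne d-1$. Hence, to verify that $R/fR$ is \claperfdinj (resp.\ \absperfdinj), it suffices to exhibit a suitable perfectoid $(R/fR)$-algebra $C$ (resp.\ object in $D(R/fR)$) and check the single injectivity $H^{d-1}_\m(R/fR) \hookrightarrow H^{d-1}_\m(C)$.

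In the \claperfdinj case, I would take $B$ as in \autoref{def.PerfdPurePair} and set $C := B/(f^{1/p^\infty}B)^- = (B/fB)_\perfd$; this is perfectoid by \cite[Theorem 7.4]{BhattScholzepPrismaticCohomology} since $B/fB$ is semiperfectoid (the $p$-power roots of $f$ already live in $B$). The two honest short exact sequences
\[ 0 \to fR \to R \to R/fR \to 0 \quad\text{and}\quad 0 \to (f^{1/p^\infty}B)^- \to B \to C \to 0 \]
fit into a commutative ladder, and applying $\myR\Gamma_\m$ (together with the vanishings above) produces a commutative square
\[
\xymatrix{
H^{d-1}_\m(R/fR) \ar@{^{(}->}[r]^-{\partial} \ar[d]_{\alpha} & H^d_\m(fR) \ar@{^{(}->}[d]^{\beta} \\
H^{d-1}_\m(C) \ar[r]^-{\partial'} & H^d_\m((fB)_\perfd)
}
\]
in which $\partial$ is injective because $H^{d-1}_\m(R) = 0$ and $\beta$ is injective by the \claperfdinj hypothesis on $(R,f)$. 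A standard one-line diagram chase ($\beta\partial = \partial'\alpha$, then kill $\alpha$ to kill $\beta\partial$, hence kill $\partial$, hence kill $\alpha$'s input) forces $\alpha$ to be injective.

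The \absperfdinj case will be essentially identical: I would replace $B$ by $R_\perfd$, $(f^{1/p^\infty}B)^-$ by $(f)_\perfd$, and $C$ by $(R/fR)_\perfd$, and run the same chase on the long exact sequences attached to the fiber sequence $(f)_\perfd \to R_\perfd \to (R/fR)_\perfd$ in $D(R)$ after applying $\myR\Gamma_\m$. I do not foresee a serious obstacle: the only slightly delicate point is the identification $B/(f^{1/p^\infty}B)^- \cong (B/fB)_\perfd$ and its perfectoidness, both of which are available via the cited results of \cite{BhattScholzepPrismaticCohomology} (\cf \cite[Lemma 2.3.2]{CaiLeeMaSchwedeTucker}).
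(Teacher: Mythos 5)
Your proof is correct and follows essentially the same route as the paper's: reduce via Cohen–Macaulayness to a single degree, set up the ladder of short exact sequences $0 \to fR \to R \to R/fR \to 0$ and $0 \to (f^{1/p^\infty}B)^- \to B \to (B/fB)_\perfd \to 0$ (resp.\ the fiber sequence $(f)_\perfd \to R_\perfd \to (R/fR)_\perfd$), apply $\myR\Gamma_\m$, and chase the resulting diagram using the injectivity of the middle map. The paper's diagram is slightly larger (it keeps the third column), but the chase is identical.
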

\begin{proof}
Let $B$ be a perfectoid $R$-algebra such that $f$ has a compatible system of $p$-power roots $\{f^{1/p^e}\}_e$ and such that the natural map $fR\to (f^{1/p^{\infty}}B)^-$ is pure.  Then we have a commutative diagram with exact rows:
	\[
	\xymatrix{
		0\ar[r] & fR\ar[r]\ar[d] & R \ar[r]\ar[d] & R/fR \ar[r]\ar[d] & 0\\
		0\ar[r]& (f^{1/p^{\infty}}B)^- \ar[r] &B\ar[r] & B/(f^{1/p^{\infty}}B)^-\ar[r] & 0
}
\]
to which taking top local cohomology gives a diagram with exact rows
	\[
\xymatrix{
	0\ar[r] & H^{d-1}_{\m}(R/fR)\ar[r]\ar[d] & H^d_{\m}(fR)\ar[r]\ar[d] & H^d_{\m}(R) \ar[r]\ar[d] & 0\\
 & H_\m^{d-1}(B/(f^{1/p^{\infty}}B)^-) \ar[r] &H^d_{\m}((f^{1/p^\infty}B)^-)\ar[r] & H^d_{\m}(B) \ar[r] &  0
}
\]
The middle vertical arrow is injective since $(R,f)$ is \claperfdinj, and thus the left vertical arrow is injective by an obvious diagram chasing. Since $B/(f^{1/p^\infty}B)^-$ is perfectoid, this means $R/fR$ is \claperfdinj.

In the  lim-perfectoid injective case, we have a fiber sequence $(f)_{\perfd}\to R_{\perfd}\to (R/f)_{\perfd}$, and therefore have a map of fiber sequences 
\[
	\xymatrix{
		fR\ar[r]\ar[d] & R \ar[r]\ar[d] & R/fR \ar[d] \\
		 (f)_{\perfd}\ar[r] &R_{\perfd}\ar[r] & (R/f)_{\perfd}
}
\]
Taking local cohomology we have
\[
\xymatrix{
	0\ar[r] & H^{d-1}_{\m}(R/fR)\ar[r]\ar[d] & H^d_{\m}(fR)\ar[r]\ar[d] & H^d_{\m}(R) \ar[r]\ar[d] & 0\\
 & H_\m^{d-1}((R/f)_\perfd) \ar[r] &H^d_{\m}((f)_{\perfd})\ar[r] & H^d_{\m}(R_{\perfd}) \ar[r] &  0
}
\]
The middle vertical arrow is injective since $(R,f)$ is \absperfdinj, and thus the left vertical arrow is injective by an obvious diagram chasing.  This shows that $R/fR$ is \absperfdinj. 
\end{proof}

We next prove the converse of the proposition above when $R$ is LCI. Note that in this case, by \autoref{lem:pure_injective_comparison} and \autoref{thm: lci case}, all four notions (\claperfdpure, \absperfdpure, \claperfdinj, and \absperfdinj) are equivalent and so we may replace \claperfdinj in the theorem below by any of the other three notions.

\begin{theorem}
\label{thm: inv of adj}
Let $(R,\m, k)$ be a Noetherian local ring of residue characteristic $p>0$. Suppose $R$ is a complete intersection. Let $f\in R$ be a nonzerodivisor such that $R/fR$ is \claperfdinj (e.g., $R/fR$ has characteristic $p>0$ and is $F$-injective). Then $(R,f)$ is \claperfdinj, and thus $R$ is \claperfdinj. 
\end{theorem}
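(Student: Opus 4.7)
The plan is to reduce to the complete case via \autoref{lem:pfdpurecompletion}, present $R$ as a complete-intersection quotient of a regular complete local ring, build a suitable perfectoid $R$-algebra $B$ using Andr\'{e}'s flatness lemma, and then carry out a diagram chase that exploits the Cohen--Macaulayness provided by \autoref{thm: lci case} together with the Gorensteinness of $R$ to reduce the claim to nonvanishing of a socle element.

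First I would replace $R$ by its $\m$-adic completion, which remains a complete intersection with $f$ a nonzerodivisor and with quotient $R^{\wedge\m}/fR^{\wedge\m}\cong (R/fR)^{\wedge\m}$ still \claperfdinj (using \autoref{lem:pfdpurecompletion}). By the Cohen structure theorem, write $R=A/(g_1,\dots,g_c)$ with $A=C_k[[x_1,\dots,x_n]]$ and $g_1,\dots,g_c$ a regular sequence, and lift $f$ to $\tilde f\in A$; then $g_1,\dots,g_c,\tilde f$ is a regular sequence on $A$. Using Andr\'{e}'s flatness lemma \cite[Theorem 7.14]{BhattScholzepPrismaticCohomology}, enlarge $A_\infty$ to a $p$-completely faithfully flat perfectoid extension $T$ in which $g_1,\dots,g_c,\tilde f$ all admit compatible $p$-power roots. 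Set $B:=R^T_\perfd$; by the proof of \autoref{thm: lci case}, $B$ identifies (up to $p$-completion) with $T/(g_1^{1/p^\infty},\dots,g_c^{1/p^\infty})$, is a perfectoid $R$-algebra containing compatible $p$-power roots of $f$, and is Cohen--Macaulay of dimension $d=\dim R$; similarly $(R/fR)^T_\perfd\cong B/(fB)_\perfd\cong T/(g_1^{1/p^\infty},\dots,g_c^{1/p^\infty},\tilde f^{1/p^\infty})$ is Cohen--Macaulay of dimension $d-1$.

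Next I would apply $\myR\Gamma_\m$ to the natural map of short exact sequences $(0\to fR\to R\to R/fR\to 0)\to (0\to (fB)_\perfd\to B\to (R/fR)^T_\perfd\to 0)$. Thanks to the Cohen--Macaulay vanishings just established, the resulting long exact sequences collapse to a commutative diagram of short exact sequences
\[
\xymatrix{
0 \ar[r] & H^{d-1}_\m(R/fR) \ar[r]^-\delta \ar[d]_\alpha & H^d_\m(fR) \ar[r]\ar[d]_\beta & H^d_\m(R) \ar[r]\ar[d]_\gamma & 0 \\
0 \ar[r] & H^{d-1}_\m((R/fR)^T_\perfd) \ar[r]^-{\delta'} & H^d_\m((fB)_\perfd) \ar[r] & H^d_\m(B) \ar[r] & 0.
}
\]
Because $H^i_\m(fR)=0$ for $i<d$, showing $(R,f)$ is \claperfdinj reduces to proving $\beta$ injective; the assertion that $R$ is \claperfdinj will then follow from \autoref{remark:pinj-log-to-usual}. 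The hypothesis that $R/fR$ is \claperfdinj, together with \autoref{lem:RAperfdsuffices} applied to $A\twoheadrightarrow R/fR$, yields injectivity of $H^{d-1}_\m(R/fR)\to H^{d-1}_\m((R/fR)^{A_\infty}_\perfd)$; using the base-change property of perfectoidization \cite[Proposition 8.13]{BhattScholzepPrismaticCohomology} along the $p$-completely faithfully flat map $A_\infty\to T$, the induced map $(R/fR)^{A_\infty}_\perfd\to (R/fR)^T_\perfd$ is also $p$-completely faithfully flat, which preserves injectivity on the $p^\infty$-torsion modules $H^i_\m(-)$; hence $\alpha$ is injective.

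To finish, I would invoke that $R$ being a complete intersection forces $R$ (and hence $R/fR$) to be Gorenstein, so $H^d_\m(R)\cong E_R(k)$ is an essential extension of its one-dimensional socle $k\cdot s$, and similarly $H^{d-1}_\m(R/fR)\cong E_{R/fR}(k)$. Since $f\in\m$ kills $s$, the element $s$ lies in the image of the connecting map $\delta_R$ of $0\to R\xrightarrow{f} R\to R/fR\to 0$, so $s=\delta_R(\bar s)$ for a unique nonzero $\bar s$, and the relation $\m\bar s=0$ forces $\bar s$ to generate the one-dimensional socle of $H^{d-1}_\m(R/fR)$. Naturality of the connecting map applied to the isomorphism of short exact sequences $(0\to R\xrightarrow{f} R\to R/fR\to 0)\simeq (0\to fR\to R\to R/fR\to 0)$ along $\mu_f:R\xrightarrow{\cdot f}fR$ identifies $\mu_f(s)=\delta(\bar s)=:s'\in H^d_\m(fR)$. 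Commutativity of the left square of the diagram gives $\beta(s')=\delta'(\alpha(\bar s))$, which is nonzero because $\alpha(\bar s)\neq 0$ and $\delta'$ is injective. Since $H^d_\m(fR)\cong E_R(k)$ is essential over the one-dimensional socle generated by $s'$, any nonzero submodule must contain $s'$, forcing $\ker\beta=0$. The main obstacle is the perfectoidization base-change step that secures injectivity of $\alpha$; everything else is a careful diagram chase underpinned by the Cohen--Macaulayness and Gorensteinness of the objects at hand.
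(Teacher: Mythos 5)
Your proposal is correct and follows essentially the same route as the paper: reduce to the complete case, invoke the Cohen--Macaulayness of the perfectoidization from \autoref{thm: lci case} to get the two short exact rows of local cohomology, use the hypothesis on $R/fR$ (via \autoref{lem:RAperfdsuffices}) to make the left vertical arrow injective, and chase the socle of $H^d_\m(fR)\cong E_R(k)$ through the diagram. The only difference is cosmetic: the paper arranges the Cohen presentation so that $f$ is a coordinate of $S=C_k[[f,x_2,\dots,x_n]]$ (so $S_\infty$ carries $p$-power roots of $f$ by construction), whereas you adjoin those roots afterwards via Andr\'e's flatness lemma and a base-change step $A_\infty\to T$ — both are fine.
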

\begin{proof}
We may assume $R$ is complete by \autoref{lem:pfdpurecompletion}. By Cohen's structure theorem we can write $R=S/(f_1,\dots,f_c)$ such that $S$ is a complete unramified regular local ring of mixed characteristic $(0,p)$ with $f$ being part of a regular system of parameters { of $R$} and $f_1,\dots,f_c$ being a regular sequence on $S$. We fix an isomorphism $S\cong C_k[[f, x_2,\dots, x_n]]$ for $C_k$ a Cohen ring\footnote{a complete unramified mixed characteristic DVR with residue field $k$} (note that even if $f=p$ in $R$, we can still take $S$ in this form and let one of the $f_i$'s be $f-p$). Note that, with this isomorphism, $S/fS$ is still a complete unramified regular local ring and we have $R/fR$ is the quotient of $S/fS$ by the image of $f_1,\dots,f_c$ (which is a regular sequence in $S/fS$).  
Let $S_\infty$ be the $p$-adic completion of  $W(k^{1/p^\infty})[[f, x_2,\dots, x_n]][p^{1/p^\infty},f^{1/p^\infty},x_2^{1/p^\infty},\dots, x_n^{1/p^\infty}]$ and $(S/fS)_\infty$ be the $p$-adic completion of $W(k^{1/p^\infty})[[x_2,\dots, x_n]][p^{1/p^\infty},x_2^{1/p^\infty}, \dots, x_n^{1/p^\infty}]$. It is straightforward to check using the universal property of perfectoidization that $$(R/fR)^{(S/fS)_\infty}_{\perfd} \cong R^{S_\infty}_\perfd/(f^{1/p^\infty}R^{S_\infty}_\perfd)^-$$
where again $(R/fR)^{(S/fS)_\infty}_{\perfd} := ((R/fR) \otimes_{S/fS} {(S/fS)_\infty})_{\perfd}$.
By \autoref{thm: lci case}, we have the following commutative diagram
\[
\xymatrix{
0\ar[r] & H_\m^{d-1}(R/fR) \ar[r] \ar[d] & H_\m^d(fR) \ar[r] \ar[d] & H_\m^d(R) \ar[r] \ar[d] & 0 \\
0\ar[r] & H_\m^{d-1}((R/fR)^{(S/fS)_\infty}_{\perfd}) \ar[r] & H_\m^d((f^{1/p^\infty}R^{S_\infty}_\perfd)^-) \ar[r] & H_\m^d(R^{S_\infty}_\perfd) \ar[r] & 0
}.
\]
By our assumption and \autoref{lem:RAperfdsuffices}, the left vertical map in the above diagram is injective. So chasing this diagram with the socle representative of $H_\m^d(fR)$ shows that the middle map 
is injective. Thus the pair $(R,f)$ is \claperfdinj as wanted {(see Remark \ref{remark:pinj-log-to-usual})}.  
\end{proof}

When $f=p$, we have the following proposition which is an analog of a weak version of results in \cite{FedderWatanabe} and \cite{MaSchwedeShimomoto}.
We refer the reader to \cite{MaSchwedeSingularitiesMixedCharBCM} or \cite{CaiLeeMaSchwedeTucker} for the definition and basic properties of BCM-regularity. 

\begin{proposition}
Let $(R,\m)$ be a Noetherian complete local domain of mixed characteristic $(0,p>0)$. Suppose $R$ is a complete intersection, $R/p$ is $F$-pure, and $R[1/p]$ is regular. Then $(R,(1-\epsilon)\Div(p))$ is BCM-regular for all ${ 0 < } \epsilon\ll1$. 
\end{proposition}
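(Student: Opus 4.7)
The plan is to combine the inversion of adjunction result \autoref{thm: inv of adj} with the regularity of $R[1/p]$ via an $\epsilon$-perturbation argument, in the spirit of the analogs of \cite{FedderWatanabe} in equal characteristic and of \cite{MaSchwedeShimomoto} and \cite{MaSchwedeSingularitiesMixedCharBCM} in the mixed characteristic BCM setting.

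First I would promote the hypothesis that $R/p$ is $F$-pure to perfectoid purity of the pair $(R,p)$. By \autoref{rmk:PerfdPureInjectiveCharp}, the characteristic $p$ ring $R/p$ is $F$-pure if and only if it is \claperfdinj. Since $R$ is a complete intersection and $p$ is a nonzerodivisor, \autoref{thm: inv of adj} then yields that the pair $(R,p)$ is \claperfdinj; inspecting its proof, the perfectoid witness may be taken to be $B=R^{S_\infty}_{\perfd}$, which is Cohen-Macaulay by \autoref{thm: lci case}. Since $R$ is Gorenstein (a complete intersection complete local domain is Gorenstein), Matlis duality applied to $H^d_\m$ in the style of \autoref{lem:pure_injective_comparison} upgrades this to perfectoid purity of the pair, so that $pR\to(p^{1/p^{\infty}}B)^{-}$ is pure as a map of $R$-modules.

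Next, I would upgrade $B$ to a perfectoid big Cohen-Macaulay $R^+$-algebra $\mathcal{B}$ by applying Andr\'e's flatness lemma (\cite[Theorem 7.14]{BhattScholzepPrismaticCohomology}) together with standard constructions of weakly functorial BCM algebras in mixed characteristic (\cf \cite{MaSchwedeSingularitiesMixedCharBCM}), preserving the purity of the pair. With $\mathcal{B}$ in hand, BCM-regularity of $(R,(1-\epsilon)\Div(p))$ reduces to showing that for every $c\in R^\circ$, the map $R\to c^{1/p^{n}}p^{(1-\epsilon)/p^{n}}\mathcal{B}$ is pure for all $n\gg 0$. Every nonzero $c\in R$ has finite $p$-adic order on $R$ since $R$ is a complete local Noetherian domain, and the regularity of $R[1/p]$ guarantees that the singular locus of $R$ is contained in $V(p)$, so away from $V(p)$ the pair is trivially BCM-regular. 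At $V(p)$, the $\epsilon$-slack factor $p^{\epsilon/p^{n}}$ in $\mathcal{B}$ should, for $n$ large, absorb the $c^{1/p^{n}}$ contribution against $p^{1/p^{n}}$, reducing the required purity to the one established for $(R,p)$.

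The hard part will be this final $\epsilon$-perturbation: rigorously converting the perfectoid purity of $(R,p)$---a statement about boundary coefficient exactly $1$---into BCM-regularity at coefficient $1-\epsilon$ for arbitrary test elements $c$. This requires careful control of $p$-adic valuations in $\mathcal{B}$ and of the interplay between $p$-power roots of $p$ and the slack $\epsilon/p^n$, and is the mixed characteristic perfectoid analog of the Fedder--Watanabe multiplier-type argument.
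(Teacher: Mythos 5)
Your first paragraph is sound and matches the paper: the hypothesis that $R/p$ is $F$-pure, combined with \autoref{thm: inv of adj} and the Gorenstein/LCI equivalences, gives that the pair $(R,\Div(p))$ is \claperfdpure, witnessed by $B=R^{S_\infty}_{\perfd}$; equivalently the map $R\to R^{S_\infty}_{\perfd}$ sending $1\mapsto p$ is pure, so $p\cdot\eta\neq 0$ in $H^d_\m(R^{S_\infty}_{\perfd})$ for the socle generator $\eta$ of $H^d_\m(R)$. The rest of the proposal, however, has a genuine gap, and you have in effect flagged it yourself by deferring ``the hard part.'' Two problems. First, your reformulation of BCM-regularity of the pair $(R,(1-\epsilon)\Div(p))$ in terms of maps $R\to c^{1/p^n}p^{(1-\epsilon)/p^n}\mathcal{B}$ for all test elements $c\in R^\circ$ is not the definition in play: since $R$ is Gorenstein, BCM-regularity of this pair amounts to $p^{1-\epsilon}\eta\neq 0$ in $H^d_\m(B)$ for all sufficiently large perfectoid big Cohen--Macaulay $R^+$-algebras $B$ --- no auxiliary $c$ appears. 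Second, and more seriously, the passage from purity at coefficient exactly $1$ to BCM-regularity at coefficient $1-\epsilon$ cannot be achieved by an unspecified ``$\epsilon$-slack absorbs the contribution'' argument, nor does ``the singular locus is contained in $V(p)$'' localize the problem away, since the whole difficulty sits over $V(p)$.

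The mechanism the paper uses, and which is absent from your sketch, is the following. Because $R[1/p]$ is regular, Heitmann's theorem on the \'etale locus (\cite[Theorem 0.1]{HeitmannEtaleLocus}) produces finitely many Noether--Cohen normalizations $A_i\to R$ and elements $g_i$ with $A_i[1/g_i]\to R[1/g_i]$ finite \'etale and $p\in\sqrt{(g_1,\dots,g_n)}$. One then argues by contradiction: if $(R,(1-\epsilon)\Div(p))$ is not BCM-regular, then $p^{1-\epsilon}\eta=0$ in $H^d_\m(B)$ for all sufficiently large perfectoid BCM $R^+$-algebras $B$, and \cite[Lemma 5.1.6]{CaiLeeMaSchwedeTucker} converts this into the statement that $(g_i)_{\perfd}\cdot p^{1-\epsilon}\eta=0$ in $H^d_\m(R^{(A_i)_\infty}_{\perfd})$ for each $i$. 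Mapping everything into a common $R^{S_\infty}_{\perfd}$ and using $p\in\sqrt{(g_1,\dots,g_n)}$ yields $(p^{1/p^\infty})\,p^{1-\epsilon}\eta=0$ there, hence $p\cdot\eta=p^{\epsilon-1/p^N}\cdot p^{1-\epsilon+1/p^N}\eta=0$ for $N\gg0$, contradicting the purity of $1\mapsto p$ established in your first step. Without Heitmann's theorem and the cited lemma (or substitutes for them), the regularity of $R[1/p]$ never enters your argument quantitatively, and the $\epsilon$ cannot be gained.
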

\begin{proof}
Let $J$ be the ideal of $R$ generated by all elements $g$ such that $A[1/g]\to R[1/g]$ is finite \'etale for some $A\to R$ Noether-Cohen normalization. If $p\notin \sqrt{J}$, then we can find a prime $Q\supseteq J$ such that $p\notin Q$. Since $R[1/p]$ is regular, it follows that $R_Q$ is regular. But then by \cite[Theorem 0.1]{HeitmannEtaleLocus}, there exists a Noether-Cohen normalization $A\to R$ and $g\notin Q$ such that $A[1/g]\to R[1/g]$ is \'etale contradicting our choice of $g$. It follows that there are $A_i\to R$, $1\leq i\leq n$, Noether-Cohen normalizations such that $A_i[1/g_i]\to R[1/g_i]$ is finite \'etale and $p\in \sqrt{(g_1,\dots,g_n)}$. 

We choose a complete and unramified regular local ring $S$ such that $A_i\to R$ factors through $A_i\to S\to R$ for all $i$ (simply add an indeterminate for each indeterminate in each of the $A_i$) and we may further choose maps $R^{(A_i)_\infty}_\perfd \to R^+$ that factor through $R^{S_\infty}_\perfd$. Suppose $(R,(1-\epsilon)\Div(p))$ is not BCM-regular, then by definition (since $R$ is Gorenstein), for the socle representative $\eta\in H_\m^d(R)$, we have $p^{1-\epsilon}\eta =0$ in $H_\m^d(B)$ for all sufficiently large perfectoid big Cohen-Macaulay $R^+$-algebra $B$. By \cite[Lemma 5.1.6]{CaiLeeMaSchwedeTucker}, for each $i$ we know that $(g_i)_\perfd p^{1-\epsilon}\eta =0$ in $H_\m^d(R^{(A_i)_\infty}_\perfd)$. But then we know that $(g_i)_\perfd p^{1-\epsilon}\eta =0$ in $H_\m^d(R^{S_\infty}_\perfd)$ for all $i$ since $R^{(A_i)_\infty}_\perfd$ maps to $R^{S_\infty}_\perfd$. It follows that $(p^{1/p^\infty})p^{1-\epsilon}\eta =0$ in $H_\m^d(R^{S_\infty}_\perfd)$ since $p\in \sqrt{(g_1,\dots,g_n)}$. Thus, the map $R\to R^{S_\infty}_\perfd$ sending $1$ to $p^{1-\epsilon'}$ is not pure for all $\epsilon'<\epsilon\ll1$. But by \autoref{thm: inv of adj}, $(R, \Div(p))$ is \claperfdpure and thus $R\to (p^{1/p^\infty})R^{S_\infty}_\perfd$ sending $1$ to $p$ is pure, which is a contradiction. 
\end{proof}

\section{Examples}

In this section we provide some examples of \claperfdpure singularities.  

\begin{example}
    Suppose $R = \bZ_p\llbracket x_1, \dots, x_n\rrbracket/(f_1, \ldots, f_c)$ where $f_1, \dots, f_c$ form a regular sequence.  If $R/(p)$ is $F$-pure, then $R$ is \claperfdpure by \autoref{thm: inv of adj}.  In particular, $\bZ_p\llbracket x, y, z\rrbracket/(x^3+y^3+z^3)$ is \claperfdpure for $p \equiv 1 \mod{3}$.
\end{example}

By using Rees algebras as in \cite{MaSchwedeTuckerWaldronWitaszekAdjoint}, we can generalize the previous example to the case where one of the variables is replaced by $p$.  

 
\begin{proposition}
\label{prop:examplereplacingvariablebyp}
    Fix a prime $p>0$ and $k$ a perfect field of characteristic $p$. Suppose that $f_1, \ldots, f_c \in \bZ[x_1, \ldots, x_n]$ are homogeneous polynomials of positive degree (with respect to the standard grading with $\deg(x_i)=1$ for all $i$). Let 
    \[
    R = W(k)\llbracket x_1, \ldots, x_n \rrbracket / (x_1 - p, f_1, 
    \ldots, f_c)
    \]
    and let $\fram = ( p,x_1,\ldots, x_n ) \subseteq R$ be the maximal ideal of $R$. If $\mathrm{gr}_\fram(R)$
    is an $F$-pure complete intersection, then $R$
    is \claperfdpure. 
\end{proposition}

\begin{remark}
An important situation where \autoref{prop:examplereplacingvariablebyp} applies is as follows. With notation as above, suppose that $R$ has dimension $d = n - c$ and 
    \(
    k [ x_1, \ldots, x_n ] / (f_1, \ldots, f_c)
    \) is an $F$-pure complete intersection of dimension $d$ that is additionally a domain. In this case, consider the natural surjection of graded rings
    \[
    \phi \colon k[ x_0, x_1, \ldots, x_n ] \twoheadrightarrow \mathrm{gr}_\fram(R)
    \]
    with $x_0, x_1, \ldots, x_n$ mapping to the classes of $p,x_1, \ldots, x_n$ in $\fram/ \fram^2$, respectively. As $f_1, \ldots, f_c \in \bZ[x_1, \ldots, x_n]$ are homogeneous, observe that $(x_0-x_1,f_1, \ldots, f_c) \subseteq \ker \phi$, giving a surjection
    \[
   k[x_1, \ldots, x_c] / (f_1, \ldots, f_c) \cong k [x_0, x_1, \ldots, x_n] / (x_0-x_1,f_1, \ldots, f_c) \twoheadrightarrow \mathrm{gr}_\fram(R).
    \]
    Since \(
    k [ x_1, \ldots, x_n ] / (f_1, \ldots, f_c)
    \) is a domain with dimension $d = \dim R = \dim \mathrm{gr}_\fram(R)$, it follows that this map is an isomorphism, and hence $\mathrm{gr}_\fram(R) \cong k [ x_1, \ldots, x_n ] / (f_1, \ldots, f_c)$ so that the assumptions of \autoref{prop:examplereplacingvariablebyp} are satisfied. See \cite{DeStefaniRossiVarbaro} for additional recent work on computing $\mathrm{gr}_\fram(R)$ in mixed characteristic.
\end{remark}
     
\begin{proof}
    Let $T = R[\fram t,t^{-1}]$ be the extended Rees algebra with $\mathfrak{n} = (t^{-1},\fram T, \fram t)$ its homogeneous maximal ideal. Note that $t^{-1}$ is a non-zero divisor, and that  $T / (t^{-1}) \cong \mathrm{gr}_\fram(R)$. In particular, $T$ is a complete intersection ring, and $T_\mathfrak{n}/(t^{-1})$ is $F$-pure. Thus, by \autoref{thm: inv of adj}, it follows that $T_\mathfrak{n}$ is \claperfdpure. By \autoref{lem:Puremap}, the conclusion follows provided that $R \to T_\mathfrak{n}$ is pure.

    Since $T_\mathfrak{n}$ is \claperfdpure, it is necessarily reduced. As the associated primes of $T$ are all homogeneous \cite[Lemma 1.5.6 (b) (ii)]{BrunsHerzog}, it follows that $T$ and hence also $R$ are reduced as well. By \cite{HochsterCyclicPurity}, $R \to T_\mathfrak{n}$ is pure if and only if $R/I \to T_\mathfrak{n} / I T_\mathfrak{n}$ is injective for all $\fram$-primary ideals $I \subseteq R$. Given such an $I$, pick $\ell \gg 0$ so that $\fram^\ell \subseteq I$ and set $J = t^{-\ell}T + IT + \fram^\ell t^\ell T$. Then $J$ is a homogeneous $\mathfrak{n}$-primary ideal of $T$ with $[J]_0 = I$, so that the natural map $R/I \to T / JT$ is a split injection. In lieu of the factorization
    \[
    R/I \to T_\mathfrak{n} / I T_\mathfrak{n} \to T_\mathfrak{n} / J T_\mathfrak{n} \cong T /J T,
    \]
    we see that $R/I \to T_\mathfrak{n} / I T_\mathfrak{n}$ is a split injection as well.
\end{proof}

\begin{example}[Calabi-Yau-like hypersufaces]
\label{ex:CYlikehypersurfaces}
Fix $p > 0$ a prime and $k$ a perfect field of characteristic $p > 0$.  Suppose $f \in \bZ[x_1, \dots, x_n]$ is a homogeneous equation of degree $\leq n$ none of whose coefficients are divisible by $p$.  This gives us a hypersurface singularity:
    \[
        R = W(k)\llbracket x_2, \dots, x_n\rrbracket / (f(p, x_2, \dots, x_n) ).
    \]
    Heuristically, we are taking a cone over a smooth hypersurface, but we replaced one of the variables with $p$.   Suppose the corresponding hypersurface singularity 
    \[
        k[x_1, \dots, x_n]/(f(x_1, \dots, x_n))
    \]
    is an $F$-pure domain in characteristic $p > 0$.  Then we see by the proposition that $R$ is \claperfdpure.

    For example, 
    $\bZ_p[y,z]/(p^3+y^3+z^3)$ is \claperfdpure for $p \equiv 1 \mod 3$.

\end{example}

\subsection{Frobenius liftable singularities}
Since our inversion of adjunction applies only for complete intersections, it is not so easy to construct examples of \claperfdinj singularities which are neither complete intersections nor splinters. In what follows, we show that quasi-Gorenstein Frobenius liftable singularities are \claperfdinj. In particular, cones over canonical lifts of ordinary abelian varieties are \claperfdinj.  Note we have learned a similar related construction will appear in forthcoming work of Ishizuka and Shimomoto \cite{ShimomotoIshizuka.Quasi-canonicalLifting}.

\begin{proposition}
Let $k$ be a perfect field of characteristic $p>0$ and let $R$ be a Noetherian local domain containing $W(k)$ such that $p$ is contained in the maximal ideal of $R$. Let $R_{p=0}$ be the reduction of $R$ modulo $p$. Assume that 
\begin{enumerate}
    \item $\omega_{R} \simeq R$ and $\omega_{R_{p=0}} \simeq R_{p=0}$,
    \item $R_{p=0}$ is $F$-split, and
        \item there exists a finite ring homomorphism $\mathscr{F} \colon R \to R$ such that modulo $p$ the homomorphism $\mathscr{F}$ agrees with Frobenius $F \colon R_{p=0} \to R_{p=0}$ and such that the following diagram commutes\footnote{In fact, the commutativity is automatic by deformation theory if $R$ is $p$-complete - a case we can reduce to.  Concretely $W(k)$ is generated by Teichm\"uller lifts of elements of $k$, and these are elements that admit all $p$-power roots; any such element in any $p$-complete $\delta$-ring must be killed by $\delta$, see \cite[Lemma 2.32]{BhattScholzepPrismaticCohomology}.}:
    \[
    \xymatrix{
      R \ar[r]^{\mathscr{F}} & R \\
      W(k) \ar@{^{(}->}[u] \ar[r]^F & W(k). \ar@{^{(}->}[u]
    }
    \]
\end{enumerate}
Then $R$ is \claperfdpure.
\end{proposition}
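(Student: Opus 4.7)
The plan is to construct an explicit perfectoid $R$-algebra $B$ and to exhibit a factorization $R \to R_\infty \to B$, in which $R \to R_\infty$ is a pure map of $R$-modules obtained by iterating the Frobenius lift $\mathscr{F}$, and $R_\infty \to B$ is a $p$-completely faithfully flat extension introducing compatible $p$-power roots of $p$. Since purity yields injections on local cohomology by \autoref{prop:purity_implies_injecitivity}, and $p$-completely faithfully flat maps preserve such injections on the $p$-power torsion modules $H^i_\m(-)$, the composition will produce $H^i_\m(R) \hookrightarrow H^i_\m(B)$, showing that $R$ is \claperfdinj.

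The key input is a trace-type map. Because $\omega_R \simeq R$ and $\mathscr{F}$ is finite, Grothendieck duality produces an identification $\Hom_R(\mathscr{F}_* R, R) \cong \mathscr{F}_* R$ of rank-one $\mathscr{F}_* R$-modules; I fix a generator $\tau \colon \mathscr{F}_* R \to R$. Its mod-$p$ reduction $\bar\tau$ is again a generator of the analogous $\Hom$ over $R_{p=0}$ (using $\omega_{R_{p=0}} \simeq R_{p=0}$ and Nakayama for rank-one modules over the local ring). The $F$-split hypothesis on $R_{p=0}$ gives a splitting $F_* R_{p=0} \to R_{p=0}$ sending $1 \mapsto 1$, which must be of the form $\bar y \cdot \bar\tau$ for some $\bar y$ with $\bar\tau(\bar y) = 1$. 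Lifting $\bar y$ to $y \in \mathscr{F}_* R$ gives $\tau(y) \equiv 1 \pmod p$, a unit in the local ring $R$ since $p \in \m$; rescaling $y$ by $\mathscr{F}(\tau(y)^{-1})$ yields $y' \in \mathscr{F}_* R$ with $\tau(y') = 1$, which produces an $R$-linear splitting $\sigma \colon \mathscr{F}_* R \to R$ of the $R$-linear map $\mathscr{F} \colon R \to \mathscr{F}_* R$.

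Applying the same argument to $\mathscr{F}^n$ (whose mod-$p$ reduction $F^n$ splits as a composition of $F$-splittings, and for which Grothendieck duality again gives the rank-one identification) shows that $R \to \mathscr{F}^n_* R$ is $R$-split for every $n \geq 1$. Consequently $R \to R_\infty := \colim_n \mathscr{F}^n_* R$ is pure in $\myD(R)$ as a filtered colimit of split maps. To upgrade $R_\infty$ to a perfectoid algebra, I would choose a perfectoid $W(k)$-algebra $\mathcal{O}$ equipped with a compatible Frobenius lift---for example $\mathcal{O} := W(k)[p^{1/p^\infty}]^{\wedge_p}$ with $\mathscr{F}_\mathcal{O}(p^{1/p^n}) := p^{1/p^{n-1}}$---form $R' := R \widehat{\otimes}_{W(k)} \mathcal{O}$ with the tensored Frobenius lift, and let $B$ be the $p$-completion of $\colim_n \mathscr{F}^n_{R'*} R'$. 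Then $B/p$ is the Frobenius perfection of $R'/p$ (hence perfect), $B$ is $p$-complete by construction, and $B$ contains the compatible system $(p^{1/p^n})_n$ inherited from $\mathcal{O}$, so $B$ is perfectoid.

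To finish, I would factor $R \to B$ as $R \to R_\infty \to R_\infty \widehat{\otimes}_R R' \to B$: the first arrow is pure by the preceding paragraph; the second is the $p$-completed base change along the $p$-completely faithfully flat extension $R \to R'$, preserving injection on $p$-power-torsion local cohomology; and the third is the analogous $p$-completely faithfully flat colimit along the Frobenius lift of $\mathcal{O}$. The principal obstacle is the careful bookkeeping around $p$-completions, filtered colimits, and perfectoidness of $B$---in particular verifying that the final colimit really is $p$-completely faithfully flat and that $B$ is genuinely perfectoid rather than merely semi-perfectoid. Reducing first to the $p$-adically complete local case via \autoref{lem:pfdpurecompletion} should streamline these verifications.
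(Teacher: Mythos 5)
Your overall strategy is the same as the paper's: split $\mathscr{F}\colon R\to\mathscr{F}_*R$ by lifting the $F$-splitting from $R_{p=0}$, take the Frobenius colimit $S=\colim_n \mathscr{F}^n_*R$, adjoin $p$-power roots of $p$ and $p$-complete to get a perfectoid $B$, and compose the purities. The construction of $B$ you propose (tensoring with $\mathcal{O}=W(k)[p^{1/p^\infty}]^{\wedge p}$ and perfecting) is, after $p$-completion, the same object as the paper's $R_\infty=(S\otimes_{W(k)}W(k)[p^{1/p^\infty}])^{\wedge p}$, and the ``$p$-completely faithfully flat'' bookkeeping you flag as needing care is straightforward.

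There is, however, a genuine gap in your justification of the key step: that the mod-$p$ reduction $\bar\tau$ of a generator $\tau$ of $\Hom_R(\mathscr{F}_*R,R)\cong\mathscr{F}_*R$ is still a generator of $\Hom_{R_{p=0}}(F_*R_{p=0},R_{p=0})$. You cite ``Nakayama for rank-one modules'', but Nakayama runs in the opposite direction: it lets you conclude that an element whose residue generates is itself a generator, whereas here you need to know that the image of $\tau$ under the natural map
$$\Hom_R(\mathscr{F}_*R,R)\otimes_R R/p \longrightarrow \Hom_{R/p}(F_*R_{p=0},R_{p=0})$$
is a generator of the target, i.e., that this map of rank-one free $F_*R_{p=0}$-modules is surjective. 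That surjectivity is equivalent to $\Ext^1_R(\mathscr{F}_*R,R)$ having no $p$-torsion, which is not automatic under the stated quasi-Gorenstein hypothesis (it would follow if $\mathscr{F}_*R$ were maximal Cohen–Macaulay over a Gorenstein $R$). This compatibility of the Grothendieck trace with reduction mod $p$ is precisely the nontrivial content of the paper's Claim \ref{claim:Frob-splits}: the paper applies $\Hom_R(-,\omega_R)$ to $0\to\mathscr{F}_*R\xrightarrow{p}\mathscr{F}_*R\to F_*R_{p=0}\to 0$, uses the duality identifications $\Hom_R(\mathscr{F}_*R,\omega_R)\cong\mathscr{F}_*\omega_R$ and $\Ext^1_R(F_*R_{p=0},\omega_R)\cong F_*\omega_{R_{p=0}}$, and then argues that the resulting connecting map is the reduction $\mathscr{F}_*\omega_R\twoheadrightarrow F_*\omega_{R_{p=0}}$, before invoking surjectivity of $\Tr_F$. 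You need to supply this identification of the connecting map with reduction (or invoke a Cohen–Macaulay hypothesis to kill the $\Ext^1$), rather than Nakayama, before the rescaling argument for $y$ goes through.
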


\begin{proof}
We construct a natural perfectoid cover of $R$ associated to $\mathscr F$. Let 
\[
S := \varinjlim\ (R \xrightarrow{\mathscr F} \mathscr{F}_*R \xrightarrow{\mathscr F} \mathscr{F}^2_*R \to \cdots ),
\]
let
\[
R^{\rm nc}_{\infty} := S[p^{1/p^\infty}] = S \otimes_{W(k)} W(k)[p^{1/p^{\infty}}]
\]
and let 
\[
R_{\infty} := {R^{\rm nc}_{\infty}}^{\wedge_p}.
\]

\begin{claim} \label{claim:Rinfty-perfectoid} $R_{\infty}$ is a perfectoid ring.
\end{claim}
\begin{proof}[Proof of Claim]
Set $\varpi = p^{1/p}$. Since $R_\infty$ is $p$-torsion free and $p$-adically complete, it is enough to show that the Frobenius $F \colon R_{\infty}/\varpi \to R_{\infty}/\varpi$ is surjective (see \cite[Lemma 3.10]{BhattMorrowScholzeIHES}). This is immediate by construction as $F \colon S/p \to S/p$ is surjective.
\end{proof}

\begin{claim} \label{claim:Frob-splits} $\mathscr{F} \colon R \to \mathscr{F}_* R$ splits.
\end{claim}
\begin{proof}
Consider the following diagram
\[
\xymatrix{
  0 \ar[r] & \mathscr{F}_*R \ar[r]^{\cdot p} & \mathscr{F}_*R \ar[r] & F_*R_{p=0} \ar[r] & 0 \\
  0 \ar[r] & R \ar[r]^{\cdot p} \ar[u]^{\mathscr{F}}  & R \ar[r] \ar[u]^{\mathscr{F}} & R_{p=0} \ar[u]_{F} \ar[r] & 0
}
\]
where the left horizontal maps are given by multiplication by $p$. Now apply $\Hom_R(-,\omega_R)$ to get the following diagram:
\begin{equation} \label{eq:diagram-Fr-lift}
\xymatrix{
 0 \ar[r] & \mathscr{F}_*\omega_R \ar[d]^{\Tr_{\mathscr{F}}} \ar[r]^{\cdot p} & \mathscr{F}_*\omega_R \ar[d]^{\Tr_{\mathscr{F}}} \ar[r] & F_*\omega_{R_{p=0}} \ar[d]^{\Tr_{F}} \\
 0 \ar[r] & \omega_R \ar[r]^{\cdot p}  & \omega_R \ar[r]  & \omega_{R_{p=0}}.
}
\end{equation}

Here, the structure of the top row is a consequence of the following identities (we would always assume that $\omega_R^\bullet$ is normalized so that it sits in $D^{[-\dim(R), 0]}$): 
\begin{enumerate}
\item $\myR\Hom_R(\mathscr{F}_*R, \omega^\mydot_R) \simeq \mathscr{F}_*\myR\Hom_R(R, \mathscr{F}^!\omega^\mydot_R) \simeq \mathscr{F}_*\omega^{\mydot}_R$,  which implies that
\[
\Hom_R(\mathscr{F}_*R, \omega_R) \simeq \mathscr{F}_*\omega_R.
\]
\item $\myR\Hom_R(F_*R_{p=0}, \omega^\mydot_R) \simeq {\myR\Hom_{R_{p=0}}(F_*R_{p=0}, \omega^\mydot_{R_{p=0}}) \simeq} F_*\omega^{\mydot}_{R_{p=0}}$, which implies that
\[
\Ext^1_R(F_*R_{p=0}, \omega_R) \simeq F_*\omega_{R_{p=0}}.
\]
\end{enumerate}
Since $\omega_R\cong R$ and $\omega_{R_{p=0}} \cong R_{p=0}$, the horizontal map $\omega_R \to \omega_{R_{p=0}}$ can be identified with the restriction map $R \to R_{p=0} = R/p$. In particular, the rightmost horizontal arrows in Diagram (\ref{eq:diagram-Fr-lift}) are surjective. 
The right most square of our diagram can thus be reinterpreted as follows
\[
    \xymatrix{
        \sF_* R \ar[d]_{\Tr_{\sF}}\ar@{->>}[r] & F_* R_{p=0}\ar[d]_{\Tr_F}\\
        R \ar@{->>}[r] & R_{p=0}.
    }
\]
As $\Tr_F$ is surjective, its image does not land in the maximal ideal of $R_{p=0}$.  Hence the image of $\Tr_{\sF}$ also does not land in the maximal ideal of $R$ and so $\Tr_{\sF}$ is surjective.  This implies $\sF$ splits and proves the claim.
\end{proof}
By Claim \ref{claim:Frob-splits}, we immediately get that the inclusion $R \hookrightarrow S$ is pure. Moreover, the inclusion $S \hookrightarrow R^{\rm nc}_{\infty}$ is pure, because $R^{\rm nc}_{\infty}$ is a colimit of free modules over $S$. 
Finally, the composition $R \hookrightarrow R_{\infty}^{\mathrm nc} \to R_\infty$ is pure since we can check purity by tensoring with $E$. Since $R_{\infty}$ is perfectoid by Claim \ref{claim:Rinfty-perfectoid}, the proof that $R$ is \claperfdpure is concluded.
\end{proof}

We say that a $d$-dimensional scheme $X$ defined over a positive characteristic field is \emph{weakly ordinary} if the action of Frobenius $F^* \colon H^d(X,\cO_X) \to H^d(X,\cO_X)$ on the highest cohomology of the structure sheaf is bijective. When $X$ is Cohen-Macaulay and $\omega_X$ is trivial, this is equivalent to $X$ being globally $F$-split.
\begin{example}
Let $X$ be a smooth projective variety defined over a perfect field $k$ of characteristic $p>0$ such that $\Omega^1_X$ is trivial. Assume that $X$ is weakly ordinary and let $\mathscr X$ be the canonical lift of $X$ over $W(k)$ as in \cite[Appendix: Theorem (1)]{Mehta1987}. Suppose that $\omega_{\mathscr X}$ is trivial (this is for example the case when $X$ is an ordinary abelian variety). Let $A$ be an ample line bundle on $X$ and let $\mathscr A$ be the canonical lift of $A$ as in \cite[Appendix: Theorem (3)]{Mehta1987}. Finally, let $R$ be the cone of $\mathscr X$ with respect to some very ample multiple of $\mathscr A$. 

Then by the above proposition $R$ is \claperfdinj. Indeed, by \cite[Appendix: Theorem (1)]{Mehta1987}, there exists a morphism $\mathscr F \colon \mathscr X \to \mathscr X$ over the Frobenius $F \colon \Spec W(k) \to \Spec W(k)$ such that $\mathscr F$ agrees modulo $p$ with the Frobenius morphism on $X$. Moreover, by \cite[Appendix: Theorem (3)]{Mehta1987}:
\[
\mathscr{F}^* \mathscr A = \mathscr{A}^p.
\]
In particular, there exists an induced ring homomorphism $\mathscr{F} \colon R \to R$ which agrees with the Frobenius $F \colon R_{p=0} \to R_{p=0}$ on the reduction $R_{p=0}$ of $R$ modulo $p$. Moreover, $\omega_{R_{p=0}}$ and $\omega_R$ are trivial by construction.
\end{example}

At this point we are unable to construct an example of a non-splinter \claperfdinj ring which is neither a complete intersection nor arises from Frobenius liftable examples in equal characteristic $p > 0$. However, it seems natural to expect that cones over Serre-Tate type lifts of Calabi-Yau varieties are \claperfdinj. For example, one can ask the following.
\begin{question}
Let $X$ be an ordinary K3 surface over a perfect field $k$ of characteristic $p>0$ and let $\mathscr X$ be a canonical lift of $X$ over $W(k)$ in the sense of Deligne (\cite{DI81}). Let $\mathscr A$ be a canonical lift on $\mathscr X$ of an ample line bundle $A$ on $X$ and let $R$ be the cone with respect to a very ample multiple of $\mathscr A$. Is $R$ \claperfdinj?
\end{question}

\bibliographystyle{skalpha}
\bibliography{MainBib}

\newcommand{\etalchar}[1]{$^{#1}$}
\def\cfudot#1{\ifmmode\setbox7\hbox{$\accent"5E#1$}\else \setbox7\hbox{\accent"5E#1}\penalty 10000\relax\fi\raise 1\ht7 \hbox{\raise.1ex\hbox to 1\wd7{\hss.\hss}}\penalty 10000 \hskip-1\wd7\penalty 10000\box7}
\providecommand{\bysame}{\leavevmode\hbox to3em{\hrulefill}\thinspace}
\providecommand{\MR}{\relax\ifhmode\unskip\space\fi MR}
\providecommand{\MRhref}[2]{%
  \href{http://www.ams.org/mathscinet-getitem?mr=#1}{#2}
}
\providecommand{\href}[2]{#2}
\begin{thebibliography}{CLM{\etalchar{+}}22}

\bibitem[Abe01]{Aberbach2001a}
{\sc I.~M. Aberbach}: \emph{Extension of weakly and strongly {F}-regular rings by flat maps}, J. Algebra \textbf{241} (2001), no.~2, 799--807. {\sf\scriptsize 1843326 (2002f:13008)}

\bibitem[AE05]{AberbachEnescuStructureOfFPure}
{\sc I.~M. Aberbach and F.~Enescu}: \emph{The structure of {$F$}-pure rings}, Math. Z. \textbf{250} (2005), no.~4, 791--806. {\sf\scriptsize MR2180375}

\bibitem[Abh56]{AbhyankarOnTheValuationsCentered}
{\sc S.~Abhyankar}: \emph{On the valuations centered in a local domain}, Amer. J. Math. \textbf{78} (1956), 321--348. {\sf\scriptsize 82477}

\bibitem[Art86]{ArtinNeronModels}
{\sc M.~Artin}: \emph{N\'{e}ron models}, Arithmetic geometry ({S}torrs, {C}onn., 1984), Springer, New York, 1986, pp.~213--230. {\sf\scriptsize 861977}

\bibitem[Bha25]{BhattLectureNotesRiemannHilbert}
{\sc B.~Bhatt}: \emph{Lecture notes on $p$-adic {H}odge theory}, available at \\ \url{https://www.math.ias.edu/~bhatt/teaching/mat517f25/pHT-notes.pdf}.

\bibitem[BIM19]{BhattIyengarMaRegularRingsPerfectoid}
{\sc B.~Bhatt, S.~B. Iyengar, and L.~Ma}: \emph{Regular rings and perfect(oid) algebras}, Comm. Algebra \textbf{47} (2019), no.~6, 2367--2383. {\sf\scriptsize 3957103}

\bibitem[BL]{BhattLuriepadicRHmodp}
{\sc B.~Bhatt and J.~Lurie}: \emph{A $p$-adic {R}iemann-{H}ilbert functor: $\mathbf{Z}/p^n$-coefficients}, In preparation.

\bibitem[BL22]{BhattLurieAbsolute}
{\sc B.~{Bhatt} and J.~{Lurie}}: \emph{{Absolute prismatic cohomology}}, arXiv e-prints (2022), arXiv:2201.06120.

\bibitem[BM21]{BhattMathewArc}
{\sc B.~Bhatt and A.~Mathew}: \emph{The arc-topology}, Duke Math. J. \textbf{170} (2021), no.~9, 1899--1988. {\sf\scriptsize 4278670}

\bibitem[BMS18]{BhattMorrowScholzeIHES}
{\sc B.~Bhatt, M.~Morrow, and P.~Scholze}: \emph{Integral {$p$}-adic {H}odge theory}, Publ. Math. Inst. Hautes \'{E}tudes Sci. \textbf{128} (2018), 219--397. {\sf\scriptsize 3905467}

\bibitem[BS17]{BhattScholzeProjectivityWitt}
{\sc B.~Bhatt and P.~Scholze}: \emph{Projectivity of the {W}itt vector affine {G}rassmannian}, Invent. Math. \textbf{209} (2017), no.~2, 329--423. {\sf\scriptsize 3674218}

\bibitem[BS22]{BhattScholzepPrismaticCohomology}
{\sc B.~Bhatt and P.~Scholze}: \emph{Prisms and prismatic cohomology}, Ann. of Math. (2) \textbf{196} (2022), no.~3, 1135--1275. {\sf\scriptsize 4502597}

\bibitem[BST17]{BhattSchwedeTakagiweakordinaryconjectureandFsingularity}
{\sc B.~Bhatt, K.~Schwede, and S.~Takagi}: \emph{The weak ordinarity conjecture and {$F$}-singularities}, Higher dimensional algebraic geometry---in honour of {P}rofessor {Y}ujiro {K}awamata's sixtieth birthday, Adv. Stud. Pure Math., vol.~74, Math. Soc. Japan, Tokyo, 2017, pp.~11--39. {\sf\scriptsize 3791207}

\bibitem[BK05]{BrionKumarFrobeniusSplitting}
{\sc M.~Brion and S.~Kumar}: \emph{Frobenius splitting methods in geometry and representation theory}, Progress in Mathematics, vol. 231, Birkh\"auser Boston Inc., Boston, MA, 2005. {\sf\scriptsize MR2107324 (2005k:14104)}

\bibitem[BH93]{BrunsHerzog}
{\sc W.~Bruns and J.~Herzog}: \emph{Cohen-{M}acaulay rings}, Cambridge Studies in Advanced Mathematics, vol.~39, Cambridge University Press, Cambridge, 1993. {\sf\scriptsize MR1251956 (95h:13020)}

\bibitem[CLM{\etalchar{+}}22]{CaiLeeMaSchwedeTucker}
{\sc H.~{Cai}, S.~{Lee}, L.~{Ma}, K.~{Schwede}, and K.~{Tucker}}: \emph{{Perfectoid signature, perfectoid Hilbert-Kunz multiplicity, and an application to local fundamental groups}}, arXiv e-prints (2022), arXiv:2209.04046.

\bibitem[CR22]{CarvajalRojasFiniteTorsors}
{\sc J.~A. Carvajal-Rojas}: \emph{Finite torsors over strongly {$F$}-regular singularities}, \'Epijournal G\'eom. Alg\'ebrique \textbf{6} (2022), Art. 1, 30. {\sf\scriptsize 4391081}

\bibitem[{\v{C}}S24]{KestutisScholzePurityFlatCohomology}
{\sc K.~{\v{C}}esnavi{\v{c}}ius and P.~Scholze}: \emph{Purity for flat cohomology}, Ann. of Math. (2) \textbf{199} (2024), no.~1, 51--180. {\sf\scriptsize 4681144}

\bibitem[DM19]{DattaMurayamaPermanencePropertiesFinjectivity}
{\sc R.~{Datta} and T.~{Murayama}}: \emph{{Permanence properties of $F$-injectivity}}, to appear in Mathematical Research Letters, arXiv e-prints (2019), arXiv:1906.11399.

\bibitem[DT23]{DattaTuckerOpenness}
{\sc R.~Datta and K.~Tucker}: \emph{Openness of splinter loci in prime characteristic}, J. Algebra \textbf{629} (2023), 307--357. {\sf\scriptsize 4583731}

\bibitem[DEV24]{DeStefaniRossiVarbaro}
{\sc A.~{De Stefani}, M.~{Evelina Rossi}, and M.~{Varbaro}}: \emph{{From a local ring to its associated graded algebra}}, arXiv e-prints (2024), arXiv:2406.16421.

\bibitem[Del81]{DI81}
{\sc P.~Deligne}: \emph{Rel\`evement des surfaces {$K3$} en caract\'{e}ristique nulle}, Algebraic surfaces ({O}rsay, 1976--78), Lecture Notes in Math., vol. 868, Springer, Berlin, 1981, pp.~58--79, Prepared for publication by Luc Illusie. {\sf\scriptsize 638598}

\bibitem[DB81]{DuBoisMain}
{\sc P.~Du~Bois}: \emph{Complexe de de {R}ham filtr\'e d'une vari\'et\'e singuli\`ere}, Bull. Soc. Math. France \textbf{109} (1981), no.~1, 41--81. {\sf\scriptsize MR613848 (82j:14006)}

\bibitem[Elk78]{ElkikDeformationsOfRational}
{\sc R.~Elkik}: \emph{Singularit\'es rationnelles et d\'eformations}, Invent. Math. \textbf{47} (1978), no.~2, 139--147. {\sf\scriptsize MR501926 (80c:14004)}

\bibitem[Fed83]{FedderFPureRat}
{\sc R.~Fedder}: \emph{{$F$}-purity and rational singularity}, Trans. Amer. Math. Soc. \textbf{278} (1983), no.~2, 461--480. {\sf\scriptsize MR701505 (84h:13031)}

\bibitem[FW89]{FedderWatanabe}
{\sc R.~Fedder and K.~Watanabe}: \emph{A characterization of {$F$}-regularity in terms of {$F$}-purity}, Commutative algebra (Berkeley, CA, 1987), Math. Sci. Res. Inst. Publ., vol.~15, Springer, New York, 1989, pp.~227--245. {\sf\scriptsize MR1015520 (91k:13009)}

\bibitem[GM22]{GodfreyMurayamaPureOfDuBois}
{\sc C.~{Godfrey} and T.~{Murayama}}: \emph{{Pure subrings of Du Bois singularities are Du Bois singularities}}, arXiv e-prints (2022), arXiv:2208.14429.

\bibitem[HW02]{HaraWatanabeFRegFPure}
{\sc N.~Hara and K.-I. Watanabe}: \emph{F-regular and {F}-pure rings vs. log terminal and log canonical singularities}, J. Algebraic Geom. \textbf{11} (2002), no.~2, 363--392. {\sf\scriptsize MR1874118 (2002k:13009)}

\bibitem[Har94]{HartshorneGeneralizedDivisorsOnGorensteinSchemes}
{\sc R.~Hartshorne}: \emph{Generalized divisors on {G}orenstein schemes}, Proceedings of Conference on Algebraic Geometry and Ring Theory in honor of Michael Artin, Part III (Antwerp, 1992), vol.~8, 1994, pp.~287--339. {\sf\scriptsize MR1291023 (95k:14008)}

\bibitem[Has01]{HashimotoCMFinjectiveHoms}
{\sc M.~Hashimoto}: \emph{Cohen-{M}acaulay {F}-injective homomorphisms}, Geometric and combinatorial aspects of commutative algebra ({M}essina, 1999), Lecture Notes in Pure and Appl. Math., vol. 217, Dekker, New York, 2001, pp.~231--244. {\sf\scriptsize 1824233 (2002d:13007)}

\bibitem[Hei21]{HeitmannEtaleLocus}
{\sc R.~C. Heitmann}: \emph{The \'{e}tale locus in complete local rings}, Commutative algebra---150 years with {R}oger and {S}ylvia {W}iegand, Contemp. Math., vol. 773, Amer. Math. Soc., [Providence], RI, [2021] \copyright 2021, pp.~49--62. {\sf\scriptsize 4321390}

\bibitem[Hoc77]{HochsterCyclicPurity}
{\sc M.~Hochster}: \emph{Cyclic purity versus purity in excellent {N}oetherian rings}, Trans. Amer. Math. Soc. \textbf{231} (1977), no.~2, 463--488. {\sf\scriptsize MR0463152 (57 \#3111)}

\bibitem[HH94]{HochsterHunekeFRegularityTestElementsBaseChange}
{\sc M.~Hochster and C.~Huneke}: \emph{{$F$}-regularity, test elements, and smooth base change}, Trans. Amer. Math. Soc. \textbf{346} (1994), no.~1, 1--62. {\sf\scriptsize MR1273534 (95d:13007)}

\bibitem[HR74]{HochsterRobertsRingsOfInvariants}
{\sc M.~Hochster and J.~L. Roberts}: \emph{Rings of invariants of reductive groups acting on regular rings are {C}ohen-{M}acaulay}, Advances in Math. \textbf{13} (1974), 115--175. {\sf\scriptsize MR0347810 (50 \#311)}

\bibitem[HR76]{HochsterRobertsFrobeniusLocalCohomology}
{\sc M.~Hochster and J.~L. Roberts}: \emph{The purity of the {F}robenius and local cohomology}, Advances in Math. \textbf{21} (1976), no.~2, 117--172. {\sf\scriptsize MR0417172 (54 \#5230)}

\bibitem[HMS14]{HoriuchiMillerShimomoto}
{\sc J.~Horiuchi, L.~E. Miller, and K.~Shimomoto}: \emph{Deformation of {$F$}-injectivity and local cohomology}, Indiana Univ. Math. J. \textbf{63} (2014), no.~4, 1139--1157, Appendix by Karl Schwede and Anurag K. Singh. {\sf\scriptsize 3263925}

\bibitem[HJ14]{HuberJorderDifferentialFormsHTopology}
{\sc A.~Huber and C.~J{\"o}rder}: \emph{Differential forms in the h-topology}, Algebr. Geom. \textbf{1} (2014), no.~4, 449--478. {\sf\scriptsize 3272910}

\bibitem[Ish24]{Ishizuka}
{\sc R.~Ishizuka}: \emph{A calculation of the perfectoidization of semiperfectoid rings}, Nagoya Mathematical Journal (2024), 1–18.

\bibitem[KL16]{KedlayaLiuRelativepadicHodgeII}
{\sc K.~S. {Kedlaya} and R.~{Liu}}: \emph{{Relative p-adic Hodge theory, II: Imperfect period rings}}, arXiv e-prints (2016), arXiv:1602.06899.

\bibitem[KS10]{KerzSchmidtOnDifferentNotionsOfTameness}
{\sc M.~Kerz and A.~Schmidt}: \emph{On different notions of tameness in arithmetic geometry}, Math. Ann. \textbf{346} (2010), no.~3, 641--668. {\sf\scriptsize MR2578565}

\bibitem[Kol13]{KollarKovacsSingularitiesBook}
{\sc J.~Koll{\'a}r}: \emph{Singularities of the minimal model program}, Cambridge Tracts in Mathematics, vol. 200, Cambridge University Press, Cambridge, 2013, With the collaboration of S{\'a}ndor Kov{\'a}cs. {\sf\scriptsize 3057950}

\bibitem[KK10]{KollarKovacsLCImpliesDB}
{\sc J.~Koll{\'a}r and S.~J. Kov{\'a}cs}: \emph{Log canonical singularities are {D}u {B}ois}, J. Amer. Math. Soc. \textbf{23} (2010), no.~3, 791--813. {\sf\scriptsize 2629988}

\bibitem[KM98]{KollarMori}
{\sc J.~Koll{\'a}r and S.~Mori}: \emph{Birational geometry of algebraic varieties}, Cambridge Tracts in Mathematics, vol. 134, Cambridge University Press, Cambridge, 1998, With the collaboration of C. H. Clemens and A. Corti, Translated from the 1998 Japanese original. {\sf\scriptsize MR1658959 (2000b:14018)}

\bibitem[Kov99]{KovacsDuBoisLC1}
{\sc S.~J. Kov{\'a}cs}: \emph{Rational, log canonical, {D}u {B}ois singularities: on the conjectures of {K}oll\'ar and {S}teenbrink}, Compositio Math. \textbf{118} (1999), no.~2, 123--133. {\sf\scriptsize MR1713307 (2001g:14022)}

\bibitem[Kov00]{KovacsDuBoisLC2}
{\sc S.~J. Kov{\'a}cs}: \emph{Rational, log canonical, {D}u {B}ois singularities. {II}. {K}odaira vanishing and small deformations}, Compositio Math. \textbf{121} (2000), no.~3, 297--304. {\sf\scriptsize MR1761628 (2001m:14028)}

\bibitem[KS16]{KovacsSchwedeDBDeforms}
{\sc S.~J. Kov\'acs and K.~Schwede}: \emph{Du {B}ois singularities deform}, Minimal models and extremal rays ({K}yoto, 2011), Adv. Stud. Pure Math., vol.~70, Math. Soc. Japan, [Tokyo], 2016, pp.~49--65. {\sf\scriptsize 3617778}

\bibitem[KSS10]{KovacsSchwedeSmithLCImpliesDuBois}
{\sc S.~J. Kov{\'a}cs, K.~Schwede, and K.~E. Smith}: \emph{The canonical sheaf of {D}u {B}ois singularities}, Adv. Math. \textbf{224} (2010), no.~4, 1618--1640. {\sf\scriptsize 2646306}

\bibitem[Lee09]{LeeLocalAcyclicFibrationsAndDeRham}
{\sc B.~Lee}: \emph{Local acyclic fibrations and the de {R}ham complex}, Homology, Homotopy Appl. \textbf{11} (2009), no.~1, 115--140. {\sf\scriptsize 2506129}

\bibitem[Lur17]{lurie_higher_algebra}
{\sc J.~Lurie}: \emph{Higher algebra}, 2017, https://www.math.ias.edu/{\textasciitilde}lurie/papers/HA.pdf.

\bibitem[MP]{MaPolstraFsingularitesBook}
{\sc L.~Ma and T.~Polstra}: \emph{F-singularities: a commutative algebra approach}, availabel at https://www.math.purdue.edu/~ma326/F-singularitiesBook.pdf.

\bibitem[MQ18]{MaQuyFrobeniusActionsAndDeformation}
{\sc L.~Ma and P.~H. Quy}: \emph{Frobenius actions on local cohomology modules and deformation}, Nagoya Math. J. \textbf{232} (2018), 55--75. {\sf\scriptsize 3866500}

\bibitem[MS21]{MaSchwedeSingularitiesMixedCharBCM}
{\sc L.~Ma and K.~Schwede}: \emph{Singularities in mixed characteristic via perfectoid big {C}ohen-{M}acaulay algebras}, Duke Math. J. \textbf{170} (2021), no.~13, 2815--2890. {\sf\scriptsize 4312190}

\bibitem[MSS17]{MaSchwedeShimomoto}
{\sc L.~Ma, K.~Schwede, and K.~Shimomoto}: \emph{Local cohomology of {D}u {B}ois singularities and applications to families}, Compos. Math. \textbf{153} (2017), no.~10, 2147--2170. {\sf\scriptsize 3705286}

\bibitem[MST{\etalchar{+}}22]{MaSchwedeTuckerWaldronWitaszekAdjoint}
{\sc L.~Ma, K.~Schwede, K.~Tucker, J.~Waldron, and J.~Witaszek}: \emph{An analogue of adjoint ideals and {PLT} singularities in mixed characteristic}, J. Algebraic Geom. \textbf{31} (2022), no.~3, 497--559. {\sf\scriptsize 4484548}

\bibitem[Man80]{ManaresiSomePropertyWeaklyNormal}
{\sc M.~Manaresi}: \emph{Some properties of weakly normal varieties}, Nagoya Math. J. \textbf{77} (1980), 61--74. {\sf\scriptsize 556308}

\bibitem[Mat16]{MathewGaloisGroupStableHomotopy}
{\sc A.~Mathew}: \emph{The {G}alois group of a stable homotopy theory}, Adv. Math. \textbf{291} (2016), 403--541. {\sf\scriptsize 3459022}

\bibitem[MR85]{MehtaRamanathanFrobeniusSplittingAndCohomologyVanishing}
{\sc V.~B. Mehta and A.~Ramanathan}: \emph{Frobenius splitting and cohomology vanishing for {S}chubert varieties}, Ann. of Math. (2) \textbf{122} (1985), no.~1, 27--40. {\sf\scriptsize MR799251 (86k:14038)}

\bibitem[MS87]{Mehta1987}
{\sc V.~B. Mehta and V.~Srinivas}: \emph{Varieties in positive characteristic with trivial tangent bundle}, Compositio Math. \textbf{64} (1987), no.~2, 191--212, With an appendix by Srinivas and M. V. Nori. {\sf\scriptsize 916481}

\bibitem[MS12]{MillerSchwedeSLCvFP}
{\sc L.~E. Miller and K.~Schwede}: \emph{Semi-log canonical vs {$F$}-pure singularities}, J. Algebra \textbf{349} (2012), 150--164. {\sf\scriptsize 2853631}

\bibitem[{Mur}24]{MurayamaInjectivityAndCubicalDescentForSchemesStacksSpaces}
{\sc T.~{Murayama}}: \emph{{Injectivity theorems and cubical descent for schemes, stacks, and analytic spaces}}, arXiv e-prints (2024), arXiv:2406.10800.

\bibitem[MS11]{MustataSrinivasOrdinary}
{\sc M.~Musta{\c{t}}{\u{a}} and V.~Srinivas}: \emph{Ordinary varieties and the comparison between multiplier ideals and test ideals}, Nagoya Math. J. \textbf{204} (2011), 125--157. {\sf\scriptsize 2863367}

\bibitem[Nee96]{neeman_grothendieck_duality}
{\sc A.~Neeman}: \emph{The {G}rothendieck duality theorem via {B}ousfield's techniques and {B}rown representability}, J. Amer. Math. Soc. \textbf{9} (1996), no.~1, 205--236. {\sf\scriptsize 1308405}

\bibitem[Nee01]{neeman_triangulated}
{\sc A.~Neeman}: \emph{Triangulated categories}, Annals of Mathematics Studies, vol. 148, Princeton University Press, Princeton, NJ, 2001. {\sf\scriptsize 1812507}

\bibitem[Pop86]{PopescuGeneralNeronDesingularization}
{\sc D.~Popescu}: \emph{General {N}\'eron desingularization and approximation}, Nagoya Math. J. \textbf{104} (1986), 85--115. {\sf\scriptsize 868439}

\bibitem[Pop90]{PopescuLetterToEditor}
{\sc D.~Popescu}: \emph{Letter to the editor: ``{G}eneral {N}\'eron desingularization and approximation'' [{N}agoya {M}ath.\ {J}.\ {\bf 104} (1986), 85--115; {MR}0868439 (88a:14007)]}, Nagoya Math. J. \textbf{118} (1990), 45--53. {\sf\scriptsize 1060701}

\bibitem[{Rak}20]{RaksitHochschildHomology}
{\sc A.~{Raksit}}: \emph{{Hochschild homology and the derived de Rham complex revisited}}, arXiv e-prints (2020), arXiv:2007.02576.

\bibitem[Ryd10]{RydhSubmersionsEffectiveDescent}
{\sc D.~Rydh}: \emph{Submersions and effective descent of \'{e}tale morphisms}, Bull. Soc. Math. France \textbf{138} (2010), no.~2, 181--230. {\sf\scriptsize 2679038}

\bibitem[Sch12]{ScholzePerfectoidspaces}
{\sc P.~Scholze}: \emph{Perfectoid spaces}, Publ. Math. Inst. Hautes \'Etudes Sci. \textbf{116} (2012), 245--313.

\bibitem[Sch09]{SchwedeFInjectiveAreDuBois}
{\sc K.~Schwede}: \emph{{$F$}-injective singularities are {D}u {B}ois}, Amer. J. Math. \textbf{131} (2009), no.~2, 445--473. {\sf\scriptsize MR2503989}

\bibitem[SZ13]{SchwedeZhangBertiniTheoremsForFSings}
{\sc K.~Schwede and W.~Zhang}: \emph{Bertini theorems for {$F$}-singularities}, Proc. Lond. Math. Soc. (3) \textbf{107} (2013), no.~4, 851--874. {\sf\scriptsize 3108833}

\bibitem[SI24]{ShimomotoIshizuka.Quasi-canonicalLifting}
{\sc K.~Shimomoto and R.~Ishizuka}: \emph{Quasi-canonical lifting of projective varieties in positive characteristic}, tentative title, in preparation, 2024.

\bibitem[{Sta}]{stacks-project}
{\sc T.~{Stacks Project Authors}}: \emph{{\itshape Stacks Project}}.

\bibitem[Ste81]{SteenBrinkDuBoisReview}
{\sc J.~H.~M. Steenbrink}: \emph{{MR}0613848 (82j:14006)}, Review of ``Complexe de de {R}ham filtr\'e d'une vari\'et\'e singuli\`ere'' for the AMS mathematical reviews database, 1981.

\bibitem[Wat97]{WatanabeFRationalityOfCertainReesAndCounterExamplesToBoutot}
{\sc K.-i. Watanabe}: \emph{{$F$}-rationality of certain {R}ees algebras and counterexamples to ``{B}outot's theorem'' for {$F$}-rational rings}, J. Pure Appl. Algebra \textbf{122} (1997), no.~3, 323--328. {\sf\scriptsize MR1481095 (98i:13007)}

\bibitem[Zar39]{ZariskiTheReductionOfTheSingsOfASurface}
{\sc O.~Zariski}: \emph{The reduction of the singularities of an algebraic surface}, Ann. of Math. (2) \textbf{40} (1939), 639--689. {\sf\scriptsize 159}

\end{thebibliography}
\end{document}